\documentclass[12pt]{amsart}
\usepackage{enumerate,amssymb,amsfonts,latexsym,psfrag}%,
\usepackage{amscd,amsmath}
\usepackage[dvips]{graphicx,epsfig}%,psfig}
\usepackage{epstopdf}
\usepackage[all]{xy}
\usepackage{fullpage}
\usepackage{subfig}
\usepackage{multirow,array}
\usepackage{color}
\usepackage{mathtools}
\usepackage{amsthm}

\usepackage{filecontents}
%%%%%%%%%%%%%%%%%%%%%%%%%%%%%%%%%%%%%%%%%%%%%%%%%%%%%%%%%%%%%%%%%%%%%%%%%

\newtheorem{thm}{Theorem}[section]

\newtheorem{cor}[thm]{Corollary}
\newtheorem{lem}[thm]{Lemma}
\newtheorem{prop}[thm]{Proposition}

\theoremstyle{remark}

\theoremstyle{definition}
\newtheorem{defn}{Definition}[section]
\newtheorem{rem}{Remark}[section]

\newtheorem*{acknowledgement}{Acknowledgement}

\numberwithin{equation}{section}
\numberwithin{figure}{section}

%\newcommand{\figref}[1]{Figure~\ref{#1}}

%{\begin{center}\includegraphics[width=#1\textwidth]{#2}\end{center}}

\font\nt=cmr7

\def\note#1
{\marginpar
{\nt $\leftarrow$
\par
\hfuzz=20pt \hbadness=9000 \hyphenpenalty=-100 \exhyphenpenalty=-100
\pretolerance=-1 \tolerance=9999 \doublehyphendemerits=-100000
\finalhyphendemerits=-100000 \baselineskip=6pt
#1}\hfuzz=1pt}

\renewcommand{\Im}{\operatorname{Im}}

\newcommand{\di}{\partial}

\newcommand{\ra}{\rightarrow}

\def\lra{\longrightarrow}

\def\ssk{\smallskip}
\def\msk{\medskip}
\def\bsk{\bigskip}

\def\nin{\noindent}

\newcommand{\diam}{\operatorname{diam}}
\newcommand{\dist}{\operatorname{dist}}

\newcommand{\inter}{\operatorname{Int}}
\renewcommand{\mod}{\operatorname{mod}}

\newcommand{\id}{\operatorname{id}}

\newcommand{\Id}{\operatorname{Id}}

\newcommand{\Orb}{\operatorname{Orb}}

\newcommand{\tip}{\operatorname{tip}}

\newcommand{\loc}{\operatorname{loc}}

\newcommand{\const}{\mathrm{const}}
\def\loc{{\mathrm{loc}}}

\newcommand{\eps}{{\varepsilon}}

\newcommand{\de}{{\delta}}
\newcommand{\la}{{\lambda}}
\newcommand{\La}{{\Lambda}}
\newcommand{\si}{{\sigma}}

\newcommand{\CC}{{\mathcal C}}

\newcommand{\II}{{\mathcal I}}

\newcommand{\GG}{{\mathcal G}}

\newcommand{\OO}{{\mathcal O}}

\newcommand{\TT}{{\mathcal T}}

\newcommand{\UU}{{\mathcal U}}

\newcommand{\N}{{\mathbb N}}

\newcommand{\R}{{\mathbb R}}

\newcommand{\Z}{{\mathbb Z}}

\def\BPhi{{\boldsymbol{\BPhi}}}
\def\B0{{\mathbf{0}}}

\newcommand{\Jac}{\operatorname{Jac}}

\newcommand{\Dom}{\operatorname{Dom}}

%&&&&&&&&&&    Content   &&&&&&&&

\catcode`\@=12

\def\Empty{}
\newcommand\oplabel[1]{
  \def\OpArg{#1} \ifx \OpArg\Empty {} \else
  	\label{#1}
  \fi}
		
%%%%%%%%%%%%%%%%%%%%%%%%%%%%%%%%%%%%%%%%%%%%%%%%%%%%%%%%%%%%%%%%%%%%%
% Insert a postscript figure using psfig.
% Usage:	\realfig{label}{filename}{caption}
%
% uses psfig macros: must have \input{psfig} in the preamble to use
% it. 
%%%%%%%%%%%%%%%%%%%%%%%%%%%%%%%%%%%%%%%%%%%%%%%%%%%%%%%%%%%%%%%%%%%%%

%&&&&&&&&&&&&       List of figures              &&&&&&&&&
%
%&&&&&&&&&&&&&&&&&&&&&&&&&&&&&&&&&&&&&&&&&&&&&&&&&&&&&

\newcommand{\comm}[1]{}
\newcommand{\comment}[1]{}

\usepackage[titletoc,title,header]{appendix}

%\comm{%%%%%%%%%%%%%%%%%%%%%

\usepackage{titlesec}
%%%%%%%%%%%%%%%%%%%%% This is for the section title format with titlesec package

%\titlelabel{\thetitle.\quad}
\titleformat{\section}
  {\Large\bfseries\center}{\thesection.}{0.5em}{}
  \titlespacing*{\section} {0pt}{1em}{2.3ex plus .2ex}
\titlelabel{\thetitle.\quad}
\titleformat{\subsection}
  [runin]{\normalfont\bfseries}{\thesubsection.}{0.5em}{}[.]
\titlespacing*{\subsection}{0pt}{1em}{2.3ex plus .2ex}

%%%%%%%%%%%%%%%%%%%%%%%%%%%%%%%%%%%%%%%%%%%%%%%%%%%%%%%%%%%%%%%%%%%%%%%%%%%%%%%%5

\usepackage{titletoc}
\titlecontents{section}[1.5em]
  {\normalfont\bfseries}
  {\contentslabel{1.5em}}%change the argument to obtain the 
                         %desired spacing
  {\hspace*{-2.3em}}
 {\titlerule*[1pc]{}\contentspage}
%%%%%%%%%%%%%%%%%%%%%%%%%%%%%%%%%%%%%%%%%%%%%%%%%%%%%%%%%%%%%%%%%%%%%%%%%%

%\addtocontents{toc}{\protect\vspace{10pt}}

\baselineskip=24pt

\allowdisplaybreaks[1]

\begin{document}

\bigskip\bigskip

\title[H\'enon renormalization]{Renormalization of three dimensional H\'enon map I $ \colon $ Reduction of ambient space }

\address {Hong-ik University }
%\address {Young Woo Nam \\ \quad e-mail : ellipse7\,@\ daum.net}
\date{August 7, 2014}
\author{Young Woo Nam}
\thanks{College of Science and Technology, Hongik University at Sejong, Korea. \newline 
Email $ \colon $ namyoungwoo\,@\,hongik.ac.kr}

\begin{abstract}

Three dimensional analytic H\'enon-like map
$$ F(x,y,z) = (f(x) - \eps(x,y,z),\, x,\, \de(x,y,z)) $$ 
and its {\em period doubling} renormalization is defined. If $ F $ is infinitely renormalizable map, Jacobian determinant of $ n^{th} $ renormalized map, $ R^nF $ has asymptotically universal expression
$$ \Jac R^nF = b_F^{2^n}a(x)(1 + O(\rho^n)) $$
where $ b_F $ is the average Jacobian of $ F $. %It is called the { universal} expression of Jacobian determinant. 
The toy model map, $ F_{\mod} $ is defined as the map satisfying $ \di_z \eps \equiv 0 $. The set of toy model map is invariant under renormalizaton. Moreover, if $ \| \di_z \de \| \ll \| \di_y \eps \| $, then there exists the continuous invariant plane field over $ \OO_F $ with dominated splitting. Under this condition, three dimensional H\'enon-like map %with the dominated splitting 
is dynamically decomposed into two dimensional map with contraction along the strong stable direction. Any invariant line field on this plane filed over $ \OO_{F_{\mod}} $ cannot be continuous. 

%$ C^r $ single surfaces invariant under $ F $ exist each of which is tangent to this invariant plane field for every $ 3 \leq r < \infty $. The existence of invariant surfaces on each level defines an invariant class under renormalization operator. 
%%%Surfaces exist invariant under $ R^nF $ for each $ n \in \N $ by construction. 
%Then geometric properties on the critical Cantor set in invariant surfaces is the same as those of two dimensional maps by $ C^r $ conjugations. %In this special class, %%%the geometric properties of Cantor set is inherited from the map on the surface to the three dimensional maps. 
%{\em Non existence of the continuous line field}, {\em non rigidity} and {\em unbounded geometry} of Cantor set for $ F $ in this class are involved with two dimensional $ C^r $ H\'enon-like maps on invariant surfaces.
\end{abstract}

\maketitle

%VT

\thispagestyle{empty}

\setcounter{tocdepth}{1}

\tableofcontents

%\makeatletter
%\renewcommand{\@pnumwidth}{15em}
%\makeatother

\newpage
\renewcommand{\labelenumi} {\rm {(}\arabic{enumi}{)}}

\section{Introduction}
Universality of infinitely renormalizable unimodal maps in one dimensional dynamical system was discovered by Feigenbaum and independently by Coullet and Tresser in the mid 1970's. Hyperbolicity at the fixed point of renormalization operator is finally proved in the one dimensional holomorphic dynamical systems by Lyubich using quadratic-like maps in \cite{Lyu}. Universality of higher dimensional maps which are strongly dissipative and close to the one dimensional maps has been expected. For example, see \cite{CEK}. %In particular, %renormalizable maps with {\em periodic doubling type} are interesting in two or higher dimensional maps. 
The {\em period doubling} renormalization of two dimensional H\'enon-like map is introduced in \cite{CLM} with universal limit of renormalized maps. 
%The renormalization of the three dimensional H\'enon-like maps is the extended theory of the two dimensional H\'enon renormalization in \cite{CLM}, which is combined with the renormalization and H\'enon-like maps. The renormalization theory has been developed in the space of one dimensional maps. However, the dynamics of H\'enon map has its own development which is independent of renormalization theory also. The higher dimensional extension of H\'enon renormalization is a part of regular (which is a counterpart of the concept {\em chaotic}) dynamics of the space of rank one attractor.
%Like the space of the two dimensional H\'enon-like maps, three dimensional H\'enon-like maps has third coordinate map with small norm. % has some similar parts of the renormalization theory like two dimensional maps. 
%\ssk \\
%Period doubling H\'enon renormalization of two dimensional map has a common property of the one dimensional renormalizable maps with same combinatorics called {\em universality}. 
But geometry of Cantor attractor has different from one dimensional maps. In this paper, H\'enon renormalization theory is extended for three dimensional maps. This extension has in general two goals. 
\begin{itemize}
\item Finding the same or similar results of two dimensional theory in three dimension. \ssk
\item Finding new phenomena which appear only on three or higher dimensional maps.
\end{itemize} \ssk
\nin This paper concentrate on the first part of above goals. The content is the modified version of the first part of my thesis in \cite{Nam}. %Section 9 %and Section 10 
%of the thesis were re-written but the topic and result of this paper is the same as those of the thesis. 
%\ssk \\
%In particular, it is shown that the small scale geometry of the Cantor attractor for three dimensional H\'enon-like maps has the same properties for two dimensional H\'enon-like maps. \ssk \\

\msk
\subsection{Statement of result}

Three dimensional H\'enon-like map $F$ from the cubic box $ B $ to $ \R^3 $ is defined as follows
$$ F \colon (x,y,z) \mapsto (f(x) - \eps(x,y,z),\, x,\, \de(x,y,z)) $$
where $ f(x) $ is a unimodal map. Let us assume that $ \| \:\! \eps \|_{C^3}, \| \:\! \de \|_{C^3} \leq \bar \eps $ for a sufficiently small positive $ \bar \eps $.  %We would call {\em three dimensional H\'enon-like maps} just H\'enon-like maps unless the name could make confusion between two and three dimensional maps. %and call the H\'enon-like map which is defined on the plane domain the two dimensional H\'enon-like map. 
%\ssk \\
%and $ F $ has two hyperbolic fixed points, $ \beta_0 $ which has positive eigenvalues and $ \beta_1 $ which has both positive and negative eigenvalues. Since $ \| \:\! \de \| $ is sufficiently small, each fixed point has only one expanding direction. Thus we may assume that product of two different eigenvalues is strictly less than one. The H\'enon-like map is called {\em renormalizable} if $ W^u(\beta_0) $ intersects $ W^s(\beta_1) $ at the orbit of a single point. However, the renormalizable map (with period doubling type) has invariant regions in $ B $ under $ F^2 $. 
\ssk \\
We need the non linear coordinate change map which is called the horizontal-like diffeomorphism $ H $ for universal limit of renormalized maps as follows
$$ H : (x,y,z) \mapsto (f(x) - \eps(x,y,z),\, y,\, z - \de(y,f^{-1}(y),0))  . $$
Then H\'enon renormalization is extendible to three dimensional H\'enon-like maps with same definitions. Renormalized map $ RF $ of three dimensional H\'enon-like map, $F$ is defined as
$$ RF = \La \circ H \circ F^2 \circ H^{-1} \circ \La^{-1} $$
where $ \La $ is the appropriate dilation. 
%\ssk \\
%We would only deal with the periodic doubling renormalization in this paper. 
%Moreover, the $ n^{th} $ renormalization $ R^nF $ is defined inductively. Assume that $ F $ is an infinitely renormalizable H\'enon-like map. Then $ R^nF $ converges to the degenerate map $ F_*(x,y,z) = (f_*(x),\, x,\,0) $ where $ f_* $ is the fixed point of the renormalization operator of one dimensional unimodal maps. 
$ R^nF $ converges to the universal limit as $ n \ra \infty $. Furthermore, $ F_* $ is the hyperbolic fixed point of the renormalization operator, $ R : F \mapsto RF $.  
\ssk \\
Let $ \II_B(\bar \eps) $ be the set of infinitely renormalizable H\'enon-like maps on the domain $ B $ with the norms $ \| \eps \| $ and $ \| \de \| $ bounded by small enough number, $ \bar \eps >0 $. 
%\nin 
Average Jacobian is defined on the critical Cantor set
$$ b_F = \exp \int_{\OO_F} \log \Jac F \,d\mu $$
where $ \mu $ is the unique ergodic probability measure on the Cantor attractor $ \OO_F $. Then 
%\begin{thm}[Universality of $ \Jac R^nF $]
for $ F \in \II_B(\bar \eps) $, universality of Jacobian is generalized for three dimensional map 
$$ \Jac R^nF = b^{2^n}a(x)(1+ O(\rho^n))$$
where $ b = b_F $ is the average Jacobian of $ F $ and $ a(x) $ is the universal function for $ \rho \in (0,1) $ (Theorem \ref{Universality of the Jacobian}).
%\end{thm}
\ssk \\
\nin However, universality of Jacobian determinant does not seem to imply any universal expression of three dimensional map, $ R^nF $.  %because the Jacobian determinant, 
%$$ \Jac R^nF = \di_y \eps_n \cdot \di_z \de_n - \di_z \eps_n \cdot \di_y \de_n $$ 
%has four different partial derivatives. %In general, the asymptotic expression of all of these cannot be recovered using only the single number $ b_F $ and the universal function. 
Thus instead of constructing universal theory of all three dimensional maps in $ \II (\bar \eps) $, we would find an invariant space under H\'enon renormalization operator and search geometric properties of Cantor attractor of maps in this class. 
%\ssk \\
Let H\'enon-like maps with the condition $ \di_z \eps \equiv 0 $ be {\em toy model maps}, say $ F_{\mod} $. %The projection of toy model map to $ xy- $plane is exactly two dimensional H\'enon-like map. 
Thus H\'enon renormalization of toy model map is a skew product of renormalization of two dimensional map with third coordinate. In other words, the following is true for every $ n \in \N $
$$ \pi_{xy} \circ R^nF_{\mod} = R^nF_{2d} $$
where $ \pi_{xy} $ is the projection from $ \R^3 $ to $ \R^2 $ and $ F_{2d} \colon (x,y) \mapsto (f(x) - \eps(x,y),\; x) $ is two dimensional H\'enon-like map. $ b_F = b_1 b_2 $ where $ b_1 $ is the average Jacobian of $ F_{2d} $ and $ b_2 \asymp \di_z \de $. Universality of two dimensional map and the fact that $  \Jac R^nF_{\mod} = \di_y \eps_n \cdot \di_z \de_n $ implies that universality of toy model maps.
$$ R^nF_{\mod} = (f_n(x) + b_1^{2^n}a(x)\,y\, (1+O(\rho^n)),\ x,\ b_2^{2^n}\!z \, (1+O(\rho^n))) $$
%where $ f_n $ is the unimodal map converging to $ f_* $ exponentially fast as $ n \ra \infty $, $ b_1b_2 = b_F $ and $ \| \,\widetilde \de_n \| = O(\bar \eps^{2^n}) $ for the sufficiently small $ \bar \eps > 0 $. %In the class of toy model maps, $ b_1 $ is actually the average Jacobian of the two dimensional H\'enon-like map and $b_2$ is the attracting rate along the third coordinate direction.
%\ssk \\
Let us assume that $ b_2 \ll b_1 $. %of $ F_{\mod} $. 
Then there exists the continuous invariant plane field on the critical Cantor set under $ DF_{\mod} $. It is $ C^1 $ robust. In particular, $ \| \di_z \eps \| \ll b_F $ also implies the existence of continuous invariant plane field over $ \OO_{F_{\mod}} $. Moreover, there is no continuous invariant line field on this continuous invariant plane field. 
\msk \\
In the forthcoming paper, we would find another invariant space under H\'enon renormalization which does not require any invariant plane field. But geometric properties of Cantor attractor is the same as those of Cantor attractor of two dimensional H\'enon-like maps. 

\msk
\begin{acknowledgement}
I would like to thank specially my advisor, Marco Martens who introduced this topic and gave much inspirations through the discussion. Tangent bundle splitting on Section \ref{Tangent bundle splitting} originated from his insight. I also thank Misha Lyubich for his interest of this topic and his kindness.
\end{acknowledgement}

\bsk

\section{Preliminaries} \label{preliminaries} 
%\nin 
Let us introduce two dimensional H\'enon-like maps and its renormalization defined in \cite{CLM}. Many topological properties of two dimensional renormalizable H\'enon-like map are well adapted to three dimensional H\'enon-like maps.
\subsection{Notations}
For the given map $ F $, if a set $ A $ is related to $F$, then we denote it to be $ A(F) $ or $ A_F $ and $ F $ can be skipped if there is no confusion without $ F $. The domain of $ F $ is denoted to be $ \Dom(F) $. %and the image of a set $ B $ under $ F $ is denoted to be $ \Im(F) $. $ F|_A $ is called the restriction $ F $ on $ A $ where $ A \subset \Dom(F) $. 
If $ F(B) \subset B $, then we call $ B $ is an (forward) invariant set under $ F $. %Similarly, if $ F^{-1}(B) \subset B $, then we call $ B $ is a backward invariant set under $ F $.
\ssk \\
For three dimensional map, let us the projection from $ \R^3 $ to its $ x- $axis, $ y- $axis and $ z- $axis be $ \pi_x $, $ \pi_y $ and $ \pi_z $ respectively. Moreover, the projection from $ \R^3 $ to $ xy- $plane be $ \pi_{xy} $ and so on.
\ssk \\
Let $ C^r(X) $ be the Banach space of all real functions on $ X $ for which the $ r^{th} $ derivative is continuous. The $ C^r $ norm of $ h \in C^r(X) $ is defined as follows
$$ \| h \|_{C^r} = \max_{ 1 \leq \,k \leq \,r } \left\{ \| h \|_0,\ \| D^kh \|_0 \right\} . $$
For analytic maps, since $ C^0 $ norm bounds $ C^r $ norm for any $ r \in \N $, we often use the norm, $ \| \cdot \| $ instead of $ \| \cdot \|_0 $ or $ \| \cdot \|_{C^k} $. 
%\ssk \\
$ W^s(p) $ and $ W^u(p) $ are stable and unstable manifold at a point, $ p $. If $ W^s(p) $ is not connected in the given region, local stable manifold, $ W^s_{\loc}(p) $ is defined as the component of $ W^s(p) $ containing $ p $. If unstable manifold is one dimensional, then we express the curve connecting two points along the unstable manifold in $ X $ as follows
$$ [ \,p,q\, ]^u_{w} \subset W^u(w) . $$
The square bracket means that $ [ \,p,q\, ]^u_{w} $ is the homeomorphic image of a closed interval, for example, $ [-1,1] $ under a continuous map from $ \R $ to $ X $. The points $ p $ and $ q $ are the end points of curve. 
%\ssk \\
$ A = O(B) $ means that there exists a positive number $ C $ such that $ A \leq CB $. Moreover, $ A \asymp B $ means that there exists a positive number $ C $ which satisfies $ \dfrac{1}{C} B \leq A \leq CB$.
\msk

\subsection{Renormalization of one dimensional unimodal map}
Let $f : I \ra I$ be a $C^3$ or smoother unimodal map with non-degenerate critical point $c \in I$ and its Schwarzian derivative is negative on $I$. $f$ is called (period doubling) \textit{renormalizable} map if there exists a closed interval $ J \subset \inter I$ such that $J \cap f(J) =\varnothing $ and $f^2(J) \subset J$ which contains the critical point of $ f $. Then $f^2 : J \ra J$ is also a unimodal map on $J$. Then we can choose the minimal disjoint intervals $J_c = [f^4(c), f^2(c)]$ and $J_v = [f^3(c), f(c)] $ which are invariant under $f^2$. Thus {\em renormalization} $R_cf$ at the critical point as $R_c f$ is defined as $ s f^2(s^{-1}x) $ where $ x \ra sx $ is the affine rescaling from $J_c$ to $I$, for $ s<-1 $. %and $R_vf$ of $f$ at the critical value. On this definition, 
The domain of renormalizable map contains the critical point, the critical value and one repelling fixed point whose eigenvalue is negative. 
\ssk \\
Suppose that $f$ is an infinitely renormalizable map. %The renormalized map $R_c f$ is defined as $ s f^2(s^{-1}x) $ on the 
Then there is the fixed point $f_*$ of the renormalization operator $R_c$ %and $f_* = \la f_*^2(\la ^{-1} x)$ 
with universal scaling factor $\si = 1/2.6 \ldots $. The scaling factor of $n^{th}$ renormalized map converges to $\si$ exponentially fast as $n \rightarrow \infty$. 
%Recall the one dimensional map $f_* \colon I \ra I$ is the fixed point of the (periodic doubling) renormalization operator of the unimodal maps, namely, $R f_* =f_*$. 
Let the critical point of $ f_* $ be $ c_* $ and the interval be $ I = [-1,1] $. Also assume that $f_*(c_*) =1$ and $f^2_*(c_*) = -1$. Then the intervals, $J_c^* = [-1, f^4_*(c_*)]$ and $J_v^*= f_*(J^*_c) = [f^3_*(c_*),1]$ are the smallest invariant ones under $f_*^2$ around the critical point and the critical value respectively. Observe that the critical point $ c_* $ is in $ J_c^* $ and $ f_*(J_v^*) = J_c^* $. 
%\ssk \\
Let onto map $s \colon J^*_c \ra I$ be the orientation reversing affine rescaling. Thus $s \circ f_* \colon J^*_v \ra [-1, 1]$ is an expanding diffeomorphism.  Let us consider the inverse contraction
$$ g_* \colon I \ra J^*_v, \quad g_*=f^{-1}_* \circ s^{-1}
$$
where $f^{-1}_*$ is the branch of the inverse function which maps $J^*_c$ onto $J^*_v$. The map $g_*$ is called the {\em presentation function} and it has the unique fixed point at 1. 
%\ssk \\
By definition of $g_*$ implies that 
$$ f^2_* | J^*_v = g_* \circ f_* \circ (g_* )^{-1} .
$$
Then by appropriate rescaling of the presentation function, $g_*$, we can define {\em renormalization} at the {\em critical value}, %$\mathrm{v}$,
$R^n_v f_*$. %, using the fixed point of renormalization at the critical point $c$. 
Inductively, $g^n_*$ is defined on the smallest interval $J^*_v(n)$ containing critical value, 1 with period $2^n$. Let $G^n_* \colon I \ra I$ be the diffeomorphism which is the appropriately rescaled map of $g_*^n$.
\msk \\
Then the fact that $g_*$ is a contraction implies the existence of the following limit,
$$ u_* = \lim_{n \ra \infty} G^n_* \colon I \ra I . $$
%\ssk
Moreover, the convergence is exponentially fast in $C^3$ topology.
%\msk \\
%Check the references for renormalization of the one dimensional map at the critical point 
Let us see the following lemmas in \cite{CLM}.

 \begin{lem} [Lemma 7.1 in \cite{CLM}]   \label{renormalization at critical value}
 For every $n \geq 1$ \msk
 \begin{enumerate}
 \item  $J^*_v(n) = g^n_*(I)$ \msk
 \item  $ R^n_v f_* = G^n_* \circ f_* \circ (G^n_*)^{-1}$ \msk
 \item  $ u_* \circ f_* = f^* \circ u_*$ \ssk
 \end{enumerate}
 \end{lem}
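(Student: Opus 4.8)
The plan is to prove (1) by induction on $n$ and then read off (2) and (3) as essentially formal consequences. As a preliminary I would iterate the conjugacy $f^2_* | J^*_v = g_* \circ f_* \circ (g_*)^{-1}$ displayed above: rewritten as $g_* \circ f_* = f^2_* \circ g_*$ on $I$ and composed with itself, a one-line induction gives $g_* \circ f^k_* = f^{2k}_* \circ g_*$, and in particular
\[
g^n_* \circ f_* = f^{2^n}_* \circ g^n_* \qquad \text{on } I,\quad n \geq 1 .
\]
Since $g_*$ is a contraction with bounded distortion and $f_*$ has no critical point in $J^*_c$, each $g^n_*$ is a diffeomorphism from $I$ onto its image; this is what makes the rescaled maps $G^n_*$ and their limit $u_*$ well defined, and it lets one invert $g^n_*$ freely below.

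For \textbf{(1)} I would induct on $n$. The case $n=1$ is the definition of $g_*$, namely $g_*(I) = f^{-1}_*(s^{-1}(I)) = f^{-1}_*(J^*_c) = J^*_v = J^*_v(1)$. For the step, assuming $g^{n-1}_*(I) = J^*_v(n-1)$, set $W := g_*\bigl(J^*_v(n-1)\bigr) = g^n_*(I)$. Then $1 = g_*(1) \in W$ since $g_*$ fixes the critical value, and the iterated conjugacy gives
\[
f^{2^n}_*(W) = g_*\bigl(f^{2^{n-1}}_*(J^*_v(n-1))\bigr) \subseteq g_*(J^*_v(n-1)) = W .
\]
It then remains to check that $W$ is the \emph{smallest} interval around $1$ with period $2^n$: disjointness of the $2^n$ intervals $W, f_*(W), \dots, f^{2^n-1}_*(W)$ comes from an even/odd split (even iterates are $g_*$-images of the cycle of $J^*_v(n-1)$, odd iterates land in $J^*_c$, which is disjoint from $J^*_v$), and minimality follows from minimality of $J^*_v(n-1)$ by pulling a hypothetical strictly smaller interval back through $g^{-1}_*$. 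This gives $W = J^*_v(n)$ and closes the induction.

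Then \textbf{(2)} and \textbf{(3)} are short. By (1), $g^n_*\colon I \to J^*_v(n)$ is a bijection, so the iterated conjugacy reads $f^{2^n}_* | J^*_v(n) = g^n_* \circ f_* \circ (g^n_*)^{-1}$; writing $\rho_n\colon J^*_v(n) \to I$ for the affine rescaling (so that $G^n_* = \rho_n \circ g^n_*$) and recalling that $R^n_v f_*$ is by construction $\rho_n \circ \bigl(f^{2^n}_* | J^*_v(n)\bigr) \circ \rho^{-1}_n$, I get
\[
R^n_v f_* = \rho_n \circ g^n_* \circ f_* \circ (g^n_*)^{-1} \circ \rho^{-1}_n = G^n_* \circ f_* \circ (G^n_*)^{-1} ,
\]
which is (2). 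For (3) I would let $n \to \infty$ in the rearranged identity $R^n_v f_* \circ G^n_* = G^n_* \circ f_*$: since $G^n_* \to u_*$ exponentially fast in $C^3$, the right-hand side tends to $u_* \circ f_*$ and the left-hand side to $f^* \circ u_*$, where $f^* := \lim_n R^n_v f_*$ is the fixed point of renormalization at the critical value, giving $u_* \circ f_* = f^* \circ u_*$.

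The hard part is entirely inside (1): verifying that the presentation function shifts the level of the little intervals $J^*_v(n)$ around the critical value by exactly one and preserves minimality. This is where the real geometry of the renormalization fixed point is needed — disjointness of the cycle of little intervals together with the honest contraction and distortion control on $g_*$ that make $G^n_*$ and $u_*$ genuine diffeomorphisms — so that all the pull-back arguments above are legitimate. Once (1) is in hand, (2) and (3) are pure bookkeeping with conjugacies plus a single $C^3$ limit.
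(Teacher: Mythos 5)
This lemma is quoted verbatim from [CLM] (Lemma 7.1 there) and the present paper gives no proof of it, so there is nothing internal to compare your argument against; what you have written is essentially the standard argument from the source. Your treatment of (1) and (2) is correct: everything rests on the single identity $f_*^2|J_v^* = g_*\circ f_*\circ g_*^{-1}$, iterated to $g_*^n\circ f_* = f_*^{2^n}\circ g_*^n$, and your even/odd disjointness argument and the pull-back of a hypothetical smaller periodic interval through $g_*^{-1}$ are exactly the right way to get minimality of $g_*^n(I)$.

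The one point to be careful about is (3). As you set it up, the identity $u_*\circ f_* = f^*\circ u_*$ is a tautological consequence of (2) once you \emph{define} $f^*:=\lim_n R_v^n f_*$, and with the notation as printed in this paper that reading is defensible. But in the original statement in [CLM] the third item is $u_*\circ f_* = f_*\circ u_*$, i.e.\ $f^*$ is (almost certainly a typo for) the renormalization fixed point $f_*$ itself. Under that reading your passage to the limit only yields $u_*\circ f_* = \bigl(\lim_n R_v^n f_*\bigr)\circ u_*$, and you still owe the separate fact that the critical-value renormalizations $R_v^n f_*$ converge to $f_*$; this does not follow formally from (2) plus $G_*^n\to u_*$ (that combination is exactly equivalent to the statement being proved). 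So if the intended claim is that $u_*$ commutes with $f_*$, you should either prove $R_v^n f_*\to f_*$ directly or say explicitly where that convergence comes from; otherwise (3) as you have argued it carries no content beyond (2).
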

 
 \begin{lem} [Lemma 7.3 in \cite{CLM}]    \label{composition of the presentation function}
 Assume that there is the sequence of smooth functions  $g_k \colon I \ra I, \ \ k=1,2, \ldots n$ such that $\|g_k - g_*\|_{C^3} \leq C\rho^k$ where the $g_* = \lim_{k \ra \infty} g_k$ \, for some constant $C>0$ and $\rho \in (0,1)$. Let $g^n_k = g_k \circ \cdots \circ g_n$ and let $G^n_k = a^n_k \circ g^n_k \colon I \ra I$, where  $a^n_k$ is the affine rescaling of $\Im g^n_k$ to $I$. Then $\| G^n_k - G^{n-k}_* \|_{C^1} \leq C_1\rho^{n-k}$, where $C_1$ depends only on $\rho$ and $C$.
\end{lem}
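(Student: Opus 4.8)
\emph{Strategy.} The plan is to prove the estimate by telescoping along the composition, replacing the maps $g_k,g_{k+1},\dots$ by $g_*$ one layer at a time, with the (highly expanding) affine rescalings kept under control by the standard real (Koebe-type) bounds available for this setup.

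\emph{Set-up.} First I would record the consequences of the hypothesis that make the telescoping work. Since $g_*$ is a contraction, $\|Dg_*\|_{C^0(I)}\le\bar\lambda<1$, and then $\|g_k-g_*\|_{C^3}\le C\rho^k$ forces $\|Dg_k\|_{C^0(I)}\le\lambda<1$ for all large $k$ (the finitely many exceptional indices being absorbed into $C_1$). Hence $|g^n_k(I)|\le\mathrm{const}\cdot\lambda^{\,n-k}$; since $g_*$ and — by the normalisation of the presentation functions — each $g_k$ fixes the endpoint $1$ and is orientation preserving there, $g^n_k(I)$ is a tiny interval with $1$ as one endpoint. The uniform $C^3$ bound on the $g_k$ gives a real (bounded-distortion) estimate for the compositions $g^n_k$, so that $\{G^n_k\}$ and $\{G^m_*\}$ lie in a fixed compact family of $C^3$ diffeomorphisms of $I$ onto $I$; in particular every $G^n_k$ and every $G^m_*$ fixes $1$, and $\|G^m_*-G^{m'}_*\|_{C^3}\le\mathrm{const}\cdot\bar\lambda^{\,\min(m,m')}$, which will absorb the harmless off-by-one between ``$n-k$'' and ``$n-k+1$'' layers below.

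\emph{The telescoping.} For $0\le i\le n-k$ let $H_i$ be the rescaled composition whose $i$ outermost layers are $g_k,\dots,g_{k+i-1}$ and whose remaining layers are copies of $g_*$; then $H_0=G^{n-k}_*$ (up to the $\bar\lambda^{\,n-k}$ error above), $H_{n-k}=G^n_k$, and $\|G^n_k-G^{n-k}_*\|_{C^1}\le\mathrm{const}\cdot\bar\lambda^{\,n-k}+\sum_{i=1}^{n-k}\|H_i-H_{i-1}\|_{C^1}$. Passing from $H_{i-1}$ to $H_i$ changes a single layer, at depth $i$, from $g_*$ to $g_{k+i-1}$; this layer is applied to the interval $J_i:=g_*^{\,n-k-i}(I)$, of length $\le\mathrm{const}\cdot\bar\lambda^{\,n-k-i}$ and anchored at $1$, while the outer $i-1$ layers together with the rescaling merely renormalise the restriction of the composition to $J_i$ into a map of $I$ onto $I$. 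The crucial point is that this renormalisation divides out the length of the relevant image interval — and in particular the multiplier of $g_{k+i-1}$ at $1$ — so that what remains is governed by the bounded-distortion (Koebe) estimate, and the distortion of the composition through $g_{k+i-1}$ on $J_i$ differs from that through $g_*$ only because the nonlinearities of $g_{k+i-1}$ and of $g_*$ near $1$ differ, by $O(\rho^{\,k+i-1})$, and this enters multiplied by the length $|J_i|$. Carried out in $C^1$ this gives $\|H_i-H_{i-1}\|_{C^1}\le\mathrm{const}\cdot C\,\rho^{\,k+i-1}\,\bar\lambda^{\,n-k-i}$. Summing and writing $j=k+i-1$,
\[
\|G^n_k-G^{n-k}_*\|_{C^1}\le\mathrm{const}\cdot\Big(\bar\lambda^{\,n-k}+C\sum_{j=k}^{n-1}\rho^{\,j}\,\bar\lambda^{\,n-j}\Big)\le C_1\,\rho^{\,n-k},
\]
where in the last step one enlarges $\rho$, if necessary, to $\max(\rho,\bar\lambda)$ — legitimate, since the hypothesis holds a fortiori for a larger $\rho$ — and absorbs the at-worst-polynomial factor by a further arbitrarily small enlargement of the rate; $C_1$ then depends only on $\rho$ and $C$.

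\emph{The main obstacle.} The heart of the argument — and where one can easily be fooled — is the per-layer bound $\|H_i-H_{i-1}\|_{C^1}\le\mathrm{const}\cdot C\rho^{\,k+i-1}\bar\lambda^{\,n-k-i}$. The affine rescalings have derivative as large as $\bar\lambda^{-(n-k)}$, and $g_{k+i-1}-g_*$ is only $C\rho^{\,k+i-1}$-small and has a genuinely different multiplier at $1$, so any estimate that treats it as a mere perturbation yields only a bounded, non-decaying sum. What saves the estimate is that renormalising the image of each layer onto $I$ annihilates the multiplier discrepancy, leaving a difference controlled purely by the bounded distortion of the two compositions on the exponentially small interval $J_i$ — which is affected only through higher-order ($O(\rho^{\,k+i-1})$) data and is therefore itself of order $\rho^{\,k+i-1}|J_i|$. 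Making this rigorous requires the real bounds for the maps involved together with careful chain-rule bookkeeping for the $C^1$ norm; the remaining ingredients — the cocycle structure $G^n_k=a^n_k\circ g_k\circ(a^n_{k+1})^{-1}\circ G^n_{k+1}$, the uniform contraction, the compactness from the distortion bounds, and the reduction of the layer-count discrepancy via $\|\phi^*_m-\mathrm{id}\|\le\mathrm{const}\cdot\bar\lambda^{\,m}$ — are routine.
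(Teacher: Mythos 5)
The paper does not actually prove this lemma: it is quoted verbatim as Lemma~7.3 of \cite{CLM}, so there is no in-paper argument to compare against. Judged on its own, your telescoping scheme is the standard (and correct) proof strategy, and you have correctly isolated the one point where a naive perturbation estimate fails --- the affine rescalings expand by $\bar\lambda^{-(n-k)}$, and only the fact that renormalizing each image onto $I$ kills the multiplier discrepancy, leaving an error of order $\rho^{\,k+i-1}\,|J_i|$ per layer, makes the sum $\sum_j \rho^j\bar\lambda^{\,n-j}$ appear. Two remarks. First, your per-layer bound has a second contribution you pass over quickly: in $H_i$ versus $H_{i-1}$ the \emph{outer} $i-1$ layers are restricted to slightly different intervals (displaced by $O(\rho^{k+i-1})$, which can dwarf $|J_i|$), and one must check that this changes their rescaled restriction by $O(\rho^{k+i-1}|J_i|)$ rather than merely $O(|J_i|)$; this works because the nonlinearity $D\log D$ of the outer composition is uniformly Lipschitz, but it needs to be said. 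The clean way to package both contributions --- and the way \cite{CLM} does it --- is to work with the nonlinearity $\eta_\phi=D\log D\phi$, which is blind to affine post-composition and satisfies the cocycle identity $\eta_{\phi\circ\psi}=(\eta_\phi\circ\psi)\,D\psi+\eta_\psi$, so that the telescoping and the factor $|Dg^n_{j+1}|\le C\bar\lambda^{\,n-j}$ come out automatically, and $C^1$ closeness of the normalized diffeomorphisms is recovered from $C^0$ closeness of nonlinearities. Second, your use of the contraction property of $g_*$ and the normalization $g_k(1)=1$ are imports from the surrounding context rather than hypotheses of the lemma as stated (the fixed point of $g_k$ is not exactly $1$ in the application, though this costs nothing), and your replacement of $\rho$ by $\max(\rho,\bar\lambda)$ is the usual harmless abuse; neither affects the validity of the argument.
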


\msk
\subsection{Two dimensional H\'enon-like map}
Let $B$ be a square region whose center is the origin and which contains the fixed points $ \beta_0 $ and $ \beta_1 $. Thus $B= I^h \times I^v$ where horizontal and vertical axes, say $I^h$ and $I^v$ are the (appropriately extended) symmetric intervals at zero of the one-dimensional renormalizable map $f$. %$I^h$ and $I^v$ are parallel to $x$-axis and $y$-axis respectively. 

\begin{figure}[htbp]
%\pichere{0.7}{renodomain3D}
\begin{center}
\centering
\subfloat[A parabolic-like curve of the degenerate map as $  W^u(\beta_0) $]{
\psfrag{b0}[c][c][0.7][0]{\Large $\beta_0$}
\psfrag{b1}[c][c][0.7][0]{\Large $\beta_1$}
\label{paracurve}
\includegraphics[width=0.45\textwidth]{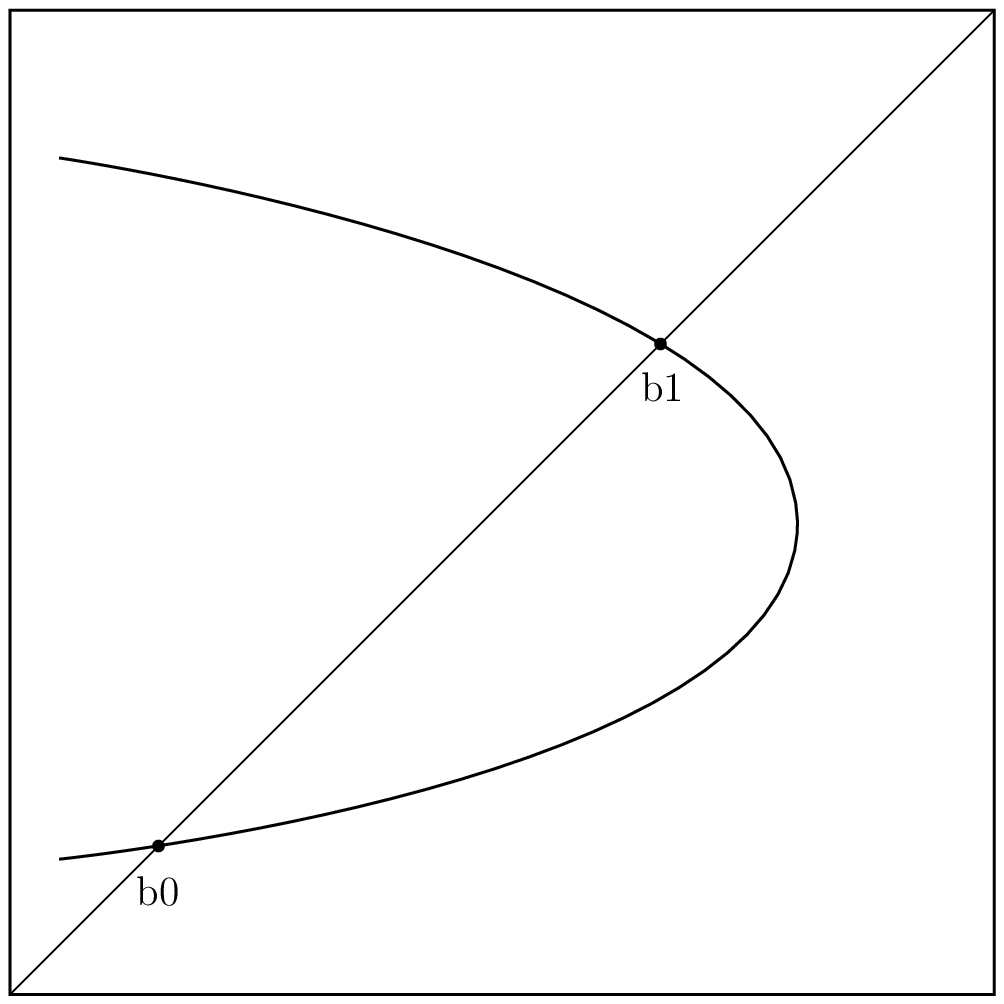} }
\subfloat[Stable and unstable manifolds of H\'enon-like map]{
\psfrag{Wb1}[c][c][0.7][0]{\Large $W^s_{\loc}(\beta_1)$}
\psfrag{Wb0}[c][c][0.7][0]{\Large $W^s_{\loc}(\beta_0)$}
\psfrag{b0}[c][c][0.7][0]{\Large $\beta_0$}
\psfrag{p0}[c][c][0.7][0]{\Large $p_0$}
\psfrag{p1}[c][c][0.7][0]{\Large $p_1$}
\psfrag{p2}[c][c][0.7][0]{\Large $p_2$}
\psfrag{b1}[c][c][0.7][0]{\Large $\beta_1$}
\includegraphics[width=0.45\textwidth]{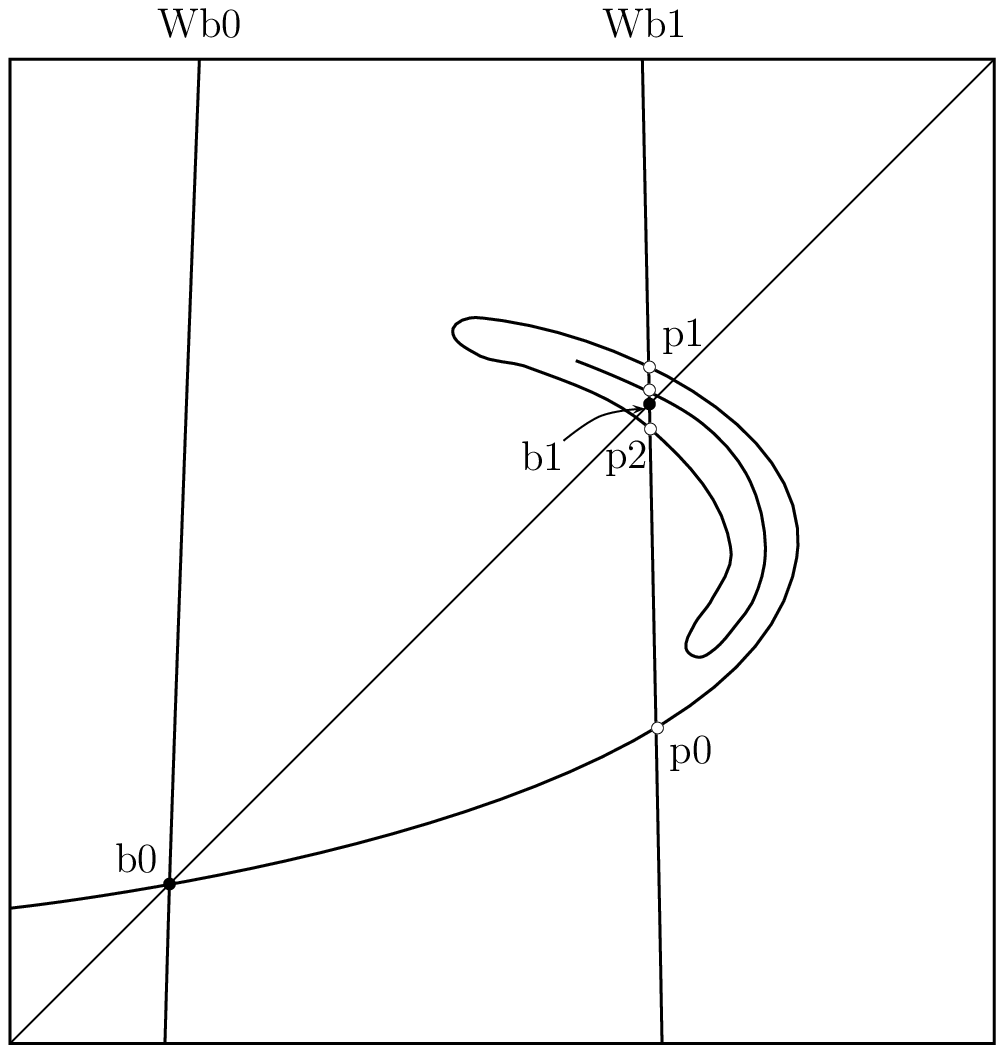} }
\end{center}
\caption{Degenerate map and H\'enon-like diffeomorphism}
\label{fig:unstable manifolds}
\end{figure}

\nin Two dimensional map $F : B \longrightarrow \R^2$ is called {\em H\'enon-like} map if the image of vertical line and horizontal line is a horizontal line and a parabolic-like curve respectively. Then H\'enon-like map $F$ is as follows
$$ F(x,y) = (f(x) - \eps(x,y),\ x).
$$
%as a small perturbation of the one dimensional map $f$. 
Additionally, if Jacobian determinant of $F$ is non-zero at every point in $ \Dom(F) $, then $F$ is called {\em H\'enon-like diffeomorphism}. %In this paper, H\'enon-like map always means H\'enon-like diffeomorphism unless any other statements are specified. Moreover, 
We assume that two dimensional H\'enon-like map, $F$ has two saddle fixed points $\beta_0$ with positive eigenvalues, {\em flip saddle} and $\beta_1$ with negative eigenvalues, {\em regular saddle}. 
%\ssk \\
%Define the local stable manifold at $w \in B$, $W^s_{loc}(w)$ to be the component of the stable manifold $ W^s(w) $ containing $ w $ without any specific size. %Similarly, let us keep the notation for the local unstable manifold. 
Period doubling renormalization of two dimensional analytic H\'enon-like map was defined in \cite{CLM}. Orientation preserving H\'enon-like map is called \textit{renormalizable} if the unstable manifold of $\beta_0$, $ W^u(\beta_0) $ intersects the stable manifold of $\beta_1$, $ W^s(\beta_1) $, at the single orbit of an intersection point, say $ \Orb_{\Z}(w) $. Let $p_0 \in \Orb_{\Z}(w) \cap W^s_{\loc}(\beta_1) $ be the {\em unique} point satisfying the following conditions. \msk
\begin{enumerate}
\item Every forward image of $ p $, namely, $p_k = F^k(p_0)$ for $ k \geq 0 $, is in $ W^s_{\loc}(\beta_1) $. \msk
\item Each backward images of $ p $ in $ \Dom(F) $ is disjoint from $ W^s_{\loc}(\beta_1) $. \ssk
\end{enumerate}
 %Then $ p_0 $ and the forward orbit of $ p_0 $, $ \Orb_{\N}(p_0) $ are on $  W^s_{loc}(\beta_1) $. 
If $ \| \eps \| $ is small enough, then %the local stable manifold of $p_{-n}$, 
$ W^s_{\loc}(p_{-n}) $ is pairwise disjoint where $ n \leq 0 $. Moreover, $ W^s(\beta_1)$ and $ W^s_{\loc}(p_{-n}) $ converges to $ W^s(\beta_1)$ because $ p_{-n} $ converges to $\beta_0 $ as $ n \rightarrow +\infty $.
%\ssk \\

\begin{figure}[htbp]
%\pichere{0.7}{renodomain3D}
\begin{center}

\psfrag{Z1}[c][c][0.7][0]{\large $Z_1$}
\psfrag{Z2}[c][c][0.7][0]{\large $Z_2$}
\psfrag{A0}[c][c][0.7][0]{\large $A_0$}
\psfrag{A-2}[c][c][0.7][0]{\large $A_{-2}$}
\psfrag{A-3}[c][c][0.7][0]{\large $A_{-3}$}
\psfrag{M1}[c][c][0.7][0]{\large $M_1$}
\psfrag{M-1}[c][c][0.7][0]{\large $M_{-1}$}
\psfrag{M-2}[c][c][0.7][0]{\large $M_{-2}$}
\psfrag{M-3}[c][c][0.7][0]{\large $M_{-3}$}
\psfrag{D}[c][c][0.7][0]{\large $D$}
\psfrag{F(D)}[c][c][0.7][0]{\large $F(D)$}

\psfrag{Wb0}[c][c][0.7][0]{\large $W^s_{\loc}(\beta_0)$}
\psfrag{M0}[c][c][0.7][0]{\large $M_0$}
\psfrag{b0}[c][c][0.7][0]{\large $\beta_0$}
\psfrag{p0}[c][c][0.7][0]{\large $p_0$}
\psfrag{p1}[c][c][0.7][0]{\large $p_1$}
\psfrag{p2}[c][c][0.7][0]{\large $p_2$}
\psfrag{p-1}[c][c][0.7][0]{\large $p_{-1}$}
\psfrag{p-2}[c][c][0.7][0]{\large $p_{-2}$}
\psfrag{p-3}[c][c][0.7][0]{\large $p_{-3}$}
\psfrag{b1}[c][c][0.7][0]{\large $\beta_1$}
\includegraphics[scale =0.8]{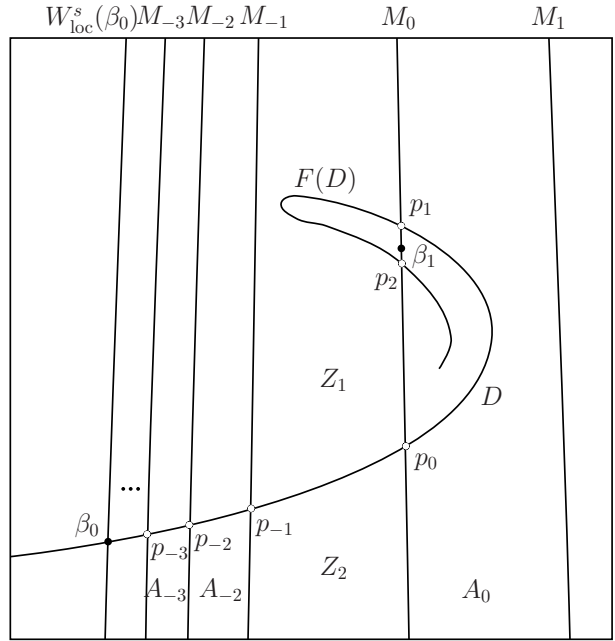} 
\end{center}
\caption{Regions between local stable manifolds}
\label{fig:invariant domain}
\end{figure}

\nin Denote $ W^s_{\loc}(p_{-n}) $ by $M_{-n}$ for every $n \geq 0$. For instance, $M_0$ is $ W^s_{\loc}(\beta_1)$. Moreover, define $M_1$ as the component of $ W^s(\beta_1)$ whose image under $F$ is contained in $M_{-1}$ such that it does not have any point of $\Orb_{\Z}(w)$. $ M_{1} $ is on the opposite side of $M_{-1}$ from $M_0$. We may assume that $M_1$ is a curve connecting the up and down sides of the square domain $B$ inside. Then we can easily check the curves $ [p_0, p_1]^u_{\beta_0} $ and  $ [p_1, p_2]^u_{\beta_0} $ which are parts of $ W^u(\beta_0) $ does not intersect $M_1$ and $M_{-1}$ respectively if $F$ is renormalizable.
\ssk \\
On the domain $B$, the dynamical region for renormalizable H\'enon-like maps is the closure of the component of $ B \setminus W^s(\beta_0) $ containing $\beta_1$, say $B_{\bullet}$ because it is an (forward) invariant region under $F$. Let each region between $M_{-n}$ and $ M_{-n+1} $ be $ A_{-n} $ for every $n \geq 0$. Since $ F(M_{-n}) \subset  M_{-n+1} $ for each $n \geq 0$, we can see $ F(A_{-n}) \subset  A_{-n+1} $ for each $n \geq 0$. But the image of $A_0$ under $F$ is contained on $A_{-1}$, that is, $ F(A_0) \subset A_{-1} $. In other words, $ W^s_{\loc}(\beta_1)$ intersects $ W^u(\beta_0) $ at $p_1$ transversally. %Since $M_0$ is an invariant curve under $F$ and $F(M_1)$ is a part of $M_{-1}$, if we take a curve $ \gamma $ connecting $p_1$ and a point in $M_1$, then $F(\gamma)$ is a curve connecting a point of $M_{-1}$ and $p_2$ in $A_{-1}$.

\nin Let the region above the curve $ [p_{-1},p_0]^u_{\beta_0} $ in $A_{-1}$ be $Z_1$ and the region below the same curve in $A_{-1}$ be $Z_2$.  Let the domain enclosed by two curves $ [p_0, p_1]^u_{\beta_0} $ and $ [p_0, p_1]^s_{\beta_1} $ be $D$. Thus for the renormalizable H\'enon-like map, 
 $ F^2(A_0) \subset D $. Then $D$ is invariant under $F^2$ and furthermore, any neighbourhood of $D$ in $A_0$ is also invariant under $F^2$. %Since $F(D)$ is invariant under $F^2$, $D \cup F(D)$ is an invariant domain under $F$. 
%Moreover, the maximal invariant region under $F$ is $\overline B_{\bullet}$ --- closure of the component of $B \setminus W^s(\beta_0) $ containing the fixed point $\beta_1$.

\comm{%%%%%%%%%%%%%%%%%%%%%%%%%%%%%%%%%%%%
\begin{lem}
Let $F$ be the two dimensional renormalizable H\'enon-like map. Then $\overline B_{\bullet}$ is invariant under $F$ and for every point $w \in B_{\bullet}$, there exists $k \in \N$ such that $F^k(w) \in \overline D$.
\end{lem}

\begin{proof}
%The stable manifold of $\beta_0 $, 
$  W^s(\beta_0) $ is invariant under $F$ and every $M_{-n} $ for some $-n \leq -1$ are components of the stable manifold $  W^s(\beta_0) $. See Figure \ref{fig:invariant domain}. Then we see that $ F^n(M_{-n}) \subset M_0 $ where $-n \leq -1$. Moreover, $ F^2(M_1) \subset M_0 $ because $F(M_1) \subset M_{-1} \cap \di Z_1$. Since $M_0$ is the local stable manifold of the fixed point of $\beta_1$, we see that $F(M_0) \subset [p_0, p_1]^s_{\beta_1} \subset \di D$.  Then we can choose $k=n+1$ where $  -n \leq 0 $ and $ k=3 $ where $ -n= 1 $.
\ssk \\
Now let us take a point $w \notin \bigcup_{n \leq 1} M_{n} $. Then it is sufficient to show that $F^k(w) \in \overline D$ for some $k \geq 0$. %to prove the invariance of $\overline B_0$. 
We may assume that $w$ is contained in some region $A_{-n}$ for some $-n \leq 1$ because each region $A_{-n}$ is separated by $M_{-n}$ and $B_0$ is the union of $ M_{-n}$ and $ A_{-n}$. % because $M_{-n}$ converges to $  W^s(\beta_0) $ as $n \rightarrow \infty $.  %If $w$ is in $A_{-n} \cap  W^u(\beta_0) $ then $F^n(w) \in \overline D$ where $n \geq 1$. If $n=0$ then $w$ is on the boundary of $D$. Then by the invariance of $\overline D$ under $F^2$, $F^2(w) \in D$.
See Figure \ref{fig:invariant domain}. If $w \in A_{-n}$ where $-n \leq -1$, $F^{n-1}(w)$ is on $A_{-1}$. Let us say $w' = F^{n-1}(w)$. Then $w'$ is contained in one of the following sets --- $Z_1$, \,$ [p_{-1},p_0]^u_{\beta_0} $ or $Z_2$. If $w' \in Z_2$, then %by the property \eqref{invariance of each region} of the regions between components of stable manifold of $ \beta_1 $, 
the image of $ w' $ under $ F^2 $ is in $ Z_1 $, that is, $F^2(w') \in Z_1$. However, $F(Z_1) \subset D$ and it implies $ F^3(w') \in D $. Moreover, the fact that $ [p_{-1},p_0]^u_{\beta_0} \subset \di Z_1 $ implies that $ F(w') \in \di D $ for $ w' \in [p_{-1},p_0]^u_{\beta_0}$. For $n=0$ case, we see that $ F^2(A_0) \subset \overline D $. Hence, we can choose $k = n+2$ for $n \geq 0$. For $n=1$, we know that $ F(A_1) \subset A_{-2} $. Then we can choose $k = 5$.
\end{proof}
%%%%%%%%%%%%%%%%%%%%%%
}

%The following Lemma is useful to determine the domain of the pre-renormalization of $F$.
\begin{lem}
Let $F$ be the renormalizable H\'enon-like map. Let the region between two local stable manifolds at $ \beta_1 $, $M_0$ and $M_1$ be $A_0$. Then $F^2(A_0) \subset D$. In particular, any open neighbourhood of $D$ in $A_0$ is invariant under $F^2$.
\end{lem}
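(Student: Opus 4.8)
\textit{Proof proposal.} The plan is to push $A_0$ forward one step at a time, in each step pinning down the image by pinning down the image of its boundary, which is assembled from arcs of the curves $M_k$ and a piece of $\partial B_{\bullet}$. Concretely I would establish the two inclusions
$$ F(A_0)\subset Z_1 \qquad\text{and}\qquad F(Z_1)\subset D, $$
which together give $F^{2}(A_0)=F\big(F(A_0)\big)\subset F(Z_1)\subset D$.

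For $F(A_0)\subset Z_1$ I would start from the inclusion $F(A_0)\subset A_{-1}$ already recorded before the lemma (this follows from $F(M_0)\subset M_0$, since $M_0=W^{s}_{\loc}(\beta_1)$ is forward invariant, from $F(M_1)\subset M_{-1}$ by the definition of $M_1$, and from the fact that, for $\|\eps\|$ small, the horizontal sides of $B_{\bullet}$ bounding $A_0$ are carried into $A_{-1}$). It then remains to see on which side of the dividing arc $[p_{-1},p_0]^{u}_{\beta_0}$ the set $F(A_0)$ lands. Since $F$ is a diffeomorphism onto its image and its inverse sends $[p_{-1},p_0]^{u}_{\beta_0}$ to $[p_{-2},p_{-1}]^{u}_{\beta_0}$, an arc running from $M_{-2}$ to $M_{-1}$ and hence disjoint from $A_0$, the image $F(A_0)$ is disjoint from $[p_{-1},p_0]^{u}_{\beta_0}$; being connected, it lies in exactly one of the two components $Z_1,Z_2$ of $A_{-1}\setminus[p_{-1},p_0]^{u}_{\beta_0}$. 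To decide which one, I would push a point of $A_0$ lying just inside $M_0$ near $p_0$: its image sits near $p_1=F(p_0)\in M_0$, and the cyclic position of $p_1$ relative to the arc $[p_{-1},p_0]^{u}_{\beta_0}$ in Figure \ref{fig:invariant domain} (equivalently, the facts that $[p_0,p_1]^{u}_{\beta_0}$ misses $M_1$ while $[p_1,p_2]^{u}_{\beta_0}$ misses $M_{-1}$) identifies the component as $Z_1$.

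For $F(Z_1)\subset D$ I would split $\partial Z_1$ into an arc of $M_0$, an arc of $M_{-1}$, the dividing arc $[p_{-1},p_0]^{u}_{\beta_0}$, and an arc of $\partial B_{\bullet}$, and map each by $F$: the first two go into $M_0$ (using $F(M_0)\subset M_0$, $F(M_{-1})\subset M_0$ and $F(p_{-1})=p_0$), the dividing arc becomes $[p_0,p_1]^{u}_{\beta_0}$, and the last becomes a curve $O(\|\eps\|)$-close to $W^{u}(\beta_0)$ which, for $\|\eps\|$ small, lies on the $D$-side. Hence $F(Z_1)$ is a connected region cobounded by a sub-arc of $W^{s}_{\loc}(\beta_1)$ and a sub-arc of $W^{u}(\beta_0)$ joining $p_0$ to $p_1$, i.e.\ $F(Z_1)\subset D$; combining with the previous paragraph gives $F^{2}(A_0)\subset D$. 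For the last assertion, recall that $D\subset\overline{A_0}$; then any open neighbourhood $U$ of $D$ in $A_0$ satisfies $F^{2}(U)\subset F^{2}(A_0)\subset D\subset U$, so $U$, and in particular $D$, is invariant under $F^{2}$.

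The boundary-chasing parts are routine: they use only the inclusions $F(M_0)\subset M_0$, $F(M_{-1})\subset M_0$, the identity $F([p_{-1},p_0]^{u}_{\beta_0})=[p_0,p_1]^{u}_{\beta_0}$ and the disjointness of the regions $A_{-k}$, all set up before the lemma. The one genuinely delicate point — and the only place I would expect to slip — is deciding, in the first inclusion, that $F(A_0)$ enters $Z_1$ rather than $Z_2$: this hinges on getting the orientation of $F$ and the cyclic order of $M_{-1},M_0,M_1$ together with the $W^{u}(\beta_0)$-arcs right, and the wrong choice would reverse the conclusion. The smallness of $\|\eps\|$ enters only to guarantee that the images of the horizontal sides of $B_{\bullet}$ stay on the correct sides of all these curves.
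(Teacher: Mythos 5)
Your argument is correct, but it is worth noting that the paper itself never proves this two-dimensional lemma: it is stated in the preliminaries as part of the recap of the theory of [CLM], with $F^2(A_0)\subset D$ simply asserted in the surrounding text. Your two inclusions are nevertheless exactly the facts the author relies on elsewhere --- the claims $F(Z_1)\subset D$ and $F^2(Z_2)\subset Z_1$ (the latter being equivalent, given $F(Z_2)\subset F(A_{-1})\subset A_0$ and connectedness, to your $F(A_0)\subset Z_1$) both appear in the author's argument for the neighbouring statement about $B_\bullet$, so your decomposition is the ``intended'' one, just carried out in more detail. You are also right to flag the $Z_1$ versus $Z_2$ determination as the only genuinely delicate step; your resolution via the disjointness of $F^{-1}\big([p_{-1},p_0]^u_{\beta_0}\big)=[p_{-2},p_{-1}]^u_{\beta_0}$ from $A_0$ plus the position of $[p_1,p_2]^u_{\beta_0}$ is sound. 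The instructive contrast is with the three-dimensional version of this statement later in the paper: there the author \emph{redefines} $D\equiv F(A_{-1})$, so the corresponding corollary collapses to the one-line chain $F^2(A_0)\subset F(A_{-1})=D\subset D'$, and all of the $Z_1$/$Z_2$ bookkeeping you had to do is absorbed into the definition. Your closing step for the ``in particular'' clause ($F^2(U)\subset F^2(A_0)\subset D\subset U$) coincides with the proof the paper does give in that three-dimensional corollary. The only cosmetic caveat is a closure issue you share with the paper: since part of $\partial D$ lies on $M_0=\partial A_0$, one should really write $F^2(A_0)\subset\overline{D}$ or work with relative neighbourhoods in $\overline{A_0}$, but this does not affect the invariance conclusion.
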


%\msk
\subsection{Renormalization operator of two dimensional H\'enon-like maps} \label{2d renorm operator}
The domain $D$ defined on the previous subsection is invariant under $F^2$. However, $F^2$ is not H\'enon-like map because the image of vertical line %, $\lbrace x = const. \rbrace $, 
under $F^2$ is not a horizontal line. %, $\lbrace y = const. \rbrace $. 
Then we need non-linear coordinate change map to define renormalization of H\'enon-like maps.  % Furthermore, the universal convergence of the infinitely renormalizable H\'enon-like map to the one dimensional fixed point $f_*$ of the renormalization operator requires the convergence of $\eps (x,y)$ to 0 which is faster than exponential convergence after it is renormalized.
Let $H$ %the non-linear part of the coordinate change map 
be the \textit{horizontal diffeomorphism} defined as follows
$$ H(x,y) = (f(x) - \eps(x,y),\ y) $$
\nin The map $ H $ preserves each horizontal lines. Then by %direct calculation of 
Lemma 3.4 in \cite{CLM}, %the map 
$ H \circ F^2 \circ H^{-1} $ is a H\'enon-like map and this map is called {\em pre-renormalization} of $F$ and is denoted to be $ PRF $. Analytic definition of renormalization of F by \msk
\begin{equation} \label{eq-analytic definition of renormalization}
\begin{aligned}
RF = \La \circ PRF \circ \La^{-1}
\end{aligned} \msk
\end{equation}

\nin where $ \La $ is the dilation $ \La(x, y) = ( sx, sy) $ for the appropriate number $ s < -1 $. For instance, if the degenerate map $ F_{\bullet}(x, y) = (f(x), \; x) $ is renormalizable with its horizontal diffeomorphism $ H_{\bullet}(x, y) = (f(x), \; y) $, then 
$$ H_{\bullet}^{-1} \circ F_{\bullet}^2 \circ H_{\bullet} = (f^2(x),\; x) . $$
Then the renormalization of $ F_{\bullet} $ is 
$$ RF_{\bullet} = \La_{\bullet} \circ PRF_{\bullet} \circ \La^{-1}_{\bullet} = (s_{\bullet}f^2(s^{-1}_{\bullet} x),\ x) $$
where $ \La_{\bullet}(x) = s_{\bullet}x $ is a dilation of the unimodal renormalizable map $ x \ra f(x) $. Thus if $ \| \:\! \eps \| $ is small enough, then the dilation $ \La $ in the equation \eqref{eq-analytic definition of renormalization} is a $ \eps- $perturbation of $ \La_{\bullet} $. The pre-renormalization is defined on the region $ \La^{-1}(B) $. Let $ U $ be the interval, $ \pi_x( \La^{-1}(B)) $. Thus $ \La^{-1}(B) $ is extendible to $ U \times I^v $ with the full height. $ \Dom(H) $ is the region enclosed by curves $ f(x) - \eps(x,y) = \const . $ and $ y = \const . $ such that the image under $ H $ is $ \La^{-1}(B) $. Define $ V $ as the interval $ \pi_x( \Dom(H) \cap \{ y = 0 \}) $.

 \comm{**********************************
 \begin{figure}[htbp]
%\pichere{0.7}{renodomain3D}
\begin{center}
\psfrag{M1}[c][c][0.7][0]{\large $M_1$}
\psfrag{M-1}[c][c][0.7][0]{\large $M_{-1}$}

\psfrag{M0}[c][c][0.7][0]{\large $M_0$}
\psfrag{p0}[c][c][0.7][0]{\large $p_0$}
\psfrag{p1}[c][c][0.7][0]{\large $p_1$}
\psfrag{p-1}[c][c][0.7][0]{\large $p_{-1}$}
\psfrag{b1}[c][c][0.7][0]{\large $\beta_1$}

\psfrag{B1c}[c][c][0.7][0]{\Large $B^1_c$}
\psfrag{B1v}[c][c][0.7][0]{\Large $B^1_v$}
\psfrag{Ga-(B)}[c][c][0.7][0]{\large $\La^{-1}(B)$}
\includegraphics[scale =0.8]{pieces-renor} 
\end{center}
\caption{Restricted pieces for renormalization}
\label{fig:Pieces for renor}
\end{figure}
**************************************}

\ssk
\begin{lem}[Lemma 3.4 on ~\cite{CLM}]
Let $F$ be analytic renormalizable H\'enon-like map with the small norm of $\eps $, $ \| \:\! \eps \| \leq \bar \eps$, then
 $$ H \circ F^2 \circ H^{-1} = (f_1(x) - \eps_1(x,y),\ x)
 $$
 for some unimodal map $f_1$ on $V$ such that $\| f^2- f_1 \|_V \leq C \bar \eps $ for some $C > 0$ and $ \| \eps_1 \| = O( \bar \eps^2)$.
\end{lem}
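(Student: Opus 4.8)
The plan is to compute $H\circ F^2\circ H^{-1}$ by direct substitution, exploiting that $H$ and $F$ share their first coordinate. Write $H^{-1}(x,y)=(\phi(x,y),\,y)$, where $\phi=\phi(x,y)$ solves the implicit relation $f(\phi)-\eps(\phi,y)=x$. This is legitimate: by construction of the domains (previous subsection), for $\bar\eps$ small $\Dom(H)$ lies on one monotone branch of $x\mapsto f(x)-\eps(x,y)$, so $H$ is an analytic diffeomorphism onto $\La^{-1}(B)$ and $\phi$ is analytic, given by the branch of $f^{-1}$ whose image is $V$ up to an $O(\bar\eps)$ correction. The relation is exactly the statement $F(\phi,y)=(x,\phi)$, so iterating once more gives $F^2(\phi,y)=\bigl(f(x)-\eps(x,\phi),\,x\bigr)$, and applying $H$ yields
$$
H\circ F^2\circ H^{-1}(x,y)=\Bigl(f\bigl(f(x)-\eps(x,\phi(x,y))\bigr)-\eps\bigl(f(x)-\eps(x,\phi(x,y)),\,x\bigr),\ x\Bigr).
$$
In particular the second coordinate is exactly $x$, so $PRF$ is H\'enon-like; it remains to study the first coordinate, which I write $\Phi\bigl(x,\phi(x,y)\bigr)$ with $\Phi(x,p):=f\bigl(f(x)-\eps(x,p)\bigr)-\eps\bigl(f(x)-\eps(x,p),\,x\bigr)$.

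Next I fix the splitting. Take $f_1(x):=\Phi\bigl(x,\phi(x,0)\bigr)$, the restriction of the first coordinate of $PRF$ to the fibre $\{y=0\}$, and $\eps_1(x,y):=f_1(x)-\Phi\bigl(x,\phi(x,y)\bigr)$, so that $\eps_1\equiv0$ on $\{y=0\}$ by construction. Since $\|\eps\|_{C^3}\le\bar\eps$, solving the implicit relation gives $\phi(x,y)=f^{-1}(x)+O(\bar\eps)$, hence $f(x)-\eps(x,\phi(x,y))=f(x)+O(\bar\eps)$ and $f_1(x)=f\bigl(f(x)\bigr)+O(\bar\eps)=f^2(x)+O(\bar\eps)$, which is the estimate $\|f^2-f_1\|_V\le C\bar\eps$ (in $C^r$, since for analytic maps the $C^0$ norm controls $C^r$). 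On $V$ the map $f^2$ is unimodal with a single non-degenerate critical point --- the preimage in $V$ of the critical point of $f$, where $(f^2)'(x)=f'(f(x))\,f'(x)$ has a simple zero --- so a $C^2$-small perturbation of it, namely $f_1$, is again a unimodal map on $V$.

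The substance is the bound $\|\eps_1\|=O(\bar\eps^2)$. Differentiating $f(\phi)-\eps(\phi,y)=x$ in $y$ gives $\di_y\phi=\di_y\eps(\phi,y)\big/\bigl(f'(\phi)-\di_x\eps(\phi,y)\bigr)=O(\bar\eps)$, as every first partial of $\eps$ is $O(\bar\eps)$ and the denominator is bounded away from zero on $\Dom(H)$. On the other hand $y$ enters $\Phi\bigl(x,\phi(x,y)\bigr)$ only through $\phi$, and since $p$ enters $\Phi$ only through the value $\eps(x,p)$ we get $\di_p\Phi=O(\bar\eps)$. Hence $\di_y\eps_1=-\di_p\Phi\cdot\di_y\phi=O(\bar\eps^2)$, and because $\eps_1$ vanishes on $\{y=0\}$, integrating in $y$ over the bounded interval $I^v$ gives $\|\eps_1\|_0=O(\bar\eps^2)$; running the same estimates on a complex neighbourhood and applying Cauchy estimates upgrades this to $\|\eps_1\|_{C^r}=O(\bar\eps^2)$.

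The one step that needs real care is this final estimate: a priori the first coordinate of $PRF$ is a genuinely $y$-dependent perturbation of $f^2$ of size $O(\bar\eps)$, and the extra power arises only from the combination of two facts --- that the $y$-dependence enters solely through $\phi$, whose fibrewise variation is $O(\bar\eps)$, and that $\Phi$ depends on that slot only at order $O(\bar\eps)$. Choosing $f_1$ to be the $\{y=0\}$ slice is what converts the resulting bound on $\di_y\eps_1$ into one on $\eps_1$ itself. A secondary, purely bookkeeping matter is keeping the correct inverse branch of $f$ in the definition of $\phi$ and checking it maps into $V$; this is where renormalizability of $F$ and the placement of $D\subset A_0$, $\Dom(H)$ and $\La^{-1}(B)$ from the previous subsections enter, but given that setup it is routine.
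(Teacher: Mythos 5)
Your argument is correct. Note first that the paper does not actually prove this two--dimensional lemma: it is quoted verbatim from \cite{CLM}, and the closest in-text proof is that of Proposition \ref{preconv}, the three--dimensional analogue. Both that proof and yours begin the same way, by computing $H\circ F^2\circ H^{-1}$ directly and observing that the second coordinate is exactly $x$. Where you differ is in how the splitting into $f_1$ and $\eps_1$ is chosen and how the quadratic smallness is extracted. The paper Taylor-expands the first coordinate to first order and \emph{defines} $f_1$ as an explicit truncation, $f^2(x)-v\circ f(x)-[f'(f(x))-\di_x\eps(f(x),x,0)]\,v(x)$ with $v(x)=\eps(x,f^{-1}(x),0)$, so that every leftover term is visibly a product of two $O(\bar\eps)$ factors; the price is a page of bookkeeping of higher-order terms. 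You instead take $f_1$ to be the $\{y=0\}$ slice of the first coordinate, so that $\eps_1$ vanishes there by fiat, and obtain $\|\eps_1\|=O(\bar\eps^2)$ from the factorization $\di_y\eps_1=-\di_p\Phi\cdot\di_y\phi$ together with the two observations that $y$ enters only through $\phi$ (with $\di_y\phi=O(\bar\eps)$, using that $f'$ is bounded away from zero on the critical-value side where $\Dom(H)$ lives) and that $\Phi$ depends on that slot only through $\eps(x,p)$ (so $\di_p\Phi=O(\bar\eps)$). This is cleaner and makes the source of the extra power of $\bar\eps$ conceptually transparent; the two choices of $f_1$ differ by $O(\bar\eps^2)$, so both satisfy the conclusion of the lemma. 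Your remaining points --- the analytic upgrade from $C^0$ to $C^r$ via Cauchy estimates, unimodality of $f_1$ as a small perturbation of $f^2$ near its non-degenerate critical point, and the branch/domain bookkeeping for $\phi$ --- are handled at the same level of rigor as the paper itself.
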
 \ssk
%
%$H \circ F^2 \circ H^{-1}$ is called \textit{pre-renormalization} of $F$, say $PRF$. Moreover, 

\nin Suppose H\'enon-like map $F$ is infinitely renormalizable. Then $ R^nF $ converges to the degenerate map $F_* = (f_*(x),\ x)$ exponentially fast as $ n \ra \infty $ where $f_*$ is the fixed point of the renormalization operator of unimodal maps. Hyperbolicity of renormalization operator of analytic unimodal maps was proved in \cite{Lyu}. The renormalization operator has codimension one stable manifold and one dimensional unstable manifold at the fixed point $f_*$. Moreover, exponential convergence of $ R^nF $ to the one dimensional fixed point $ (f_*(x),\,x) $ and super-exponential decay of $ \eps_n $ of $ R^nF $ implies the vanishing spectrum of $DR$, the derivative of renormalization operator. Hence, the unstable manifold at the fixed point of H\'enon renormalization operator is the same as that of renormalization operator of unimodal maps. See Section 4 in \cite{CLM}.

\bsk

\section{Renormalization of three dimensional H\'enon-like maps} \label{3D maps}

\subsection{H\'enon-like maps in three dimension}
Let $B$ be the three dimensional box domain, namely, $B = B_{2d} \times [-c,c]$ for some $c>0$ where $ B_{2d} $ be the domain of two dimensional H\'enon-like map. Moreover, denote $B $ as $ I^x \times  {\bf I}^v $ where $I^x$ is the line parallel to $x$-axis and ${\bf I}^v = I^y \times I^z$ where $I^y$ and $I^z$ are lines parallel to $y$-axis and $z$-axis respectively. Let us define {\em three dimensional H\'enon-like map} on the cube $B$ % of which center is the origin 
as follows 
\begin{equation}  \label{3d Henon map}
\begin{aligned}
F(x,y,z) = (f(x) - \eps(x,y,z), \ x, \ \de(x,y,z))
\end{aligned}
\end{equation}
where $f : I^x \rightarrow I^x $ is a unimodal map. 
\ssk 
%\begin{rem}
%On the following sections, some objects defined on the two dimensional space has the subscript $2d$. For example, $B_{2d}$ is the square domain of two dimensional H\'enon-like map and $F_{2d}$ is two dimensional H\'enon-like map defined on $B_{2d}$. However, same notation without any index indicates three dimensional objects. %For instance, $F$ and $B$ are the three dimensional H\'enon-like map and its box domain respectively.
%\end{rem}
%\ssk
\nin Observe that the image of the plane, $ \lbrace x = C \rbrace $ parallel to $yz$-plane under $F$ is contained in the plane, $ \lbrace y = C \rbrace  $ which is parallel to $ xz $-plane.

\begin{figure}[htbp]
\begin{center}
\psfrag{M1}[c][c][0.7][0]{\Large $F $}

\includegraphics[scale=1.0]{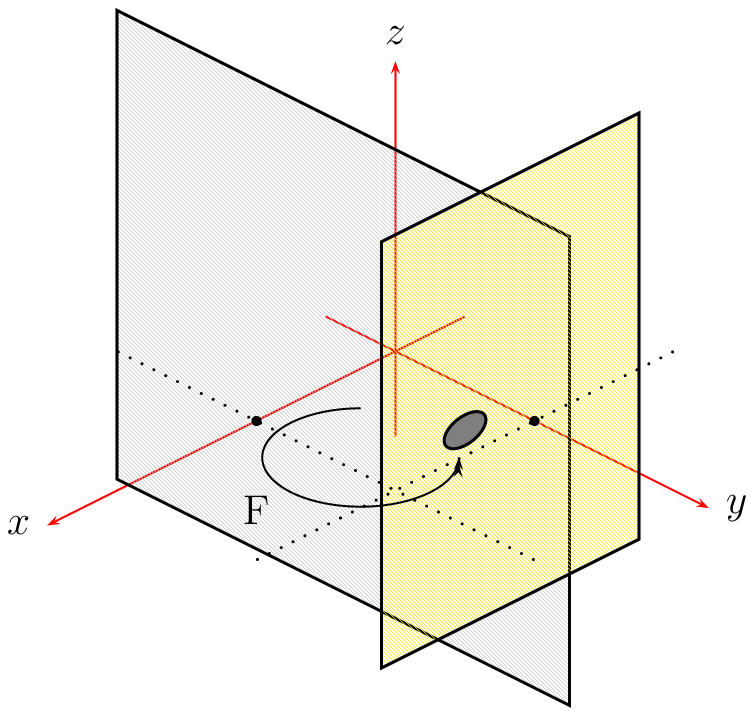}
\end{center}
\caption{Image of $ \lbrace x= \const . \rbrace$ under three dimensional H\'enon-like map}
\label{fig:imageofplane}
\end{figure}

\nin Let $ \bar \eps $ and $ \bar \de $ be small enough positive numbers. Assume that  $ \| \eps \|_{C^3} \leq \bar \eps $ and $\| \de \|_{C^3} \leq \bar \de$. If the unimodal map $ f(x) $ has two fixed points, then $F$ has also only two saddle fixed points, say $\beta_0$ and $\beta_1$, by contraction mapping theorem of the third coordinate map. Moreover, if the product of all eigenvalues of $ DF $ at each $ \beta_i $ for $ i=0,1 $ are close enough to zero, then each fixed point has %stable manifolds of codimension one and 
one dimensional unstable manifold. Orientation preserving three dimensional H\'enon-like map is called \textit{renormalizable} if $ W^u(\beta_0) $ and $W^s(\beta_1) $ intersects in the single orbit of a point.
 \ssk  \\
Topological properties of renormalizable two dimensional H\'enon-like map are well extended to the renormalizable three dimensional one. %Each topological object for three dimensional map is defined similarly to that of two dimensional maps. 
See Figure \ref{fig:invariant domain} in page \pageref{fig:invariant domain} for the adaptation of three dimensional objects. Let $B_{\bullet}$ be the component of $B \setminus W^s(\beta_0) $ containing $ \beta_1 $, which is invariant under $F$. Definitions of $ M_i $ for $ i \leq 1 $, $ A_j $ for $ j \leq 0 $ is the same as those for two dimensional H\'enon-like maps.
%%%%%%%%%%%%%%%%%
%Express $ W^s_{loc}(p_{-n}) $ to be $M_{-n}$ for $n \geq 0$ and define $M_1$ as the component of $W^s(\beta_1)$ in $B_{\bullet}$ such that $M_1$ does not have any point of $ \Orb_{\Z}(p_0) $ and its forward image under $F$ is contained in $M_{-1}$, namely, $F(M_1) \subset M_{-1}$. Let each region in $B$ between $M_n$ and $ M_{-n+1} $ be $A_n$ for $ n \geq 0 $ and let the region in $B$ on the right side of $M_1$ be $A_1$. 
%%%%%%%%%%%%%%%%%
$ W^s_{\loc}(\beta_1) $ is (forward) invariant under $F$ and it is the common boundary of the regions $ A_{-1} $ and $ A_0 $. Then $ F(A_{-1}) \subset A_0 $ and $ F(A_0) \subset A_{-1} $. In particular, $ A_0 $ is invariant under $ F^2 $ and $ F^2(A_0) $ contains a small neighborhood of $ [p_0, p_1]^u_{\beta_0} $ in $ A_0 $ and its boundary is disjoint from $M_1$. % which is the component of $ W^s(\beta_1)$ on the right hand side of $ W^s_{loc}(\beta_1)$. 
Then the following properties are the same as those of two dimensional H\'enon-like maps. %except (8), (9) and (11).
%See Figure \ref{fig:invariant domain} in page \pageref{fig:invariant domain}.
\msk

\begin{enumerate}
\item  $M_0$ is invariant under $F$. \ssk
\item  $ F(M_{-n}) \subset  M_{-n+1} $ for each $n \geq 0$. \ssk
\item  $F(M_1) \subset M_{-1} $. \ssk
\item  $ F(A_{-n}) \subset  A_{-n+1} $ for each $n \geq 0$. In particular, $ F(A_{-1}) \subset  A_0 $. \ssk
\item  Let the region on the right side of $M_1$ be $A_1$. Then $F(A_1) \subset A_{-2}$. \ssk
%\item  $F(A_0) \subset Z_1 \subset A_{-1} $. \ssk
%\item  $F(Z_1) \subset D' $. \ssk we don't need it on higher dimesional case
\item  $ W^u(\beta_0) $ intersects $ W^s_{loc}(\beta_1)$ at $p_0$, $p_1$ and $p_2$ transversally. %\ssk
%\item  $F(D)$ is the interior enclosed by two curves $ [p_1, p_2]^u_{\beta_0} $ and $ [p_1, p_2]^s_{\beta_1} $ in $A_{-1}$. We need another definition of invariant region. \ssk
\end{enumerate}
\msk
\nin For three dimensional H\'enon-like map, define the cylindrical region $ D $ as follows.
\begin{equation} \label{eq-D invariant region under F2}
\begin{aligned}
D \equiv F(A_{-1})
\end{aligned} \msk
\end{equation}
Then $ D $ is invariant under $F^2$ in $A_0$.

\begin{rem}
The region $ D_{2d} $ for two dimensional H\'enon-like map is enclosed by two curves $ W^s_{\loc}(\beta_1) $ and $ W^u(\beta_0) $. $ D_{2d} $ depends only on the given H\'enon-like map. However, this definition is not valid for higher dimension. The region $ D \equiv F(A_{-1}) $ for three dimensional H\'enon-like map is a small neighborhood of the curve $ [p_0, p_1]^u \subset W^u(\beta_0) $ in the region $ A_0 $. %Then $ D $ depends not only on the map but also the size of $ B $ although $ D $ can be extended to any arbitrary bigger region in $ A_0 $ which is invariant under $ F^2 $.
\end{rem}

\ssk
\begin{lem} \label{attracting region in B}
Let $F$ be renormalizable three dimensional H\'enon-like map. Then $\overline B_{\bullet}$ is invariant under $F$. Moreover, for every point $w \in B_{\bullet}$, there exists $k \in \N$ such that $F^k(w) \in \overline D$.
\end{lem}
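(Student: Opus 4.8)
The plan is to follow the two‑dimensional argument sketched in the commented lemma above, which becomes noticeably shorter in three dimensions thanks to the definition $D \equiv F(A_{-1})$ in \eqref{eq-D invariant region under F2}.

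For the first assertion I would use that $B_\bullet$, the component of $B \setminus W^s(\beta_0)$ containing $\beta_1$, is already known to be forward invariant under $F$ (this is recorded just after its definition; it also follows directly, since for $x \in B_\bullet$ the possibility $F(x) \in W^s(\beta_0)$ would force $F^{n+1}(x) \to \beta_0$ and hence $x \in W^s(\beta_0)$, while $F(B_\bullet)$ is connected and contains $F(\beta_1) = \beta_1$, so it stays in $B_\bullet$). Continuity of $F$ then upgrades this to $F(\overline{B}_\bullet) \subseteq \overline{F(B_\bullet)} \subseteq \overline{B}_\bullet$, which is the first claim.

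For the second assertion I would first assemble the containments that are the three‑dimensional versions of properties (1)--(6) together with \eqref{eq-D invariant region under F2}: namely $F(A_{-n}) \subset A_{-n+1}$ for $n \ge 1$ (so $F^{n-1}(\overline{A_{-n}}) \subset \overline{A_{-1}}$), $F(A_{-1}) = D$, $F(A_0) \subset A_{-1}$ (so $F^2(\overline{A_0}) \subset \overline{F(A_{-1})} = \overline D$), and $F(A_1) \subset A_{-2}$. Next I would record the stratification $\overline{B}_\bullet = \bigcup_{n \le 1} \overline{A_n}$: the regions $A_1, A_0, A_{-1}, A_{-2}, \dots$ are pairwise separated by the local stable manifolds $M_1, M_0, M_{-1}, \dots$, and since $p_{-n} \to \beta_0$ the manifolds $M_{-n} = W^s_{\loc}(p_{-n})$ nest down onto the piece of $W^s(\beta_0)$ bounding $B_\bullet$, so every point of $B_\bullet$ lies in some $\overline{A_n}$ with $n \le 1$. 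Given $w \in B_\bullet$, pick such an $n$: if $n = -m$ with $m \ge 1$, then $F^m(w) \in F(\overline{A_{-1}}) \subseteq \overline D$ and $k = m$ works; if $n = 0$, then $F^2(w) \in \overline D$ and $k = 2$ works; if $n = 1$, then $F(w) \in \overline{A_{-2}}$ and applying the case $m = 2$ to $F(w)$ gives $F^3(w) \in \overline D$, so $k = 3$.

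The step requiring the most care is the stratification $\overline{B}_\bullet = \bigcup_{n \le 1}\overline{A_n}$ — i.e.\ that the regions $A_n$ and their separating manifolds $M_n$ genuinely exhaust $B_\bullet$ with no leftover piece. This is where one needs the three‑dimensional analog of the facts, stated in the preliminaries for small $\|\eps\|$, that the backward orbit $p_{-n}$ converges to $\beta_0$ and that the curves $W^s_{\loc}(p_{-n})$ are pairwise disjoint and converge to $W^s(\beta_0)$; one should also confirm that $\overline D$ and all the $A_n$ really sit inside $B$, so that the iterates above are defined. Everything else is the formal bookkeeping transported from the two‑dimensional case.
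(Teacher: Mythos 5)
Your proposal is correct and follows essentially the same route as the paper: forward invariance of $B_\bullet$ plus its closure for the first claim, and the stratification of $\overline B_\bullet$ into the regions $\overline{A_n}$ ($n\le 1$) combined with the containments $F(A_{-n})\subset A_{-n+1}$, $F(A_{-1})=D$, $F^2(A_0)\subset D$, $F(A_1)\subset A_{-2}$ for the second, yielding the same choices of $k$. The only cosmetic difference is that the paper handles points lying on the separating manifolds $M_n$ as a separate preliminary case, whereas you absorb them by working with the closures $\overline{A_n}$ throughout.
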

\begin{proof}
$  W^s(\beta_0) $ is invariant under $F$ and every $M_{-n} $ for some $-n \leq -1$ are components of the stable manifold $  W^s(\beta_0) $. Then we see that $ F^{n-1}(M_{-n}) \subset M_{-1} $ where $-n \leq -2$ and $ F(M_1) \subset M_{-1} $. Furthermore, by the definition of $D$, we see $ F(M_{-1}) \subset \di D $ and $ F(M_{0}) \subset \di D $. Then we can take $k = n$ where $ -n \leq -1 $, $ k=1 $ where $ n=0 $ and $ k=2 $ where $ n=1 $.
\ssk \\
Now let us take a point $w \notin \bigcup_{n \leq 1} M_{n} $. Let us show that $F^k(w) \in \overline D$ for some $k \geq 0$. %to prove the invariance of $\overline B_0$. 
We may assume that $w$ is contained in some region $A_{-n}$ for some $-n \leq 1$ because each region $A_{-n}$ is separated by $M_{-n}$ and $B_0$ is the union of $ M_{-n}$ and $ A_{-n}$. If $w \in A_{-n}$ where $-n \leq -1$, $F^{n-1}(w)$ is on $A_{-1}$. Let $w' $ be $ F^{n-1}(w)$. Then by the definition of $D$, $ F(w') \in D $. Moreover, if $ w \in A_0 $, then $F^2(w) \in D$ by the invariance of $D$ under $F^2$. If $ w \in A_1 $, then $ F(w) \in A_{-2} $. Hence, we can choose $ k = n $ where $ -n \leq -1 $, $ k=2 $ where $ n=0 $ and $ k = 3 $ where $ n=1 $.
\end{proof}

\ssk
\begin{cor} 
Let $F$ be renormalizable three dimensional H\'enon-like map. Let the region between two local stable manifolds $M_0$ and $M_1$ be $A_0$. Then $F^2(A_0) \subset D$. In particular, any open neighbourhood of $D$ in $A_0$ is invariant under $F^2$.
\end{cor}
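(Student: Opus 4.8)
The plan is to read the statement off from the inclusions among the regions $A_{-n}$ recorded just above, together with the definition $D \equiv F(A_{-1})$ in \eqref{eq-D invariant region under F2}. No new construction is needed: everything reduces to a short chain of set inclusions taking place inside the invariant region $\overline B_{\bullet}$ supplied by Lemma \ref{attracting region in B}.

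First I would assemble the two ingredients already in hand: item (4) of the displayed list preceding the definition of $D$ gives $F(A_{-1}) \subset A_0$, while the sentence asserting "$F(A_{-1}) \subset A_0$ and $F(A_0) \subset A_{-1}$" gives $F(A_0) \subset A_{-1}$. Composing the latter with the former and using the definition of $D$,
\[
 F^2(A_0) = F\big(F(A_0)\big) \subset F(A_{-1}) = D ,
\]
which is the first assertion. Along the way one also reads off $D = F(A_{-1}) \subset A_0$, so that it makes sense to speak of neighbourhoods of $D$ lying inside $A_0$.

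For the "in particular" clause I would take an arbitrary open neighbourhood $N$ of $D$ with $N \subset A_0$; then $F^2(N) \subset F^2(A_0) \subset D \subset N$, so $N$ is invariant under $F^2$ in the (forward) sense used in this paper. In particular $F^2(D) \subset D$, which recovers the invariance of $D$ itself under $F^2$ asserted right after \eqref{eq-D invariant region under F2}. I do not expect any genuine obstacle here: the only point requiring care is that the regions $A_{-n}$, $M_i$ and $D$, and hence the three inclusions above, are exactly the ones carried over to three dimensions in the discussion around Figure \ref{fig:invariant domain} and used in Lemma \ref{attracting region in B}. The elaborate case analysis employed for the two-dimensional region $D_{2d}$ is unnecessary here precisely because $D$ has been redefined as $F(A_{-1})$ rather than as the curvilinear region bounded by $W^s_{\loc}(\beta_1)$ and $W^u(\beta_0)$.
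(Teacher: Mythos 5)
Your proposal is correct and follows essentially the same route as the paper: both deduce $F^2(A_0)\subset F(A_{-1})=D$ from $F(A_0)\subset A_{-1}$ and then observe that any neighbourhood $D'$ of $D$ in $A_0$ satisfies $F^2(D')\subset F^2(A_0)\subset D\subset D'$. The only difference is that you cite the specific inclusions from the displayed list more explicitly, which is fine.
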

\begin{proof}
%The proof is based on the standard graph transform method, which was used in \cite{LM}. 
Let us take any neighborhood of $D$ in $A_0$, say $D'$. Then we get the following set inclusion relation
$$ F^2(D) \subset F^2(D') \subset F^2(A_0) \subset F(A_{-1}) = D \subset D'  .
$$
Hence, $ F^2(D') \subset D' $.
\end{proof}

\nin As the result, we can choose arbitrary region $ D' \subset A_0 $ containing $D$ as an invariant domain under $F^2$. Let us take a region containing $ D $ such that $ \pi_{xy}(D) $ contains (relatively) compactly $ D_{2d} $ in $A_0$ where two dimensional region $ D_{2d} $ is enclosed by curves, $ [p_0, p_1]^{s}_{\beta_1} $ and $ [p_0, p_1]^{u}_{\beta_0} $. %$ D_{2d} $ is an invariant domain under two dimensional map $F_{2d}^2$. 
Express this extended region $ D' $ to be also $D$ unless it makes any confusion.

%\msk

\begin{figure}[htbp]

\begin{center}
\psfrag{b0}[c][c][0.7][0]{\Large $\beta_0 $}
\psfrag{b1}[c][c][0.7][0]{\Large $\beta_1 $}
\psfrag{Ws} [c][c][0.7][0]{\Large $W_{\loc}^s$}
%\psfrag{Wu} [c][c][0.7][0]{\Large $W^u $}
\includegraphics[scale=1.0]{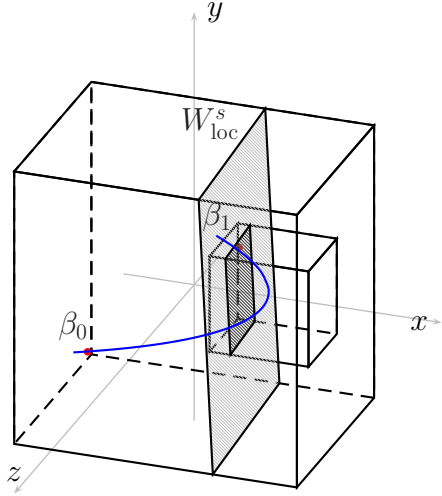}
\end{center}
\caption{The local stable manifold of $ \beta_1 $, $W_{\loc}^s(\beta_1)$ and the unstable manifold of $ \beta_0 $, $ W^u(\beta_0)  $ }
\label{fig:domain3D}
\end{figure}

\comm{%%%%%%%%%%%%%%%%%%%%%%%%%%%%%%%%%%%%%
\begin{prop} \label{bounds of the graph norm}
Let $F(x,y,z) = (f(x) - \eps(x,y,z),\; x,\; \de(x,y,z))$ be three dimensional H\'enon-like map with $\| \:\!\eps \|_{C^1} \leq \bar \eps$ and $ \| \:\!\de \|_{C^1} \leq \bar \de $ for small enough positive numbers $ \bar \eps$ and $ \bar \de$. Suppose that there are intervals $U$ and  $\; U' \subset I^h$ such that $f$ is injective on $V'  \Supset  U'$ with
$$ f(U') \supset \overline U . $$
Then if there exists the map
$$ \eta \colon {\bf I}^v \lra U
$$
such that $\| D\eta \| \leq C_0 (\bar \eps + \bar \de)$ for some constant $C_0 >0$, then the image of $\eta$ under $F^{-1}$ in $B$, namely, $F^{-1}(\textrm {graph}(\eta)) \cap (U' \times  {\bf I}^v) $ is the graph of some function $\xi \colon  {\bf I}^v \ra U'$ with
$$ \| D\xi \| \leq C (\bar \eps + \bar \de)
$$
for some constant $C>0$.
\end{prop}

\begin{proof}
Let us show that there exists the unique $x \in U'$ for each $(y',z') \in  {\bf I}^v$ such that $F(x,y,z) = (\,\eta(y',z'),\; y',\; z') \in \textrm{ graph}(\eta)$. Then
\begin{align}  \label{graph map}
\phi_{y,z}(x) \equiv f(x) - \eps(x,y,z) = \eta(x,\, \de(x,y,z)) .
\end{align}
The injectivity of $f$ on $U'$ with small enough $\bar \eps$ implies that $f(x) -\eps(x, y, z)$ has its inverse function for every point $(y,z) \in {\bf I}^v$. Moreover, $\eta$ is the contraction with the small norm $\| \de \|$. Then
$$ \phi_{y,z}^{-1} \circ \eta(x,\; \de(x,y,z)) \,\colon U' \ra U'
$$
is a well-defined contraction. Thus contraction mapping theorem implies unique existence of $x$ for \eqref{graph map}. Then $F^{-1}(\,\textrm {graph}(\eta)) \cap (\,U' \times  {\bf I}^v) $ is the graph of some function, say $\xi$. 
\ssk
Secondly, consider the image of the graph of $\xi$ under $F$
$$ (\xi(y,z),\, y,\, z) \equiv (x,\,y,\,z) \mapsto (\eta(y', z'),\, y',\, z') .
$$
Then the formula of H\'enon-like map implies the following equation
$$\eta(y', z') = \eta(x,\, \de(x,y,z)) = f(x) - \eps(x,y,z)  . $$
By the chain rule, we have
\begin{align*}
D\eta(y',z') &= Df \cdot D\xi(y,z) - \frac{\di \eps}{\di x} D\xi(y,z) - D\eps |_{{\bf I}^v} (y,z) \\[0.3em]
&= D\eta( \xi, \de) \cdot 
\begin{pmatrix}
& 2D\xi \\
& \frac{\di \de}{\di x} \cdot D\xi + D\de |_{{\bf I}^v} 
\end{pmatrix} \\[0.3em]
&= 2 \;\!\frac{\di \eta}{\di y} \cdot D\xi (y,z) + \frac{\di \eta}{\di z} \cdot \left(\frac{\di \de}{\di x} \cdot D\xi (y,z) + D \de |_{{\bf I}^v}  \right) .
%& D\eps( \xi(y,z), y, z) = \left( \frac{\di \eps}{\di \xi} \frac{\di \xi}{\di y} + \frac{\di \eps}{\di y}, \frac{\di \eps}{\di \xi} \frac{\di \xi}{\di z} + \frac{\di \eps}{\di z}
\end{align*}
Hence, when we solve the above equation in terms of $D\xi (y,z)  $, we obtain that
$$ D\xi (y,z) = \frac{D\eps |_{\,{\bf I}^v} (y,z) + \frac{\di \eta}{\di z} \cdot D\de |_{\,{\bf I}^v}(y,z)}
{Df(x) - \frac{\di \eps}{\di x} - 2 \frac{\di \eta}{\di y} - \frac{\di \eta}{\di z} \cdot \frac{\di \de}{\di x}} .
$$
Therefore $\| D\xi \| \leq C (\bar \eps + \bar \de)$.
\end{proof}

%By the above proposition, the function from ${\bf I}^v$ to $U' \subset I^h$ with the small norm of derivative keeps its order under the (graph) transformation $F^{-1}$. 
\nin Next we show that the local stable manifold $W^s_{\loc}(\beta_1)$ can be the graph of some function from ${\bf I}^v $ to $I^h$ by the standard graph transform technique.

\begin{cor} \label{bounds of the local stable manifold}
$W^s_{\loc}(\beta_1)$ is the graph of a function from  ${\bf I}^v$ to $I^h$ with the norm bounded by $C (\bar \eps + \bar \de)$ for some constant $C>0$.
\end{cor}

\begin{proof}
Since $\beta_1$ is a fixed point of $F$,  $\pi_x(\beta_1)$ is away from the critical point of $f$ on $I^h$. Then we can take some neighborhood $B_{2 \rho} (\pi_x(\beta_1))$ of the $\pi_x(\beta_1)$ for some $\rho > 0$ such that $|Df(x)| \geq C>1$ with a uniform constant $C$ on $B_{\de} (\pi_x(\beta_1))$. Denote that $U = B_{\rho}$ and $V= B_{2\rho}$. Thus let us consider a family of functions as follows
$$ \GG_K = \lbrace \eta \colon {\bf I}^v \ra I^h \ | \ \eta \left( \pi_y(\beta_1), \pi_z(\beta_1) \right) = \pi_x(\beta_1),\ \| D\eta \| \leq K (\bar \eps +\bar \de)\rbrace .
$$
Moreover, we may assume that 
$$ \diam(\eta ({\bf I}^v)) \leq K (\bar \eps + \bar \de) \cdot \diam (I^h) < \rho_0
$$
for some $ 0 <\rho_0 <1$. Then for $\eta \in \GG_K$, we have $\eta ({\bf I}^v) \subset U$. Applying the Proposition \ref{bounds of the graph norm} with small enough $\bar \eps$, the connected component of $F^{-1}(\textrm{graph}(\eta))$ containing $\beta_1$ in $B$ is the graph of some function $\eta'$. If we take $K>0$ large enough, then we have $\eta' \in \GG_K$. Then we can define the graph transformation $\TT \colon \GG_K \ra \GG_K$ with 
$$ \TT \colon \eta \mapsto \eta' .
$$
This transformation is defined globally on the graph of $\eta$. Since the function $f$ is expanding on $U$ and $\bar \eps + \bar \de$ is small, this graph transformation contracts $C^0$ distance on $\GG_K$. Hence, the unique fixed point of $\TT$, say $ \eta_0 $ is $W^s_{\loc}(\beta_1) \in \GG_K$ and it is the graph of the function in $\GG_K$.
\end{proof}
\nin Let $\zeta$ be the function from ${\bf I}^v$ to $I^h$ whose graph is the local stable manifold. Then by the Proposition  \ref{bounds of the graph norm} the norm $\| D\zeta \| \leq C (\bar \eps + \bar \de)$ for some $C>0$. 

%%%%%%%%%%%%%%%%%%%%%%%%%%%%%%%%%%%%%%
}

\msk

\subsection{Renormalization of three dimensional H\'enon-like maps} \label{def of renormal}
In this section we construct {\em period doubling} renormalization operator of three dimensional analytic H\'enon-like maps. % the non linear coordinate change map. The region $D$ defined on \eqref{eq-D invariant region under F2} is an invariant domain under $F^2$. 
However, $F^2$ is not H\'enon-like map because the image of the plane, $ \lbrace x=C \rbrace $ in $B$ under $F^2$ is not a part of the plane, $ \lbrace y=C \rbrace $. Thus in order to construct renormalization operator, we need non-linear coordinate change map. Define the \textit{horizontal-like diffeomorphism} as follows
\begin{align} \label{horizontal-like diffeo}
  H(x,y,z) = (f(x) - \eps (x,y,z), \ y, \ z - \de (y, f^{-1}(y), 0)) .
\end{align}
\nin  Recall that $ J_c $ and $ J_v $ is the minimal invariant intervals under $ f^2 $ containing the critical point and the critical value of $ f $ respectively. Let $ V $ be a closed interval invariant under $ f^2 $ disjoint from $ J_v $. Suppose that $ V $ contains a small neighborhood of every $ J_c $ if the given unimodal maps are $ f(x) - \eps(x,\,y_0,\,z_0) $ for every $(y_0, z_0) \in {\bf I}^v $. Recall also that $ {\bf I}^v $ has the full length to $ y- $axis and $ z- $axis direction. 
\ssk \\
Let us take $ \Dom(H) $ as the region %enclosed by surfaces,  such that the 
of which image under $ H $ is $ V \times {\bf I}^v $. Let us take the region in $ \Dom(H) $, say $ P $, of which faces satisfy the equations
$$ \{f(x) - \eps(x,y,z) = \const .\},\quad \{y = \const . \},\quad \{z = \const . \} $$
as boundary surfaces such that $ P $ is the minimal region invariant under $ F^2 $. Thus the ratio of each side of $ H(P) $ parallel to $ x $, $ y $ and $ z $ axis is 
$$ 1 \;\colon 1 + O(\bar \eps) \;\colon O(\bar \de) $$
Let us extend the minimal region in order to construct the cube as its image under $ H $. This extended region is called $ B^1_v $. Observe that $ B^1_v $ is compactly contained in $ \inter(\overline{A_{-1} \cup A_0}) $ and $ B^1_v $ is invariant under $ F^2 $.
\ssk \\
The inverse map $ H^{-1} $ is expressed as follows\footnote{ The first coordinate map of $ H^{-1}(w) $, $ \phi^{-1}(x,y,z) $ is {\em not} the {\em inverse function} of any one dimensional map. However, $ \phi^{-1}(w) $ is a $ \eps - $perturbation of $ f^{-1}(x) $ satisfying the following equation 
$$ f \circ \phi^{-1}(w) - \eps \circ H^{-1}(w) = x . $$  } 
\begin{align*}
%H(x,y,z) & \equiv (f(x) - \eps(w),\,y,\, z - \de(y,\,f^{-1}(y),\,0)) %= (\phi(w),\,y,\, z - \de(y,\,f^{-1}(y),\,0)) 
%\\
H^{-1}(x,y,z) & \equiv  (\phi^{-1}(x,y,z),\ y,\ z + \de(y,\,f^{-1}(y),\,0)) .
\end{align*}
where $ \phi^{-1} $ is the straightening map satisfying $ \phi^{-1} \circ H(x,y,z) = x $. \\
Denote $ (x,y,z) $ by the point $ w $ in the three dimensional region.
%\ssk \\
\nin Let $ \UU_{\,V} $ be the space of unimodal maps defined on the set $V$. %and let $ \HH_U $ be the set of three dimensional H\'enon-like map defined on the set $U$.
In this paper, every H\'enon-like map is {\em analytic} H\'enon-like map. H\'enon-like map means three dimensional H\'enon-like map unless any confusion appears. 

\ssk
\begin{prop} \label{preconv}
Let $F(w) = (f(x) - \eps(w), \  x, \ \de(w)) $ be a H\'enon-like map and $H$ be the horizontal-like diffeomorphism in \eqref{horizontal-like diffeo}. Suppose that $\| \:\! \eps \|_{C^2} \leq C\bar\eps$ and $\| \:\! \de\|_{C^2} \leq C\bar\de$ % with small enough positive numbers $ \bar \eps $ and $ \bar \de $ 
for some $ C>0 $. %Suppose also that the unimodal map $ f $ is renormalizable. 
Then there exists a unimodal map $f_1 \in \UU_{\,V}$ such that $\| f_1 -f^2 \|_V < C\bar\eps$ and the map $H \circ F^2 \circ H^{-1}$ is the H\'enon-like map $(x,y,z) \mapsto (f_1(x) - \eps_1(x,y,z), \ x, \ \de_1(x,y,z))$ on $ V \times {\bf I}^v $ with the norm, $\| \:\! \eps_1 \| = O(\bar\eps^2 + \bar\eps \bar\de) $ and $\| \:\! \de_1 \| = O(\bar\eps \bar\de + \bar\de^2)$.
\end{prop}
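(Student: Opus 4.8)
The plan is to compute $H\circ F^2\circ H^{-1}$ one coordinate at a time, using that the horizontal-like diffeomorphism $H$ has been designed precisely to straighten the parabolic-like images of $F$ back into planes $\{y=\const\}$, and then to read off $f_1$, $\eps_1$ and $\de_1$ from the result. Write $w=(x,y,z)$ and recall from the excerpt that $H^{-1}(w)=\bigl(\phi^{-1}(w),\,y,\,z+\de(y,f^{-1}(y),0)\bigr)$ with $f\circ\phi^{-1}(w)-\eps\circ H^{-1}(w)=x$. Hence the first coordinate of $F\circ H^{-1}(w)$ is $f(\phi^{-1}(w))-\eps\bigl(\phi^{-1}(w),\,y,\,z+\de(y,f^{-1}(y),0)\bigr)=x$ by that defining identity, so, writing $\xi:=\phi^{-1}(w)$ and $\zeta:=\de\bigl(\xi,\,y,\,z+\de(y,f^{-1}(y),0)\bigr)$, one gets
$$ F\circ H^{-1}(w)=(x,\ \xi,\ \zeta),\qquad F^2\circ H^{-1}(w)=\bigl(f(x)-\eps(x,\xi,\zeta),\ x,\ \de(x,\xi,\zeta)\bigr), $$
and therefore
\begin{align*}
H\circ F^2\circ H^{-1}(w)=\Bigl(\,&f\bigl(f(x)-\eps(x,\xi,\zeta)\bigr)-\eps\bigl(f(x)-\eps(x,\xi,\zeta),\,x,\,\de(x,\xi,\zeta)\bigr),\\
&\quad x,\ \ \de(x,\xi,\zeta)-\de(x,f^{-1}(x),0)\,\Bigr).
\end{align*}
The middle coordinate is exactly $x$, which is the H\'enon-like form; it remains to split the first coordinate as $f_1(x)-\eps_1(w)$ and to bound $\eps_1$ and $\de_1(w):=\de(x,\xi,\zeta)-\de(x,f^{-1}(x),0)$.

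Next I would record two reference estimates. Since $f(\xi)=x+\eps(H^{-1}(w))$ and the relevant branch of $f^{-1}$ is nonsingular on $V$ (which is disjoint from $J_v$), one has $\xi-f^{-1}(x)=O(\bar\eps)$; and $|\zeta|\le\|\de\|=O(\bar\de)$ trivially. Define the $x$-only map
$$ f_1(x):=f\bigl(f(x)-\eps(x,f^{-1}(x),0)\bigr)-\eps\bigl(f(x)-\eps(x,f^{-1}(x),0),\,x,\,0\bigr), $$
i.e. the first coordinate above with $\xi$ replaced by $f^{-1}(x)$ and with $\zeta$, together with the $\de$-slot of the outer $\eps$, replaced by $0$. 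A first-order expansion gives $f_1(x)=f^2(x)-f'(f(x))\,\eps(x,f^{-1}(x),0)-\eps(f(x),x,0)+O(\bar\eps^2)$, so $\|f_1-f^2\|_V<C\bar\eps$; and $f_1\in\UU_{\,V}$ for $\bar\eps$ small, being a $C^0$-small (hence, by analyticity, $C^3$-small) perturbation of $f^2|_V$, which is unimodal by the choice of $V$.

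For the sizes of $\eps_1$ and $\de_1$ I would use the mean value theorem, exploiting that the straightening variables $\xi,\zeta$ enter the new first coordinate \emph{only through} $\eps$, and $\de_1$ \emph{only through} $\de$. Differentiating the first coordinate in $\xi$ or in $\zeta$ yields a sum in which every term carries at least one factor $D\eps=O(\bar\eps)$, the remaining factors being $O(1)$, $O(\bar\eps)$, or $D\de=O(\bar\de)$; hence both partials are $O(\bar\eps)$, and
$$ |\eps_1(w)|\le O(\bar\eps)\,\bigl|\xi-f^{-1}(x)\bigr|+O(\bar\eps)\,|\zeta|+O(\bar\eps\bar\de)=O(\bar\eps^2+\bar\eps\bar\de), $$
the last term accounting for the replacement of $\de(x,f^{-1}(x),0)$ by $0$ in $f_1$. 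Likewise $\de(x,\cdot,\cdot)$ has $C^1$-size $O(\bar\de)$, so
$$ |\de_1(w)|\le O(\bar\de)\,\bigl|\xi-f^{-1}(x)\bigr|+O(\bar\de)\,|\zeta|=O(\bar\eps\bar\de+\bar\de^2); $$
this is exactly where the correction $-\de(y,f^{-1}(y),0)$ built into $H$ is used, since it cancels the $O(\bar\de)$ leading term $\de(x,f^{-1}(x),0)$, without which $\de_1$ would only be $O(\bar\de)$. In the analytic category these $C^0$ bounds control the $C^r$ norms on a slightly shrunk domain, and $H\circ F^2\circ H^{-1}$ is defined on all of $V\times{\bf I}^v$ because $H^{-1}(V\times{\bf I}^v)=\Dom(H)=B^1_v$ is invariant under $F^2$.

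I expect the only genuine work to be the order-of-magnitude bookkeeping of the last paragraph: recognizing that $\xi,\zeta$ enter the new first coordinate solely through $\eps$ and the new third coordinate solely through $\de$, so that each of the two differences gains an extra factor $\bar\eps$, respectively $\bar\de$; and seeing that the term $-\de(y,f^{-1}(y),0)$ in $H$ is precisely the correction needed to make the new third coordinate second order. The remaining points --- the straightening identity, the expansion of $f_1$, and the unimodality and domain claims --- are the two-dimensional argument of Lemma~3.4 in \cite{CLM} transcribed almost verbatim.
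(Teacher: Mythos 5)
Your proposal is correct and follows essentially the same route as the paper: compute $H\circ F^2\circ H^{-1}$ explicitly, freeze the $(y,z)$-dependence at the reference point $(f^{-1}(x),0)$ to extract $f_1$, and bound $\eps_1$ and $\de_1$ by first-order estimates using $\phi^{-1}(w)-f^{-1}(x)=O(\bar\eps)$ and $\de\circ H^{-1}(w)=O(\bar\de)$. The only cosmetic difference is that you take $f_1$ to be the exact substitution while the paper takes its linearization; the two differ by $O(\bar\eps^2)$, which is absorbed into $\eps_1$ either way.
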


\begin{proof}

\comm{**********
%Let the first component function of $H$ be $\phi $.
Let us calculate $ \phi^{-1}(w) - f^{-1}(x) $ first. By \eqref{f comp phi-1}, we obtain that
\begin{align*}
\phi^{-1}(w) &= f^{-1}(x + \eps \circ H^{-1}(w)) \\
&= f^{-1}(x) + (f^{-1})'(x) \cdot \eps \circ H^{-1}(w) + \rm{higher \ order \ terms} .
\end{align*}
Then we get
\begin{align} \label{phi inverse - f inverse}
\phi^{-1}(w) - f^{-1}(x) = (f^{-1})'(x) \cdot \eps \circ H^{-1}(w) + \rm{higher \ order \ terms} .
\end{align}
\nin Let us calculate $ \eps \circ F \circ H^{-1} $ and $ \eps \circ F^2 \circ H^{-1} $ as preparation estimating $ \| \:\! \eps_1 \| $ and $ \| \:\! \de_1 \| $. Use the equation \eqref{F comp H-1} and linear approximation. Then
\begin{equation} \label{epsilon F H}
\begin{aligned}
& \eps \circ F \circ H^{-1}(w) \\
= \ &\eps(x,\ \phi^{-1}(w),\ \de \circ H^{-1}(w)) \\
= \ &\eps(x, f^{-1}(x), 0) + \di_y \eps \circ (x, f^{-1}(x), 0) \cdot (\phi^{-1}(w) - f^{-1}(x)) \\
 \quad & + \di_z \eps \circ (x, f^{-1}(x), 0) \cdot \de \circ H^{-1}(w) + h.o.t. \\
= \ & v(x) + \di_y \eps \circ (x, f^{-1}(x), 0) \cdot (f^{-1})'(x) \cdot \eps \circ H^{-1}(w)
% \\  \quad & 
 + \di_z \eps \circ (x, f^{-1}(x), 0) \cdot \de \circ H^{-1}(w) \\
 \quad & + h.\,o.\,t.
\end{aligned}
\end{equation}
Similarly, let us estimate $ \eps \circ F^2 \circ H^{-1} $.
\msk
\begin{equation} \label{epsilon FF H}
\begin{aligned}
& \eps \circ F^2 \circ H^{-1}(w) \\
= \ & \eps( f(x) - \eps \circ F \circ H^{-1}(w),\ x,\ \de \circ F \circ H^{-1}(w)) \\
= \ & \eps(f(x),\,x,\, 0) + \di_x \eps \circ (f(x),\,x,\, 0) \cdot \eps \circ F \circ H^{-1}(w) \\
\quad & + \di_z \eps \circ (f(x),\,x,\, 0) \cdot \de \circ F \circ H^{-1}(w) + h.\,o.\,t. \\
= \ & v \circ f(x) +  \di_x \eps \circ (f(x),\,x,\, 0) \cdot \eps \circ F \circ H^{-1}(w) 
%\\ \quad & 
 + \di_z \eps \circ (f(x),\,x,\, 0) \cdot \de \circ F \circ H^{-1}(w) \\
 \quad & + h.\,o.\,t.
\end{aligned} \msk
\end{equation}
*******************}

By the straightforward calculation, we obtain the expression of $H \circ F^2 \circ H^{-1} $ as follows \ssk
\begin{equation*}
\begin{aligned}
& \ (x,y,z) \\
%\xmapsto{\hspace*{3pt}H^{-1}} & \ (\phi^{-1}(w),\ y,\ z + \de(y, f^{-1}(y),0))  \\
%\xmapsto{\hspace*{7pt} F \hspace*{7pt}} & \ (x,\ \phi^{-1}(w),\ \de \circ H^{-1}(w)) \\
%\xmapsto{\hspace*{7pt} F \hspace*{7pt}} & \ (f(x) - \eps \circ F \circ H^{-1}(w),\ x,\ \de \circ F \circ H^{-1}(w)) \\
%\xmapsto{\hspace*{6pt} H \hspace*{7pt}} 
\mapsto & \ (f(f(x) - \eps \circ F \circ H^{-1}(w)) - \eps \circ F^2 \circ H^{-1}(w),\ x,\ \de \circ F \circ H^{-1}(w) - \de(x, f^{-1}(x),0)) .
\end{aligned} \msk
\end{equation*} 
Thus the first coordinate function of $H \circ F^2 \circ H^{-1}$ is 
$$ f(f(x) - \eps \circ F \circ H^{-1}(w)) - \eps \circ F^2 \circ H^{-1}(w) . $$
%By \eqref{phi inverse - f inverse}, \eqref{epsilon F H} and \eqref{epsilon FF H}, we get the estimation of first coordinate function. \ssk

\nin Denote $ v(x) $ by $ \eps(x, f^{-1}(x),0) $. Thus $ v \circ f(x) = \eps(f(x),x,0) $. By the linear approximation, we obtain \ssk
\begin{equation*}
\begin{aligned}
& f(f(x) - \eps \circ F \circ H^{-1}(w)) - \eps \circ F^2 \circ H^{-1}(w) \\[0.3em]
= \ & f^2(x) - f'(f(x)) \cdot \eps \circ F \circ H^{-1}(w) - \big[\,\eps(f(x),\,x,\, 0) + \di_x \eps \circ (f(x),\,x,\, 0) \cdot \eps \circ F \circ H^{-1}(w) \\
\quad & + \di_z \eps \circ (f(x),\,x,\, 0) \cdot \de \circ F \circ H^{-1}(w)\,\big] + h.\,o.\,t. \\[0.3em]
= \ & f^2(x) - v \circ f(x) - [\,f'(f(x)) - \di_x \eps \circ (f(x),\,x,\, 0)\, ] \cdot v(x) \\
\quad & - \big[\,f'(f(x)) - \di_x \eps \circ (f(x),\,x,\, 0)\,\big] \cdot \big[\,\di_y \eps \circ (x, f^{-1}(x), 0) \cdot (f^{-1})'(x) \cdot \eps \circ H^{-1}(w) \\
\qquad & + \di_z \eps \circ (x, f^{-1}(x), 0) \cdot \de \circ H^{-1}(w) \,\big]
 %\\ \quad &
  - \di_z \eps \circ (f(x),\,x,\, 0) \cdot \de \circ F \circ H^{-1}(w) + h.\,o.\,t. 
\end{aligned} \msk
\end{equation*}
Let us choose the unimodal map of the first component of $H \circ F^2 \circ H^{-1}$, say $ f_1(x)$ as follows
$$ f^2(x) - v \circ f(x) - \big[\,f'(f(x)) - \di_x \eps \circ (f(x),\,x,\, 0)\,\big] \cdot v(x) . $$ 
Then $ \| f_1(x) - f^2(x) \| = O(\| \:\!\eps \|)$ and the norm of $ \eps_1(w) $ is $ O \big(\| \:\!\eps \|^2 + \| \:\! \eps \| \, \| \:\! \de \| \,\big) $. 
%\ssk \\
Let us estimate the third coordinate of $H \circ F^2 \circ H^{-1}$. %Recall $ \de_1(w) = \de \circ F \circ H^{-1}(w) - \de(x,\,f^{-1}(x),\,0) $. 
\msk
\begin{equation*}
\begin{aligned}
& \de \circ F \circ H^{-1}(w) - \de(x,\,f^{-1}(x),\,0) \\
= \ & \de \big(x,\ \phi^{-1}(w),\ \de \circ H^{-1}(w) \big) - \de(x,\,f^{-1}(x),\,0) \\
= \ & \di_y \de \circ (x,\,f^{-1}(x),\,0) \cdot (\phi^{-1}(w) - f^{-1}(x)) + \di_z \de \circ (x,\,f^{-1}(x),\,0) \cdot \de \circ H^{-1}(w) +  h.\,o.\,t. \\
= \ & \di_y \de \circ (x,\,f^{-1}(x),\,0) \cdot (f^{-1})'(x) \cdot \eps \circ H^{-1}(w) + \di_z \de \circ (x,\,f^{-1}(x),\,0) \cdot \de \circ H^{-1}(w) +  h.\,o.\,t. 
\end{aligned} \msk
\end{equation*}
Then $ \| \:\!  \de_1\| $ is $ O(\| \:\!  \eps \| \, \| \:\!  \de \| + \| \:\!  \de \|^2) $.
\end{proof}

\nin Define {\em pre-renormalization} of $F$ as $ PRF \equiv H \circ F^2 \circ H^{-1}$ on $ H(B^1_v) $. %Define  $ H(B^1_v) $ as $ \Dom(PRF) $ unless any other statements are specified. 
Since $ H(B^1_v) $ is the cubic region, the domain $ B $ is recovered as the image of $ H(B^1_v) $ under the appropriate linear expanding map $\La(x,y,z) = (sx, sy, sz)$ for some $s<-1$, that is, $ \Dom(PRF) $ is $\La^{-1}(B)$.

\comm{***************
\ssk \\
Notify that the map $ H^{-1} $ from $\La^{-1}(B)$ to $B^1_v$ preserves each plane parallel to $ xz- $plane and the map $ F \circ  H^{-1} $ from $\La^{-1}(B)$ to $B^1_c$ preserves each plane parallel to $ yz- $plane. Let $B^1_c$ be %the image of $ B^1_v $ under $ F $, namely, 
$ F(B^1_v) $. 
\begin{equation*}
 \begin{aligned}
 H^{-1} : & \ \La^{-1}(B) \lra B^1_v ,  & (x,y,z) & \mapsto (\phi^{-1}(x,y,z) ,\ y,\ z + \de(y, f^{-1}(y),0)) & \\
 F \circ  H^{-1} : & \ \La^{-1}(B) \lra B^1_c,  &  (x,y,z) &\mapsto  (x,\ \phi^{-1}(x,y,z),\ \de \circ H^{-1}(w)) &
 \end{aligned} \ssk
 \end{equation*}
***********************}

\msk
\begin{defn}[Renormalization]
Let  $V$ be the (minimal) closed subinterval of $I^x$ such that  $V \times {\bf I}^v$ is invariant under $ H \circ F^2 \circ H^{-1}$ and let $s \colon V \ra I $ be the orientation reversing affine rescaling. With the rescaling map $\La(x,y,z) = (sx, sy, sz)$, {\it Renormalization} of three dimensional H\'enon-like map is defined on the domain $ B \equiv I^x \times \bf{I}^v $ as follows
\begin{align*}
RF = \La \circ H \circ F^2 \circ H^{-1} \circ \La ^{-1} .
\end{align*}
If %$RF$ is also renormalizable, we can define the second renormalization of $F$ as the renormalization of $RF$. Then if 
$F$ is $n$ times renormalizable, then the $n^{th}$ renormalization is defined successively
\begin{align*}
 R^{n}F =  \La_{n-1} \circ H_{n-1} \circ (R^{n-1}F)^2 \circ H^{-1}_{n-1} \circ \La_{n-1} ^{-1} .
\end{align*}
where $ R^{n-1}F $ is $(n-1)^{th}$ renormalization of $F$ for $n \geq 1 $.
\end{defn}

\nin %Let %$ \UU_J $ be the set of the unimodal maps on the interval $J$ and $ \HH_B $ is the set of H\'enon-like maps on the domain $B$. %Let us assume that the unimodal map on the interval $ J \subset I^x $ can be extended on $ I^x $.
%We assume that renormalizable three dimensional H\'enon-like maps in this paper are analytic. 
Denote the set of infinitely renormalizable H\'enon-like maps on the region $ B $ by $ \II_B $. In particular, if all H\'enon-like maps in $ \II_B $ satisfies that $ \max \{ \| \eps \|, \| \de \| \} \leq \bar \eps $, then this set is denoted to be $ \II_B(\bar \eps) $. The set of infinitely renormalizable unimodal maps on the interval $ I^x $ is expressed as $ \II_{I^x} $.

\ssk
\begin{lem} \label{exponential convergence to 1d map} 
Let $F \in \II_B(\bar \eps) $ %be an infinitely renormalizable H\'enon-like map 
with small enough $ \| \eps\| $ and $ \| \de \| $ bounded by $ C\bar \de$ for some $ C>0 $. 
Then for all sufficiently big $ n \geq 1 $, $ R^nF $ converge to the degenerate map $F_* = (f_*(x), x, 0)$ exponentially fast as $ n \ra \infty $ where $ f_* $ is the fixed point of one dimensional renormalization operator. %Moreover, \[    \|  R^nF -  F_{f_n} \| = O\big((\bar \eps + \bar \de)^n \big)%O \left(\sum_{i+j=n} \bar{\eps}^i \cdot \bar {\de}^j \right) \] where $ F_{f_n} $ is the degenerate map $ F_{f_n}(x,y,z) = (f_n(x), x, 0) $ and $f_n$ is the first term of the first coordinate of the H\'enon-like map $R^nF = (f_n(x) - \eps_n, \, x, \, \de_n)$.
\end{lem}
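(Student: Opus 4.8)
The statement is the three dimensional analogue of the convergence result for two dimensional H\'enon-like maps recalled at the end of Section~\ref{preliminaries} (Section~4 of \cite{CLM}), and I would prove it along the same lines, the only genuinely new bookkeeping being the third coordinate. Write $R^nF(x,y,z)=(f_n(x)-\eps_n(w),\,x,\,\de_n(w))$ and set $\bar\eps_n=\max\{\|\eps_n\|,\|\de_n\|\}$. The plan is to split the conclusion into two estimates: a super-exponential decay $\bar\eps_n\to 0$, and a definite exponential convergence $f_n\to f_*$; the slower of the two rates — the one dimensional one — then governs $R^nF\to(f_*(x),x,0)$.

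For the first estimate I would iterate Proposition~\ref{preconv}. Applied to $R^nF$ it gives that the pre-renormalization has perturbative parts of size $O(\|\eps_n\|^2+\|\eps_n\|\,\|\de_n\|)=O(\bar\eps_n^2)$ and $O(\|\eps_n\|\,\|\de_n\|+\|\de_n\|^2)=O(\bar\eps_n^2)$; the subsequent conjugation by the dilation $\La_n(x,y,z)=(s_nx,s_ny,s_nz)$ multiplies the ($C^0$) norm only by the uniformly bounded factor $|s_n|$ — an $O(\bar\eps_n)$ perturbation of the one dimensional scaling factor $\approx 2.6\ldots$ — and higher $C^k$ norms by $|s_n|^{1-k}\le 1$. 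So for analytic maps one obtains a uniform $K>0$ with $\bar\eps_{n+1}\le K\bar\eps_n^2$; with $\bar\eps_0$ small enough this forces $\bar\eps_n\le K^{-1}\theta^{2^n}$ for some $\theta\in(0,1)$, in particular $\sum_n\bar\eps_n<\infty$. (One first checks, by this same recursion, that the $\bar\eps_n$ decrease, so Proposition~\ref{preconv} legitimately applies at every level.)

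For the second estimate, note that by Proposition~\ref{preconv} each $f_n$ is a genuine unimodal map, and, by the ``openness of period-doubling renormalizability'' argument of \cite{CLM}, the hypothesis $F\in\II_B(\bar\eps)$ forces each $f_n$ to be infinitely renormalizable as a one dimensional map (renormalizability of $R^{n+m}F$ propagates back to $f_n$ for all $m$). Inspecting the explicit first-coordinate formula in the proof of Proposition~\ref{preconv} and using that $\La_n$ is an $O(\bar\eps_n)$ perturbation of the one dimensional dilation, one gets $f_{n+1}=R_cf_n+E_n$ with $\|E_n\|=O(\bar\eps_n)$, where $R_c$ is the one dimensional operator of Section~\ref{preliminaries}; thus $(f_n)$ is a pseudo-orbit of $R_c$ with super-exponentially small errors. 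I would convert this into exponential convergence through the presentation-function picture: the presentation functions $g_n$ attached to the infinitely renormalizable $f_n$ are contractions converging to $g_*$ at a definite exponential rate (one dimensional real a priori bounds together with hyperbolicity of $R_c$ at $f_*$, \cite{Lyu}), perturbed by $O(\bar\eps_n)$; feeding $\|g_n-g_*\|_{C^3}\le C\rho^n$ into Lemma~\ref{composition of the presentation function}, and using the intertwining $u_*\circ f_*=f_*\circ u_*$ of Lemma~\ref{renormalization at critical value}, yields $\|f_n-f_*\|\le C_1\rho^n$ for all large $n$. Finally the one dimensional scaling factors $s_n$ converge exponentially, so the intervals $V_n\subset I^x$ fill up $I^x$ exponentially fast and the domains of $R^nF$ stabilize; combining the two estimates gives $R^nF\to(f_*(x),x,0)$ exponentially fast, at the one dimensional rate $\rho$.

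The main obstacle is this last conversion: promoting ``$(f_n)$ is a pseudo-orbit of $R_c$ with tiny errors'' to genuine exponential convergence. A naive forward iteration of $R_c$ from $f_0$ fails, because $R_c$ has a one dimensional expanding (``Feigenbaum'') direction at $f_*$ along which the errors $E_k$ would be amplified; the point of passing to presentation functions — which are contractions — is precisely that suitably rescaled compositions of near-$g_*$ contractions behave stably, as quantified by Lemma~\ref{composition of the presentation function}. Everything else (the recursive estimate, the stabilization of the domains, and the final assembly) is a routine adaptation of the two dimensional arguments of \cite{CLM}, with $\de_n$ carried along on the same footing as $\eps_n$ thanks to Proposition~\ref{preconv}.
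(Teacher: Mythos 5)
Your overall architecture matches the paper's: split $\|R^nF-F_*\|$ into the super\-exponentially small perturbative part $\|(\eps_n,0,\de_n)\|$ (obtained by iterating Proposition~\ref{preconv}, exactly as the paper does) plus the distance of the unimodal parts $f_n$ from $f_*$, and observe that $(f_n)$ is a pseudo-orbit of the one dimensional operator $R_c$ with errors $\|f_{n+1}-R_cf_n\|=O(\bar\eps^{2^n})$. You also correctly identify the crux: the expanding direction of $R_c$ at $f_*$ prevents a naive propagation of these errors. But your proposed resolution of that crux does not close. You feed $\|g_n-g_*\|_{C^3}\le C\rho^n$ into Lemma~\ref{composition of the presentation function}; however, that exponential estimate on the presentation functions of the $f_n$ is equivalent to the exponential estimate $\|f_n-f_*\|\le C\rho^n$ you are trying to prove (the $g_n$ are built from the $f_n$, and the cited one dimensional results give $g_n\to g_*$ only along a genuine orbit $R_c^nf$, not along your pseudo-orbit), so the argument is circular. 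Moreover, even granting the hypothesis, the conclusion of Lemma~\ref{composition of the presentation function} concerns the rescaled compositions $G^n_k$ of presentation functions (it is the tool for the universality of the scaling maps $\Psi^n_k$ in Section~\ref{universality of Jacobian}), and you do not explain how to extract $\|f_n-f_*\|\le C\rho^n$ from it via Lemma~\ref{renormalization at critical value}.

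The missing ingredient, which the paper supplies, is a \emph{uniform contraction statement for $R_c$ restricted to the set of infinitely renormalizable unimodal maps}: since every $f_n$ and every $R_cf_{n-1}$ is itself infinitely renormalizable, the differences in the telescoping sum $f_n-R_c^nf=\sum_j\bigl(R_c^jf_{n-j}-R_c^{j+1}f_{n-j-1}\bigr)$ are differences of infinitely renormalizable maps, and for large $j$ the operator $R_c^j$ contracts them --- this is imported from Lemma~8 of \cite{dMP} ($C^0$ contraction) and the Main Theorem of \cite{AMdM} ($C^r$ contraction), while for small $j$ the bounded amplification is beaten by the super-exponential smallness of the errors. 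With that input your pseudo-orbit recursion (or the paper's telescoping sum) closes immediately and yields the exponential rate; without it, neither the forward iteration nor the presentation-function detour does. So the proof as written has a genuine gap at its central step, even though the decomposition and the identification of the difficulty are right.
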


\begin{proof}
Let the degenerate map %with the unimodal map $ f_N $ 
be $ F_{f_N} = (f_N ,\, x,\, 0) $ where $ R^NF = (f_N - \eps_N,\, x,\, \de_N) $ and let $F_{R^N_cf} = (R^N_cf ,\, x,\, 0)$ where $ R^N_cf $ is the $ N^{th} $ renormalized map of $ f $ for $ N \geq 1$. Then for big enough $N$, we obtain the following estimation
\begin{align*}
\| R^NF - F_* \| \leq \ &  \| R^NF - F_{f_N} \| + \|  F_{f_N} - F_{R^N_cf} \| + \| F_{R^N_cf} - F_* \|  \\
= \ & \| (\eps_N,\,0,\,\de_N) \| + \| f_N - R_c^Nf \| + \|  R_c^Nf - f_* \|   \\
\leq \ & C_2 \big( \bar \eps + \bar \de \big)^{2^N} + \| f_N - R_c^Nf \| + C_0 \rho_0
\end{align*}
for some $ C_0,\, C_2 >0 $ and $ 0 < \rho_0 < 1 $. From the theory of renormalization of unimodal maps, $ R_c^{Nk}f $ converges to $ f_* $ exponentially fast as $ k \ra \infty $ for sufficiently large $N$. Using the adapted metric in \cite{PS}, we can take $ N = 1 $. Then for every $ n \geq 1 $, we obtain 
\begin{align*}
\| R^nF - F_* \| \leq C_2 \big( \bar \eps + \bar \de \big)^{2^n} + \| f_n - R_c^nf \| + C_0 \rho_0^n
\end{align*}
for some $ C_0,\, C_1 >0 $ and $ 0 < \rho_0 < 1 $. 
%\ssk \\
Moreover,
\begin{align*}
\| f_n - R_c^nf \|  \leq &\, \| f_n - R_c f_{n-1} \| + \| R_c f_{n-1} - R_c^2 f_{n-2} \| +  \| R_c^2 f_{n-2} - R_c^3 f_{n-3} \| + \cdots \\
 & +  \| R_c^{m-1} f_{n-m+1} - R_c^m f_{n-m} \| + \| R_c^{m} f_{n-m} - R_c^{m+1} f_{n-m-1} \| + \cdots \\
 & + \| R_c^{n-1} f_1 - R_c^n f \| .
\end{align*}
For sufficiently large $ m $ and $ n-m $, by Lemma 8 in \cite{dMP} on the space of quadratic-like maps, we have $ C_0 $ distance contraction. Moreover, by Main Theorem on \cite{AMdM}, we obtain $ C^r $ contractions for $ r \geq 3 $ .\footnote{The theorems of on \cite{dMP} and \cite{AMdM} assumed that the maps are infinitely renormalizable with bounded combinatorics. On \cite{AMdM}, infinitely renormalized unimodal maps have the same bounded type. However, we assume that every renormalizable maps have the type of {\em period doubling} on this article. This fixed and bounded single combinatorics is much simpler than the actual hypothesis on \cite{dMP} or \cite{AMdM}.}
\begin{align} \label{bounds of sum eps 1}
& \| R_c^{m} f_{n-m} - R_c^{m+1} f_{n-m-1} \| + \cdots + \| R_c^{n-1} f_1 - R_c^n f \| %\\ & \ 
\leq C_m \rho_m^{n-m} + \cdots + C_n \rho_n^n
\end{align} 
for some $ 0 < C_i = O(\bar \eps^{2^i}) $ and $ 0< \rho_i < 1$ where $ i =m, m+1, \ldots , n $. Every $ C_i $ and $ \rho_i $ are independent of $ n $. Thus the sum \eqref{bounds of sum eps 1} is bounded above by $ C_1 \rho_1^{n-m} $ for some $ C_1 >0 $ and $ 0 < \rho_1 <1 $. Moreover, by the direct calculations of each terms, we obtain
\begin{equation} \label{bounds of sum eps 2}
\begin{aligned} 
& \| f_n - R_c f_{n-1} \| + \| R_c f_{n-1} - R_c^2 f_{n-2} \| %+  \| R_c^2 f_{n-2} - R_c^3 f_{n-3} \| + \cdots \\ &
 + \cdots + \| R_c^{m-1} f_{n-m+1} - R_c^m f_{n-m} \| \\
& \ \ \leq C_n \bar \eps^{2^{n-1}} + C_{n-1}^2 \bar \eps^{2^{n-2}} + \cdots + C_{n-m}^m \bar \eps^{2^{n-m}}
\end{aligned}
\end{equation}
for some $ 0 < C_i $, $ i = n-m, \ldots , n $. For sufficiently big $ n-m $, the sum \eqref{bounds of sum eps 2} is $ O(\bar \eps_0^{2^{n-m}}) $ for $ \bar \eps_0 < \bar \eps $.
%\ssk \\
 Then $ \| f_n - R_c^nf \| \leq  C_1 \rho_1^{n-m} + O(\bar \eps_0^{2^{n-m}}) $. Hence,
\begin{align*}
\| R^nF - F_* \| \leq C_2 \big( \bar \eps + \bar \de \big)^{2^n} + C_1 \rho_1^{n-m} + O(\bar \eps_0^{2^{n-m}}) + C_0 \rho_0^n \leq C\rho^n
\end{align*}
for some $ C>0 $ and $ 0<\rho <1 $. Therefore, $R^nF$ converges to  $F_*$ exponentially fast.
\end{proof}

\nin On the following sections, we suppress the bound of small norms of $\eps $ and $ \de $ to be $ \bar \eps $, that is, we express $ \bar \eps = \max \{\bar \eps, \bar \de \} $. %We denote $ \si $ to be $ \la^{-1} $, the inverse of the universal scaling factor of the periodic doubling renormalization of the unimodal maps. Then $ \si $ is a universal constant less than 1 which is independent of the renormalization level.
%\msk
\msk

\subsection{Hyperbolicity of renormalization operator}
Hyperbolicity of renormalization operator at its fixed point was proved by Lyubich in \cite{Lyu} using quadratic-like maps. %This theory was for the one dimensional complex analytic mappings. %This hyperbolicity extended to the renormalization operator of $ C^r $ maps on the interval where $ r =3 + \alpha $ if $ \alpha $ is close to one by de Faria, de Melo and Pinto in \cite{dFdMP}. However, the renormalization operator of $ C^r $ maps is not differentiable. Thus the linear operator at the fixed point for hyperbolicity should be established. The contraction or repulsion along the stable and unstable manifold should be considered in the much bigger space than the space of the analytic maps.
%\ssk \\
Moreover, Lyubich proved that renormalization operator at the fixed point has one dimensional unstable manifold %and codimension one stable manifold 
 in the complex sense. The same thing is true in the real sense by Theorem 2.4 and Theorem 3.9 in \cite{dFdMP}. %The renormalization operator of analytic maps are analytic and then it has derivative. %Moreover, the uniform norm of $ R $ bounds the norm of the derivative of the same operator.
 \footnote{Lyubich pointed out the uniform norm of analytic operator bounds the norm of all derivatives of the same operator.}
\ssk \\
The renormalization operator of one dimensional maps is embedded under the natural inclusion to the renormalization operator of degenerate maps in the space of renormalizable H\'enon-like maps. Moreover, since the space of one dimensional infinitely renormalizable maps is a closed subset of the space of renormalizable H\'enon-like maps, the quotient space $ \II_B(\bar \eps) / \II_{I^x} $ is defined with the quotient norm. % where $ I^x $ is the interval as the invariant domain of renormalizable unimodal maps. 
Thus the fact that super exponential convergence of both $ \| \:\!\eps_n \|$ and $ \| \de_n \| $ is $ O(\bar \eps^{2^n}) $ by Proposition \ref{preconv} implies that the renormalization fixed point is the map $ (x,y,z) \ra (f_*(x), x, 0) $. Furthermore, it also implies vanishing spectrum of the quotient space at the renormalization fixed point and hyperbolicity of the renormalization operator of two and three dimensional H\'enon-like maps. See Section 4 in \cite{CLM}. 

\bsk

\section{Critical Cantor set} \label{critical cantor set}
We study the minimal attracting Cantor set for infinitely renormalizable H\'enon-like map $ F $. $ F $ acts as a dyadic adding machine on this Cantor set. Topological construction of the invariant Cantor set of three dimensional H\'enon-like map is exactly the same as that of two-dimensional one (Corollary \ref{dyadic adding machine} below). %Moreover, definitions and notions of the three dimensional H\'enon-like maps are basically identical with the two-dimensional case. 
Thus we use the same definitions and notions of the two dimensional case in this section for the sake of completeness. %See \cite{CLM}.
\subsection{Branches} \label{branch}
Let $\Psi^1_v = \psi^1_v \equiv H^{-1} \circ \La^{-1} $ be the non linear scaling map which conjugates $F^2$ to $RF$ on $\Psi^1_v (B)$, %which is invariant under $F^2$, 
and let $\Psi^1_c = \psi^1_c \equiv F \circ \psi_v$. The subscripts $v$ and $c$ are associated to the {\it critical value} and the {\it critical point} respectively.
Similarly, let $\psi^2_v$ and $\psi^2_c$ be the non linear scaling maps conjugating $RF$ to $R^2F$. Let
$$
 \Psi^2_{vv} = \psi^1_v \circ \psi^2_v, \ \  \Psi^2_{cv} = \psi^1_c \circ \psi^2_v, \ \  \Psi^2_{vc} = \psi^1_v \circ \psi^2_c,  \ldots
$$
Successively, we can define the non linear scaling map of the $n^{th}$ level for any $n \in \N$ as follows 
$$
 \Psi^n_{\bf w} = \psi^1_{w_1} \circ \cdots \circ \psi^n_{w_n},  \ \ {\bf w}=(w_1, \ldots , w_n) \in \lbrace v, c \rbrace^n 
$$
where ${\bf w}=(w_1, \ldots , w_n)$ is the word of length $n$ and $W^n = \lbrace v, c \rbrace^n$ is the $n$-fold Cartesian product of $\lbrace v, c \rbrace$. 
\msk
\begin{lem} \label{diameter}
Let $F \in \II_B(\bar{\eps})$ for $ n \geq 1$ with small enough $ \bar \eps > 0 $. The derivative of $\Psi^n_{\bf w}$ is exponentially shrinking for $ n \in \N $ with $ \si $, that is, $\| D\Psi^n_{\bf w} \| \leq C \si^n$ for every words ${\bf w} \in W^n$ for some $C>0$ depending only on $B$ and $\bar{\eps}$.
\end{lem}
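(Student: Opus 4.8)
The plan is to compare $\Psi^n_{\bf w}$ with the corresponding composition attached to the degenerate fixed point $F_*=(f_*(x),x,0)$, for which everything reduces to the one-dimensional presentation-function calculus, and to control the difference via the exponential convergence $R^kF\to F_*$ furnished by Lemma~\ref{exponential convergence to 1d map}.

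First I would record the local form of the scaling maps. By construction $\psi^k_v=H_{k-1}^{-1}\circ\La_{k-1}^{-1}$, where $\La_{k-1}^{-1}$ is the dilation by $s_{k-1}^{-1}$ with $|s_{k-1}^{-1}|\to\si$, and $\psi^k_c=R^{k-1}F\circ\psi^k_v$. Using Proposition~\ref{preconv} and the super-exponential decay $\|\eps_{k-1}\|,\|\de_{k-1}\|=O(\bar\eps^{2^{k-1}})$, both $DH_{k-1}^{-1}$ and $DR^{k-1}F$ differ on the relevant domains from the differentials of $H_*^{-1}$ and $F_*$ by $O(\bar\eps^{2^{k-1}})$; in particular $\psi^k_w$ is $O(\bar\eps^{2^{k-1}})$-close in $C^1$ to the fixed-point scaling map $\psi^w_*$, $w\in\{v,c\}$.

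Next comes the core estimate, which I would organize by coordinate. Projecting to the $x$-axis, the first coordinate of $\Psi^n_{\bf w}$ is, after the affine normalization built into the construction, a composition of $n$ branches of the one-dimensional renormalization scheme of the unimodal maps $f_{k-1}$ --- that is, the objects $g^n_k$ and $G^n_k$ of Lemma~\ref{composition of the presentation function}. Since $f_{k-1}\to f_*$ exponentially, that lemma gives $\|G^n_k-G^{n-k}_*\|_{C^1}\le C_1\rho^{n-k}$, so the derivatives $\|DG^n_k\|$ are uniformly bounded, and undoing the affine rescaling $a^n_k$ --- whose derivative is $\asymp\si^{-n}$ by the a priori bounds recorded in Lemma~\ref{renormalization at critical value} --- yields $\|D(\pi_x\circ\Psi^n_{\bf w})\|=O(\si^n)$; for words containing the letter $c$ one combines this with the a priori bounds, which force all $2^n$ branch images to have comparable diameter $\asymp\si^n$ and bounded distortion. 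For the $(y,z)$-directions, Step~1 shows every factor contracts them by the scalar $s_{k-1}^{-1}$ up to an $O(\bar\eps^{2^{k-1}})$ coupling, and the differentials $DR^{k-1}F$ that appear at the letters $w_k=c$ perturb this pattern by $O(\bar\eps^{2^{k-1}})$ as well, so the product of the $n$ blocks has norm at most $C\prod_{k=1}^{n}|s_{k-1}^{-1}|\cdot\prod_{k=1}^{n}\big(1+O(\bar\eps^{2^{k-1}})\big)$; the second product converges because the corrections are summable (super-exponentially), so the $(y,z)$-contribution is $\le C'\si^n$ as well. Assembling the directions --- the coupling between them runs only through the shift $y\mapsto x$ of $F$ and through $O(\bar\eps)$ terms, both dominated by the rate $\si^n$ --- gives $\|D\Psi^n_{\bf w}\|\le C\si^n$ with $C$ depending only on $B$ and $\bar\eps$.

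The step I expect to be the real obstacle is ensuring that the constants do not accumulate over the $n$ steps: a naive factor-by-factor bound only gives $(C_0\si)^n$, which is useless, and the $x$-direction in particular runs through the critical-value region where the relevant inverse branch of $f$ has large derivatives. The remedy is that $\Psi^n_{\bf w}$ has \emph{bounded distortion}: for points $p,q\in B$ the difference $\log\|D\Psi^n_{\bf w}(p)\|-\log\|D\Psi^n_{\bf w}(q)\|$ telescopes into $n$ terms, the $k$-th of which is controlled by the $C^2$-size of $\psi^k_{w_k}$ on the box $\Psi^{n-k}_{(w_{k+1},\dots,w_n)}(B)$, whose diameter is $\asymp\si^{n-k}$, so the sum is bounded independently of $n$; bounded distortion together with $\diam\Psi^n_{\bf w}(B)\asymp\si^n$ (again from Lemma~\ref{renormalization at critical value}, Lemma~\ref{composition of the presentation function} and the exponential convergence) upgrades the average contraction rate $\si^n$ to the pointwise bound $\|D\Psi^n_{\bf w}(p)\|\le C\si^n$ valid for every $p$ and every word ${\bf w}\in W^n$.
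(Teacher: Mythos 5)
Your proposal is correct, and it diverges from the paper's own argument at exactly the step you flag as the obstacle. The paper's proof is more computational and more local: it writes $\phi^{-1}(w)=f^{-1}(x+\eps\circ H(w))$, computes the partial derivatives of $\phi^{-1}$ explicitly (showing $\|\di_x\phi^{-1}\|\asymp\|(f^{-1})'\|=O(1)$ and $\|\di_y\phi^{-1}\|,\|\di_z\phi^{-1}\|=O(\bar\eps^{2^k})$ at level $k$), notes that $\La_k^{-1}$ contracts by $\si(1+O(\rho^k))$, concludes $\|D\psi^k_w\|\le C\si$ for each factor, and then simply multiplies the $n$ factors. As you observe, that last multiplication as literally written only yields $(C\si)^n$; the paper does not spell out why the per-factor constants do not compound. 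Your route supplies precisely the missing mechanism: in the dominant $x$-direction you invoke Lemma~\ref{composition of the presentation function} together with the real bound $|\Im g^n_k|\asymp\si^{n-k}$, so the whole $n$-fold composition is bounded at once by $O(\si^n)$ rather than factor by factor, while in the $(y,z)$-directions the per-level corrections are $O(\bar\eps^{2^{k}})$ and hence give a convergent product $\prod_k(1+O(\bar\eps^{2^{k}}))$. This is the same machinery the paper itself only deploys later (in Section 6, for the asymptotics of $S^n_k$), imported here to make the contraction estimate airtight. Two small remarks: your closing bounded-distortion paragraph is largely redundant once the $x$-direction is controlled via $G^n_k$ (and note that the operator norm of a matrix product does not telescope the way a one-dimensional derivative does, so that paragraph should be read as applying to the scalar $x$-block, which dominates); and you should make explicit that the hypotheses of Lemma~\ref{composition of the presentation function} ($\|g_k-g_*\|_{C^3}\le C\rho^k$) are met by the $x$-sections of the $\psi^k_v$, which follows from Lemma~\ref{exponential convergence to 1d map} and Proposition~\ref{preconv} exactly as you indicate in your first step.
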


\begin{proof}
%Recall $ \phi^{-1}(x,y,z) $ is the first coordinate function of $ H^{-1} $.
%\footnote{ The first coordinate map of $ H^{-1}(w) $, $ \phi^{-1}(x,y,z) $ is {\em not} the {\em inverse function} of the some function $ \phi(w) $. However, $ \phi^{-1}(w) $ is a $ \eps - $perturbation of $ f^{-1}(x) $ as follows 
%$$ f \circ \phi^{-1}(w) - \eps \circ H^{-1}(w) = x . $$  }
By the definition of the first coordinate map of $ H^{-1} $ and the trivial identity, $ H \circ H^{-1} = \id $, we have
$$ \phi^{-1}(w) = f^{-1}(x + \eps \circ H(w)) $$

\nin %Recall $ \eps \circ H^{-1}(w) = \eps ( \phi^{-1}(w),\ y,\ z + \de(y, f^{-1}(y), 0)) $. 
Thus 
\comm{************
each partial derivatives of $ \phi^{-1} $ is as follows \msk
\begin{equation*}
\begin{aligned}
\di_x \phi^{-1}(w) &= \ (f^{-1})'(x + \eps \circ H^{-1}(w)) \cdot \big[\, 1 + \di_x \eps \circ H^{-1}(w) \cdot \di_x \phi^{-1}(w) \,\big] \\[0.4em]
\di_y \phi^{-1}(w) &= \ (f^{-1})'(x + \eps \circ H^{-1}(w)) \\[-0.3em]
& \quad \cdot \big[\, \di_x \eps \circ H^{-1}(w) \cdot \di_y \phi^{-1}(w) + \di_y \eps \circ H^{-1}(w) + \di_z \eps \circ H^{-1}(w) \cdot \frac{d}{dy}\, \de(y,f^{-1}(y),0) \,\big] \\[0.2em]
\di_z \phi^{-1}(w) &= \ (f^{-1})'(x + \eps \circ H^{-1}(w)) \cdot \big[\, \di_x \eps \circ H^{-1}(w) \cdot \di_z \phi^{-1}(w) + \di_z \eps \circ H^{-1}(w) \,\big] .
\end{aligned} \msk
\end{equation*}
******}
we obtain
\begin{equation} \label{partial derivatives of phi-1}
\begin{aligned}
\di_x \phi^{-1}(w) &= \ \frac{(f^{-1})'(x + \eps \circ H^{-1}(w))}{1 - (f^{-1})'(x + \eps \circ H^{-1}(w)) \cdot \di_x \eps \circ H^{-1}(w)} \\%[0.6em]
\di_y \phi^{-1}(w) %&= \ \frac{(f^{-1})'(x + \eps \circ H^{-1}(w)) \cdot \big[\,\di_y \eps \circ H^{-1}(w) + \di_z \eps \circ H^{-1}(w) \cdot \frac{d}{dy}\, \de(y,f^{-1}(y),0)\,\big]}{1 - (f^{-1})'(x + \eps \circ H^{-1}(w)) \cdot \di_x \eps \circ H^{-1}(w)} \\
&= \ \di_x \phi^{-1}(w) \cdot \big[\,\di_y \eps \circ H^{-1}(w) + \di_z \eps \circ H^{-1}(w) \cdot \frac{d}{dy}\, \de(y,f^{-1}(y),0)\,\big] \\
\di_z \phi^{-1}(w) %&= \ \frac{(f^{-1})'(x + \eps \circ H^{-1}(w)) \cdot \di_z \eps \circ H^{-1}(w)}{1 - (f^{-1})'(x + \eps \circ H^{-1}(w)) \cdot \di_x \eps \circ H^{-1}(w)} \\[0.3em]
&= \ \di_x \phi^{-1}(w) \cdot \di_z \eps \circ H^{-1}(w) \ .
\end{aligned} \bsk
\end{equation}

\nin Let us estimate $ \| \:\! D \phi^{-1} \| $. The above equation \eqref{partial derivatives of phi-1} implies that $ \| \:\! \di_x \phi^{-1} \| \asymp \| (f^{-1})' \| $ \ and furthermore, $  \phi^{-1}(x,y_0,z_0) \asymp f^{-1}(x) $ for every $ (y_0,z_0) \in {\bf I}^v $. The fact that $ \phi^{-1} \colon \La^{-1}(B) \ra \pi_x(B^1_v) $ implies that the domain of $ f^{-1} $ is $ \pi_x(\La^{-1}(B)) $. Then $ \| (f^{-1})' \| $ is away from one and $ \| \:\! \di_y \phi^{-1} \| $ and $ \| \:\! \di_z \phi^{-1} \| $ are $ O(\bar \eps) $ by the equation \eqref{partial derivatives of phi-1}. 
\ssk \\
The norm of derivatives of $ \phi^{-1}_n(w) $ for each $ n $ has the same upper bounds because $ f_n \ra f_* $ exponentially fast and $ \| \:\! \di_y \phi^{-1}_n \| $ and $ \| \:\! \di_z \phi^{-1}_n \| $ is bounded by $ O(\bar \eps^{2^n}) $. The dilation, $ \La^{-1} $ contracts by the factor $ \si(1 + O(\rho^n)) $ where $ \rho = \dist(F, F_*) $. 
The above equation implies that $ \| DH^{-1}_n \| $ and $ \| D(F\circ H^{-1}_n) \| $ are uniformly bounded and the upper bounds are independent of $ n $. Thus $ \| \psi^n_w \| \leq C\si $ for $ w =v, c $. Hence, the composition of $ \psi^k_w $ for $ k = 1,2,\ldots, n $ contracts by the factor $ \si^n $, that is, $ \| D\Psi_{\bf w} \| \leq C\si^n $ for some $ C>0 $.
\end{proof}
\msk

\subsection{Pieces}
Definitions on Section \ref{def of renormal} implies that $ B_v^1(F) = \psi^1_v(B) $ and $ B_c^1(F) = F \circ \psi^1_v(B) $. Observe that $F(B_c^1) \subset B_v^1 $. 
Similarly, define $ B_v^1(R^nF) $ and $ B_c^1(R^nF) $ as $ \psi^{n+1}_v(B) $ and $F_n \circ \psi^{n+1}_v(B) $ respectively for each $n \geq 1$. Observe that the piece $B^1_c(F_*)$ is a part of the parabolic-like curve of $ \{ \, (x,y) \; | \; x=f_*(y) \,\} $ and $B^1_v(F_*)$ is a rectangular box. \ssk \\ 
Let us call the set $B^n_{\bf w} \equiv B^n_{\bf w}(F) = \Psi^n_{\bf w}(B)$ the {\em pieces} of $n^{th}$ level or $n^{th}$ {generation} where ${\bf w} \in W^n$. %The number of pieces is $2^n$ for each $n$. 
Moreover, $W^n$ can be a additive group under the following correspondence from $W^n$ to the numbers with base 2 of  mod $2^n$
$$ 
{\bf w} \mapsto \sum^{n-1}_{k=0} w_{k+1} 2^k \qquad  (\mod 2^n)
$$
where the symbols $v$ and $c$ are corresponding to $0$ and $1$ respectively.  Let $P \colon W^n \ra W^n$ be the operation of adding 1 in this group. Lemma 5.3 in \cite{CLM} involves the following lemma in three dimension. 

\begin{lem} \label{pieces}
\begin{enumerate}
\item[{\rm (1)}] The pieces for the above maps are nested :
$$
B^n_{{\bf w} \nu} \subset B^{n-1}_{\bf w}, \quad {\bf w} \in W^{n-1}, \ \nu \in W.
$$
\item[{\rm (2)}] The pieces $B^n_{\bf w}, \ {\bf w} \in W$ %\footnote{For the disjointness of every pieces corresponding each words, we have to choose the restricted region of $ B^n_{\bf w} $. For example, $ \Psi^n_{\bf w}(B_{\bullet}(R^nF)) $ is a possible choice. However, choosing the original pieces, $ B^n_{\bf w} $ does not affect the construction of the critical Cantor set. It affects only the number of pieces for $ n $. The number of the mutually disjoint original pieces, $ B^n_{\bf w} $ is just $ 2^{n-1} $. } 
are pairwise disjoint. \msk

\item[{\rm (3)}] the pieces under $F$ are permuted as follows. $F(B^n_{\bf w}) = B^n_{P({\bf w})}$ unless $P({\bf w}) = v^n$. If $P({\bf w}) = v^n$, then $F(B^n_{\bf w}) \subset B^n_{v^n}$.
\end{enumerate}
\end{lem}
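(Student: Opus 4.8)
The plan is to prove the three assertions simultaneously by induction on $n$, reproducing the proof of Lemma 5.3 in \cite{CLM}; the only three-dimensional input is the base case $n=1$. Two structural facts drive the argument. First, by the definitions of $RF$ and of $\psi^1_v = H^{-1}\circ\La^{-1}$ one has $RF = (\psi^1_v)^{-1}\circ F^2\circ\psi^1_v$, i.e.\ the conjugacy $F^2\circ\psi^1_v = \psi^1_v\circ RF$ holds on $B$, together with $\psi^1_c = F\circ\psi^1_v$. Second, the level-$k$ scaling map of $F$ is the level-$(k-1)$ scaling map of $RF$ for $k\geq 2$, so that $\psi^n_\nu(F)(B) = B^1_\nu(R^{n-1}F)$ and, for ${\bf w}=(w_1,\dots,w_{n-1})\in W^{n-1}$ and $\nu\in W$,
$$ B^n_{{\bf w}\nu}(F)=\Psi^{n-1}_{\bf w}(F)\big(B^1_\nu(R^{n-1}F)\big),\qquad B^n_{v{\bf w}}(F)=\psi^1_v\big(B^{n-1}_{\bf w}(RF)\big),\qquad B^n_{c{\bf w}}(F)=F\big(B^n_{v{\bf w}}(F)\big). $$
Thus every generation-$n$ piece is obtained by applying $\psi^1_v$, or $\psi^1_c=F\circ\psi^1_v$, to a generation-$(n-1)$ piece of $RF$, and each $\Psi^{n-1}_{\bf w}(F)$ is injective since $\psi^1_v$ maps $B$ into $B^1_v$, where the first coordinate $f$ is injective, so $F$ (hence $\psi^1_c$) is injective there.

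\emph{Base case.} For $n=1$ I must check: $B^1_v\subset B$ and $B^1_c\subset B$; $B^1_v\cap B^1_c=\varnothing$; $F(B^1_v)=B^1_c$ and $F(B^1_c)\subset B^1_v$. By the construction in Section \ref{def of renormal}, $B^1_v=\psi^1_v(B)$ is compactly contained in $\inter(\overline{A_{-1}\cup A_0})\subset B$ and is invariant under $F^2$; hence $B^1_c=F(B^1_v)\subset F(A_{-1}\cup A_0)\subset A_0\cup A_{-1}\subset B$, while $F(B^1_c)=F^2(B^1_v)\subset B^1_v$, and $F(B^1_v)=B^1_c$ is the definition. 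Disjointness of $B^1_v$ and $B^1_c$ is seen after projecting to the $x$-axis: $\pi_x(B^1_v)$ and $\pi_x(B^1_c)=\pi_x(F(B^1_v))$ lie in $O(\bar\eps)$-neighbourhoods of two disjoint compact intervals of the base dynamics (these come from the one-dimensional renormalizability $J\cap f(J)=\varnothing$, as in the two-dimensional case), hence are disjoint for $\bar\eps$ small; the $z$-direction is harmless, being $O(\bar\eps)$-thin. Since in $W^1$ one has $P(v)=c\neq v$ and $P(c)=v=v^1$, statement (3) at $n=1$ is exactly the last bullet, and (1), (2) at $n=1$ are the first two.

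\emph{Inductive step.} Assume the lemma for $R^{n-1}F$, hence for $RF$, at generation $n-1$. Part (1): $B^n_{{\bf w}\nu}(F)=\Psi^{n-1}_{\bf w}(F)(B^1_\nu(R^{n-1}F))\subset\Psi^{n-1}_{\bf w}(F)(B)=B^{n-1}_{\bf w}(F)$, using $B^1_\nu(R^{n-1}F)\subset B$ from the base case. Part (2): for distinct ${\bf w}\nu,{\bf w}'\nu'\in W^n$, if the prefixes differ then $B^{n-1}_{\bf w}(F)\cap B^{n-1}_{{\bf w}'}(F)=\varnothing$ by the hypothesis and part (1) finishes; if the prefixes agree and $\nu\neq\nu'$, then, $\Psi^{n-1}_{\bf w}(F)$ being injective, disjointness reduces to $B^1_v(R^{n-1}F)\cap B^1_c(R^{n-1}F)=\varnothing$, the base case for $R^{n-1}F$. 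Part (3): write ${\bf u}=v{\bf w}$ or ${\bf u}=c{\bf w}$ with ${\bf w}\in W^{n-1}$. If ${\bf u}=v{\bf w}$, then $F(B^n_{v{\bf w}}(F))=B^n_{c{\bf w}}(F)$ by definition and $P(v{\bf w})=c{\bf w}$, since adding $1$ to the even value $2m$ of $v{\bf w}$ gives $2m+1$ and $c{\bf w}\neq v^n$; this is the equality case. If ${\bf u}=c{\bf w}$, then by $B^n_{c{\bf w}}(F)=F(B^n_{v{\bf w}}(F))$ and the conjugacy,
$$ F\big(B^n_{c{\bf w}}(F)\big)=F^2\big(B^n_{v{\bf w}}(F)\big)=F^2\big(\psi^1_v(B^{n-1}_{\bf w}(RF))\big)=\psi^1_v\big(RF(B^{n-1}_{\bf w}(RF))\big), $$
and the inductive hypothesis for $RF$ gives $RF(B^{n-1}_{\bf w}(RF))=B^{n-1}_{P({\bf w})}(RF)$ unless $P({\bf w})=v^{n-1}$, in which case it lies in $B^{n-1}_{v^{n-1}}(RF)$; applying $\psi^1_v$ and using $\psi^1_v(B^{n-1}_{\bf z}(RF))=B^n_{v{\bf z}}(F)$ yields $F(B^n_{c{\bf w}}(F))=B^n_{vP({\bf w})}(F)$, resp.\ $F(B^n_{c{\bf w}}(F))\subset B^n_{v^n}(F)$. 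This matches the arithmetic: $P$ carries the odd value $2m+1$ of $c{\bf w}$ to $2(m+1)$, which is the word $vP({\bf w})$ when $m+1<2^{n-1}$ and is $0\equiv v^n\pmod{2^n}$ when $m+1=2^{n-1}$, i.e.\ when $P({\bf w})=v^{n-1}$.

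\emph{Main obstacle.} Once the two structural identities are recorded the induction is purely formal and reproduces the two-dimensional proof verbatim; the only genuinely new work is the base case $n=1$, i.e.\ checking in three dimensions that $B^1_v$ is $F^2$-invariant inside $B$ and that $B^1_v$ and $B^1_c=F(B^1_v)$ are disjoint. This relies on the topological picture of $A_{-1}$, $A_0$, $D$ and the local stable manifolds from Section \ref{3D maps}, on the construction of $B^1_v$ as an $F^2$-invariant cube compactly inside $\inter(\overline{A_{-1}\cup A_0})$ in Section \ref{def of renormal}, and on the one-dimensional disjointness $J_c\cap f(J_c)=\varnothing$; the extra coordinate only adds the $O(\bar\eps)$-thin $z$-direction, which affects nothing.
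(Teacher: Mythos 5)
Your proof is correct, and it takes the route the paper itself intends: the paper gives no proof of this lemma, simply deferring to Lemma 5.3 of \cite{CLM}, and your induction built on the conjugacy $F^2\circ\psi^1_v=\psi^1_v\circ RF$ together with the identities $B^n_{v{\bf w}}(F)=\psi^1_v\big(B^{n-1}_{\bf w}(RF)\big)$ and $B^n_{c{\bf w}}(F)=F\big(B^n_{v{\bf w}}(F)\big)$ is exactly that two-dimensional argument transplanted to three dimensions (state the inductive hypothesis as holding at generation $n-1$ for every infinitely renormalizable map, since you invoke it both for $F$ and for $RF$). You also correctly isolate the only genuinely three-dimensional content in the base case, namely the $F^2$-invariance of $B^1_v$ and the disjointness of $\pi_x(B^1_v)\approx J_v$ from $\pi_x(B^1_c)\approx J_c$, which is supplied by the construction in Section \ref{def of renormal}.
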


%The following is for the picture

\begin{figure}[htbp]
\begin{center}
\psfrag{Psiv}[c][c][0.7][0]{\Large $\psi^1_v $}
\psfrag{Psi2}[c][c][0.7][0]{\Large $\psi^2_v $}
\psfrag{Psin}[c][c][0.7][0]{\Large $\psi^n_v $}
\psfrag{RnF}[c][c][0.7][0]{\Large $R^nF $}
\psfrag{F}[c][c][0.7][0]{\Large $F$}
\psfrag{RF}[c][c][0.7][0]{\Large $RF$}
\includegraphics[scale=0.65]{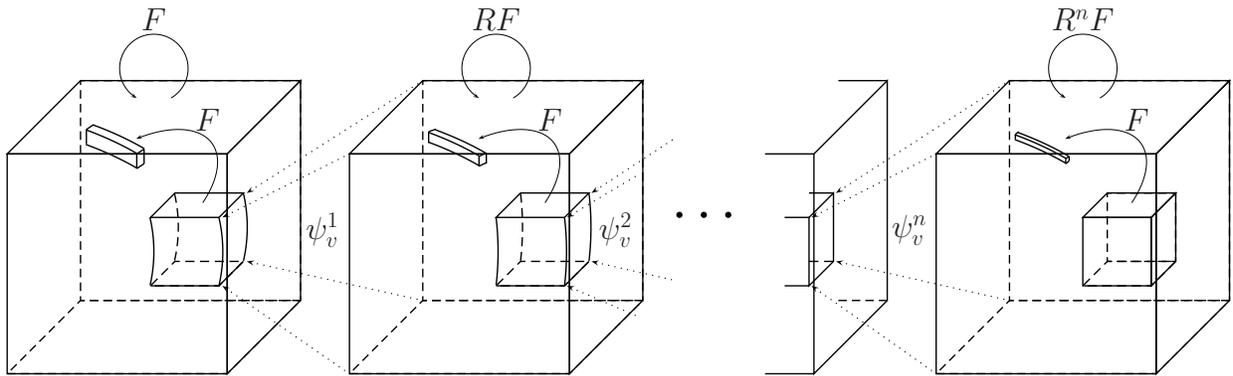}
%\pichere{1.0}{coordi-ren}
\end{center}
\caption{Coordinate change map, $\psi^n_{v}$ at each level}
\label{fig:coordiren}
\end{figure}

\comm{*************
\nin Then the following diagram is commutative. 
%Recall the $\si = \la^{-1} $ where $\la$ is the universal scaling factor.
\begin{displaymath}
\xymatrix @C=1.5cm @R=1.5cm
 {
B  \ar[d]_*+ {\Psi^n_{\bf w}}
  \ar[r]^*+{R^nF} & B \ar[d]^*+{\Psi^n_{\bf w}}   \\
 \Psi^n_{\bf w}(B) \ar[r]^*+{F^{2^n}}           & \Psi^n_{\bf w}(B)  }
\end{displaymath}
***********}
%\msk
\nin Furthermore, Lemma~\ref{diameter} implies the following corollary. Recall that dyadic numbers and dyadic adding machine corresponds to {\em period doubling} renormalization. 

\begin{cor} \label{diameter 2}
The diameter of each piece shrinks exponentially fast for each $ n \geq 1 $, that is, $\diam (B_{\bf w}^n) \leq C\si^n$ for all ${\bf w} \in W^n$ where the constant $C > 0$ depends only on $B$ and $\bar \eps$.
\end{cor}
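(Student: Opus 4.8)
The plan is to deduce this directly from Lemma~\ref{diameter} together with the convexity of the box $B$. Recall that by definition $B^n_{\bf w} = \Psi^n_{\bf w}(B)$, so it suffices to bound the diameter of the image of $B$ under the scaling map $\Psi^n_{\bf w}$. The point is that a uniform bound on $\|D\Psi^n_{\bf w}\|$ turns into a uniform bound on $\diam(\Psi^n_{\bf w}(B))$ because $B$ is convex.

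First I would observe that $B = I^x \times {\bf I}^v$ is a product of intervals, hence a convex set; in particular any two points $p,q \in B$ are joined by the straight segment $[p,q] \subset B$. Parametrizing this segment as $\gamma(t) = (1-t)p + tq$ for $t \in [0,1]$ and applying the fundamental theorem of calculus along $\gamma$ gives
\begin{align*}
|\Psi^n_{\bf w}(p) - \Psi^n_{\bf w}(q)| = \left| \int_0^1 D\Psi^n_{\bf w}(\gamma(t))\,(q-p)\,dt \right| \leq \| D\Psi^n_{\bf w} \| \cdot |p-q|,
\end{align*}
where $\| D\Psi^n_{\bf w}\|$ denotes the supremum of the operator norm over the domain. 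Now I invoke Lemma~\ref{diameter}, which provides $\| D\Psi^n_{\bf w} \| \leq C_0 \si^n$ with $C_0$ depending only on $B$ and $\bar\eps$ and --- crucially --- uniform over all words ${\bf w} \in W^n$. Taking the supremum over $p,q \in B$ then yields
\begin{align*}
\diam(B^n_{\bf w}) = \diam\big(\Psi^n_{\bf w}(B)\big) \leq C_0 \si^n \cdot \diam(B),
\end{align*}
so the claim holds with $C = C_0\,\diam(B)$, again depending only on $B$ and $\bar\eps$. Since $\si = 1/2.6\ldots \in (0,1)$, this is genuine exponential shrinking in $n$.

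I do not expect any real obstacle here: the only two things to verify are that $B$ is convex (immediate from its product structure, so the mean value estimate along segments applies) and that the constant in Lemma~\ref{diameter} is uniform in ${\bf w}$ (which is already part of that lemma's statement, since the bounds on $\|DH^{-1}_n\|$, $\|D(F\circ H^{-1}_n)\|$ and the contraction factor $\si(1+O(\rho^n))$ of the dilations established there do not see the choice of branch $w \in \{v,c\}$). All the substantive analytic work has thus already been done in the proof of Lemma~\ref{diameter}; the corollary is a one-line consequence.
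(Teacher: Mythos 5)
Your argument is correct and is exactly the route the paper intends: the corollary is stated as an immediate consequence of Lemma~\ref{diameter}, and the mean value estimate along straight segments in the convex box $B$ is the standard way to pass from the uniform bound $\|D\Psi^n_{\bf w}\| \leq C\si^n$ to $\diam(B^n_{\bf w}) \leq C\si^n \diam(B)$. No gap.
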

%The image of $B(R^nF)$ under the coordinate change map $ \Psi^n_{\bf w}$ for $ {\bf w}=(w_1, \ldots , w_n) \in \lbrace v, c \rbrace^n  $ has $ 2^n $ disjoint sets.
\nin Define the minimal invariant Cantor set of infinitely renormalizable H\'enon-like map $F$ as follows
$$
\OO \equiv \OO_F = \bigcap_{n=1}^{\infty} \bigcup_{{\bf w} \in W^n} B^n_{\bf w}.
$$
%Then $\OO$ is the invariant Cantor set under $F$. 
Since each $\Psi^n_{\bf w}$ is a diffeomorphism from $B^n_{\bf w}$ to its image, passing the limit with the result of Lemma \ref{pieces}, the Cantor set $ \OO_F $ is invariant under $F$. 
%\ssk \\
Let us consider the inverse limit of $W^n$, say $\displaystyle {W^{\infty} \equiv \lim_{\longleftarrow}W^n} $. The element of this set is the infinite sequence $(w_1 w_2 \ldots)$ of symbols. Thus $ W^{\infty} $ is the set of formal power series of dyadic numbers when $v$ and $c$ corresponds to 0 and 1 respectively.
$$
{\bf w} \mapsto \sum^{\infty}_{k=0} w_{k+1}2^k
$$
Then $W^{\infty}$ is {\it dyadic} group and it is also Cantor set with topology induced by the following metric
$$
\sum^{\infty}_{i=0} \frac{| \;\! v_i - w_i |}{2^i}
$$
where $ v_i $ and $ w_i $ are $ i^{th} $ letters of $ (v_1v_2v_3 \ldots)$ and $ (w_1w_2w_3 \ldots)$ respectively for every $ i \in \N $. For the detailed construction of dyadic group as Cantor set, see ~\cite{BB}. With this Cantor set, the {\it adding machine} $P \colon W^{\infty} \ra W^{\infty}$ is the operation of adding 1 in this group. Non negative integers with base 2 are embedded as the set of finite numbers in the dyadic group. Moreover, $F$ acts on the critical Cantor set as an adding machine of dyadic group by the following Corollary.

\begin{cor} \label{dyadic adding machine}
The map $F|_{\;\!\OO} $ is topologically conjugate to the dyadic adding machine $P %\colon W^{\infty} \lra W^{\infty}
$ on $ W^{\infty} $. The conjugacy is the following homeomorphism $h \colon W^{\infty} \lra \OO$
$$
h \colon {\bf w} = (w_1\,w_2 \ldots) \mapsto \bigcap ^{\infty}_{n=1} B^n_{w_1 \ldots\, w_n} .
$$
Furthermore, there exists the unique invariant probability measure $\mu$ of which support is the Cantor set $\OO$.
\end{cor}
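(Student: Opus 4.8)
The plan is to show directly that the map $h$ displayed in the statement is a well-defined homeomorphism, that it intertwines $F|_{\OO}$ with the adding machine $P$, and then to transport the Haar measure of the dyadic group $W^{\infty}$ through $h$. First I would check that $h$ is well defined: by Lemma~\ref{pieces}(1) the pieces $B^n_{w_1\ldots w_n}$ are nested and, being continuous images of the compact box $B$, they are nonempty compacta; by Corollary~\ref{diameter 2} their diameters satisfy $\diam(B^n_{w_1\ldots w_n})\le C\si^n\to 0$. Hence $\bigcap_{n\ge 1}B^n_{w_1\ldots w_n}$ is a single point, lying in $\OO$ by definition of $\OO$, so $h\colon W^{\infty}\to\OO$ is a genuine map. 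The same diameter estimate gives continuity: if ${\bf v}$ and ${\bf w}$ agree on a long prefix $w_1\ldots w_n$, then $h({\bf v})$ and $h({\bf w})$ both lie in $B^n_{w_1\ldots w_n}$, which has diameter $O(\si^n)$; and in $W^{\infty}$ closeness of two sequences is precisely agreement on a long prefix.

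Injectivity of $h$ follows from Lemma~\ref{pieces}(2): if ${\bf v}\ne{\bf w}$, let $n$ be the first index at which they differ; then $h({\bf v})\in B^n_{v_1\ldots v_n}$ and $h({\bf w})\in B^n_{w_1\ldots w_n}$, and these pieces are disjoint. Surjectivity onto $\OO$ is just the definition $\OO=\bigcap_{n}\bigcup_{{\bf w}\in W^n}B^n_{\bf w}$. Since $W^{\infty}$ is compact and $\OO$ is Hausdorff, the continuous bijection $h$ is a homeomorphism.

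For the conjugacy, fix ${\bf w}=(w_1w_2\ldots)$, set $x=h({\bf w})$, so $x\in B^n_{w_1\ldots w_n}$ for every $n$. By Lemma~\ref{pieces}(3), $F(x)\in F(B^n_{w_1\ldots w_n})\subset B^n_{P(w_1\ldots w_n)}$ for every $n$; the exceptional case $P(w_1\ldots w_n)=v^n$ yields only an inclusion rather than an equality, but that is all that is used. The point to check with care is that the finite operations $P$ on the groups $W^n$ are compatible with $P$ on $W^{\infty}$, i.e.\ that truncation of $P({\bf w})$ to its first $n$ letters equals $P(w_1\ldots w_n)$; this is exactly the assertion that adding $1$ in the dyadic group reduces modulo $2^n$ to adding $1$ in $W^n\cong\Z/2^n\Z$, and it is where the carry in dyadic addition (and the role of the all-$c$ word $v^n$ as the overflow case) has to be tracked. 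Granting this, $F(x)$ lies in $B^n_{(P{\bf w})_1\ldots(P{\bf w})_n}$ for all $n$, hence $F(x)=h(P({\bf w}))$. Thus $F\circ h=h\circ P$ on $W^{\infty}$; since $\OO$ is invariant under $F$ and $P$ is a bijection, $F|_{\OO}=h\circ P\circ h^{-1}$ is a bijection of $\OO$ topologically conjugate to $P$.

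For the measure, $W^{\infty}$ is a compact abelian group and $P$ is translation by the generator; its forward orbit of the identity is dense, so $P$ is a minimal translation, and a minimal translation on a compact group is classically uniquely ergodic with normalized Haar measure as its unique invariant probability measure. Pushing this measure forward by the homeomorphism $h$ produces the unique $F|_{\OO}$-invariant probability measure $\mu$ on $\OO$; its support is all of $\OO$ because Haar measure has full support and $h$ is a homeomorphism. I expect the only real obstacle to be the bookkeeping in the conjugacy step (compatibility of the adding machine across levels and the overflow case $P({\bf w})=v^n$); the rest is a routine combination of Corollary~\ref{diameter 2}, Lemma~\ref{pieces}, and compactness, together with the standard unique ergodicity of adding machines.
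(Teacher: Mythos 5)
Your proposal is correct and follows essentially the same route as the paper: nested pieces with exponentially shrinking diameters give a well-defined continuous bijection, disjointness of same-level pieces gives injectivity, Lemma~\ref{pieces}(3) passed to the limit gives the conjugacy, and compactness upgrades the bijection to a homeomorphism. You additionally spell out the unique ergodicity of the adding machine and the pushforward of Haar measure, which the paper's proof leaves implicit; that is a welcome completion rather than a divergence.
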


\begin{proof}
Consider the following diagram.
\begin{displaymath}
\xymatrix @C=1.5cm @R=1.5cm
 {
W^{\infty}  \ar[d]_*+ {h}
  \ar[r]^*+{P} & W^{\infty} \ar[d]^*+{h}   \\
  \OO \ar[r]^*+{F}          & \OO  }
\end{displaymath}
\ssk \\
Take a word ${\bf w} \in W$. Let ${\bf w}_i = (w_1w_2w_3 \ldots w_i)$ be the first consecutive $i$ concatenations of the word ${\bf w}= (w_1w_2w_3 \ldots)$. Then by Lemma \ref{pieces}, $F(B^i_{{\bf w}_i}) = B^{i}_{{\bf w}_i +1}$ if $ {\bf w}_i \neq v^n$. Otherwise, $F(B^i_{{\bf w}_i}) \subset  B^{i}_{{\bf w}_i +1}$. Each domain $B^i_{{\bf w}_i}$ shrinks to a point of $\OO_F$ when $ i \ra \infty$. Then passing the limit
$$
F \left(\, \bigcap_{i=1}^{\infty} B^i_{{\bf w}_i} \right) = \bigcap_{i=1}^{\infty} B^{i}_{{\bf w}_i +1}
$$
where $ {\bf w}_i +1 $ is the image of $ {\bf w}_i $ under the adding machine with finite length $ (\mod 2^i) $. Thus $F(h({\bf w})) = h ({\bf w}+1)$, that is, the above diagram is commutative. If two words ${\bf v}$ and ${\bf w}$ have the different $i^{th}$ letter but not before, then $B^i_{{\bf v}_i}$ and $B^i_{{\bf w}_i}$ are disjoint from each other. Moreover, every point of $\OO$ has its word and two different points of $\OO$ have different words by construction of the critical Cantor set. Hence, $h$ is the bijection. The metric of dyadic group implies the (uniform) continuity of $h$. Furthermore, same topological structure and continuous bijection implies that $h$ is a homeomorphism between two compact spaces.
\end{proof}

\begin{rem}
The formal power series of numbers with base 2 are involved with the combinatorics of renormalization operator. If the combinatorics of renormalization is not period doubling but $p$-tupling by a constant $ p $, then we can construct {\it p-adic} additive group of numbers with base $p$ using the same notions. Compare \cite{HLM} for the p-tupling renormalization of two dimensional H\'enon-like maps.
\end{rem}

\nin We will call the set $\OO_F$ constructed above the {\it critical Cantor set} of $F$.

\msk

\section{Average Jacobian}
\nin Let us consider the average Jacobian of infinitely renormalizable map and show that the biggest Lyapunov exponent is 0 in Theorem \ref{max exponent is 1} below. Definitions and lemmas in this section are the same as those for two dimensional H\'enon-like map in \cite{CLM}. 
\ssk \\
Denote the Jacobian determinant of $ F $ at $w$ by $ \Jac F(w) $. Thus
$$
\log \left| \frac{\Jac F(y)}{\Jac F(z)} \right| \leq C \quad \text{for any} \ \ y,z \in B
$$
by some constant $C$ which is not depending on $y$ or $z$. Moreover, Lemma ~\ref{diameter} says the diameter of the domain $B^n_{\bf w} $ converges to zero exponentially fast. %Then these facts implies the following lemma.

\begin{lem}[Distortion Lemma]  \label{distortion}
There exist a constant $C$ and the positive number $\rho <1$ satisfying the following estimate
$$
\log \left| \frac{\Jac F^k(y)}{\Jac F^k(z)} \right| \leq C \rho^n \quad \textrm{for any} \ \ y,z \in B^n_{\bf w}
$$
where $k= 1, 2, 2^2, \ldots , 2^n$.
\end{lem}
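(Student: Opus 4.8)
The plan is to run the classical distortion argument (as for two-dimensional H\'enon-like maps in \cite{CLM}): write $\Jac F^k$ as a product along the orbit, control the variation of $\log|\Jac F|$ by a Lipschitz estimate, and use that the first $k\le 2^n$ iterates of a level-$n$ piece stay inside level-$n$ pieces, which are exponentially small by Corollary~\ref{diameter 2}. Concretely, for $w\in B$ the chain rule gives
$$ \log\bigl|\Jac F^k(w)\bigr| \;=\; \sum_{i=0}^{k-1} \log\bigl|\Jac F\bigl(F^i(w)\bigr)\bigr|, $$
so for $y,z$ lying in a common set,
$$ \log\left|\frac{\Jac F^k(y)}{\Jac F^k(z)}\right| \;\le\; \sum_{i=0}^{k-1} \Bigl|\, \log\bigl|\Jac F(F^i(y))\bigr| - \log\bigl|\Jac F(F^i(z))\bigr| \,\Bigr|. $$

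Since $F$ is analytic on the compact box $B$ and $\Jac F$ is bounded away from $0$ there (the standing hypothesis behind the average Jacobian; it already underlies the crude bound $\log|\Jac F(y)/\Jac F(z)|\le C$ recorded just before the lemma), the function $\log|\Jac F|$ is $C^1$ on the convex set $B$, hence Lipschitz with some constant $L$, uniform over $F\in\II_B(\bar\eps)$ because $\|\eps\|_{C^3},\|\de\|_{C^3}\le\bar\eps$. For the localization of the orbit, Lemma~\ref{pieces}(3) gives $F(B^n_{\bf w})\subseteq B^n_{P({\bf w})}$ for every ${\bf w}\in W^n$, so by induction $F^i(B^n_{\bf w})\subseteq B^n_{P^i({\bf w})}$ for all $i\ge 0$ (in particular the orbit segments never leave $B$). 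Hence for $y,z\in B^n_{\bf w}$ and $0\le i\le k-1$ both $F^i(y)$ and $F^i(z)$ lie in the single piece $B^n_{P^i({\bf w})}$, and Corollary~\ref{diameter 2} yields $\dist(F^i(y),F^i(z))\le\diam\bigl(B^n_{P^i({\bf w})}\bigr)\le C\sigma^n$, where $\sigma=1/2.6\ldots$ is the universal scaling factor.

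Combining the Lipschitz bound with this geometric estimate and using $k\le 2^n$,
$$ \log\left|\frac{\Jac F^k(y)}{\Jac F^k(z)}\right| \;\le\; \sum_{i=0}^{k-1} L\,C\,\sigma^n \;\le\; LC\,(2\sigma)^n, $$
and since $\sigma<1/2$ we may take $\rho:=2\sigma\in(0,1)$; if the uniform version of Lemma~\ref{diameter} forces the slightly larger piece-contraction rate $\sigma(1+O(\rho_0^n))$, its double is still $<1$ for all large $n$ and the finitely many small $n$ are absorbed into the constant. The one point that genuinely needs care is precisely this balance: the orbit contributes $2^n$ terms, which is compensated by the $\sigma^n$-smallness of the pieces exactly because $2\sigma<1$ — everything else is the chain rule and the uniform $C^1$-control of $\log|\Jac F|$. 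Finally, note the argument gives the estimate for \emph{every} $k\le 2^n$; the statement singles out $k=1,2,\dots,2^n$ only because these are the return times relevant to renormalization.
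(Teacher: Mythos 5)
Your argument is correct and is exactly the standard distortion argument that the paper intends: the paper states this lemma without proof (deferring to the two-dimensional case in \cite{CLM}), and the two ingredients it records immediately beforehand --- the uniform bound on $\log|\Jac F(y)/\Jac F(z)|$ coming from $\Jac F$ being bounded away from zero on $B$, and the exponential shrinking of the pieces --- are precisely what you combine via the chain rule, Lemma \ref{pieces}(3), and the observation that $2\sigma<1$. Nothing to add.
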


\nin Existence of the unique invariant probability measure, say $\mu$, on $\OO_F$ enable us to define the average Jacobian.
$$
b_F \equiv b = \exp \int_{\OO_F} \log \Jac F \; d\mu
$$
On each level $ n $, the measure $\mu$ on $\OO_F$ satisfies that $\mu (B^n_{{\bf w}_n} \cap \OO_F) = 1/ 2^n$ for every ${\bf w}_n$ where ${\bf w}_n$ is a word of length $ n $. 
\msk
\begin{cor} \label{average}
For any piece of $B^n_{{\bf w}}$ on the level $ n $ and any point $ w \in B^n_{{\bf w}} $,
$$
\Jac F^{2^n} \!(w) = b^{2^n} (1 + O(\rho ^n))
$$
where $ b $ is the average Jacobian of $ F $ for some positive $ \rho < 1 $.
\end{cor}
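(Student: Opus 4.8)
The plan is to reduce everything to the chain rule along a single orbit of length $2^n$, exploiting that $F$ acts on the level-$n$ pieces as the adding machine. By Lemma \ref{pieces}(3), for any $w \in B^n_{\mathbf{w}}$ the points $w_j := F^j(w)$, $0 \le j < 2^n$, lie in $B$ and satisfy $w_j \in B^n_{P^j(\mathbf{w})}$; since $P$ is a cyclic permutation of order $2^n$ on $W^n$, the map $j \mapsto P^j(\mathbf{w})$ is a bijection onto $W^n$, so the orbit $w_0, \dots, w_{2^n-1}$ meets every level-$n$ piece exactly once. The chain rule then gives $\log \Jac F^{2^n}(w) = \sum_{\mathbf{w}' \in W^n} \log \Jac F(w_{j(\mathbf{w}')})$, where $w_{j(\mathbf{w}')} \in B^n_{\mathbf{w}'}$ denotes the unique orbit point lying in the piece $B^n_{\mathbf{w}'}$.

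Next I would expand $\log b$ piecewise. The sets $B^n_{\mathbf{w}'} \cap \OO_F$, $\mathbf{w}' \in W^n$, are pairwise disjoint by Lemma \ref{pieces}(2) and cover $\OO_F$, and $\mu(B^n_{\mathbf{w}'} \cap \OO_F) = 2^{-n}$, so $\log b = \int_{\OO_F} \log \Jac F \, d\mu = 2^{-n} \sum_{\mathbf{w}' \in W^n} A_{\mathbf{w}'}$, where $A_{\mathbf{w}'}$ is the average of $\log \Jac F$ over $B^n_{\mathbf{w}'} \cap \OO_F$ with respect to the normalized measure $2^n \mu$. Subtracting term by term,
\[
\log \Jac F^{2^n}(w) - 2^n \log b = \sum_{\mathbf{w}' \in W^n} \bigl( \log \Jac F(w_{j(\mathbf{w}')}) - A_{\mathbf{w}'} \bigr).
\]
Since $F$ is analytic with non-vanishing Jacobian on the compact box $B$, the function $\log|\Jac F|$ is Lipschitz on $B$ with some constant $L$; as both $w_{j(\mathbf{w}')}$ and the support of the averaging measure defining $A_{\mathbf{w}'}$ lie in $B^n_{\mathbf{w}'}$, whose diameter is $\le C\si^n$ by Corollary \ref{diameter 2}, each summand is bounded in absolute value by $LC\si^n$. (Alternatively one may invoke the Distortion Lemma \ref{distortion} with $k=1$.)

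Summing the $2^n$ terms yields $|\log \Jac F^{2^n}(w) - 2^n \log b| \le LC(2\si)^n$. Because $\si = 1/2.6\ldots < 1/2$, the number $2\si$ lies strictly in $(0,1)$, so fixing any $\rho$ with $2\si < \rho < 1$ we have $LC(2\si)^n = O(\rho^n)$; exponentiating gives $\Jac F^{2^n}(w) = b^{2^n} e^{O(\rho^n)} = b^{2^n}(1 + O(\rho^n))$, uniformly in $\mathbf{w} \in W^n$ and in $w \in B^n_{\mathbf{w}}$.

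The one genuine subtlety --- which I would flag as the \emph{main point requiring care} rather than a deep obstacle, since this corollary parallels the two-dimensional one in \cite{CLM} --- is the interplay of scales in the last step: the estimate adds up $2^n$ per-piece errors, each only $O(\si^n)$ small, and the argument collapses unless $2\si<1$; this is exactly where the numerical value of the universal scaling factor enters. A secondary, minor point is that the orbit points $w_j$ need not lie on $\OO_F$, so the comparison is between $\log \Jac F$ at an arbitrary point of a piece and its $\mu$-average over the Cantor part of that \emph{same} piece, which the diameter bound of Corollary \ref{diameter 2} handles just as well.
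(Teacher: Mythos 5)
Your proof is correct and follows essentially the same route as the paper's: both arguments rest on the adding-machine permutation of the level-$n$ pieces, the equidistribution $\mu(B^n_{\bf w}\cap\OO)=2^{-n}$, and the exponential shrinking of the piece diameters. The only packaging difference is that the paper obtains a mean-value point $\eta\in B^n_{\bf w}$ with $\log\Jac F^{2^n}(\eta)=2^n\log b$ and then cites the Distortion Lemma, whereas you inline the same term-by-term Lipschitz-plus-diameter estimate and correctly flag that the factor $2^n$ is absorbed only because $2\si<1$.
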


\begin{proof}
Since
$$
\int _{B^n_{\bf w}} \log \Jac F^{2^n} d\mu = \int _{\OO} \log \Jac F \; d\mu = \log b,
$$
there exists a point $\eta \in B^n_{\bf w}$ such that  $ \log \Jac F^{2^n} \!(\eta) = \dfrac{\log b}{\mu(B^n_{\bf w})}  = 2^n \log b$ \\
For any $ w \in B^n_{\bf w}$, $\log \, \Jac F^{2^n} \! (z) \leq C \rho^n + \log \, \Jac F^{2^n} \!(\eta)$, and $ O(\rho^n) = \log (1 + O(\rho^n))$ for a fixed constant $\rho$. Then
\begin{align*}
\log \Jac F^{2^n} (w) &= \log (1 + O(\rho^n)) + \log \Jac F^{2^n} (\eta)\\
                            &= \log (1 + O(\rho^n)) \cdot  b^{2^n} 
\end{align*}
\textrm{Therefore}, \ $ \displaystyle{ \Jac F^{2^n} (w) = b^{2^n} (1 + O(\rho^n))} $.
\end{proof}

\nin Three Lyapunov exponents $\chi_0, \chi_1$ and $ \chi_2$ exist for three dimensional map. Let $\chi_0$ be the maximal one. 
%By  the invariant measure $\mu$ 
%and the definition of the average Jacobian, $ b$ is the sum of Lyapnov exponent and moreover 
%there is at most two non-zero exponent because the dimension of the stable manifold at every point under $F^{2^n}$ is 2.
%In general, the Lyapunov exponent $\chi$ is a measurable function. 
Since $F$ is ergodic with respect to the invariant finite measure $\mu$ on the critical Cantor set, we have the following inequality.
$$
| \: \! \mu |\, \chi(x) \leq \int_{\OO_F} \log \| DF(x) \| \; d\mu(x)
$$
where $| \:\!\mu| $ is the total mass of $ \mu $ on $ \OO_F $.
\msk
\begin{thm} \label{max exponent is 1}
The maximal Lyapunov exponent of $F$ on $\OO_F$ is 0.
\end{thm}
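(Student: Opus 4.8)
The plan is to pin $\chi_0$ down by a two-sided squeeze: the renormalization microscopes will force $\chi_0\le 0$, and the Cantor structure of $\OO_F$ together with unique ergodicity will force $\chi_0\ge 0$. The first thing I would set up is the identity
$$\chi_0(F)=2^{-n}\,\chi_0(R^nF)\qquad\text{for every }n\ge 0,$$
where $\chi_0(R^nF)$ denotes the maximal Lyapunov exponent of $R^nF$ on its critical Cantor set $\OO_{R^nF}$ with respect to the unique invariant measure $\mu_n$ supplied by Corollary~\ref{dyadic adding machine} applied to $R^nF$. Indeed $\mu$ is $F^{2^n}$-invariant and decomposes into the $2^n$ ergodic components $\mu_{\bf w}=2^n\mu|_{B^n_{\bf w}}$, ${\bf w}\in W^n$, which $F$ permutes cyclically; since $F$ and the microscope $\Psi^n_{v^n}$ are diffeomorphisms whose derivatives and inverse derivatives are bounded on the relevant compact pieces (for this fixed $n$), invariance of the Oseledets spectrum under $C^1$ conjugacy shows that all the $\mu_{\bf w}$ have the same spectrum, and that this common spectrum equals, via $\Psi^n_{v^n}$, the spectrum of $R^nF$ on $\OO_{R^nF}$. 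Combined with $\chi_0(F^{2^n},\mu_{\bf w})=2^n\chi_0(F,\mu)$ this gives the displayed identity.

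For the upper bound I would feed $R^nF$ into the ergodic inequality stated just before the theorem, getting $\chi_0(R^nF)\le\int_{\OO_{R^nF}}\log\|D(R^nF)\|\,d\mu_n\le\log\sup_B\|D(R^nF)\|$. By Lemma~\ref{exponential convergence to 1d map} the maps $R^nF$ converge (analytically, hence in $C^1$) to the degenerate map $F_*=(f_*(x),x,0)$, so $\sup_B\|D(R^nF)\|$ stays below a constant $M$ independent of $n$. Hence $\chi_0(F)=2^{-n}\chi_0(R^nF)\le 2^{-n}\log M$, and letting $n\to\infty$ yields $\chi_0(F)\le 0$.

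For the lower bound I would argue by contradiction. If $\chi_0(F)<0$ then all three Lyapunov exponents are negative. Unique ergodicity of $(F,\OO_F,\mu)$ together with the (uniform, easy) half of the subadditive ergodic theorem for the continuous subadditive cocycle $m\mapsto\log\|DF^m\|$ gives some $m$ with $\|DF^m(x)\|\le\lambda<1$ for all $x\in\OO_F$, hence $\|DF^m\|\le\lambda<1$ on a neighbourhood $U$ of $\OO_F$. Now choose $N=2^nm$ with $n$ so large that $\diam B^n_{\bf w}\le C\si^n$ is smaller than the scale of $U$ (Corollary~\ref{diameter 2}). Since $F^{2^n}$ maps each piece $B^n_{\bf w}$ into itself, the straight segment between any two points of $B^n_{\bf w}\cap\OO_F$ stays in $U$, so $d(F^Nx,F^Ny)\le\lambda^{2^n}d(x,y)$ on $B^n_{\bf w}\cap\OO_F$; iterating, $F^{Nk}$ collapses this set to a single point as $k\to\infty$. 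But $F^{Nk}$ restricts to a bijection of the infinite set $B^n_{\bf w}\cap\OO_F$ (Corollary~\ref{dyadic adding machine}), a contradiction. Therefore $\chi_0(F)\ge 0$, and with the upper bound $\chi_0(F)=0$.

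I expect the main obstacle to be the first step: one must verify carefully that the composed microscopes $\Psi^n_{v^n}$ genuinely conjugate $F^{2^n}$ restricted to $\OO_F\cap B^n_{v^n}$ to $R^nF$ on $\OO_{R^nF}$, and that this conjugacy (together with the cyclic permutation of the pieces by $F$) transports the entire Oseledets flag, so that \emph{maximal} exponents are matched with maximal exponents and the passage from $F^{2^n}$ to $F$ divides by $2^n$ cleanly. The uniform version of the subadditive uniquely-ergodic ergodic theorem invoked in the lower bound is standard but should be stated precisely. The remaining estimates are soft consequences of Lemmas~\ref{diameter} and~\ref{exponential convergence to 1d map} and Corollaries~\ref{diameter 2} and~\ref{dyadic adding machine}.
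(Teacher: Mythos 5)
Your proof is correct. The upper bound $\chi_0\le 0$ is essentially the paper's own argument (the printed proof simply defers to Theorem 6.3 of \cite{CLM}, whose two--dimensional argument is exactly your first two paragraphs): push $\mu$ through the microscope $\Psi^n_{v^n}$, identify $2^n\chi_0(F,\mu)$ with $\chi_0(R^nF,\nu_n)$, and bound the latter by the uniformly bounded $C^1$ norm of $R^nF$ supplied by Lemma \ref{exponential convergence to 1d map}. Where you genuinely diverge is the lower bound. The paper excludes $\chi_0<0$ by appealing to Pesin theory --- an ergodic measure all of whose exponents are negative must charge attracting periodic cycles, which the adding machine on $\OO_F$ forbids. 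You instead combine unique ergodicity with the uniform half of the subadditive ergodic theorem to extract a single iterate $F^m$ that contracts uniformly on a neighbourhood of $\OO_F$, and then contradict the fact that $F^{2^nmk}$ maps $B^n_{\bf w}\cap\OO_F$ bijectively onto itself while your estimate forces the diameter of that set to zero. This trades nonuniform hyperbolicity theory for a soft uniform-convergence statement and is more elementary and self-contained; the price is that you must quote the uniqueness-implies-uniformity result for subadditive cocycles (Furman, Schreiber, Sturman--Stark), which is standard but not in the paper's toolkit, and you must verify that every intermediate segment between $F^{jm}x$ and $F^{jm}y$ stays in the contraction neighbourhood --- which does follow from Lemma \ref{pieces} and Corollary \ref{diameter 2}, since all intermediate images lie in pieces of diameter $O(\si^n)$ meeting $\OO_F$. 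Both routes ultimately rest on the same structural fact: the dyadic adding machine admits no periodic or collapsing behaviour.
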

\begin{proof}
\comm{*******************
Let $\mu_n$ be \,$2^n \mu | _{B^n_w}$, an invariant measure under $F^{2^n}$ and let $\nu_n$ be the (unique) invariant measure on $R^nF |_{\;\!\OO_{R^nF}}$. Then
$$
2^n \chi_0(F, \mu) = \chi_0(F^{2^n} |_{B^n_{v^n}}, \mu_n) = \chi_0(R^nF, \nu_n) \leq \int_{B^n_w} \log \| D(R^nF) \|\, d\nu_n \leq C 
$$
 for every $ n \in \N$, where $C$ is a constant independent of $n$. The last inequality comes from the uniformly bounded $C^1$ norm of derivative of $R^nF$. Then the maximal Lyapunov exponent $\chi_0 \leq 0$. If $\chi_0 <0$, then the support of $\mu$ contains some periodic cycles by Pesin's theory. But $\OO_F$ does not contain any periodic cycle because $ F $ acts on $ \OO_F $ as a dyadic adding machine. Therefore, $\chi_0 =0$ and the sum of the other exponents, $\chi_1  + \chi_2$, is $\log b$.
 ***********************}
See the proof of Theorem 6.3 in \cite{CLM}. 
\end{proof}
\nin Observe that $ \log b $ is the sum of Lyapunov exponents except the maximal one.
\bsk

\section{Universal expression of Jacobian determinant}  \label{universality of Jacobian}

\nin Universality of average Jacobian is involved with asymptotic behavior of the non linear scaling map $\Psi^n_{v^n}$ between the renormalized map $F_n \equiv R^nF$ and $F^{2^n}$ for each $n \in \N$. $ \Psi^n_{v^n}$ conjugate $F^{2^n}$ to $F_n$. Thus using the chain rule and Corollary \ref{average}, $ \Jac F_n $ is the product of the average Jacobian of $F^{2^n}$ and the ratio of $ \Jac \Psi^n_{v^n} $ at $ w $ and $ F_n(w) $ as follows
\begin{equation} \label{chain rule}
\begin{aligned}
\Jac F_n(w) &= \Jac F^{2^n}(\Psi^n_{v^n}(w)) \frac{\Jac \Psi^n_{v^n}(w)}{\Jac \Psi^n_{v^n}(F_n(w))}  \\
                  &= b^{2^n} \frac{\Jac \Psi^n_{v^n}(w)}{\Jac \Psi^n_{v^n}(F_n(w))} (1+ O(\rho^n)).
\end{aligned} \msk
\end{equation}
%where $ \Psi^n_0 $ is $ \Psi^n_{v^n} $. \ssk \\
\nin Then in Theorem \ref{Universality of the Jacobian} below, universality of Jacobian of $ \Psi^n_{v^n}$ implies that of $ \Jac F_n $. The asymptotic of non-linear part of $ \Psi^n_{v^n}$ is essential to the universal expression of $ \Jac \Psi^n_{v^n} $.
%\ssk \\
%On the following sections, we express $ \Psi^n_{v^n}$ to be $ \Psi^n$.

\msk 
\subsection{Asymptotic of $\Psi^n_k$ for fixed $ k^{th} $ level } \label{asymptotic coordinate}
For every infinitely renormalizable H\'enon-like map $F$, we define the {\em tip}
\begin{align}
\{\tau \} \equiv \{ \tau_F \} = \bigcap_{n \ge 0} B^n_{v^n}
\end{align}
where the pieces $B^n_{v^n}$ are defined as $ \Psi^n_{v^n}(B(R^nF)) $. The tip of $ R^kF $ is denoted by $\tau_k =\tau (R^k F)$ for each $ k \in \N $. Since every $ B^n_{v^n}(F) $ contains $ \tau_F $, let us condense the notation $ \Psi^n_{v^n} $ into $ \Psi^n_{\tip} $. Moreover, in order to simplify notations and calculations, let the tip move to the origin as a fixed point of each $\Psi^1_v (R^kF)$ for every $ k \in \N $ by the conjugation of appropriate translations. Let us define $  \Psi_k^{k+1} $ in this section \footnote{If we need to distinguish the scaling maps, $ \Psi^n_k $ around tip from its composition with translations, then we use the notation, $ \Psi^n_{k,\, \tip} $. }
\ssk
\begin{align} \label{origin as tip}
    \Psi_k(w) \equiv \Psi_k^{k+1}(w) = \Psi^{k+1}_v (w + \tau_{k+1}) - \tau_k 
\end{align}
where $w=(x, y, z)$. 
\nin Denote the derivative of $ \Psi_k $ %on \eqref{origin as tip} 
at the origin by $D_k \equiv D_k^{k+1}$.
\begin{equation*}
\begin{aligned}   
  \quad D^{k+1}_k \equiv D_k = D\Psi^{k+1}_k(0) &= D(\Psi^1_v(R^k F))(\tau_{k+1} ) & \\
       &= D(T_k \circ \Psi^1_v (R^kF) \circ T^{-1}_{k+1})(0) &
\end{aligned} \ssk
\end{equation*}       
where \ $T_j : w \mapsto  w-\tau_j$  for $ j= k,\, k+1$.
Then we can decompose $ D_k $ into the matrix of which diagonal entries are ones and  the diagonal matrix. 
\ssk
\begin{align}   \label{decomposition}
 D_k= 
 \left(    \begin{array}{ccc}
      1 & t_k & u_k \\
        & 1 & \\
        & d_k & 1
    \end{array}      \right)
 \left(    \begin{array}{ccc}
     \alpha_k &  & \\
              & \si_k & \\
              &  & \si_k
    \end{array}       \right) 
=
 \left(    \begin{array} {ccc}
     \alpha_k & \si_k \:\! t_k & \si_k \:\!u_k \\
              & \si_k &  \\
              & \si_k \:\! d_k &\si_k
    \end{array}       \right)         
\end{align} 

\msk
\nin Recall that $\si_k = - \si \left(1+O(\rho^k) \right)$.
Moreover, we can express $\Psi^{k+1}_k$ with the linear and non-linear parts.
\begin{align} \label{Psi on k level}
\Psi^{k+1}_k \equiv \Psi_k(w) = D_k \circ (\id+ {\bf s}_k)(w)
\end{align}
where ${\bf s}_k (w)= (s_k(w),\; 0,\; r_k(y)) $ and $ \|{\bf s}_k (w) \|= O(\| w \|^{\:\!2})$\ near the origin. 
%\msk \\
Comparing the derivative of $ H^{-1}_k \circ \La^{-1}_k $ at the tip and $D_k$ and by Corollary \ref{diameter 2}, we obtain the following estimations
\begin{equation} \label{bounds at 0}
\begin{aligned}
t_k &= \di_y \phi_k^{-1} (\tau_{k+1}) =  \di_x \phi_k^{-1}(\tau_{k+1}) \cdot \big[\, \di_y \eps_k (\tau_k) +  \di_z \phi_k^{-1}(\tau_{k+1}) \cdot d_k \,\big] \\[0.2em]
u_k &= \di_z \phi_k^{-1} (\tau_{k+1}) = \di_x \phi_k^{-1}(\tau_{k+1}) \cdot \di_z \eps_k(\tau_k) \\
 \textrm{and}\quad d_k &= \frac{d}{dy}\,\de_k \left(\pi_y(\tau_{k+1}), f^{-1}_k(\pi_y(\tau_{k+1})),0 \right) %= O(\bar \eps^{2^k})
\end{aligned}
\end{equation}
where $ \phi^{-1}_k(w) = \pi_x \circ H^{-1}_k(w) $. Since the norm, $ \| \:\! \di_x\phi^{-1}_k(\tau_k) \| $ %at the tip 
exponentially converges to $\si$ as $k \ra \infty$, we have the estimation, $\alpha_k = \si^2 \left(1+O(\rho^k) \right)$ for some $\rho \in (0,1)$. The above constants $ | \;\! t_k |, | \:\! u_k | $ and $ | \:\! d_k | $ are $ O \big(\bar \eps^{2^k} \big) $ because $ \| \:\! \eps \|_{C_1} $ and $ \| \:\! \de \|_{C^1}  $ are $ O \big(\bar \eps^{2^k} \big) $.  

\msk 
\begin{lem} \label{bounds on domain}
Let $s_k$ be the function defined on \eqref{Psi on k level}. For each $k \in \N$, \msk
\begin{enumerate}
\item
$ | \, \partial_xs_k| \; = O(1), \qquad \ \ | \, \partial_y s_k|  \ = O(\bar \eps^{2^k}), \qquad | \, \partial_z s_k| \ = O(\bar \eps^{2^k})  $ \ssk
  \item 
$ | \, \partial^2_{xx} s_k| = O(1), \qquad  \ \ \! | \, \partial^2_{xy} s_k| = O(\bar \eps^{2^k}), \qquad | \, \partial^2_{yy} s_k| = O(\bar \eps^{2^k}) $ \ssk
 \item
$ | \, \partial^2_{yz} s_k| = O(\bar \eps^{2^k}), \qquad | \, \partial^2_{zx}s_k| = O(\bar \eps^{2^k}), \qquad | \, \partial^2_{zz} s_k| = O(\bar \eps^{2^k}) $ \ssk
 \item
$  | \, r_k(y)| = O(\bar \eps^{2^k}), \qquad   | \, r'_k(y)| = O(\bar \eps^{2^k}), \qquad | \, r_k''(y)| = O(\bar \eps ^{2^k}) $ \msk
\end{enumerate}
\end{lem}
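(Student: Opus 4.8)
The plan is to read off the three components of $\mathbf{s}_k$ from the defining identity $\Psi_k = D_k\circ(\id+\mathbf{s}_k)$ of \eqref{Psi on k level}, exploiting the skew structure of the horizontal-like diffeomorphism \eqref{horizontal-like diffeo}, and then to differentiate $\phi^{-1}_k=\pi_x\circ H^{-1}_k$ twice using the formulas \eqref{partial derivatives of phi-1}. First recall $\Psi_k=T_k\circ H^{-1}_k\circ\La^{-1}_k\circ T^{-1}_{k+1}$ with $T_j:w\mapsto w-\tau_j$ and $\La^{-1}_k$ linear diagonal. By \eqref{horizontal-like diffeo} the second coordinate of $H^{-1}_k$ is just $y$ and its third coordinate is $z+\de_k(y,f^{-1}_k(y),0)$, i.e. $z$ plus a function of $y$ alone; both features survive composition with the diagonal dilation and conjugation by the translations $T_k,T_{k+1}$, so $(\Psi_k)_2=\si_k y$ and $(\Psi_k)_3=\si_k z+g_k(y)$ for a function $g_k$ with $g_k(0)=0$ and $g_k'(0)$ equal to the $(3,2)$-entry $\si_k d_k$ of $D_k=D\Psi_k(0)$, using the normalization $\Psi_k(0)=0$. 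Writing $D_k^{-1}=\Sigma_k^{-1}L_k^{-1}$ with $\Sigma_k=\operatorname{diag}(\alpha_k,\si_k,\si_k)$ and $L_k$ the unipotent factor of \eqref{decomposition}, one checks $L_k^{-1}(x,y,z)=(x-(t_k-u_kd_k)y-u_kz,\ y,\ z-d_ky)$, whence the middle component of $\mathbf{s}_k=D_k^{-1}\Psi_k-\id$ is identically $0$, its third component is $r_k(y)=\si_k^{-1}\big(g_k(y)-\si_k d_k y\big)$, a function of $y$ only with $r_k(0)=r_k'(0)=0$, and $s_k(w)=\alpha_k^{-1}\big((\Psi_k)_1-(t_k-u_kd_k)(\Psi_k)_2-u_k(\Psi_k)_3\big)-x$. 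This fixes the shape $\mathbf{s}_k(w)=(s_k(w),0,r_k(y))$ and reduces the lemma to differentiating these two expressions.

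Next I would bound the partials of $\phi^{-1}_k$. The first-order identities are exactly \eqref{partial derivatives of phi-1}: they give $\|\di_x\phi^{-1}_k\|\asymp\|(f^{-1}_k)'\|=O(1)$, while $\di_y\phi^{-1}_k$ and $\di_z\phi^{-1}_k$ each carry a factor among $\di_y\eps_k$, $\di_z\eps_k$, $\tfrac{d}{dy}\de_k$, hence are $O(\bar\eps^{2^k})$ because $\|\eps_k\|_{C^3},\|\de_k\|_{C^3}=O(\bar\eps^{2^k})$ by Proposition \ref{preconv} iterated along the renormalization orbit (cf. Lemma \ref{exponential convergence to 1d map}). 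Differentiating \eqref{partial derivatives of phi-1} once more, $\di^2_{xx}\phi^{-1}_k$ is again controlled by $\|(f^{-1}_k)''\|=O(1)$ since $f_k\to f_*$ in $C^3$ and the relevant domain $\pi_x(\La^{-1}_k(B))$ stays away from the critical point, whereas any second derivative involving at least one differentiation in $y$ or $z$ either hits an $\eps_k$- or $\de_k$-factor (still $O(\bar\eps^{2^k})$, as the $C^2$ norms are that small) or differentiates $\di_x\phi^{-1}_k$ in a $y/z$ direction (again $O(\bar\eps^{2^k})$ by the first-order estimates), so $\di^2_{xy},\di^2_{xz},\di^2_{yy},\di^2_{yz},\di^2_{zz}$ of $\phi^{-1}_k$ are all $O(\bar\eps^{2^k})$.

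Finally I would substitute into the formulas for $s_k$ and $r_k$ above. The rescalings inside $\La^{-1}_k$ and the translations contribute only uniformly bounded factors (powers of $s_k$, with $s_k\to s_*$), and $\alpha_k^{-1},\si_k^{-1}$ are uniformly bounded since $\alpha_k=\si^2(1+O(\rho^k))$ and $\si_k=-\si(1+O(\rho^k))$; combined with $|t_k|,|u_k|,|d_k|=O(\bar\eps^{2^k})$ from \eqref{bounds at 0} the chain rule yields $\di_x s_k=O(1)$ and $\di^2_{xx}s_k=O(1)$, while every remaining first and second partial of $s_k$ is $O(\bar\eps^{2^k})$ (each such partial forces an $\eps_k/\de_k$-factor or a $t_k,u_k,d_k$-factor to appear). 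For $r_k$, since $g_k$ is built from $\de_k$, $f^{-1}_k$ and a constant, $r_k,r_k',r_k''$ are all $O(\bar\eps^{2^k})$ via $\|\de_k\|_{C^2}=O(\bar\eps^{2^k})$, the bounded $C^2$ norm of $f^{-1}_k$, and $|d_k|=O(\bar\eps^{2^k})$. That $\di_x s_k,\di^2_{xx}s_k$ are merely $O(1)$ and not small is forced: $s_k$ records precisely the one-dimensional nonlinearity of $f^{-1}_k$ in the $x$-direction.

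The step I expect to be the real obstacle is the uniform-in-$k$ control of the second derivatives of $\phi^{-1}_k$: making precise that $\di^2_{xx}\phi^{-1}_k=O(1)$ while one genuinely gains the factor $\bar\eps^{2^k}$ on every other second partial. This needs (i) a $k$-independent $C^2$ bound for $f^{-1}_k$ on $\pi_x(\La^{-1}_k(B))$, which follows from $C^3$-convergence $f_k\to f_*$ and the domain staying off the critical point, and (ii) a careful second differentiation of the implicit relation $\phi^{-1}_k(w)=f^{-1}_k\big(x+\eps_k\circ H_k(w)\big)$ underlying \eqref{partial derivatives of phi-1}, tracking which factors are $O(1)$ and which are $O(\bar\eps^{2^k})$; both are bookkeeping, but must be carried out uniformly in $k$.
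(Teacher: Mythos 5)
Your proposal is correct and follows essentially the same route as the paper: identify the components of $\mathbf{s}_k$ by equating the two expressions $D_k\circ(\id+\mathbf{s}_k)$ and $T_k\circ H^{-1}_k\circ\La^{-1}_k\circ T^{-1}_{k+1}$ coordinate by coordinate, bound $r_k,r_k',r_k''$ via $\|\de_k\|_{C^2}=O(\bar\eps^{2^k})$, and reduce the bounds on the partials of $s_k$ to first and second partials of $\phi^{-1}_k$ computed from the implicit relation with $f^{-1}_k$, with only $\di_x\phi^{-1}_k$ and $\di^2_{xx}\phi^{-1}_k$ of order one. The step you flag as the real obstacle (uniform-in-$k$ second-derivative control of $\phi^{-1}_k$) is exactly the bookkeeping the paper carries out.
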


\begin{proof}
The map $\Psi_k $ has the two expressions, $D_k \circ (\id+ {\bf s}_k)$ and $T_k \circ H^{-1}_k \circ \La_k \circ T_{k+1}^{-1}$, that is,
\begin{align*} 
\Psi_k &= D_k \circ (\id+ {\bf s}_k)(w) \\
&=T_k \circ H^{-1}_k \circ \La_k^{-1} \circ T_{k+1}^{-1}(w) = H^{-1}_k \circ \La_k^{-1} (w+ \tau_{k+1}) - \tau_k
\end{align*}
%Let us compare the third and the first coordinates of above two expressions of $\Psi_k $. 
Let $ \tau_k = (\tau_k^x, \tau_k^y, \tau_k^z) $ for each $ k \geq 1 $.  
%in order to obtain the asymptotic of the non-linear part of $\Psi_k $. 
Firstly, let us compare the third coordinates of two expressions of $\Psi_k $. 
\begin{align*}
\si_k (d_ky + z+ r_k(y)) %&= \pi_z\big( \phi^{-1}_k(\si_k w + \si_k \tau_{k+1}) - \tau_k \big) \\
&= \pi_z \big(H^{-1}_k \circ \La_k^{-1} (w+ \tau_{k+1}) - \tau_k \big) \\
&= \si_k(z + \tau_{k+1}^z) + \de_k \big(\, \si_k (y+\tau_{k+1}^y), \ f^{-1}_k (\si_k (y+\tau_{k+1}^y)), \ 0 \big) - \tau_k^z 
\end{align*}
Thus we have the following equation
\[ \si_k  r_k(y) = - \si_k d_k \:\! y + \de_k \big( \si_k (y+\tau_{k+1}^y), \; f^{-1}_k (\si_k (y+\tau_{k+1}^y)), \; 0 \big) +\si_k \tau_{k+1}^z - \tau_k^z .
\]
Then $ | \:\! r_k(y)| \leq C \big( \,| \:\! d_ky | + \| \:\! \de_k \|_{C^0} \big)$ for some $C>0$. Since $ \Dom(\Psi_k) $ is bounded and $ \| \:\! \de_k \| $ is $ O(\bar \eps ^{2^k}) $, we have $| \;\! r_k| = O(\bar \eps ^{2^k}) $. Moreover, 
\[ r_k'(y) = -d_k + \frac{d}{dy}\,\de_k \big( \si_k (y+\tau_{k+1}^y), \; f^{-1}_k (\si_k (y+\tau_{k+1}^y), \; 0 \big)
\]
Thus $ | \:\! r_k'| $ is bounded by $ \| \:\!\de \|_{C^1} $. Similarly, the second derivative $ | \;\! r_k''| $ is also controlled by $ \| \:\! \de \|_{C^2} $. Then $| \;\! r_k'| = O(\bar \eps ^{2^k}) $ and $ | \:\! r_k''| = O(\bar \eps ^{2^k}) $.
\msk \\
Secondly, compare first coordinates using \eqref{decomposition} and \eqref{Psi on k level}. Thus
\begin{align}  \label{comparison}
\alpha_k x + \alpha_k \cdot s_k(w) + \si_k t_k \:\! y + \si_k u_k (z + r_k(y)) = \phi_k^{-1}(\si_k w + \si_k \tau_{k+1}) - \pi_x(\tau_k).
\end{align}
\ssk
It implies the following equations
\begin{equation} \label{bounds of sk}
\begin{aligned}
\alpha_k \cdot \di_x s_k &= \si_k \cdot \di_x \phi^{-1}_k - \alpha_k  \\
\alpha_k \cdot \di_y s_k &= \si_k \cdot \di_y \phi^{-1}_k - \si_k t_k -\si_k \cdot u_k r'_k(y)  \\
\alpha_k \cdot \di_z s_k &= \si_k \cdot \di_z \phi^{-1}_k - \si_k u_k .
\end{aligned}
\end{equation}
 
\nin %The norm of $\eps_k$ and $\de_k$ is uniformly bounded above by $ O \big(\bar \eps^{2^k} \big) $. 
Then by the equation \eqref{partial derivatives of phi-1}\ssk , $ \| \;\!\di_x \phi^{-1}_k \| = O(1)$, $ \| \;\! \di_y \phi^{-1}_k \| = O \big(\bar \eps ^{2^k} \big) $ and $ \| \;\! \di_z \phi^{-1}_k \| = O\big(\bar \eps ^{2^k}\big)$. By the equation \eqref{bounds at 0}, $ | \;\!t_k | $ and $ | \:\! u_k | $ is $ O\big(\bar \eps ^{2^k}\big) $. Hence, $ \| \:\! \partial_xs_k \| \, = O(1) $,  $  \| \:\! \partial_y s_k \| \, = O \big(\bar \eps^{2^k} \big)$ and $ \| \:\! \partial_z s_k \| \; = O \big(\bar \eps^{2^k} \big) $. By the above equation \eqref{bounds of sk}, each second partial derivatives of $s_k$ are comparable with the second partial derivatives of $\phi^{-1}_k$ over the same variables because $  | \, r_k''(y)| = O \big(\bar \eps ^{2^k} \big) $. 
\ssk \\
Let us estimate some second partial derivatives of $ \phi^{-1}_k $. Recall that 
\begin{equation*}
\begin{aligned}
\phi^{-1}_k(w) &= \ f^{-1}_k(x + \eps_k \circ H^{-1}_k(w)) \\[0.2em]
\eps_k \circ H^{-1}_k(w) &= \ \eps_k(\phi^{-1}_k(w),\,y,\,z + \de_k (y,f^{-1}(y),0)) .
\end{aligned}
\end{equation*}
Thus
\begin{equation*}
\begin{aligned}
\di_x \phi^{-1}_k(w) &= \ (f^{-1}_k)'(x + \eps_k \circ H^{-1}_k(w)) \cdot \big[\, 1 + \di_x (\eps_k \circ H^{-1}_k(w))\,\big] \\[0.4em]
\di_x( \eps_k \circ H^{-1}_k(w)) &= \ \di_x \eps_k \circ H^{-1}_k(w) \cdot \di_x \phi^{-1}_k(w) \\[0.4em]
\di_{xx}( \eps_k \circ H^{-1}_k(w)) &= \ \di_x( \eps_k \circ H^{-1}_k(w)) \cdot \di_x \phi^{-1}_k(w) + \di_x \eps_k \circ H^{-1}_k(w) \cdot \di_{xx} \phi^{-1}_k(w) \\
&= \ \di_x \eps_k \circ H^{-1}_k(w) \cdot \big[\,\di_x \phi^{-1}_k(w)\,\big]^2 + \di_x \eps_k \circ H^{-1}_k(w) \cdot \di_{xx} \phi^{-1}_k(w) .
\end{aligned} \msk
\end{equation*}
\nin Moreover, $ \| \:\! \eps_k \|_{C^2} $ and $ \| \:\! \de_k \|_{C^2} $ bounds the norm of every second derivatives of $ \| \:\! \phi^{-1}_k \| $ except $ \| \di_{xx} \phi^{-1}_k(w) \| $. Let us estimate $ \di_{xx} \phi^{-1}_k(w) $
\msk
\begin{equation*}
\begin{aligned}
\di_{xx} \phi^{-1}_k(w) &= \ (f^{-1}_k)''(x + \eps_k \circ H^{-1}_k(w)) \cdot \big[\, 1 + \di_x (\eps_k \circ H^{-1}_k(w))\,\big] \\
& \quad \ \ + (f^{-1}_k)'(x + \eps_k \circ H^{-1}_k(w)) \cdot \di_{xx} (\eps_k \circ H^{-1}_k(w)) .
\end{aligned} \ssk
\end{equation*}
\nin Recall that $ \| \:\! \eps_k \|_{C^2} $ and $ \| \:\! \de_k \|_{C^2} $ are $ O \big(\bar \eps^{2^k} \big) $. Since both $ \| (f^{-1})' \| $ and $ \| (f^{-1})'' \| $ are $ O(1) $, so is $ \| \:\! \di_{xx}\phi^{-1}_k \| $. Any other second derivative of $ \| \:\! \phi^{-1}_k \| $ is bounded by $ O\big( \bar \eps^{2^k} \big) $. For example, the following estimation,
\begin{equation*}
\begin{aligned}
\di_{yx}\phi^{-1}_k(w) &= \di_{xx}\phi^{-1}_k(w) \cdot \left[\, \di_y \eps_k \circ H^{-1}_k(w) + \di_z \eps_k \circ H^{-1}_k(w) \cdot \frac{d}{dy}\,\de_k(y,f^{-1}_k(y),0)\,\right] \\
& \quad \ \ + \di_x \phi^{-1}_k(w) \cdot \left[\, \di_x (\di_y \eps_k \circ H^{-1}_k(w)) + \di_x (\di_z \eps_k \circ H^{-1}_k(w)) \cdot \frac{d}{dy}\,\de_k(y,f^{-1}_k(y),0)\,\right]
\end{aligned}
\end{equation*}
implies that $ \| \;\! \di_{yx} \phi^{-1}_k \| $ is bounded by $ O\big( \bar \eps^{2^k} \big) $. The norm estimation of other second partial derivatives of $ \phi^{-1}_k $ is left to the reader.
\end{proof}
\msk

\subsection{The estimation of non linear part $S^n_k$ from level $k$ to the fixed level $n$}
We consider the behavior of %the coordinate change map 
non linear scaling map from $k^{th} $ level to $n^{th}$ level. 
Let
$$
\Psi^n_k = \Psi_k \circ \cdots \circ \Psi_{n-1}, \quad B^n_k = \Im \Psi^n_k
$$
By Lemma \ref{diameter}, 
$$ \diam(B^n_k) = O(\si^{n-k}) \qquad \textrm{ for} \quad k<n
$$
Then combining Lemma \ref{diameter} and Lemma \ref{bounds on domain}, we have the following corollary.

\begin{cor}  \label {bounds at k level}
 For all points $w = (x,y,z) \in B^n_{k}$ and where $k<n$, we have
\begin{align*}
& |\; \! \di_x s_k(w) | = O(\si^{n-k}) \qquad |\; \! \di_y s_k(w) | = O  \big(\bar \eps^{2^k} \si^{n-k} \big)
 \qquad |\; \! \di_z s_k(w) | = O \big( \bar \eps^{2^k} \si^{n-k} \big) \\
 &|\; \! r'_k(y)|=O \big(\bar \eps^{2^k} \si^{n-k} \big) \qquad \quad |\; \! r''_k(y)|=O \big(\bar \eps^{2^k} \si^{n-k} \big)
\end{align*}
\end{cor}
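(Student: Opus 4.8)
The plan is to exploit two facts already in hand: that $\diam(B^n_k) = O(\si^{n-k})$, recorded just above, and that by \eqref{Psi on k level} the nonlinear part ${\bf s}_k = (s_k,\,0,\,r_k)$ vanishes to second order at the tip. Since each $\Psi_j$ fixes the origin by the normalization \eqref{origin as tip}, so does $\Psi^n_k = \Psi_k \circ \cdots \circ \Psi_{n-1}$, whence $0 \in B^n_k = \Im \Psi^n_k$; together with $\diam(B^n_k) = O(\si^{n-k})$ this forces $\| w \| = O(\si^{n-k})$, so $|x|, |y|, |z| = O(\si^{n-k})$, for every $w = (x,y,z) \in B^n_k$. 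This is the only step where the geometry of the pieces is used.

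From $\| {\bf s}_k(w) \| = O(\| w \|^2)$ near the origin one reads off $\di_x s_k(0) = \di_y s_k(0) = \di_z s_k(0) = 0$ and $r_k(0) = r_k'(0) = 0$. For a first--order quantity $Q \in \{ \di_x s_k,\, \di_y s_k,\, \di_z s_k \}$, Taylor's formula along the segment from the tip to $w$ (which stays in the domain of $\Psi_k$, since the tip is interior to it and $\|w\|$ is small) gives
$$ |Q(w)| = |Q(w) - Q(0)| \;\le\; \Big( \sup_{\Dom(\Psi_k)} \| \nabla Q \| \Big)\, \| w \| \;=\; O(\si^{n-k}) \cdot \sup_{\Dom(\Psi_k)} \| \nabla Q \| . $$
It remains to feed in Lemma \ref{bounds on domain}. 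The point that needs care is that among the second derivatives of $s_k$ only $|\di^2_{xx} s_k|$ is $O(1)$, while $|\di^2_{xy} s_k|$, $|\di^2_{yy} s_k|$, $|\di^2_{yz} s_k|$, $|\di^2_{zx} s_k|$, $|\di^2_{zz} s_k|$ are all $O(\bar\eps^{2^k})$. Hence $\sup \| \nabla(\di_x s_k) \| = O(1)$, yielding $|\di_x s_k(w)| = O(\si^{n-k})$, whereas $\sup \| \nabla(\di_y s_k) \|$ and $\sup \| \nabla(\di_z s_k) \|$ are $O(\bar\eps^{2^k})$, yielding $|\di_y s_k(w)| = O(\bar\eps^{2^k}\si^{n-k})$ and $|\di_z s_k(w)| = O(\bar\eps^{2^k}\si^{n-k})$. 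The same argument with $r_k'(0)=0$ and $\| r_k'' \|_{C^0} = O(\bar\eps^{2^k})$ (Lemma \ref{bounds on domain}) gives $|r_k'(y)| = O(\bar\eps^{2^k}\si^{n-k})$.

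For $r_k''$ I would return to the explicit expression for $r_k$ isolated in the proof of Lemma \ref{bounds on domain},
$$ \si_k\, r_k(y) \;=\; -\,\si_k d_k\, y \;+\; \de_k\big( \si_k (y+\tau_{k+1}^y),\; f^{-1}_k(\si_k (y+\tau_{k+1}^y)),\; 0 \big) \;+\; \si_k \tau_{k+1}^z - \tau_k^z , $$
and differentiate twice in $y$: $r_k''$ is $\si_k$ times a second $y$--derivative of the $\de_k$--composition, which is $O(\bar\eps^{2^k})$ because $\| \de_k \|_{C^2} = O(\bar\eps^{2^k})$ and $\| f^{-1}_k \|_{C^2} = O(1)$, while the contraction of $\La^{-1}_k$ supplies the remaining factor. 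Restricting $y$ to the $O(\si^{n-k})$--neighbourhood of the tip cut out by $B^n_k$ -- and invoking analyticity, so that $\| r_k''' \|_{C^0} = O(\bar\eps^{2^k})$ as well -- closes this last estimate in the same manner.

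The only genuine obstacle is the bookkeeping in the middle step: one must keep track of exactly which of the second--order bounds in Lemma \ref{bounds on domain} is $O(1)$ and which are $O(\bar\eps^{2^k})$, since that dichotomy is precisely what produces the extra $\bar\eps^{2^k}$ in the $y$-- and $z$--derivative estimates but not in the $x$--derivative one. The identification of $B^n_k$ as an $O(\si^{n-k})$--ball about the tip, and the Taylor/mean--value inequalities themselves, are routine.
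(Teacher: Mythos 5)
Your argument is essentially the paper's: the paper's (two-line) proof likewise observes that $s_k$ and $r_k$ vanish to second order at the tip, so that Taylor's expansion on $B^n_k$, whose diameter is $O(\si^{n-k})$ and which contains the origin, converts the second-derivative bounds of Lemma \ref{bounds on domain} into the stated first-derivative bounds with the extra factor $\si^{n-k}$. Your bookkeeping of which second derivatives are $O(1)$ versus $O(\bar\eps^{2^k})$ is exactly what the paper intends, and the first four estimates are fine.

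The one place your write-up does not close is the final bound $|r_k''(y)| = O(\bar\eps^{2^k}\si^{n-k})$. The mean-value step $|r_k''(y)| \le |r_k''(0)| + \|r_k'''\|\,|y|$ only yields the claimed bound if $r_k''(0)$ itself is $O(\bar\eps^{2^k}\si^{n-k})$; the condition $\|{\bf s}_k(w)\| = O(\|w\|^2)$ gives $r_k(0) = r_k'(0) = 0$ but says nothing about $r_k''(0)$, and your explicit differentiation of the $\de_k$-composition only gives $\|r_k''\| = O(\bar\eps^{2^k})$ with no decay in $n-k$. The paper handles this by asserting that $r_k'$ (not just $r_k$) "only contains quadratic and higher order terms at the tip," i.e. $r_k''(0) = 0$; if you want the full statement you need to supply that vanishing (or weaken the last estimate to $O(\bar\eps^{2^k})$, which is all that is actually used downstream).
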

\begin{proof}
By definition, $ s_k(w) $ is quadratic and higher order terms at the tip, $ \tau_k $. Similarly, $ r'_k(y) $ only contains quadratic and higher order terms at the tip. Using Taylor's expansion and the fact that $ \diam(B^n_k) = O (\si^{n-k}) $, we obtain the result of corollary.
\end{proof}
\msk
\nin Since the origin is the fixed point of every $ \Psi_j $, %and $ D_j $ is $ D \Psi_j(0) $ for every $ k \leq j \leq n $, 
derivative of $\Psi^n_k$ at the origin is the composition of consecutive $ D_i $s for $ k \leq i \leq n-1 $
\begin{equation*}
 D^n_k = D_k \circ D_{k+1} \circ \cdots \circ D_{n-1}. %\ ,
%\quad \textrm{which is of form} \ \
%\left ( \begin{array} {ccc}
%*&*&* \\
%  &*&  \\
%  &*&*
%\end{array} \right ) \\
\end{equation*}
Moreover, we can decompose $D^n_k$ into two matrices, the matrix whose diagonal entries are ones and the diagonal matrix by reshuffling. %The group of matrices which is of above form with diagonal entries are ones is a normal subgroup of the group of matrices of which non zero entries are $*$. Then after reshuffling we can express $D^n_k$ as the following.

\begin{rem}
The notations $t_{n+1,\,n}, u_{n+1,\,n}, d_{n+1,\,n} $ are simplified as $t_n, u_n, d_n$ like the notations used in \eqref{decomposition}. Similarly, $ \alpha_{n+1,\,n}, \si_{n+1,\,n}$ are abbreviated as $\alpha_n, \si_n$ respectively. For instance, we let $\alpha_n = \si^2(1+ O(\rho^n)),\ \si_n = -\si (1+ O(\rho^n))$. Using the similar abbreviation, $D_n$ denote $D^{n+1}_n$ and $s_n$ does $s^{n+1}_n$.
\end{rem}

\begin{lem} \label{decomposition of derivative}
The derivative of $ \Psi^n_k $ at the origin, $ D^n_k $ is decomposed into the dilation and other parts as follows
\begin{align*}
D_k^n=
\left ( \begin{array} {ccc}
1 & t_{n,\,k} & u_{n,\, k} \\[0.2em]
   & 1          & \\
   & d_{n,\, k}& 1 
\end{array} \right )
\left ( \begin{array} {ccc}
\alpha_{n,\, k} &                &  \\
                & \si_{n,\,k}    &  \\
                &                & \si_{n,\,k}
\end{array} \right ) 
\end{align*}
\msk \\
where $\alpha_{n,\,k} = \si^{2(n-k)}(1+O(\rho^k))$ and $\si_{n,\,k}= (-\si)^{n-k}(1+O(\rho^k)) $ for some $\rho \in (0,1)$. Each $ t_{n,\,k} $, $ u_{n,\,k} $ and $ d_{n,\,k} $ are comparable with $ t_{k+1,\,k} $, $ u_{k+1,\,k} $ and $ d_{k+1,\,k} $ respectively and converges to the numbers $ t_{*,\,k} $, $ u_{*,\,k} $ and $ d_{*,\,k} $ respectively super exponentially fast as $ n \ra \infty $.
\end{lem}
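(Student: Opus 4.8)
The plan is to prove the lemma by induction on $n$, using the one-step factorization $D_j=U_j\,\Lambda_j$ from \eqref{decomposition}, where $\Lambda_j=\operatorname{diag}(\alpha_j,\si_j,\si_j)$ and $U_j$ is the unipotent matrix whose only nonzero off-diagonal entries are $t_j$ in position $(1,2)$, $u_j$ in position $(1,3)$ and $d_j$ in position $(3,2)$. Two elementary facts drive the computation. First, the product of two matrices of the shape of $U_j$ is again of that shape: if their off-diagonal triples are $(t,u,d)$ and $(t',u',d')$, that of the product is $(\,t+t'+u\,d',\ u+u',\ d+d'\,)$. Second, conjugating such a unipotent matrix by $\Lambda=\operatorname{diag}(\alpha,\si,\si)$, i.e. forming $\Lambda U\Lambda^{-1}$, multiplies the $(1,2)$- and $(1,3)$-entries by $\alpha/\si$ and leaves the $(3,2)$-entry unchanged, because the $y$- and $z$-coordinates carry the same scaling factor $\si$.

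Assume inductively that $D^n_k=U^n_k\,\Lambda^n_k$ with $\Lambda^n_k=\operatorname{diag}(\alpha_{n,k},\si_{n,k},\si_{n,k})$ and $U^n_k$ unipotent of the above shape with off-diagonal triple $(t_{n,k},u_{n,k},d_{n,k})$. Since $D^{n+1}_k=D^n_k D_n=U^n_k\big(\Lambda^n_k\,U_n\,(\Lambda^n_k)^{-1}\big)\Lambda^n_k\Lambda_n$, we get $\Lambda^{n+1}_k=\Lambda^n_k\Lambda_n$ and $U^{n+1}_k=U^n_k\cdot\big(\Lambda^n_k U_n(\Lambda^n_k)^{-1}\big)$. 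Telescoping $\alpha_{n,k}=\prod_{j=k}^{n-1}\alpha_j$ and $\si_{n,k}=\prod_{j=k}^{n-1}\si_j$, and using $\alpha_j=\si^2(1+O(\rho^j))$, $\si_j=-\si(1+O(\rho^j))$ with $\prod_{j\ge k}(1+O(\rho^j))=1+O(\rho^k)$, gives $\alpha_{n,k}=\si^{2(n-k)}(1+O(\rho^k))$ and $\si_{n,k}=(-\si)^{n-k}(1+O(\rho^k))$, hence $\alpha_{n,k}/\si_{n,k}=(-\si)^{n-k}(1+O(\rho^k))$. Feeding the conjugated factor and the multiplication rule into $U^{n+1}_k$ produces the recursions $d_{n+1,k}=d_{n,k}+d_n$, $u_{n+1,k}=u_{n,k}+\tfrac{\alpha_{n,k}}{\si_{n,k}}u_n$ and $t_{n+1,k}=t_{n,k}+\tfrac{\alpha_{n,k}}{\si_{n,k}}t_n+u_{n,k}d_n$, with initial data $t_{k,k}=u_{k,k}=d_{k,k}=0$ (consistent with the abbreviations $t_{k+1,k}=t_k$, $u_{k+1,k}=u_k$, $d_{k+1,k}=d_k$, since $\alpha_{k,k}/\si_{k,k}=1$).

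It then remains to extract the asymptotics from these recursions. By the bounds recorded just after \eqref{bounds at 0}, $|t_j|,|u_j|,|d_j|=O(\bar\eps^{2^j})$; since $0<\si<1$, each of the three resulting series converges, its tail past index $n$ being $O(\bar\eps^{2^n})$, which is the claimed super-exponential rate, and the limits define $t_{*,k},u_{*,k},d_{*,k}$. In each series the contribution of the index $j=k$ equals $t_{k+1,k}$, $u_{k+1,k}$, $d_{k+1,k}$ respectively and dominates the remainder, which is $O(\bar\eps^{2^{k+1}})$; hence $t_{n,k}\asymp t_{k+1,k}$, $u_{n,k}\asymp u_{k+1,k}$ and $d_{n,k}\asymp d_{k+1,k}$. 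I expect the only delicate point to be the bookkeeping of which composite dilation $\Lambda^j_k$ conjugates which single-step unipotent factor $U_j$ as the diagonal matrices are moved past the unipotent ones to the right end of the product; the inductive set-up above is arranged precisely so that at step $n$ exactly $\Lambda^n_k$ acts on $U_n$, which keeps the powers of $\si$ and the cross term $u_{n,k}d_n$ in the recursion for $t_{n,k}$ transparent.
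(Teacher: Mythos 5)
Your proposal is correct and follows essentially the same route as the paper: both factor each $D_j$ into its unipotent and diagonal parts, push the diagonal factors to the right, and control the resulting sums for $t_{n,k}$, $u_{n,k}$, $d_{n,k}$ via the super-exponential bounds $|t_j|,|u_j|,|d_j|=O(\bar\eps^{2^j})$ together with $\alpha_{j,k}/\si_{j,k}=(-\si)^{j-k}(1+O(\rho^k))$. The paper writes out the resulting series in closed form by direct multiplication, whereas you obtain the same series through an inductive recursion; this is a difference of bookkeeping, not of method.
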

\begin{proof}
Using the definition of each derivatives of $ \Psi_j $ on the equation \eqref{decomposition} at the origin, we obtain 
\begin{align*}
D^n_{k} = \prod^{n-1}_{j=k} D_j = \prod^{n-1}_{j=k}
\begin{pmatrix}
\alpha_j & \si_j \;\! t_j &  \si_j \;\! u_j\, \\
              & \si_j &  \\
              & \si_j \;\! d_j \,&\si_j
\end{pmatrix} .
\end{align*}
By the straightforward calculation, we have following expressions,

\comm{*********
\begin{align}
D^n_k = 
\begin{pmatrix}
\ \ \displaystyle\prod^{n-1}_{j=k} \alpha_j & T_{n,\,k} & \quad U_{n,\,k} \phantom{**} \\%[0.3em]
 & \displaystyle\prod^{n-1}_{j=k} \si_j & \\[2em]
 & \displaystyle\prod^{n-1}_{j=k} \si_j \sum^{n-1}_{j=k} d_j &\quad \displaystyle\prod^{n-1}_{j=k} \si_j \phantom{*}
\end{pmatrix}
\end{align}
where 
\begin{align*}
U_{n,\,k} \ &= \ \si_k \,\si_{k+1} \,\si_{k+2} \cdots \si_{n-2} \,\si_{n-1}\, u_k \\
& \qquad +\ {\color{blue} \alpha_k}\, \si_{k+1}\, \si_{k+2} \cdots \si_{n-2} \,\si_{n-1} \,u_{k+1} \\
& \qquad +\ {\color{blue} \alpha_k \,\alpha_{k+1}}\, \si_{k+2} \cdots \si_{n-2} \,\si_{n-1}\, u_{k+2} \\
& \hspace{1in} \vdots \\
& \qquad +\ {\color{blue}\alpha_k \,\alpha_{k+1}\, \alpha_{k+2}} \cdots {\color{blue}\alpha_{n-2}}\, \si_{n-1}\, u_{n-1} 
\end{align*}
\begin{align*}
T_{n,\,k} \ &= \ \si_k \,\si_{k+1} \,\si_{k+2} \cdots \si_{n-3}\,\si_{n-2} \,\si_{n-1}\, \big[\, u_k \,(d_{k+1} + d_{k+2} + d_{k+3} + \cdots + d_{n-1} ) + t_k \big] \\[0.2em]
& \quad +\ {\color{blue}\alpha_k}\, \si_{k+1}\, \si_{k+2} \cdots \si_{n-3}\,\si_{n-2} \,\si_{n-1} \,\big[\, u_{k+1} \,( \qquad \quad d_{k+2} + d_{k+3} + \cdots + d_{n-1} ) + t_{k+1} \,\big] \\[0.2em]
& \quad +\ {\color{blue}\alpha_k \,\alpha_{k+1}}\, \si_{k+2} \cdots \si_{n-3}\, \si_{n-2} \,\si_{n-1}\,\big[\, u_{k+2} \,( \qquad \qquad \qquad d_{k+3} + \cdots + d_{n-1} ) + t_{k+2}\, \big]\\
& \hspace{2in} \vdots \\
& \quad +\ {\color{blue}\alpha_k\, \alpha_{k+1}\, \alpha_{k+2}} \cdots {\color{blue}\alpha_{n-3}}\,\si_{n-2} \,\si_{n-1} \,\big[\, u_{n-2} \cdot  d_{n-1} + t_{n-2} \,\big] \\[0.2em]
& \quad +\ {\color{blue}\alpha_k \,\alpha_{k+1}\, \alpha_{k+2}} \cdots {\color{blue}\alpha_{n-3}\, \alpha_{n-2}}\, \si_{n-1} \cdot t_{n-1}  .
\end{align*}
%Then we have the followings.
Moreover,
****************}
\msk
\begin{equation} \label{scaling from kth to nth level}
\begin{aligned}
\si_{n,\,k} &= \prod^{n-1}_{j=k} \si_j = \prod^{n-1}_{j=k} (-\si) (1+ O(\rho^j)) = (-\si)^{n-k}  (1+ O(\rho^k))& \\
\alpha_{n,\,k} &= \prod^{n-1}_{j=k} \alpha_j = \prod^{n-1}_{j=k} \si^2 (1+ O(\rho^j)) = \si^{2(n-k)}  (1+ O(\rho^k))& .
\end{aligned}
\end{equation}
By the definition of $ d_{n,\,k} $ and \eqref{scaling from kth to nth level}, each components of the diffeomorphic part and the scaling part are separated
\begin{equation} \label{sum of d, u and t respectively}
\begin{aligned}
%& \quad \ \ 
d_{n ,\: k} =& \sum^{n-1}_{j=k} d_j \\
% U_{n,\,k} =&\ \si_{n,\,k} u_{n,\,k} = \prod^{n-1}_{j=k} \si_j \sum_{j=k}^{n-1} (-\si)^{j-k}u_j \, (1+ O(\rho^k)) \\
% T_{n,\,k} =&\ \si_{n,\,k} t_{n,\,k} = \prod^{n-1}_{j=k} \si_j \sum_{j=k}^{n-1} (-\si)^{j-k} \left[ u_j \sum_{i=j}^{n-2} d_{i+1} + u_{n-1} + t_j \right] (1+ O(\rho^k))
u_{n,\: k} =&  \sum_{j=k}^{n-1} (-\si)^{j-k}u_j \, (1+ O(\rho^k)) \\
t_{n ,\: k} =&  \sum_{j=k}^{n-2} (-\si)^{j-k} \left[ u_j \sum_{i=j}^{n-2} d_{i+1} + t_j + t_{n-1} \right] (1+ O(\rho^k)) .
\end{aligned}
\end{equation}
Since $ | \:\! d_j | = O\big(\bar \eps^{2^j} \big) $, $ | \:\! u_j | = O\big(\bar \eps^{2^j} \big) $ and $ | \:\! t_j| = O\big(\bar \eps^{2^j} \big) $ for each $ j \in \N $, each terms of the series %of $ d_j $, $ u_j $ and $ t_j $ 
in \eqref{sum of d, u and t respectively} shrink super exponentially fast. Then the sum $ d_{n,\,k} $, $ u_{n,\,k} $ and $ t_{n,\,k} $ are comparable with the first terms of each series. Moreover, $ d_{n,\,k} $, $ u_{n,\,k} $ and $ t_{n,\,k} $ converge to some numbers $ d_{*,\,k} $, $ u_{*,\,k} $ and $ t_{*,\,k} $ as $ n \ra \infty $ super exponentially fast respectively.
\end{proof}
\msk
\nin After reshuffling $\Psi^n_k$, we can factor out $ D^n_k $ from the map $\Psi^n_k$. Then we have
\begin{align} \label{coor change from k to n}
 \Psi_k^n=D_k^n \circ (\id + {\bf S}^n_k)
\end{align}
where  ${\bf S} ^n_k=(S_k^n(w),\ 0,\ R_k^n(y)) %=O(| \: \! w|^2)
$. Observe that $R_k^n $ depends only on $ y $ %which is the second coordinate of the point.
by the direct calculation of $H^{-1}_k \circ \La^{-1}_k \circ \cdots \circ H^{-1}_{n-1} \circ \La^{-1}_{n-1}$.
\msk
\begin{prop} \label{bounds of R}
The third coordinate of\, ${\bf S} ^n_k  $, $ R_k^n(y) $ has the following norm estimations
\begin{align*}
|R_k^n| = O \big(\bar \eps^{2^k} \big), \quad
| \: \!(R_k^n)'| = O\big(\bar \eps^{2^k} \si^{n-k} \big). %\quad \text{and} \quad  \ 
%| \: \!(R_k^n)''|= O\big(\bar \eps^{2^k} \si^{2(n-k)} \big) 
\end{align*}
for all $ w \in B(R^nF) $ and for all $ k<n $.
\end{prop}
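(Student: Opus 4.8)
The plan is to exploit the triangular (skew--product) structure of the $(y,z)$--block of the scaling maps $\Psi_j$: the third coordinate of $H^{-1}_j$ is $z+\de_j(y,f^{-1}_j(y),0)$, which does not involve $x$, and $\La^{-1}_j$ is diagonal, so $\pi_y\Psi_j$ depends only on $y$ while $\pi_z\Psi_j$ depends only on $(y,z)$ and is affine in $z$. Concretely, reading off the third coordinate from the decomposition $\Psi_j=D_j\circ(\id+{\bf s}_j)$ in \eqref{decomposition}--\eqref{Psi on k level} gives
\begin{equation*}
\pi_z\Psi_j(x,y,z)=\si_j\bigl(d_j\,y+z+r_j(y)\bigr),
\end{equation*}
and composing such triangular maps keeps the $(y,z)$--block triangular, which is exactly the remark (already made in the text) that $R^n_k$ depends on $y$ alone.

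The main step is to turn this into a recursion. Writing $\Psi^n_k=\Psi_k\circ\Psi^n_{k+1}$ and using $\pi_z\Psi^n_{k+1}(w)=\si_{n,k+1}\bigl(d_{n,k+1}\,y+z+R^n_{k+1}(y)\bigr)$ (from \eqref{coor change from k to n} together with Lemma~\ref{decomposition of derivative}), I apply $\pi_z\Psi_k$ to $\Psi^n_{k+1}(w)$ and compare the result with the decomposition \eqref{coor change from k to n} of $\Psi^n_k$ itself. The terms affine in $y$ cancel because $d_{n,k}=d_k+d_{n,k+1}$ by \eqref{sum of d, u and t respectively}, and since $\si_{n,k}=\si_k\,\si_{n,k+1}$ what remains is
\begin{equation*}
R^n_k(y)=R^n_{k+1}(y)+\frac{1}{\si_{n,k+1}}\,r_k\!\bigl(\si_{n,k+1}\,y\bigr),\qquad R^n_n\equiv0.
\end{equation*}
Unrolling this downward in $k$ yields the closed forms
\begin{equation*}
R^n_k(y)=\sum_{j=k}^{n-1}\frac{1}{\si_{n,j+1}}\,r_j\!\bigl(\si_{n,j+1}\,y\bigr),\qquad (R^n_k)'(y)=\sum_{j=k}^{n-1}r_j'\!\bigl(\si_{n,j+1}\,y\bigr),
\end{equation*}
the compositions being legitimate because the pieces are nested (Lemma~\ref{pieces}), so $\si_{n,j+1}\,y\in\Dom(r_j)$ whenever $y\in\pi_y B(R^nF)$.

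It then remains to estimate termwise. Since ${\bf s}_j$ is quadratic at the tip (moved to the origin), $r_j(0)=r_j'(0)=0$, while Lemma~\ref{bounds on domain} gives $\|r_j''\|_{C^0}=O(\bar\eps^{2^j})$; Taylor's theorem then yields $|r_j(t)|\le\tfrac12\|r_j''\|\,t^2$ and $|r_j'(t)|\le\|r_j''\|\,|t|$. Using $|\si_{n,j+1}|\asymp\si^{\,n-j-1}\le1$ from \eqref{scaling from kth to nth level} and the fact that $|y|$ is bounded on the fixed box $B(R^nF)$, every summand of $R^n_k$ is $O(\bar\eps^{2^j}\si^{\,n-j-1})$ and every summand of $(R^n_k)'$ is also $O(\bar\eps^{2^j}\si^{\,n-j-1})$. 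The ratio of consecutive summands is $O(\bar\eps^{2^j}\si^{-1})\ll1$ for $\bar\eps$ small, so both series are dominated by their $j=k$ terms; hence $|R^n_k|=O(\bar\eps^{2^k}\si^{\,n-k-1})=O(\bar\eps^{2^k})$ and $|(R^n_k)'|=O(\bar\eps^{2^k}\si^{\,n-k-1})=O(\bar\eps^{2^k}\si^{\,n-k})$, as claimed. The only genuinely delicate point is the bookkeeping in the middle step --- arranging that all the $y$--affine contributions cancel through $d_{n,k}=\sum_j d_j$ so that $R^n_k$ becomes a pure superposition of rescaled quadratic bumps $r_j$ --- after which the bound is just Taylor expansion plus the competition between the super--exponential decay of $\bar\eps^{2^j}$ and the merely geometric growth of $\si^{-(n-j)}$.
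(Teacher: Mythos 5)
Your proof is correct and follows essentially the same route as the paper: both derive the recursion $R^n_k(y)=R^n_{k+1}(y)+\si_{n,k+1}^{-1}\,r_k(\si_{n,k+1}y)$ by comparing third coordinates of $\Psi^n_k=\Psi_k\circ\Psi^n_{k+1}$, and both bound the increments via the quadratic vanishing of $r_k$ at the tip together with $\|r_k''\|=O(\bar\eps^{2^k})$ and the contraction $|\si_{n,k+1}|\asymp\si^{n-k-1}$. The only cosmetic difference is that you unroll the recursion into an explicit sum and run a ratio test, where the paper iterates the corresponding recursive inequalities; the estimates are identical.
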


\begin{proof}
The proof is involved with the recursive formula between each partial derivatives of $S^n_k$ and $S^n_{k+1}$. Thus %before proving this lemma 
we need some intermediate calculations. %Let each number $K_i = K_i(k,n) = O(1)$ for $i \in \N$ be the constant independent of the points on the domain. 
%For a point $w=(x,y,z) \in B $, 
Denote $ \Psi_{k+1}^n(w) $ by $ w_{k+1}^n = (x_{k+1}^n, y_{k+1}^n, z_{k+1}^n) \in  B^n_{k+1} $. 

\nin By the equation \eqref{coor change from k to n}, we have
\msk
\begin{equation*}
\begin{aligned}
\left ( \begin{array} {c}
x_{k+1}^n\\ [0.3em]
y_{k+1}^n\\ [0.3em]
z_{k+1}^n
\end{array} \right )=
\left ( \begin{array} {c l l}
\alpha_{n,\,k+1}     & \si_{n,\,k+1}  \cdot t_{n,\,k+1}  & \si_{n,\,k+1} \cdot u_{n,\,k+1} \\ [0.3em]
                     & \si_{n,\,k+1}              &                 \\ [0.3em]
                     & \si_{n,\,k+1} \cdot d_{n,\,k+1}  &  \si_{n,\,k+1}
\end{array} \right )
\left ( \begin{array} {c}
x+ S _{k+1}^n(w) \\ [0.3em]
y \\ [0.3em]
z+R_{k+1}^n(y)
\end{array} \right )  .            
% \cdot                (1+ O(\rho^{k+1}))
\end{aligned} \bsk
\end{equation*}
Then each coordinate of $w^n_{k+1}$ is
\msk
\begin{equation}  \label{the image of the Psi from nth level to k+1th level}
\begin{aligned} 
x_{k+1}^n =& \ \alpha_{n,\,k+1}(x+S_{k+1}^n(w))+\si_{n,\,k+1} t_{n,\, k+1} \cdot y + \si_{n,\,k+1} u_{n,\,k+1}(z+R_{k+1}^n(y)) \\[0.2em]
y_{k+1}^n =& \ \si_{n,\,k+1} \cdot y \\[0.2em]
z_{k+1}^n =& \ \si_{n,\,k+1}\, d_{n,\,k+1} \cdot y+ \si_{n,\,k+1}(z+R_{k+1}^n(y)) . 
\end{aligned} \msk
\end{equation}
For any fixed $k<n$, the recursive formula of $ \Psi^n_k$ is
\msk
\begin{equation}      \label{recursive form}
\begin{aligned} 
D_k^n \circ (\id + {\bf S}_k^n) &= \Psi _k^n = \Psi _k \circ \Psi_{k+1}^n = D_k \circ (\id + {\bf s}_k) \circ \Psi _{k+1}^n  \\[0.2em]
 &= D_k^n \circ (\id + {\bf S}_{k+1}^n) + D_k \circ {\bf s}_k \circ \Psi_{k+1}^n  \\[0.2em]
\text {Thus} \qquad  \Psi_k^n(w) &= D_k^n \circ (\id + {\bf S}_{k+1}^n)(w) + D_k \circ {\bf s}_k (w_{k+1}^n)
\end{aligned}
\end{equation}
and note that
\begin{equation*}
 D_k \circ {\bf s}_k (w_{k+1}^n) =
\left (
       \begin{array} {crr}
    \alpha_k  &  \si_k t_k  &  \si_k u_k  \\[0.2em]
              &  \si_k &         \\[0.2em]
              &  \si_k d_k  & \si_k
       \end{array}
\right )
\left (
       \begin{array}{c}
s_k(w_{k+1}^n) \\[0.3em]
0 \\[0.1em]
r_k (y_{k+1}^n)
        \end{array}
\right ) . 
%\left ( 1+O(\rho^k) \right )
\end{equation*}
\ssk \\
\comm{**********
Moreover, the first partial derivatives of each coordinate are as follows
\msk
\begin{equation}  \label{1st partial}
\begin{aligned}
\frac {\partial x_{k+1}^n}{\partial x} &= \alpha_{n,\,k+1} \left (1+ \frac{\partial S_{k+1}^n}{\partial x}(w) \right) \\[0.3em]
\frac {\partial x_{k+1}^n}{\partial y} &= \alpha_{n,\,k+1} \frac{\partial S_{k+1}^n}{\partial y}(w) + \si_{n,\,k+1} t_{n,\,k+1} +\si_{n,\,k+1}u_{n,\,k+1} (R_{k+1}^n)'(y) \\[0.3em]
\frac {\partial x_{k+1}^n}{\partial z} &= \alpha_{n,\,k+1} \frac{\partial S_{k+1}^n}{\partial z}(w) + \si_{n,\,k+1} u_{n,\,k+1} \\[0.3em]
\frac {\partial y_{k+1}^n}{\partial y} &= \frac {\partial z_{k+1}^n}{\partial z} = \si_{n,\,k+1} \\[0.3em]
\frac {\partial z_{k+1}^n}{\partial y} &= \si_{n,\,k+1} d_{n,\,k+1}+ \si_{n,\,k+1} \cdot (R_{k+1}^n)'(y) \\[0.3em]
\frac {\partial y_{k+1}^n}{\partial x} &= \frac {\partial y_{k+1}^n}{\partial z} = \frac {\partial z_{k+1}^n}{\partial x} = 0 .
\end{aligned} \msk
\end{equation}
*******************}
In order to estimate of $R^n_k(y)$, compare the third coordinates of functions in \eqref{recursive form}. Recall $\si^{-1} = \la $. Then
\begin{equation*}
\begin{aligned}
z_{k}^n =& \ \si_{n,\,k}\, d_{n,\,k} \cdot y+ \si_{n,\,k}(z+R_{k}^n(y)) \\[0.2em]
=& \ \si_{n,\,k} \,d_{n,\,k} \cdot y+ \si_{n,\,k}(z+R_{k+1}^n(y)) +\si_k \cdot r_k (y_{k+1}^n) 
\end{aligned} %\msk
\end{equation*}
Then
\begin{equation*}
\begin{aligned}
 R_k^n(y)  =& \ R_{k+1}^n(y) +\si_{n,\,k}^{-1} \cdot \si_k \cdot r_k(y_{k+1}^n)   
\end{aligned} \msk
\end{equation*}
where $ \si_{n,\,k}^{-1} \cdot \si_k $ is $ (-\lambda)^{n-k-1}(1+O(\rho^k)) $. \ssk
By %the equation \eqref{1st partial}, 
the recursive relation between $R^n_k(y)$, $  R_{k+1}^n(y) $ and the bounds of $ r_k(y_{k+1}^n) $, we obtain that 
\msk
%the first and the second derivatives of $R^n_k(y)$ are
\begin{align*}
 R^n_k(y) &= R_{k+1}^n(y) + O\big( (-\lambda)^{n-k-1} r_k(y_{k+1}^n) \big), \quad  %  \\[0.2em]
 (R^n_k)'(y) &= (R_{k+1}^n)'(y) + O\big( r_k'(y_{k+1}^n) \big)      %   \\[0.2em]
%\textrm{and}  \quad (R^n_k)''(y) &= (R_{k+1}^n)''(y) + O\big( \si^{n-k} \cdot  r_k''(y_{k+1}^n) \big) . \\[-1em]
\end{align*} 
Hence, by the equation \eqref{the image of the Psi from nth level to k+1th level}, we have 
\begin{equation*}
\begin{aligned}
|R_k^n| &\leq |R_{k+1}^n| + K_0 \bar \eps^{2^k}, \quad  %  \\ %\quad \text{for some} \ \  C_0
| \: \! (R_k^n)'| &\leq  |(R_{k+1}^n)'| + K_1 \bar \eps^{2^k}\si^{n-k} %, \quad % \\
%| \: \!(R_k^n)''| &\leq  |(R_{k+1}^n)''| + K_2 \bar \eps^{2^k} \si^{2(n-k)}
\end{aligned} \msk
\end{equation*}
 for all $ k<n  $. Then, 
\begin{align*}
|R_k^n| = O(\bar \eps^{2^k}), \quad | \: \!(R_k^n)'| = O(\bar \eps^{2^k} \si^{n-k} ) %, \quad |(R_k^n)''|&= O(\bar \eps^{2^k} \si^{2(n-k)}) 
\end{align*}
 for all $ k<n  $.
\end{proof}

%\begin{rem}
%The constant $t_{n,\,k}, u_{n,\,k}$ and $d_{n,\,k}$ is comparable with $ \sum^n_{i=k} t_{i+1,\,i},  \sum^n_{i=k} u_{i+1,\,i}$ and $ \sum^n_{i=k} d_{i+1,\,i} $. However, each series is the sum of numbers shrinking super exponentially fast. Then it is always comparable with the first term of each series. Then  $|t_{n,\,k}|, |u_{n,\,k}|$ and $|d_{n,\,k}|$ is $O(\bar \eps^{2^k})$.
%\end{rem}
%Remember $ \alpha_k = \si^2(1+O(\rho^k)) $, $ \si_k =  -\si(1+O(\rho^k)) $, $ \alpha_{n,\,k} = \si^{2(n-k)}(1+O(\rho^k)) $ and $\si_{n,\,k} =  -\si^{n-k}(1+O(\rho^k))  $. Recall $ \si = \la^{-1} $. By the definition of $D^n_k$ , we see that $ \alpha_{n,\,k} = \alpha_{n,\,k+1} \alpha_k $ and $ \si_{n,\,k} = \si_{n,\,k+1} \si_k $ For $ k<n $.
\msk
\begin{lem} \label{asymptotics of non-linear part 1}
For $k<n$, we have \msk
\begin{enumerate}
\item $ | \; \! \partial_x S^n_k| \; = O(1), \qquad  \qquad | \; \! \partial_y S^n_k|  \ = O(\bar \eps^{2^k}), \qquad \qquad \ | \; \! \partial_z S^n_k| \ = O(\bar \eps^{2^k})  $ \ssk
\item $ | \; \! \partial^2_{xy} S^n_k| = O(\bar \eps^{2^k} \si^{n-k}), \quad \; | \; \! \partial^2_{xz} S^n_k| = O(\bar \eps^{2^k} \si^{n-k})$ \ssk
\item  $| \; \! \partial^2_{yz}S^n_k| = O(\bar \eps^{2^k} ),  \qquad \quad \ | \; \! \partial^2_{zz} S^n_k| = O(\bar \eps^{2^k} ) $ . %\ssk
% \item
%$  |R^n_k(y)| = O(\bar \eps^{2^k})$
\end{enumerate}
\end{lem}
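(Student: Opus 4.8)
The plan is to prove all six estimates at once by a descending induction on $k$ (equivalently, induction on $n-k\ge 1$), the engine being the recursive identity \eqref{recursive form}. In the base case $k=n-1$ the composition $\Psi^n_{n-1}$ reduces to the single map $\Psi_{n-1}$, so $S^n_{n-1}=s_{n-1}$, $R^n_{n-1}=r_{n-1}$, and every bound is immediate from Lemma \ref{bounds on domain} (the factor $\si^{n-k}=\si$ being a harmless fixed constant). For the inductive step I would compare the first coordinates in \eqref{recursive form}: writing $w^n_{k+1}=\Psi^n_{k+1}(w)\in B^n_{k+1}$ and inserting the third-coordinate relation $R^n_k(y)=R^n_{k+1}(y)+\si_{n,k}^{-1}\si_k\,r_k(y^n_{k+1})$ from the proof of Proposition \ref{bounds of R}, the first-coordinate comparison produces an identity of the form
\begin{align*}
S^n_k(w)=S^n_{k+1}(w)+\alpha_{n,k}^{-1}\alpha_k\,s_k(w^n_{k+1})+\alpha_{n,k}^{-1}\si_k\,(u_k-u_{n,k})\,r_k(y^n_{k+1}).
\end{align*}

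Here the prefactors $\alpha_{n,k}^{-1}\alpha_k=\si^{-2(n-k-1)}(1+O(\rho^k))$ and $\alpha_{n,k}^{-1}\si_k=O(\si^{-2(n-k)+1})$ are \emph{exponentially large} in $n-k$, and the whole point is that they are absorbed by the smallness of $s_k$ and $r_k$ on the piece $B^n_{k+1}$. Since $\Psi_k=D_k\circ(\id+{\bf s}_k)$ fixes the tip — the origin — with derivative $D_k$ there, the map ${\bf s}_k$ and its first derivatives vanish at the origin, which lies in $B^n_{k+1}$, a set of diameter $O(\si^{n-k-1})$ by Lemma \ref{diameter}. Consequently, on $B^n_{k+1}$ a Taylor expansion at the tip sharpens the global bounds of Lemma \ref{bounds on domain} to
\begin{align*}
|\di_x s_k|=O(\si^{n-k-1}),\quad |\di_y s_k|,\,|\di_z s_k|=O(\bar\eps^{2^k}\si^{n-k-1}),\quad |r_k|=O(\bar\eps^{2^k}\si^{2(n-k-1)}),\quad |r_k'|=O(\bar\eps^{2^k}\si^{n-k-1}),
\end{align*}
while $|\di^2_{xx}s_k|=O(1)$ and every other second derivative of $s_k$ stays $O(\bar\eps^{2^k})$; this is Corollary \ref{bounds at k level} with $B^n_{k+1}$ in place of $B^n_k$.

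I would then differentiate the displayed identity once (for (1)) and twice (for (2), (3)) in $x,y,z$. The partial derivatives of $w^n_{k+1}$ are read off from $\Psi^n_{k+1}=D^n_{k+1}\circ(\id+{\bf S}^n_{k+1})$ using Lemma \ref{decomposition of derivative} for the entries of $D^n_{k+1}$ — notably $\alpha_{n,k+1}=O(\si^{2(n-k-1)})$, $\si_{n,k+1}=O(\si^{n-k-1})$, $t_{n,k+1},u_{n,k+1},d_{n,k+1}=O(\bar\eps^{2^{k+1}})$ — together with the inductive bounds on $\di S^n_{k+1}$, $\di^2 S^n_{k+1}$ and the bounds on $R^n_{k+1}$, $(R^n_{k+1})'$ from Proposition \ref{bounds of R}. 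This yields $\di_x y^n_{k+1}=\di_x z^n_{k+1}=\di_z y^n_{k+1}=0$, $\di_x x^n_{k+1}=O(\si^{2(n-k-1)})$, $\di_y y^n_{k+1}=\di_z z^n_{k+1}=O(\si^{n-k-1})$, the remaining first partials $O(\bar\eps^{2^{k+1}}\si^{n-k-1})$, and the relevant second partials $O(\bar\eps^{2^{k+1}}\si^{3(n-k-1)})$. Substituting these, in each case the correction term picks up exactly enough powers of $\si$ to cancel $\alpha_{n,k}^{-1}\alpha_k$ and $\alpha_{n,k}^{-1}\si_k$, leaving per-level increments such as
\begin{align*}
|\di_y S^n_k-\di_y S^n_{k+1}|=O(\bar\eps^{2^k}),\quad |\di^2_{xz}S^n_k-\di^2_{xz}S^n_{k+1}|=O(\bar\eps^{2^k}\si^{n-k-1}),\quad |\di^2_{yz}S^n_k-\di^2_{yz}S^n_{k+1}|=O(\bar\eps^{2^k}),
\end{align*}
and similarly for $\di_x,\di_z,\di^2_{xy},\di^2_{zz}$. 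Telescoping these from level $k$ to level $n-1$, and noting that the $j=k$ summand dominates each resulting series (since $\bar\eps^{2^{j+1}}=(\bar\eps^{2^j})^2$ decays super-exponentially in $j$ whereas the $\si$-powers decay only geometrically), recovers precisely the six stated bounds.

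The hard part is purely bookkeeping: three kinds of factors must be tracked together — the exponentially large $\si^{-2(n-k)}$-type prefactors from the rescalings $\La_k^{-1}$, the exponentially small $\si^{n-k}$-type diameters of the pieces $B^n_{k+1}$, and the super-exponentially small $\bar\eps^{2^k}$ coming from $\eps_k,\de_k$ — and one must verify, case by case through the handful of terms produced by the chain rule, that the net power of $\si$ is nonnegative and the net power of $\bar\eps$ is at least $2^k$. The single place that is not automatic is that the only second derivative of $s_k$ which fails to vanish at the tip, $\di^2_{xx}s_k=O(1)$, always appears multiplied by $\di_x x^n_{k+1}=O(\si^{2(n-k-1)})$ coming from the parabolic first component of $\Psi^n_{k+1}$, which supplies the missing $\si$-powers; this is also why $\di^2_{xx}S^n_k$ is absent from the list (it is merely $O(1)$) whereas $\di^2_{xy}S^n_k$ and $\di^2_{xz}S^n_k$ acquire the full gain $\si^{n-k}$.
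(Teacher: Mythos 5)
Your proposal is correct and is essentially the paper's own argument: both rest on the first-coordinate comparison in \eqref{recursive form} to obtain the recursion $S^n_k = S^n_{k+1} + \alpha_{n,k}^{-1}\alpha_k\, s_k(w^n_{k+1}) + (\text{terms in } r_k)$, sharpen the bounds on $s_k, r_k$ over the small piece via Taylor expansion at the tip (Corollary \ref{bounds at k level}), differentiate with the chain rule using Lemma \ref{decomposition of derivative} and Proposition \ref{bounds of R}, and then telescope the resulting per-level increments, noting that the $j=k$ term dominates. Your explicit tracking of the cancellation between the large prefactors $\alpha_{n,k}^{-1}\alpha_k$ and the derivatives of $w^n_{k+1}$ (in particular $\di_x x^n_{k+1}=\alpha_{n,k+1}(1+\di_xS^n_{k+1})$) is exactly what the paper's recursive inequalities encode.
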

\ssk
\begin{proof}
Compare the first coordinates of $\Psi^n_k$ in \eqref{recursive form}. Thus %\ssk
\begin{align*}
x^n_k &= \alpha_{n,\,k}(x+S_k^n(w)) + \si_{n,\,k}\,t_{n,\,k} \cdot y + \si_{n,\,k}\,u_{n,\,k}\big(z+R_k^n(y) \big)  \\
      &= \alpha_{n,\,k}(x+S_{k+1}^n(w)) + \si_{n,\,k}\,t_{n,\,k} \cdot y + \si_{n,\,k}\,u_{n,\,k} \big(z+R_{k+1}^n(y) \big) + \alpha_k \cdot s_k(w_{k+1}^n) \\
      & \quad \ \ + \si_k u_k \cdot r_k(y_{k+1}^n) .
\end{align*}
%\ssk 
Then the recursive formula for $S^n_k$ is as follows
\begin{equation*}
\begin{aligned}
S_k^n(w) &=  S_{k+1}^n(w)+\alpha_{n,\,k}^{-1} \alpha_k \cdot s_k(w_{k+1}^n) +\alpha_{n,\,k}^{-1} \si_{n,\,k} \, u_{n,\,k} \big(R_{k+1}^n(y)-R_k^n(y) \big) \\
 & \quad \ \ +\alpha_{n,\,k}^{-1}\si_k \, u_k \cdot r_k(y^n_{k+1}) .
\end{aligned} \ssk
\end{equation*}
\nin Let us take the first partial derivatives of the above equation in order to have the recursive formulas of each first partial derivatives of $S_k^n(w)  $. Then we obtain that \msk
\begin{align*}
\frac{\di S^n_k}{\di x} = \ &   \frac{\di S^n_{k+1}}{\di x}\left(1+  \frac{\di s_k}{\di x^n_{k+1}}\right)+  \frac{\di s_k}{\di x^n_{k+1}}\\[0.4em]
\frac{\partial S_k^n}{\partial y} 
= \ & \left (1 + \frac{\partial s_k}{\partial x^n_{k+1}} \right ) \frac{\partial S_{k+1}^n}{\partial y} + 
%\alpha_{n,\,k}^{-1} \alpha_k \si_{n,\,k+1} is denoted to be K_1
K_1 \la^{n-k-1} \left[ \Big (t_{n,\,k+1} + u_{n,\,k+1} \cdot (R_{k+1}^n)'(y) \Big )  \frac{\partial s_k}{\partial x^n_{k+1}}  \right. \\[0.2em]
& \quad \left.  + \,\frac{\partial s_k}{\partial y^n_{k+1}} +  \Big ( d_{n,\,k+1} + (R_{k+1}^n)'(y) \Big ) \frac{\partial s_k}{\partial z^n_{k+1}} \; \right]  \\[0.2em]
& \quad + %\alpha_{n,\,k}^{-1} \si_{n,\,k} is denoted to be K_2
K_1\la^{n-k-1} u_{n,\,k} \Big ( (R_{k+1}^n)'(y) - (R_k^n)'(y) \Big) +
 %\alpha_{n,\,k}^{-1} \si_{n,\,k+1} denoted to be K_3
 K_2 \la^{n-k} u_k \cdot r_k'(y_n^{k+1}) \\[0.4em]
\frac{\partial S_k^n}{\partial z}  
= \ & \left( 1 + \frac{\partial s_k}{\partial x^n_{k+1}} \right) \frac{\partial S_{k+1}^n}{\partial z} + K_1 \la^{n-k-1} \left[ u_{n,\,k+1}  \frac{\partial s_k}{\partial x^n_{k+1}} +\frac{\partial s_k}{\partial z^n_{k+1}} \; \right] \\[-1em]
\end{align*} 
where \,$ \alpha_{n,\,k}^{-1} \;\! \alpha_k \;\! \si_{n,\,k+1} = K_1 (-\la)^{n-k-1} $\, and 
 $ \alpha_{n,\,k}^{-1} \;\! \si_{n,\,k+1} = K_2 (-\la)^{n-k+1} $. %\ssk \\
By Corollary \ref{bounds at k level} and Proposition \ref{bounds of R}, $| \; \! \di s_k / \di x^n_{k+1} |$ is $ O(\si^{n-k}) $ and $| \; \! \di s_k / \di y^n_{k+1} | $ and $| \; \! \di s_k / \di z^n_{k+1} | $ is $ O(\bar \eps^{2^k} \si^{n-k})$. Moreover, $| \: \! t_{n,\,k} |,\, | \: \! u_{n,\,k}|$ and $| \: \!d_{n,\,k}|$ are $ O(\bar \eps^{2^k}) $. With all these facts, each partial derivatives of $ S^n_k $ has bounds as follows \msk
\begin{align*}
\left| \frac{\di S^n_k}{\di x} \right|  \leq & ( 1+ O(\rho^{n-k}) ) \left| \frac{\di S^n_{k+1}}{\di x} \right|+  C \si^{n-k}, \qquad  %\\[0.2em]
\left |\dfrac{\partial S_k^n}{\partial y} \right | \leq & \big (1 + O(\rho^{n-k}) \big ) \left |\dfrac{\partial S_{k+1}^n}{\partial y} \right | + C \bar \eps^{2^k} \\[0.2em]
 \left |\dfrac{\partial S_k^n}{\partial z} \right | \leq & \big (1 + O(\rho^{n-k}) \big ) \left| \dfrac{\partial S_{k+1}^n}{\partial z} \right| + C \bar \eps^{2^k} \\[-1em]
\end{align*} 
for some constant $ C>0 $ and $ \rho \in (0,1) $. 
%\ssk \\
Hence, using above recursive formulas we have
\msk
\begin{equation*}
\begin{aligned}
\left| \frac{\di S^n_k}{\di x} \right| =O(\si),  \quad \left | \dfrac{\partial S_k^n}{\partial y} \right | =  O( \bar \eps^{2^k}) \quad \text{and} \quad \left| \dfrac{\partial S_k^n}{\partial z} \right| = O( \bar \eps^{2^k})
\end{aligned} \msk
\end{equation*}
for all $ k<n $.
%\msk \\
The second partial derivatives of $S^n_k$ are as follows % Differentiating the $\di S^n_k / \di y$ over $x$, we have
\msk
\begin{align*}
\frac{\partial^2 S_k^n}{\partial xy}
&=  \left( 1+ \frac{\partial s_k}{\partial x^n_{k+1}} \right) \frac{\partial^2 S_{k+1}^n}{\partial xy} + \alpha_{n,\,k+1} \left( 1 + \frac{\partial S_{k+1}^n}{\partial x} \right)\frac{\partial^2 s_k }{\partial (x^n_{k+1})^2} \frac{\partial S_{k+1}^n}{\partial y}\\
 & \quad + \si_{n,\, k+1} \left( 1 + \frac{\partial S_{k+1}^n}{\partial x} \right)  \left[ \Big( t_{n,\,k+1} + u_{n,\,k+1}(R_{k+1}^n)'(y) \Big) \frac{\partial^2 s_k }{\partial (x^n_{k+1})^2} + \frac{\partial^2 s_k }{\partial x^n_{k+1}y^n_{k+1}} \right.       \phantom{***}  \\
 & \quad \left. + \Big( d_{n,\,k+1} + (R_{k+1}^n)'(y) \Big) \frac{\partial^2 s_k }{\partial x^n_{k+1}z^n_{k+1}} \; \right] \\[0.3em]
 \frac{\partial^2 S_k^n}{\partial xz} 
& = \left( 1+ \frac{\partial s_k}{\partial x^n_{k+1}} \right) \frac{\partial^2 S_{k+1}^n}{\partial xz} + \alpha_{n,\,k+1} \left( 1 + \frac{\partial S_{k+1}^n}{\partial x} \right)\frac{\partial^2 s_k }{\partial (x^n_{k+1})^2} \frac{\partial S_{k+1}^n}{\partial z} \hspace*{1.25in}\\
 & \quad + \si_{n,\, k+1} \left( 1 + \frac{\partial S_{k+1}^n}{\partial x} \right)  
  \cdot \left[  u_{n,\,k+1} \frac{\partial^2 s_k }{\partial (x^n_{k+1})^2} +  \frac{\partial^2 s_k }{\partial x^n_{k+1}z^n_{k+1}} \; \right] \\[0.3em]
\frac{\partial^2 S_k^n}{\partial yz}  
& =  \left (1+\frac{\partial s_k}{\partial x^n_{k+1}} \right) \frac{\partial^2 S_{k+1}^n}{\partial yz} + \left[ \alpha_{n,\,k+1} \frac{\partial S_{k+1}^n}{\partial z} \frac{\partial S_{k+1}^n}{\partial y} + \si_{n,\, k+1} u_{n,\,k+1} \frac{\partial S_{k+1}^n}{\partial y} \right.  \\
   & \quad + \left. \si_{n,\, k+1} \Big( t_{n,\,k+1} + u_{n,\,k+1}(R_{k+1}^n)'(y) \Big) \Big( \frac{\partial S_{k+1}^n}{\partial z} + K_1 (-\la)^{n-k-1} u_{n,\,k+1} \Big) \right]    \frac{\partial^2 s_k}{\partial (x^n_{k+1})^2}   \\
& + \left( \si_{n,\, k+1} \frac{\partial S_{k+1}^n}{\partial z} + K_4 u_{n,\,k+1} \right) \frac{\partial^2 s_k}{\partial x^n_{k+1}y^n_{k+1}}  \\
& + \si_{n,\, k+1} \left[\, \frac{\partial S_{k+1}^n}{\partial y} + \big(d_{n,\,k+1} + (R_{k+1}^n)'(y) \big) \frac{\partial S_{k+1}^n}{\partial z} \right.  \\
   & \quad \bigg. + K_4 \left( t_{n,\,k+1} +u_{n,\,k+1}\, d_{n,\,k+1} + 2u_{n,\,k+1}(R_{k+1}^n)'(y) \right) \bigg] \frac{\partial^2 s_k}{\partial x^n_{k+1}z^n_{k+1}}  \\
& + K_4 \frac{\partial^2 s_k}{\partial y^n_{k+1}z^n_{k+1}} + K_4\Big( d_{n,\,k+1} + (R_{k+1}^n)'(y) \Big) \frac{\partial^2 s_k}{\partial (z^n_{k+1})^2} \\[0.3em]
\frac{\partial^2 S_k^n}{\partial z^2}  
=  & \left (1+\frac{\partial s_k}{\partial x^n_{k+1}} \right) \frac{\partial^2 S_{k+1}^n}{\partial z^2} \\
& + \left(\alpha_{n,\,k+1} \left( \frac{\di S^n_{k+1}}{\di z} \right)^2 + \si_{n,\,k+1} (1 + u_{n,\,k+1}) \frac{\di S^n_{k+1}}{\di z} + K_4  u_{n,\,k+1}^2 \right) \frac{\di^2 s_k}{\di (x^n_{k+1})^2} \phantom{****} \\
& + 2\left( \si_{n,\,k+1} \frac{\di S^n_{k+1}}{\di z} + K_4 u_{n,\,k+1} \right) \frac{\di^2 s_k}{\di x^n_{k+1}z^n_{k+1}} + K_4 \frac{\di^2 s_k}{\di (z^n_{k+1})^2}
\end{align*}
where $ K_4 = \alpha_{n,\,k}^{-1} \, \alpha_k \, \si_{n,\,k+1}^2 = O(1)$.
\ssk \\
By Lemma \ref{bounds of sk}, Corollary \ref{bounds at k level}, %the equation \eqref{the image of the Psi from nth level to k+1th level} 
and Proposition \ref{bounds of R}, the bounds of $| \;\! \di^2 s_k / \di (x^n_{k+1})^2 | $ is $ O( \si^{n-k})$ and 
 $| \;\! \di^2 s_k / \di uv | $ is $ O(\bar \eps^{2^k} \si^{n-k})$ where $ u, v = x^n_{k+1}, y^n_{k+1}, z^n_{k+1} $ except that both $ u $ and $ v $ are not $ x^n_{k+1} $ simultaneously. The norm of the first and the second partial derivatives of $ s_k $ and the estimation of $| \;\!t_{n,\,k}|, | \;\!u_{n,\,k}|$ and $| \;\! d_{n,\,k}|$ imply the bounds of norm of the second partial derivatives of $ S_k^n $ as follows.
\msk 
\begin{align*}
\left |\dfrac{\partial^2 S_k^n}{\partial xy} \right | & \leq \big (1 + O(\rho^{n-k}) \big ) \left |\dfrac{\partial^2 S_{k+1}^n}{\partial xy} \right | + C \bar \eps^{2^k}\si^{n-k}  \\[0.2em]
 \left |\dfrac{\partial^2 S_k^n}{\partial xz} \right | & \leq \big (1 + O(\rho^{n-k}) \big ) \left |\dfrac{\partial^2 S_{k+1}^n}{\partial xz} \right | + C \bar \eps^{2^k}\si^{n-k} \\[0.2em]
  \left |\dfrac{\partial^2 S_k^n}{\partial yz} \right | & \leq \big (1 + O(\rho^{n-k}) \big ) \left |\dfrac{\partial^2 S_{k+1}^n}{\partial yz} \right | + C \bar \eps^{2^k} \\[0.2em]
  \left |\dfrac{\partial^2 S_k^n}{\partial z^2} \right | & \leq \big (1 + O(\rho^{n-k}) \big ) \left |\dfrac{\partial^2 S_{k+1}^n}{\partial z^2} \right | + C \bar \eps^{2^k} \ .\\[-1em]
\end{align*} 
Hence, \
%  $ \left | \dfrac{\partial^2 S_k^n}{\partial xy} \right | = O(\bar \eps^{2^k}\si^{n-k}), \quad
%  \left | \dfrac{\partial^2 S_k^n}{\partial xz} \right | = O( \bar \eps^{2^k}\si^{n-k}), \quad
%  \left | \dfrac{\partial^2 S_k^n}{\partial yz} \right | = O(\bar \eps^{2^k}) $  and  $
%  \left | \dfrac{\partial^2 S_k^n}{\partial z^2} \right | = O(\bar \eps^{2^k})$.
$ | \:\!\partial^2_{xy} S^n_k| = O(\bar \eps^{2^k}\si^{n-k}), \  | \:\!\partial^2_{xz} S^n_k| = O(\bar \eps^{2^k}\si^{n-k})$, \ 
 $| \:\!\partial^2_{yz}S^n_k| = O(\bar \eps^{2^k}),$ \ and   \ $| \:\!\partial^2_{zz} S^n_k| = O(\bar \eps^{2^k}) $.
\end{proof}
\msk

\subsection{Universal properties of the coordinate change map $ \Psi^n_k $} \label{universal functions}
%We can estimate the asymptotic behaviour of $\Psi^n_k$ by the previous Lemma \ref{asymptotics of non-linear part 1}. 
On the following Lemma \ref{asymptotics of S for k}, we would show that the non-linear part of $ \id + S(x,y,z) $ is a small perturbation of the one-dimensional universal function. %The content of this section is to rephrase some parts of Section 7 in \cite{CLM}. 
\ssk \\
Let us normalize the maps, $u_*$ and $g_*$ in Lemma \ref{renormalization at critical value} and Lemma \ref{composition of the presentation function}. Let the fixed point move to the origin and let the derivatives at the origin is one. Define the map $ v_*(x) $ as follows
$$ v_*(x) = \frac{u_*(x+1)-1}{u_* ' (1)} %\quad \textrm{and} \quad g_*(x) = \frac{g_*(x+1)-1}{g_* ' (1)} .
$$
Abusing notation, denote the normalized function of $g_*(x)$ to be also the $g_*(x)$ in the following lemma.

\begin{lem} \label{asymptotics of S for k}
There exists a positive constant $\rho <1$ such that for all $k<n$ and for every $y \in I^y $ and $ z \in   I^z$
\begin{align*}
  | \id + S^n_k(\, \cdot \,, y,z) - v_*(\,\cdot \,) \: \!| =& \ O (\bar \eps^{2^k} y + \bar \eps^{2^k}z + \rho^{n-k}) \\
 \textrm{and} \quad  | 1+ \di_x S^n_k (\,\cdot \,, y, z) - v_*'(\,\cdot \,)\: \! | =& \ O (\rho^{n-k}) .
\end{align*}
\end{lem}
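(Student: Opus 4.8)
The plan is to reduce the estimate to the one–dimensional axis $\{y=z=0\}$ and then to identify the behaviour of $\Psi^n_k$ along that axis with a composition of rescaled presentation functions, to which the one–dimensional contraction estimate of Lemma~\ref{composition of the presentation function} applies. \emph{First I would peel off the $y$– and $z$–dependence.} By Lemma~\ref{asymptotics of non-linear part 1}(1), $|\di_y S^n_k|,\,|\di_z S^n_k| = O(\bar\eps^{2^k})$ uniformly on $B(R^nF)$, so integrating from the axis,
\[
 |S^n_k(x,y,z) - S^n_k(x,0,0)| = O\big(\bar\eps^{2^k}y + \bar\eps^{2^k}z\big),
\]
which already produces the two $\bar\eps^{2^k}$–terms in the statement. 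Likewise, Lemma~\ref{asymptotics of non-linear part 1}(2) gives $|\di^2_{xy}S^n_k|,\,|\di^2_{xz}S^n_k| = O(\bar\eps^{2^k}\si^{n-k})$, so $|\di_x S^n_k(x,y,z) - \di_x S^n_k(x,0,0)| = O(\bar\eps^{2^k}\si^{n-k}(y+z)) = O(\rho^{n-k})$ since $|y|,|z|$ are bounded and $\si<1$. Hence it suffices to prove $|\id + S^n_k(\cdot,0,0) - v_*| = O(\rho^{n-k})$ and $|1+\di_x S^n_k(\cdot,0,0) - v_*'| = O(\rho^{n-k})$ on the axis.

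\emph{Next I would recognize the axis dynamics as a presentation–function composition.} Along $\{y=z=0\}$ the $y$–coordinate of each $\Psi_j$ stays $0$, so the only coupling into the $x$–coordinate is the $z$–drift created by the terms $R^n_j$ (equivalently, the summand $z-\de(y,f^{-1}(y),0)$ in $H$). Using $\Psi^n_k=\Psi_k\circ\cdots\circ\Psi_{n-1}$, the decompositions \eqref{Psi on k level} and \eqref{coor change from k to n}, the recursion \eqref{recursive form}, the fact that $s_j$ and $r_j$ vanish to second order at the tip while the intermediate points $w^n_{j+1}$ lie in pieces $B^n_{j+1}$ of diameter $O(\si^{n-j-1})$ (Lemma~\ref{diameter}, Corollary~\ref{bounds at k level}), and the super–exponential bounds $|r_k|,|r_k'| = O(\bar\eps^{2^k})$ of Proposition~\ref{bounds of R}, I would show that $x\mapsto x+S^n_k(x,0,0)$ equals — up to an error that is $O(\bar\eps^{2^j})$ at level $j$ and, after the rescalings $\alpha_{n,k}^{-1}\asymp\si^{-2(n-k)}$ are applied and the levels are summed, contributes only $O(\rho^{n-k})$ — the normalized rescaled composition $G^n_k = a^n_k\circ g_k\circ\cdots\circ g_{n-1}$, where $g_j$ is the presentation function of the renormalized unimodal map $f_j$ (carrying its $\eps_j$–correction) and $a^n_k$ rescales $\Im(g_k\circ\cdots\circ g_{n-1})$ onto $I$. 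Here both $G^n_k$ and $v_*$ are taken with the normalization fixing the origin with unit derivative, which is precisely the normalization already built into $\id+\mathbf S^n_k$ by moving the tip to the origin in \eqref{origin as tip} and factoring out $D^n_k$ (so $\mathbf S^n_k(0)=0$ and $D\mathbf S^n_k(0)=0$).

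\emph{Then I would apply the one–dimensional machinery and conclude.} By Lemma~\ref{exponential convergence to 1d map}, $f_j\to f_*$ exponentially fast in $C^3$, and $\|\eps_j\|_{C^3}=O(\bar\eps^{2^j})$; since presentation functions depend smoothly on the underlying unimodal map and $\bar\eps^{2^j}=O(\rho^j)$, this gives $\|g_j-g_*\|_{C^3}\le C\rho^j$. Lemma~\ref{composition of the presentation function} then yields $\|G^n_k-G^{n-k}_*\|_{C^1}\le C_1\rho^{n-k}$, and the exponentially fast convergence $G^m_*\to u_*$ in $C^3$ (stated after Lemma~\ref{renormalization at critical value}), together with the normalization $v_* = (u_*(\cdot+1)-1)/u_*'(1)$, gives $\|G^{n-k}_*-v_*\|_{C^1}=O(\rho^{n-k})$. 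Combining with Step~2 gives the axis estimates, and Step~1 upgrades them to the full statement (enlarging $\rho$ if necessary so that $\si\le\rho<1$).

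\emph{The hard part} is Step~2: one must verify that the genuinely three–dimensional corrections — the feedback through $\di_z\eps_j$ and through the $z$–term of $H$ — do not blow up after being multiplied by the large rescaling factors $\alpha_{n,k}^{-1}\asymp\si^{-2(n-k)}$, and that their cumulative contribution to the comparison with $G^n_k$ is only $O(\rho^{n-k})$ rather than $O(\bar\eps^{2^k})$. This is the three–dimensional counterpart of the corresponding argument in \cite{CLM}; the cancellation of the dangerous powers of $\si^{-1}$ against the piece–diameters $O(\si^{n-j})$ and the quadratic vanishing of $s_j,r_j$ at the tip (Lemma~\ref{bounds on domain}, Corollary~\ref{bounds at k level}, Proposition~\ref{bounds of R}) is what makes the bookkeeping close.
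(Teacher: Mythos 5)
Your proposal is correct and follows essentially the same route as the paper: reduce to the axis $\{y=z=0\}$ via the bounds $|\di_y S^n_k|,|\di_z S^n_k|=O(\bar\eps^{2^k})$ and $|\di^2_{xy}S^n_k|,|\di^2_{xz}S^n_k|=O(\bar\eps^{2^k}\si^{n-k})$ from Lemma~\ref{asymptotics of non-linear part 1}, identify $\id+S^n_k(\cdot,0,0)$ with the rescaled composition of presentation functions, and invoke Lemma~\ref{exponential convergence to 1d map}, Lemma~\ref{composition of the presentation function} and the convergence $G^n_*\to u_*$ (normalized to $v_*$). Your Step~2 spells out the identification and the control of the three--dimensional correction terms in more detail than the paper, which essentially asserts this comparison; the substance of the argument is the same.
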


\begin{proof}
The map \,$ \id + S^n_k(\,\cdot \, , y,z)$ is the normalized function of $\Psi^n_k$ such that the derivative at the origin is the identity map, and $v_*(\, \cdot \,)$ is also the normalized map of $u_*$, which is the conjugation of the renormalization fixed point at the critical point and the critical value in Lemma \ref{renormalization at critical value}. Thus the normalized map, $ \id + S^n_k(\,\cdot \, , 0,0)$ and the one dimensional map, $G_*^n$ converge to the same function $v_*(\, \cdot \,)$ as $ n \ra \infty $ because the critical value of $f$ and the tip of $F$ moved to the origin as the fixed point of each function $g_*^n$ by the appropriate affine conjugation. 
\ssk \\
By Lemma \ref{asymptotics of non-linear part 1} we have
\begin{align*}
| \; \! \partial_y S^n_k|  \ = O(\bar \eps^{2^k}), \qquad  | \; \! \partial_z S^n_k| \ = O(\bar \eps^{2^k}) 
\end{align*}
%Moreover, The estimation of the second derivative of $S^n_k$ in Lemma \ref{asymptotics of non-linear part 1} is
and moreover,
 $$ | \;\! \partial^2_{xy} S^n_k| = O(\bar \eps^{2^k}\si^{n-k}), \qquad  | \;\! \partial^2_{xz} S^n_k| = O(\bar \eps^{2^k}\si^{n-k}) . $$
Thus the proof of asymptotic along the section parallel to $ x- $axis is enough to prove the whole lemma. By Lemma \ref{exponential convergence to 1d map}, 
$$  \dist_{C^3} (\, \id + s_k(\,\cdot \, , 0,0),\ g_*(\,\cdot \,) ) = O(\rho^k)
$$
and by Lemma \ref{composition of the presentation function}, we obtain 
\begin{align} \label{C1 convergence of coordinate change}
\dist_{C^1} (\, \id + S^n_k(\,\cdot \, , 0,0),\ G_*^{n-k}(\,\cdot \,) ) = O(\rho^{n-k}) .
\end{align}
Since the $G_* ^{n} \ra v_*$ exponentially fast, we have the exponential convergence of $\id + S^n_k(\,\cdot , 0,0)$ to $v_*(\,\cdot \,)$. 
Hence, the above asymptotic and the exponential convergence at the origin prove the first part of the lemma. Furthermore, $ C^1 $ convergence of \eqref{C1 convergence of coordinate change} implies that 
$$ | \, 1+ \di_x S^n_k (\,\cdot \, , 0, 0) - v_*'(\,\cdot \,) | = O (\rho^{n-k})$$
 where $\rho \in (0,1)$. 
%prove the second part of the lemma. %(recall $O(1+ \bar \eps^{2^k} ) = O(1)$ ).
\end{proof}
\msk

\subsection{Estimation of the quadratic part of $S^n_k$ for $n$} \label{asymptotic coordinate for n}
We estimate the asymptotic of $S^n_k$ using the estimation of partial derivatives and recursive formulas. Then it implies the asymptotic of non-linear part of $\Psi^n_k$ for $ n $. %as $n \ra \infty$. 
In order to simplify notations, we would treat the case $ k =0$ and consider the behavior of $S^n_0$ instead of $S^n_k$. 

\begin{lem} \label{asymptotics of non linear part}
The following asymptotic is true
$$ | \: \! [\; x + S^n_0(x,y,z) ] - [\, v_*(x) + a_{F,\,1}\,y^2+ a_{F,\,2}\,yz + a_{F,\,3}\,z^2] | = O(\rho^n)
$$
where constants $ | \:\! a_{F,\,1}|, | \:\! a_{F,\,2}|$  $ | \:\! a_{F,\,3}| $ are $ O(\bar \eps) $ for some $\rho \in (0,1)$.
\end{lem}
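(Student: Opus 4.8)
The plan is to split $x+S^n_0(x,y,z)$ into a first‑coordinate part, which converges to $v_*(x)$, and a part depending only on $(y,z)$, namely the value $S^n_0(0,y,z)$ on the central section, and then to show that this second part converges exponentially fast to a quadratic form in $(y,z)$ whose coefficients are the constants $a_{F,i}$.

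\emph{Reduction to the section $\{x=0\}$.} Since the tip has been translated to the origin, $S^n_0(0,0,0)=0$ and $DS^n_0(0)=0$; in particular $S^n_0(0,\,\cdot\,,\,\cdot\,)$ vanishes to second order at $(0,0)$. By Lemma~\ref{asymptotics of S for k} with $k=0$ one has $|1+\di_x S^n_0(\,\cdot\,,y,z)-v_*'(\,\cdot\,)|=O(\rho^n)$ uniformly in $(y,z)$; integrating this in the first variable along the segment from $(0,y,z)$ to $(x,y,z)$ and using $v_*(0)=0$ gives
\begin{equation*}
x+S^n_0(x,y,z)=S^n_0(0,y,z)+\int_0^x\!\bigl(1+\di_x S^n_0(t,y,z)\bigr)\,dt=S^n_0(0,y,z)+v_*(x)+O(\rho^n),
\end{equation*}
uniformly on $B(R^nF)$. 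Thus it suffices to prove that $S^n_0(0,y,z)=a_{F,1}y^2+a_{F,2}yz+a_{F,3}z^2+O(\rho^n)$ with $|a_{F,i}|=O(\bar\eps)$.

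\emph{Telescoping and the quadratic part.} From the recursion in the proof of Lemma~\ref{asymptotics of non-linear part 1}, and since $\Psi^n_n=\id$ (so $S^n_n\equiv0$), one has $S^n_0=\sum_{k=0}^{n-1}c_k$, where $c_k(w)=\alpha_{n,k}^{-1}\alpha_k\,s_k(w^n_{k+1})+\alpha_{n,k}^{-1}\si_{n,k}u_{n,k}\bigl(R^n_{k+1}(y)-R^n_k(y)\bigr)+\alpha_{n,k}^{-1}\si_k u_k\,r_k(y^n_{k+1})$ and $w^n_{k+1}=\Psi^n_{k+1}(w)$. I would restrict each $c_k$ to $\{x=0\}$ and Taylor‑expand $s_k$ and $r_k$ at the tip, using: $w^n_{k+1}\in B^n_{k+1}$ has diameter $O(\si^{n-k-1})$ (Lemma~\ref{diameter}); its $y$‑coordinate is exactly $\si_{n,k+1}y$, while its $x$‑ and $z$‑coordinates differ from $\si_{n,k+1}z$ only by $O(\si^{n-k-1}\bar\eps^{2^{k+1}})$ (Lemma~\ref{decomposition of derivative}, with $t_{n,k},u_{n,k},d_{n,k}=O(\bar\eps^{2^k})$ and $R^n_k=O(\bar\eps^{2^k}\si^{n-k})$, the last coming from $R^n_k(0)=0$ and the bound on $(R^n_k)'$ in Proposition~\ref{bounds of R}); and $|\di^2 s_k|,|r_k''|=O(\bar\eps^{2^k})$ except $|\di^2_{xx}s_k|=O(1)$ (Lemma~\ref{bounds on domain}), with analyticity controlling the cubic remainders. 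The decisive computation is the telescoping identity $\alpha_{n,k}^{-1}\alpha_k\,\si_{n,k+1}^2=\prod_{j=k+1}^{n-1}\bigl(\si_j^2/\alpha_j\bigr)=\prod_{j=k+1}^{n-1}\bigl(1+O(\rho^j)\bigr)$, which converges as $n\to\infty$ to a limit $P_k=1+O(\rho^{k+1})$ at rate $O(\rho^n)$ (and similarly $\alpha_{n,k}^{-1}\alpha_k\alpha_{n,k+1}=1$, while the analogous products governing the $R_k$‑ and $r_k$‑prefactors also converge). It follows that the full second‑order‑in‑$(y,z)$ part of $c_k|_{\{x=0\}}$ converges to an $n$‑independent quadratic form $Q_k$, with $\|Q_k\|=O(\bar\eps^{2^k})$ (it is dominated by $\tfrac12\di^2_{yy}s_k(0)P_k\,y^2+\di^2_{yz}s_k(0)P_k\,yz+\tfrac12\di^2_{zz}s_k(0)P_k\,z^2$, all other contributions being $O(\bar\eps^{2^{k+1}})$), the convergence being at rate $O(\bar\eps^{2^k}\rho^n)$; and every remaining term of $c_k|_{\{x=0\}}$ — anything carrying a power of the small $x$‑coordinate of $w^n_{k+1}$, or of third or higher order in $(y,z)$ — is $O(\bar\eps^{2^k}\si^{n-k})$.

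\emph{Summation and the main obstacle.} Summing over $k=0,\dots,n-1$: the series of quadratic forms $\sum_k Q_k$ converges super‑exponentially, so $\sum_{k=0}^{n-1}Q_k=a_{F,1}y^2+a_{F,2}yz+a_{F,3}z^2+O(\bar\eps^{2^n})$, the $a_{F,i}$ being the super‑exponentially convergent sums over $k$ of the corresponding coefficients of $Q_k$, dominated by their $k=0$ term and hence $O(\bar\eps)$; the convergence errors contribute $\sum_{k=0}^{n-1}O(\bar\eps^{2^k}\rho^n)=O(\rho^n)$ and the higher‑order remainders contribute $\sum_{k=0}^{n-1}O(\bar\eps^{2^k}\si^{n-k})=O(\si^n)$. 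Choosing $\rho\in(0,1)$ to be the largest of $\si$, the $\rho$ of Lemma~\ref{asymptotics of S for k}, and the rates above, and combining with the reduction step, yields the claim. The main obstacle, just as in the two‑dimensional case of \cite{CLM}, is precisely this bookkeeping: one must verify that $\alpha_{n,k}^{-1}\alpha_k$ compensates the quadratic factor $\si_{n,k+1}^2$ exactly enough for the level‑$k$ quadratic coefficients to stabilize and assemble into $n$‑independent constants, whereas any appearance of the $x$‑coordinate of $w^n_{k+1}$ — which carries an extra $\alpha_{n,k+1}\asymp\si^{2(n-k)}$ or $\si_{n,k+1}t_{n,k+1}\asymp\si^{n-k}\bar\eps^{2^k}$ — or any cubic‑or‑higher Taylor remainder of $s_k$ in $(y,z)$ picks up a genuine factor $\si^{n-k}$ and so contributes only $O(\rho^n)$ after summation. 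Carrying out these estimates uniformly on $B(R^nF)$, simultaneously for all $k<n$, is where the real work lies.
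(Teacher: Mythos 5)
Your argument is correct, but it runs in the opposite direction from the paper's. Both proofs share the reduction step: the second half of Lemma \ref{asymptotics of S for k} (equivalently, your integration of $1+\di_xS^n_0-v_*'$ in the first variable) reduces the claim to showing that $S^n_0(0,y,z)$ equals a quadratic form in $(y,z)$ up to $O(\rho^n)$; this is exactly equation \eqref{exponential convergence of S} in the paper. From there the paper uses the recursion in $n$, namely ${\bf S}^{n+1}_0={\bf s}_n+D_n^{-1}\circ{\bf S}^n_0\circ D_n\circ(\id+{\bf s}_n)$, first to show $R^n_0\to0$ and then to obtain $a_{n+1,i}=(\si_n^2/\alpha_n)\,a_{n,i}+O(\bar\eps^{2^n})$ with $\si_n^2/\alpha_n=1+O(\rho^n)$, so the quadratic coefficients converge by a one-step contraction estimate on the sequence $(a_{n,i})$. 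You instead telescope the recursion in $k$ from the proof of Lemma \ref{asymptotics of non-linear part 1}, writing $S^n_0=\sum_{k<n}c_k$ (using $S^n_n\equiv0$) and extracting the quadratic part of each summand via the product identity $\alpha_{n,k}^{-1}\alpha_k\,\si_{n,k+1}^2=\prod_{j=k+1}^{n-1}(\si_j^2/\alpha_j)$ — the same ratio the paper uses once per inductive step, resummed globally in your version. What your route buys is an explicit series $a_{F,i}=\sum_k(\text{coefficient of }Q_k)$, dominated by its $k=0$ term, which makes $|a_{F,i}|=O(\bar\eps)$ transparent; the cost is the heavier simultaneous-in-$k$ bookkeeping you flag at the end — in particular you genuinely need the convergence of $t_{n,k},u_{n,k},d_{n,k}$ as $n\to\infty$ from Lemma \ref{decomposition of derivative}, the bound $R^n_{k+1}=O(\bar\eps^{2^{k+1}})$ so that it enters the quadratic part only at cubic order, and analyticity to control third-order Taylor remainders of $s_k$ in $(y,z)$, none of which the paper's induction on $n$ has to invoke. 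Both arguments are sound; the paper's is leaner, yours is more explicit.
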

\begin{proof}
For any fixed $k \geq 0$, the recursive formula for $n > k$ comes from the $\Psi^{n+1}_k = \Psi^n_k \circ \Psi^{n+1}_n$. Thus by the equation \eqref{recursive form}, we have 
\begin{align} \label{recursive formula for n}
{\bf S}^{n+1}_k (w) = {\bf s}_n (w) + D^{-1}_n \circ {\bf S}^n_k \circ D_n \circ (\id + {\bf s}_n) (w) .
\end{align}
Let $k=0$ for simplicity, and compare each coordinates of the both sides. Then \ssk
\begin{equation*}
\begin{aligned}
& \  (S^{n+1}_0(w),\ 0 ,\ R^{n+1}_0(y))  \\[0.3em]
 = & \ (s_n(w),\ 0,\ r_n(y)) % \\[0.1em]
 +  \left (
   \begin{array} {ccc}
    \alpha_n^{-1}  & \alpha_n^{-1}(-t_n +d_n u_n)  & - \alpha_n^{-1} u_n \\ [0.3em]
              &\ \  \si_n^{-1} &         \\ [0.3em]
              & -\si_n^{-1} d_n  & \ \  \si_n^{-1}
       \end{array}
\right )     %  \cdot
\left (
       \begin{array}{c}
S^n_0(w) \\ [0.3em]
0 \\ [0.3em]
R^n_0(y)
        \end{array}
\right ) \\[0.8em]
& \qquad \qquad \qquad \qquad \qquad \qquad \qquad \qquad \circ  
\left( 
       \begin{array} {ccc}
 \alpha_n & \si_n t_n & \si_n u_n  \\ [0.3em]
               & \si_n       &      \\ [0.3em]
               & \si_n d_n & \si_n
       \end{array}  
\right)       %  \cdot
\left(
       \begin{array} {c}
       x+ s_n(w) \\ [0.3em]
       y \\ [0.3em]
       z + r_n(y)
       \end{array}  
\right) .
\end{aligned} \msk
\end{equation*}
\nin Thus we obtain the following equation by the straightforward calculations.
\msk
\begin{align*}
& \  (S^{n+1}_0(w),\ 0 ,\ R^{n+1}_0(y))  \\[0.2em]
 = & \ (s_n(w),\ 0,\ r_n(y))  
  +  \left( \dfrac{1}{\alpha_n}S^n_0(w) - \dfrac{1}{\alpha_n}u_n R^n_0(y),\ 0 ,\ \dfrac{1}{\si_n} R^n_0(y) \right) \circ \\
  & \quad  \Big( \alpha_n (x+ s_n(w)) + \si_n t_n \:\! y + \si_n u_n (z + r_n(y)), \  \si_n y, \  \si_n d_n \:\! y + \si_n (z+r_n(y)) \Big) \\[0.5em]
 = & \  (s_n(w),\ 0,\ r_n(y))  \\
 & + \left(  \dfrac{1}{\alpha_n}S^n_0 \Big( \alpha_n (x+ s_n(w)) + \si_n t_n \:\! y + \si_n u_n (z + r_n(y))  , \  \si_n y, \  \si_n d_n \:\! y + \si_n (z+r_n(y)) \Big) \right. \\
 & \qquad  \left. - \:\! \dfrac{1}{\alpha_n} u_n R^n_0(\si_n y) ,\ 0 , \   \dfrac{1}{\si_n} R^n_0(\si_n y) \right) . \\[-1em]
\end{align*}  

\nin Firstly, let us compare the third coordinates of each side of the above equation. Using Taylor's expansion and Lemma \ref{bounds on domain}, we obtain
\begin{align*}
R^{n+1}_0(y) & = r_n(y) + \dfrac{1}{\si_n}R^n_0(\si_n y) \\
 &= \dfrac{1}{\si_n}\,R^n_0(\si_n y) + c_ny^2 +O(\bar \eps^{2^n} y^3)  \ \ \textrm{where} \ c_n = O(\bar \eps^{2^n}) .
\end{align*}
\ssk
Then we have the following form of $R^n_0(y)$.
\begin{align*}
R^n_0(y) & = a_n y^2 + A_n(y) y^3 \\
%\textrm{Thus} \quad 
R^{n+1}_0(y) & = \dfrac{1}{\si_n} \Big( a_n (\si_n y)^2 + A_n(\si_n y) \cdot (\si_n y)^3 \Big) 
+ c_ny^2 + O(\bar \eps^{2^n} y^3) .
\end{align*}
Thus $a_{n+1} = \si_n a_n + c_n $  and $ \| A_{n+1} \| \leq \| \;\!\si_n \|^2   \| A_n  \| + O(\bar \eps^{2^n}) $. 
%\msk \\
Hence,  $A_n \ra 0$ \ and \ $ a_n \ra 0$ \ exponentially fast as $n \ra \infty$.
 The image of the vertical plane $(y,z) \ra (0,y,z)$ under the map, $\id + {\bf S}^n_0$ is the graph of the function $\xi_n \colon {\bf I}^v \ra \R$ defined as 
$$ \xi_n (y,z) = (S^n_0 (0,y,z),\ 0,\ R^n_0(y)) .
$$
Since $R^n_0(y)$ is vanished exponentially fast, $ | \;\!\xi_n (y,z)| = | \;\!S^n_0 (0,y,z)| + O(\rho^n)$. Moreover, the second part of Lemma \ref{asymptotics of S for k} implies the following equation
\begin{align} \label{exponential convergence of S}
 \big| \: \! [\,x + S^n_0(x,y,z) ] - [ \,v_*(x) + S^n_0 (0,y,z) ] \big| = O(\rho^n) .
\end{align}
\msk
Secondly, compare the first coordinates of the equation \eqref{recursive formula for n} at $(0,y,z)$
\begin{align*}
 & S^{n+1}_0(0,y,z) \\
  = & \ s_n(0,y,z)  % \\  \quad & 
  + \dfrac{1}{\alpha_n}\,S^n_0 \Big( \alpha_n s_n(0,y,z) + \si_n t_n y + \si_n u_n (z + r_n(y))  , \  \si_n y, \  \si_n d_n y + \si_n (z+r_n(y)) \Big)   \\
 &    -  \dfrac{1}{\alpha_n}\, u_n R^n_0(\si_n y) .
\end{align*}
The estimation of\, $| \: \! \partial^2_{xy} S^n_k|,\,  |  \:\! \partial^2_{xz} S^n_k|$ \,and\, $|\: \! \partial^2_{yz}S^n_k|,\, | \; \! \partial^2_{zz} S^n_k| $ in Lemma \ref{asymptotics of non-linear part 1} implies that 
$$ \frac{\di S^n_0}{\di x} (0,y,z) = O(\si^{n} y + \si^n z) \quad \textrm{and} \quad
 \frac{\di S^n_0}{\di z} (0,y,z) = O( y + z)
$$
respectively. The order of $t_n, u_n, r_n$ and Taylor's expansion of\, $S^n_0$\, at $(0, \si_n y, \si_n z)$ implies that 
\begin{align*}
 & S^{n+1}_0(0,y,z) \\
  = & \ s_n(0,y,z)   \\
  \quad & + \dfrac{1}{\alpha_n} \left[ S^n_0(0,\,\si_n y,\, \si_n z) +  \frac{\di S^n_0}{\di x} (0,\,\si_n y,\, \si_n z) \cdot \Big(\alpha_n s_n(0,y,z) + \si_n t_n y + \si_n u_n (z + r_n(y)) \Big) \right. \\
  \quad & \left. + \frac{\di S^n_0}{\di z} (0,\,\si_n y,\,\si_n z) \cdot \Big( \si_n d_n y + \si_n r_n(y) \Big) \right] -  \dfrac{1}{\alpha_n}\, u_n R^n_0(\si_n y) + O\left( \bar \eps^{2^n} \sum^3_{j=0} y^{3-j}z^j \right)   \\
  = & \ \dfrac{1}{\alpha_n}\, S^n_0(0,\,\si_n y,\, \si_n z) + \sum^2_{i=0} e_{n,\,i}\,y^{2-i}z^i +  O\left( \bar \eps^{2^n} \sum^3_{j=0} y^{3-j}z^j \right) 
\end{align*} 
where $e_{n,\,i} = O(\bar \eps^{2^n})$ \ for \ $i=0,1,2$. 
%\msk \\
Then we can express $S^n_0(0,y,z)$ as the quadratic and higher order terms, 
%\msk
\begin{align*}
 S^n_0(0,y,z) &=  a_{n,\,1}\,y^2 + a_{n,\,2}\,yz + a_{n,\,3}\,z^2 + A_n(y,z) \left( \sum^3_{j=0} c_j\, y^{3-j}z^j \right) .
\end{align*} 
The recursive formula for $S^n_0(0,y,z)$ implies that
\begin{align*}
 \quad & S^{n+1}_0(0,y,z) \\
=& \   \dfrac{1}{\alpha_n} \left[ a_{n,\,1}(\si_n y)^2 + a_{n,\,2}(\si_n y \: \si_n z) +  a_{n,\,3}(\si_n z)^2
+ A_n(\si_n y, \si_n z) \left(\sum^3_{j=0} c_j\,(\si_n y)^{3-j}(\si_n z)^j \right) \right] \\
\quad & +  \sum^2_{i=0} e_{n,\,i} \: y^{2-i}z^i +  O\left( \bar \eps^{2^n} \sum^3_{j=0} y^{3-j}z^j \right) .
\end{align*}
Hence, $a_{n+1,\,i} = \dfrac{\,\si^2}{\,\alpha_n} \, a_{n,\,i} + \displaystyle\sum^2_{j=0} e_{n,\,j}$ for $i=0,1,2$ \, and moreover,
$ \|A_{n+1} \| \leq \|A_n\| \cdot \dfrac{\ | \;\! \si_n|^3}{| \;\!\alpha_n|} + O(\bar \eps^{2^n})$. 
It implies that $a_{n,\,i} \ra a_{F,\,i}$ \  for $i=0,1,2 $\, and\ $\| A_n\| \ra 0$ exponentially fast as $n \ra \infty$. The exponential convergence of $ S^n_0(0,y,z) $ to the quadratic function of $y$ and $z$ and the equation \eqref{exponential convergence of S} show the asymptotic of $S^n_0(x,y,z)$.
\end{proof}
\ssk
\begin{rem}
The above Lemma can be generalized for $S^n_k$ as follows 
$$ \big| \: \! [\; x + S^n_k(x,y,z) ] - [\, v_*(x) + a_{F,\,1}\,y^2+ a_{F,\,2}\,yz + a_{F,\,3}\,z^2] \big| = O(\rho^{n-k}) .
$$
The constants $ | \:\!a_{F,\,i}| $ for $i=1,2,3$ \,of $S^n_k$ are $O(\bar \eps^{2^k})$.
\end{rem}

\msk

\subsection{Universality of Jacobian determinant, $ \Jac R^nF $}
Let the $n^{th}$ renormalized map of $F$ be $R^nF \equiv F_n = (f_n -\eps_n,\;x,\; \de_n)$. Recall that $\Psi^n_{\tip} \equiv \Psi^n_{v^n}$ from $n^{th}$ level to $0^{th}$ level and the tip $\tau_F $ is contained in $ B^n_{v^n}$ for all $n \in \N $. Thus $\Psi^n_{\tip}$ is the original coordinate change map rather than the normalized function $\Psi^n_0$ conjugated by translations $T_n$. %Denote this original coordinate change function at the tip to be $\Psi^n_{\tip}$ %and $B^n_{v^n}$ to be $B^n$. 
%\\
Recall the equation \eqref{chain rule} again
\begin{align*}
\Jac F_n(w) = \Jac F^{2^n}(\Psi^n_{\tip} (w)) \frac{\Jac \Psi^n_{\tip} (w)}{\Jac \Psi^n_{\tip}(F_nw)}  %\\
                  = b^{2^n} \frac{\Jac \Psi^n_{\tip}(w)}{\Jac \Psi^n_{\tip}(F_nw)} (1+ O(\rho^n)).
\end{align*} 
\msk
\begin{thm} [Universal expression of Jacobian determinant] \label{Universality of the Jacobian}
Let $F $ be the three dimensional H\'enon-like diffeomorphism $ \II_B (\bar \eps)$ for sufficiently small $\bar \eps >0$, we obtain that
$$ \Jac F_n = b^{2^n}_F a(x) \: (1+ O(\rho^n))
$$
where $b_F$ is the average Jacobian of \;$F$ and $a(x)$ is the universal positive function for some $\rho \in (0, 1)$.
\end{thm}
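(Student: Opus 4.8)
The plan is to reduce the three‑dimensional statement to the asymptotic structure of the coordinate change map $\Psi^n_{\tip}$, exactly as in the two‑dimensional case of \cite{CLM}. Starting from the chain‑rule identity \eqref{chain rule},
$$ \Jac F_n(w) = b^{2^n}_F\,\frac{\Jac \Psi^n_{\tip}(w)}{\Jac \Psi^n_{\tip}(F_n w)}\,(1+O(\rho^n)), $$
the whole task is to show that the ratio $\Jac \Psi^n_{\tip}(w)/\Jac \Psi^n_{\tip}(F_n w)$ converges, uniformly in $w$ and with rate $O(\rho^n)$, to a function of the form $a(x)$, i.e.\ depending only on the first coordinate, universal, and positive. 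First I would write $\Psi^n_{\tip}=T_0^{-1}\circ\Psi^n_0\circ T_n$ (up to the translations fixing the tip at the origin) and use the factorization \eqref{coor change from k to n}, $\Psi^n_0 = D^n_0\circ(\id+{\bf S}^n_0)$ with ${\bf S}^n_0=(S^n_0(w),0,R^n_0(y))$. Since $D^n_0$ is linear, $\Jac D^n_0$ is a constant and cancels in the ratio; hence $\Jac \Psi^n_{\tip}(w)$ is, up to this constant, $\Jac(\id+{\bf S}^n_0)$ evaluated at the appropriate point.

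**Key steps.** (1) Compute $\Jac(\id+{\bf S}^n_0)$ directly from the Jacobian matrix. Because ${\bf S}^n_0$ has the special shape $(S^n_0(x,y,z),0,R^n_0(y))$, the matrix $\id + D{\bf S}^n_0$ is lower‑triangular in the blocks corresponding to $(x)$ and $(y,z)$ after permuting rows: the determinant is $\big(1+\partial_x S^n_0\big)\cdot 1\cdot\big(1+(R^n_0)'(y)\cdot 0 + \ldots\big)$; more precisely one gets $\det(\id+D{\bf S}^n_0) = (1+\partial_x S^n_0)$ plus terms carrying factors $\partial_y S^n_0$, $\partial_z S^n_0$, $(R^n_0)'$, all of which are $O(\bar\eps^{2^0})=O(\bar\eps)$ times something, but in fact, by Lemma~\ref{asymptotics of non-linear part 1}, $\partial_y S^n_0 = O(\bar\eps)$, $\partial_z S^n_0 = O(\bar\eps)$ and $(R^n_0)'=O(\bar\eps\si^{n})$ — I need to be a little careful, for $k=0$ these are $O(\bar\eps)$, not small, so I should keep track of the exact combination and show the off‑diagonal contributions organize into a function of $x$ as well. (2) Invoke Lemma~\ref{asymptotics of non linear part}: $x+S^n_0(x,y,z) = v_*(x)+a_{F,1}y^2+a_{F,2}yz+a_{F,3}z^2 + O(\rho^n)$, so $1+\partial_x S^n_0(x,y,z) = v_*'(x)+O(\rho^n)$ uniformly (this is precisely the second assertion of Lemma~\ref{asymptotics of S for k}, carried to level $n$). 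Likewise $\partial_y S^n_0, \partial_z S^n_0$ converge to the explicit affine functions $2a_{F,1}y+a_{F,2}z$, $a_{F,2}y+2a_{F,3}z$ up to $O(\rho^n)$, and $(R^n_0)'\to 0$. (3) Therefore $\Jac \Psi^n_{\tip}(w) = c_n\cdot\big(v_*'(\pi_x w) + O(\rho^n)\big)$ for a constant $c_n$ (the product of $\Jac D^n_0$ with the cancellation of the affine corrections — here I must check the $y,z$‑dependent pieces genuinely cancel in the ratio or are themselves $O(\rho^n)$ relative to the leading $v_*'$, using that $\pi_x(F_n w)$ and $\pi_x(w)$ both lie in the fixed box while the correction terms are controlled). (4) Form the ratio: the constant $c_n$ cancels, and $F_n(x,y,z)=(f_n(x)-\eps_n,x,\de_n)$ has first coordinate $f_n(x)-\eps_n(w)\to f_*(x)$ exponentially by Lemma~\ref{exponential convergence to 1d map}; hence
$$ \frac{\Jac \Psi^n_{\tip}(w)}{\Jac \Psi^n_{\tip}(F_n w)} = \frac{v_*'(x)+O(\rho^n)}{v_*'(f_*(x))+O(\rho^n)} = \frac{v_*'(x)}{v_*'(f_*(x))} + O(\rho^n) =: a(x) + O(\rho^n). $$
Finally I would record that $a(x)=v_*'(x)/v_*'(f_*(x))$ is universal (depends only on $f_*$, hence on nothing but the renormalization fixed point), and positive since $v_*=(u_*(\cdot+1)-1)/u_*'(1)$ is a diffeomorphism so $v_*'$ has no zeros and has constant sign on the relevant interval; combining with \eqref{chain rule} gives $\Jac F_n = b_F^{2^n}a(x)(1+O(\rho^n))$.

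**Main obstacle.** The delicate point is step (3): showing that the $y,z$‑dependent contributions to $\Jac\Psi^n_{\tip}$ — coming from $\partial_y S^n_0$, $\partial_z S^n_0$, and the quadratic terms $a_{F,i}$ at level $k=0$, which are only $O(\bar\eps)$ and do \emph{not} shrink with $n$ — either cancel exactly in the ratio or are dominated, uniformly in $w$, by the $O(\rho^n)$ error. In two dimensions this is the content of the argument in \cite{CLM} that only the ``$v_*$ direction'' survives; the extra $z$‑coordinate here means I must verify the $3\times 3$ determinant expansion and check that the cross terms $\partial_z S^n_0\cdot(R^n_0)'$, $\partial_y S^n_0$ etc.\ enter $\Jac\Psi^n_{\tip}(w)$ and $\Jac\Psi^n_{\tip}(F_nw)$ in a way whose quotient is $1+O(\rho^n)$ — this uses that $\partial_x S^n_0$ (the large, order‑one part) carries the only $w$‑dependence that does not vanish as $n\to\infty$, while all genuinely multi‑dimensional pieces either appear as common factors or are multiplied by quantities that localize both $w$ and $F_n w$ near the tip, where the relevant differences are $O(\rho^n)$. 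Once that is in hand, the rest is bookkeeping with Lemmas~\ref{asymptotics of S for k}, \ref{asymptotics of non linear part} and \ref{exponential convergence to 1d map}.
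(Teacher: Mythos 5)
Your strategy coincides with the paper's proof: start from the chain rule \eqref{chain rule}, pass to the normalized maps $\Psi^n_0=D^n_0\circ(\id+{\bf S}^n_0)$ so that the linear factor cancels in the ratio, and identify the limit of $1+\di_xS^n_0$ via Lemma \ref{asymptotics of non linear part}. The one point you leave open — your ``main obstacle'' concerning contributions of $\di_yS^n_0$, $\di_zS^n_0$ and $(R^n_0)'$ to the determinant — is not actually an obstacle: since ${\bf S}^n_0=(S^n_0(w),\,0,\,R^n_0(y))$, the matrix $\id+D{\bf S}^n_0$ has zeros in the $(2,1)$, $(3,1)$ and $(2,3)$ positions, so expanding along the first column gives
$\det(\id+D{\bf S}^n_0)=(1+\di_xS^n_0)\cdot\det\bigl(\begin{smallmatrix}1&0\\ (R^n_0)'&1\end{smallmatrix}\bigr)=1+\di_xS^n_0$
exactly; no cross terms survive, and this is precisely the identity \eqref{ratio of jacobian} used in the paper. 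The remaining steps (exponential convergence $1+\di_xS^n_0\to v_*'$, convergence of ${\bf F}_n{\bf w}_n$ to $T_\infty\circ F_*(w)$, positivity of $v_*'$) match the paper's argument; the only cosmetic difference is that the paper's universal function carries the translation by the tip, $a(x)=v_*'(x-c_*)/v_*'(f_*(x)-f_*(c_*))$, which your unshifted formula suppresses.
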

\begin{proof}
Let us consider the affine maps 
$$ T \colon w \mapsto w- \tau, \qquad T_n \colon w \mapsto w - \tau_n $$
where $\tau_n$ is the tip of $R^nF$.      %, as the local charts on $B$. 
Then we can consider the map
$$ L^n \colon w \mapsto (D^n_0)^{-1}(w - \tau)  $$
as the local chart of $B^n$. On these local charts, we write maps with the boldfaced letters if the maps are conjugated by its local charts in this proof.
$$ {\bf F}_n = T_n \circ F_n \circ T_n^{-1}, \qquad \id + \ {\bf S}^n_0 = L^n \circ \Psi^n_{\tip} \circ T_n^{-1}
$$
By the definition of coordinate change map, $\Psi^n_{\tip}$ and the normalized map, $\Psi^n_0$, the following diagram is commutative.
\begin{displaymath}
\xymatrix @C=1.5cm @R=1.5cm
 {
T_n(B)  \ar[d]_*+{{\Psi^n_0}}& B \ar[l]_*+{{T_n}} \ar[r]^*+{{F_n}}  \ar[d]_*+{{\Psi^n_{\tip}}} & F_n(B) \ar[r]^*+{{T_n} } \ar[d]^*+{{\Psi^n_{\tip}}} &(T_n \circ F_n)(B)  \ar[d]^*+{{\Psi^n_0}} \\
 T(B_n)                   & B^n \ar[l]_*+{T}  \ar[r]^*+{{F^{2^n}}}            &F^{2^n}(B^n) \ar[r]^*+{T}         & (T \circ F^{2^n})(B^n)  }
\end{displaymath}
\ssk \\
Since any translation does not affect Jacobian determinant, the ratio of Jacobian determinant of coordinate change maps is as follows
\ssk
\begin{equation} \label{ratio of jacobian}
\begin{aligned} 
\frac{\Jac \Psi^n_{\tip} (w)}{\Jac \Psi^n_{\tip}(F_nw)} = \frac{\Jac \Psi^n_0 ({\bf w}_n)}{\Jac \Psi^n_0({\bf F}_n{\bf w}_n)}
= \frac{1 + \di_x S^n_0({\bf w}_n)}{1 + \di_x S^n_0({\bf F}_n{\bf w}_n)}
\end{aligned} \ssk
\end{equation}
where ${\bf w}_n = T_n(w)$. %\\
By Theorem \ref{exponential convergence to 1d map}, the tip, $\tau_n$ converges to $ \tau_{\infty} = (f_*(c_*),\ c_*,\ 0)$ \,exponentially fast where $ c_* $ is the critical point of $ f_*(x) $. It implies the following limits
\begin{align*}
T_n & \ra  T_{\infty} \colon w \mapsto w- \tau_{\infty} \\
{\bf w}_n = T_n(w) & \ra  T_{\infty}(w) \\
{\bf F}_n{\bf w}_n  & \ra {\bf F}_* \circ T_{\infty}(w) = T_{\infty} \circ F_*(w) = (f_*(x) - f_*(c_*),\ x- c_*,\ 0) 
\end{align*}
and each convergence is exponentially fast. Hence, Lemma \ref{asymptotics of non linear part} implies that the following convergence 
\begin{align} \label{convergence of dS over x}
1+ \di_xS^n_0 \ra v_* '
\end{align}
is exponentially fast. The equations \eqref{ratio of jacobian}, \eqref{convergence of dS over x} and convergence of ${\bf F}_n{\bf w}_n$ to the ${\bf F}_* \circ T_{\infty} $ imply the following convergence
\begin{align} \label{universal 1dim limit}
\frac{\Jac \Psi^n_{\tip} (w)}{\Jac \Psi^n_{\tip}(F_nw)} \lra  %\frac{v_* ' ({\bf x})}{v_* ' ({\bf f_*(x)})}=
 \frac{v_* '\big( x- c_* \big)}{v_* ' \big( f_*(x)- f_*(c_*) \big)} \,\equiv a(x)
\end{align}
%\ssk
where $w=(x,y,z)$. % and \ ${\bf x} = x-1 = \pi_x \circ T_{\infty}(w)$.
Moreover, this convergence is exponentially fast. 
\ssk \\ 
The positivity of $a(x)$ comes from two facts. Firstly, Jacobian determinant of orientation preserving diffeomorphism is non-negative at every point and we assumed that each infinitely renormalizable map, $F \in \II(\bar \eps)$, is orientation preserving on each level. Secondly, renormalization theory of one dimensional maps at the critical value implies the non vanishing property of $v_* '$ with sufficiently small perturbation.
\end{proof}

\begin{rem}
 The universality of Jacobian determinant does not imply the universality of renormalized map $F_n$ because the Jacobian determinant of $F_n$, namely, $\di_y \eps_n \cdot \di_z \de_n - \di_z \eps_n \cdot \di_y \de_n $ does not imply universal expression of each element of Jacobian matrix, $ DF_n $.
\end{rem}
\bsk

\section{Toy model H\'enon-like map in three dimension} \label{Tangent bundle splitting}
%\nin Let us define the special three dimensional H\'enon-like maps satisfying $ \di_z \eps \equiv 0 $. Additionally if $ \| \di_z \de \| \ll \min \{ \,\di_y \eps(w) \,| \; w \in B \} $, then two dimensional H\'enon-like map dominated the dynamics on the compact invariant set because it has dominated splitting with stronger contraction along $ z- $axis. Moreover, dominated splitting also holds when a perturbation is chosen with small enough $ \| \di_z \eps \| $ over a given completely invariant compact set, for instance, periodic points or the critical Cantor set of infinitely renormalizable maps.
%\msk \\
%subsection{Renormalizable toy model map in three dimension} 
% Recall the H\'enon-like map is of the form
%\begin{equation*}
%F(x,y, z) = (f(x) - \eps(w),\ x,\ \de(w))
%\end{equation*}
%where $ w = (x,y, z) \in B $. 
\nin Let H\'enon-like map satisfying $ \eps(w) = \eps(x,y) $, that is, $ \di_{z} \eps \equiv 0 $ be {\em toy model H\'enon-like map}. Denote the toy model map by $ F_{\mod} $. Then the projected map $ \pi_{xy} \circ F_{\mod} = F_{2d} $ from $ B $ to $ \R^2 $ is exactly two dimensional H\'enon-like map. Let the horizontal-like diffeomorphism of $ F_{\mod} $ be $ H_{\mod} $. Thus $ \pi_{xy} \circ H_{\mod} $ is the horizontal map, $ H_{2d} $ of $ F_{2d} $. If $ F_{\mod} $ is renormalizable map, then renormalization of toy model map is a skew product of renormalization of two dimensional H\'enon-like map. In other words, we have $ \pi_{xy} \circ RF_{\mod} = RF_{2d} $. 
\msk

\begin{prop} \label{renormalization of toy model}
Let $ F_{\mod} = (f(x) - \eps(x,y),\ x,\ \de(w)) $ be a toy model map in $ \II(\bar \eps) $. Then $ n^{th} $ renormalized map $ R^nF_{\mod} $ is also a toy model map, that is,
% $$ R^nF_{\mod} \equiv F_{\mod, \,n} = (f_n(x) - \eps_n(x,y),\ x,\ \de_n(x,y,z)) $$
% where $ \pi_{xy} \circ F_{\mod, \,n} $ is the $ n^{th} $ renormalization of two dimensional H\'enon-like map, $ R^nF_{2d} = (f(x) - \eps(x,y),\,x) $. In other words, the space of toy model map which is infinitely renormalizable maps is invariant under renormalization operator. Furthermore, 
$$ \pi_{xy} \circ R^nF_{\mod} = R^nF_{2d} $$
for every $ n \in \N $. Moreover, %the maps $ \eps_n $ %and $ \de_n $ are 
$ \eps_n(w) = \ b_1^{2^n}a(x)\:\! y (1+O(\rho^n)) $ where $ b_1 $ is the average Jacobian of two dimensional map, $ F_{2d} = \pi_{xy} \circ F_{\mod} $. 
%, %$ b = b_1 b_2 $, $ a(x) $ is the non vanishing diffeomorphism on $ \pi_x(B) $ and $ \| \:\! \tilde{\de}_n \| = O\big(\eps^{2^n}\big) $ for some positive $ \rho <1 $.
\end{prop}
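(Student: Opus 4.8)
The plan is to prove the two assertions separately: first that the toy model class is invariant under $R$ with $\pi_{xy}$ intertwining the three- and two-dimensional renormalization operators, and then to read off the asymptotics of $\eps_n$ from the two-dimensional universality theorem of \cite{CLM}.

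First I would check that $\pi_{xy}$ conjugates each ingredient in the definition of $R$ to its two-dimensional counterpart. Since $\di_z\eps\equiv 0$, the first two coordinates of $F_{\mod}(x,y,z)=(f(x)-\eps(x,y),\,x,\,\de(w))$ do not involve $z$, so $\pi_{xy}\circ F_{\mod}=F_{2d}\circ\pi_{xy}$ with $F_{2d}(x,y)=(f(x)-\eps(x,y),\,x)$, and hence $\pi_{xy}\circ F_{\mod}^2=F_{2d}^2\circ\pi_{xy}$. Likewise the horizontal-like diffeomorphism $H_{\mod}(x,y,z)=(f(x)-\eps(x,y),\,y,\,z-\de(y,f^{-1}(y),0))$ has its first two coordinates independent of $z$, so $\pi_{xy}\circ H_{\mod}=H_{2d}\circ\pi_{xy}$ with $H_{2d}(x,y)=(f(x)-\eps(x,y),\,y)$ the two-dimensional horizontal diffeomorphism of \cite{CLM}. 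For $H_{\mod}^{-1}$, the first coordinate $\phi^{-1}$ is the straightening map determined by $f\circ\phi^{-1}(w)-\eps\circ H_{\mod}^{-1}(w)=x$; since $\eps\circ H_{\mod}^{-1}(w)=\eps(\phi^{-1}(w),y)$ carries no $z$-dependence, this equation with $z$ suppressed determines $\phi^{-1}$ as a function of $(x,y)$ alone, whence $\pi_{xy}\circ H_{\mod}^{-1}=H_{2d}^{-1}\circ\pi_{xy}$.

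Next I would note that the rescalings match. By Proposition \ref{preconv} the unimodal map $f_1$ in the pre-renormalization of $F_{\mod}$ depends only on $f$ and $\eps$, so it coincides with the first-coordinate unimodal map of the two-dimensional pre-renormalization; therefore the minimal interval $V$ with $V\times{\bf I}^v$ invariant under $H_{\mod}\circ F_{\mod}^2\circ H_{\mod}^{-1}$ and the orientation-reversing affine rescaling $s\colon V\to I$ are exactly the two-dimensional ones, and $\La(x,y,z)=(sx,sy,sz)$ satisfies $\pi_{xy}\circ\La=\La_{2d}\circ\pi_{xy}$ with $\La_{2d}(x,y)=(sx,sy)$. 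Composing the intertwining relations for $F_{\mod}^2$, $H_{\mod}$, $H_{\mod}^{-1}$, $\La$ gives
$$\pi_{xy}\circ RF_{\mod}=\La_{2d}\circ H_{2d}\circ F_{2d}^2\circ H_{2d}^{-1}\circ\La_{2d}^{-1}\circ\pi_{xy}=RF_{2d}\circ\pi_{xy}.$$
In particular the first coordinate of $RF_{\mod}$ depends on $(x,y)$ only, i.e. $\di_z\eps_1\equiv 0$, so $RF_{\mod}$ is again a toy model map; induction on $n$ via $R^nF_{\mod}=\La_{n-1}\circ H_{n-1}\circ(R^{n-1}F_{\mod})^2\circ H_{n-1}^{-1}\circ\La_{n-1}^{-1}$ then yields $\pi_{xy}\circ R^nF_{\mod}=R^nF_{2d}$ for all $n\in\N$, with every $\eps_n$ independent of $z$. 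The asymptotic of $\eps_n$ is then inherited from the two-dimensional theory: $\eps_n(x,y,z)=\eps_n(x,y)$ is precisely the $\eps$-term of $R^nF_{2d}$, the unique invariant measure of $F_{2d}$ on its critical Cantor set is the $\pi_{xy}$-pushforward of $\mu$ (so the average Jacobian $b_1$ of $F_{2d}$ is the one in the statement), and the universality theorem for two-dimensional H\'enon-like maps in \cite{CLM} gives $\eps_n(w)=b_1^{2^n}a(x)\,y\,(1+O(\rho^n))$ with $a(x)$ the universal function of Theorem \ref{Universality of the Jacobian}.

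The only genuinely delicate point is the exactness of the commutation: one has to verify that none of the objects built from $F_{\mod}$ in the renormalization procedure — the perturbation $\eps$, the straightening map $\phi^{-1}$, the renormalized unimodal map $f_1$, the interval $V$, and the scaling factor $s$ — ever involves the $z$-coordinate, so that $\pi_{xy}$ intertwines $R$ and $R_{2d}$ exactly rather than up to a small error. Once this bookkeeping is carried out, the remaining steps are direct applications of the two-dimensional results recalled in Section \ref{preliminaries}.
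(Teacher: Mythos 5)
Your proposal is correct and follows essentially the same route as the paper's argument: show that $\pi_{xy}$ intertwines every ingredient of the renormalization (with the key point being that the straightening map $\phi^{-1}$ is independent of $z$, which you get by uniqueness of the solution of $f(\phi^{-1})-\eps(\phi^{-1},y)=x$ and the paper gets by differentiating this identity in $z$), then induct and invoke the two-dimensional universality theorem of \cite{CLM} for the asymptotics of $\eps_n$. The bookkeeping you flag at the end is exactly what the paper carries out, so no gap remains.
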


\comm{******************
\begin{proof}
Firstly, let us show that the space of infinitely renormalizable toy model H\'enon-like map is invariant under renormalization operator. Let us compare the first coordinates of pre-renormalization of the toy model map and two dimensional map. %If $ \pi_x \circ PRF_{\mod} $ is the same as $ \pi_x \circ PRF_{2d} $, then $ \pi_{xy} \circ RF_{\mod} $ is the same as $ RF_{2d} $ up to the $ 1 + O( \bar \eps) $ dilation.
%\ssk \\
Recall that the first coordinate map of $ \pi_x \circ PRF_{\mod} $ is as follows
\begin{equation}
\begin{aligned}
&\pi_x \circ PRF_{\mod} \\
= \ & f(f(x) - \eps \circ F \circ H^{-1}_{\mod}(w)) - \eps \circ F^2 \circ H^{-1}_{\mod}(w) \\
= \ & f(f(x) - \eps(x,\, \phi^{-1}(w),\, \de \circ H^{-1}_{\mod}(w))) - \eps(f(x) - \eps \circ F \circ H^{-1}_{\mod}(w),\,x,\, \de \circ F \circ H^{-1}_{\mod}(w)) \\
= \ & f(f(x) - \eps(x, \phi^{-1}(w))) - \eps(f(x) - \eps \circ F \circ H^{-1}_{\mod}(w),x ) \\
= \ & f(f(x) - \eps(x, \phi^{-1}(w))) - \eps(f(x) - \eps(f(x) - \eps(x, \phi^{-1}(w)))) .
\end{aligned} \msk
\end{equation}
Similarly, we have
\begin{equation}
\begin{aligned}
\pi_x \circ PRF_{2d} %\\
= \  f(f(x) - \eps(x, \phi_{2d}^{-1}(w))) - \eps(f(x) - \eps(f(x) - \eps(x, \phi_{2d}^{-1}(w))))
\end{aligned} \msk
\end{equation}
where $ \phi_{2d}^{-1}(w) $ is the first coordinate map of $ H_{2d}^{-1}(w) $ which is the inverse of the two dimensional horizontal diffeomorphism. Then it suffice to show that $ \phi^{-1} = \phi_{2d}^{-1} $. Recall that the $ \phi^{-1} $, the first coordinate map of $ H^{-1}_{\mod} $ satisfies the following equation
\begin{equation*}
\begin{aligned}
f \circ \phi^{-1}(w) = \  x + \eps \circ H^{-1}_{\mod} = \ x + \eps (\phi^{-1}(w),y) .
\end{aligned}
\end{equation*}
Thus by the chain rule, we obtain
\begin{equation*}
f' \circ \phi^{-1}(w) \cdot \di_{z}\phi^{-1}(w) = \di_x \eps \circ H^{-1}_{\mod}(w) \cdot \di_{z}\phi^{-1}(w) .
\end{equation*}
%for every $ i \in \N $. 
Since $ f'(x) \asymp -1 $ on $ f(V) $ and $ \| \:\! \eps \|_{C^1} = O(\bar \eps) $, with the small enough $ \bar \eps $, $ \di_{z}\phi^{-1}(w) \equiv 0 $% for every $ i \in \N $
, that is, $ \phi^{-1}(w) = \phi^{-1}(x,y) $.
\ssk \\
Thus if $ \pi_{xy} \circ F_{\mod} = F_{2d} $, then $ PRF_{\mod} $ is the same as $ PRF_{2d} $. Let $ \La_{2d}(x,y) = (sx, sy) $ and $ B_{2d} $ is the two dimensional box domain of $ F_{2d} $. Thus 
$$ \La^{-1}(B) = \La_{2d}^{-1}(B_{2d}) \times {I}^z $$ 
%where $ {\bf I^z} $ is the product of intervals of $ \Bz $ coordinates, that is, $ \prod_i \pi_{z_i} (B) $. 
Then it implies that $ \pi_{xy} \circ \La = \La_{2d} $. Hence, $ \pi_{xy} \circ RF_{\mod} = RF_{2d} $. By induction, $ \pi_{xy} \circ R^nF_{\mod} = R^nF_{2d} $ for every $ n \in \N $.
\ssk \\
Secondly, let us consider the asymptotic expression of $ \eps_n $ and $ \de_n $ with Lyapunov exponents of $ F_{\mod} $. By Universality theorem of Jacobian for two and three dimensional H\'enon-like maps, $ \Jac R^nF $ and $ \Jac R^nF_{2d} $ have each universal asymptotic expression
\begin{equation*}
\begin{aligned}
\Jac R^nF(w) & = \ b^{2^n}a(x) (1 +O(\rho^n)) \\
\Jac R^nF_{2d}(w) & = \ b_1^{2^n}a_{2d}(x) (1 +O(\rho^n)) 
\end{aligned}
\end{equation*} 
where $ a(x) $ and $ a_{2d}(x) $ are the universal positive functions for some $ \rho \in (0,1) $. The fact that $ \di_z \eps \equiv 0 $ implies the equation of the Jacobian determinant, $ \Jac R^nF_{\mod} = \di_y \eps_n \cdot \di_z \de_n $. Since the map $ \eps_n $ of $ R^nF_{2d} $ is the same as $ R^nF_{\mod} $, we have the equation as follows
\begin{equation*}
\Jac R^nF(w) = b^{2^n}a(x) (1 +O(\rho^n)) = b_1^{2^n}a_{2d}(x) (1 +O(\rho^n)) \cdot \di_z \de_n .
\end{equation*}
Thus it suffice to show that $ a(x) $ is identically same $ a_{2d}(x) $. The $ a_{2d}(x) $ is the limit of the following two dimensional maps
$$ \lim_{n \ra \infty} \frac{\Jac \Psi^n_{\tip}(w)}{\Jac \Psi^n_{\tip}(F_nw)} . $$
By Lemma \ref{asymptotics of S for k}, $ 1 + \di_x S^n_0 $ converges to the universal function, $ v_* '(x) $ exponentially fast. Since both two and three dimensional coordinate change maps, $ \Jac \Psi^n_{\tip} = 1 + \di_x S^n_0 $ is the determinant of Jacobian up to the appropriate dilation, these maps have the same universal limit uniformly. Hence, $ a(x) = a_{2d}(x) $. Moreover, the fact that $ b = b_1 b_2 $ and asymptotic of $ \Jac R^nF $ implies that $ \di_z \de_n = b_2^{2^n} (1 + O(\rho^n)) $.
\end{proof}
**********************}
\ssk
\nin Let $ b $ be the average Jacobian of $ F_{\mod} \in \II(\bar \eps) $. Define another universal number, say $ b_2 $ is the ratio $ b/b_1 $. Then by the above Proposition $ \di_z \de_n \asymp b_2^{2^n} $ for every $ n \in \N $. 

\msk

\subsection{Tangent bundle splitting under $ DF_{\mod} $}
Let $ DF $ %and $ D\de $ 
be the Fr\'echet derivative of $ F $. %and $ \de $ respectively. %In particular, for each $ w \in \R^2 \times \ell^2 $, $ D\bde_w $ is a bounded linear operator from $ \R^2 \times \ell^2 $ to $ \ell^2 $. Let $ \di_{\Bz} \bde_w = \pi_{\Bz} \circ D\bde_w $ at each $ w $ and $ \di_{\Bz} \bde $ the bundle map which induce $ \di_{\Bz} \bde_w $ at every point $ w $ in the given set.
%\ssk \\
For the given point $ w = (x,\, y,\, z) $, let us denote $ w_i = (x_i,\, y_i,\, z_i) = F^i(x,\, y,\, z) $. The derivative of $ F $ has the block matrix form
\[
\renewcommand{\arraystretch}{1.3}
DF_{\mod} =  \left(
\begin{array} {cc|c}
\multicolumn{2}{c|}{\multirow{2}{*}{$DF_{2d}$}} & { 0}  \\ 
 & & { 0} \\    \hline  %\noalign{\smallskip}
\di_x \de & \di_y \de &\di_{z} \de
\end{array}
\right) .
\]
\ssk
Let us take a simpler notation below
\begin{equation*} \msk
\begin{aligned}
DF_{\mod}(x,y,z) =
\begin{pmatrix} 
A(w)& {\bf 0} \\
C(w)& D(w)
\end{pmatrix} \equiv
\begin{pmatrix}
A_w & {\bf 0} \\
C_w & D_w
\end{pmatrix} .
\end{aligned} 
\end{equation*} 
where $ A_w = DF_{2d}(x,y) $, 
${\bf 0} = \bigl(\begin{smallmatrix}
{ 0} \\ 
{ 0}
\end{smallmatrix} \bigr)$ , 
$ C_w = \big(\,\di_x \de(w) \ \, \di_y \de(w)\,\big) $ and $ D_w = \di_{z} \de(w) $. 
%\msk \\
Since we assume that $ F_{\mod} $ and $ F_{2d} $ are diffeomorphisms, $ DF_{\mod} $ and $ A_w $ are invertible. It implies that $ D_w $ is invertible at each $ w $. Let $ w_N $ be $ F^N(w) $ and the derivative of the $ N^{th} $ iterated map $ F^N_{\mod} $ be $ DF^N_{\mod} $. We express $ DF^N_{\mod} $ as the block matrix form as follows \msk
\begin{equation} \label{block matrix of DF-mod infty}
\begin{aligned}
DF^N_{\mod}(x,y, z) = 
\begin{pmatrix}
A_N(w) & {\bf 0} \\
C_N(w) & D_N(w) 
\end{pmatrix} \equiv
\begin{pmatrix}
A_N& {\bf 0} \\
C_N& D_N
\end{pmatrix} .
\end{aligned} %\msk
\end{equation}

\nin 
Then for each $ N \geq 1 $, 
\begin{equation}
\begin{aligned}
\begin{pmatrix}
A_N& {\bf 0} \\
C_N& D_N
\end{pmatrix} =
\begin{pmatrix}
A_1(w_{N-1}) & {\bf 0} \\
C_1(w_{N-1}) & D_1(w_{N-1})
\end{pmatrix} \cdot
\begin{pmatrix}
A_{N-1}& {\bf 0} \\
C_{N-1}& D_{N-1}
\end{pmatrix} .
\end{aligned} \msk
\end{equation}

\nin Let $ A_0 \equiv 1 $, $ C_0 \equiv 1 $, $ D_0 \equiv 1 $ and $ w = w_0 $ for notational compatibility. Then by the direct calculations, we obtain
\begin{equation} \label{component of DF-n infty}
\begin{aligned}
A_N &= A_1(w_{N-1})\;\! A_{N-1} = \prod_{i=0}^{N-1} A_1(w_{N-i-1}) \\
D_N &= D_1(w_{N-1})\;\! D_{N-1} = \prod_{i=0}^{N-1} D_1(w_{N-i-1}) \\[0.3em]
C_N &= C_1(w_{N-1})\;\! A_{N-1} + D_1(w_{N-1})\;\! C_{N-1} \\
&= \sum_{i=0}^{N-1} D_i(w_{N-1-i})\, C_1(w_{N-1-i}) \, A_{N-1-i} . \\
\end{aligned} %\msk
\end{equation}

\nin We see that $ [DF^N_{\mod}(w)]^{-1} = DF^{-N}_{\mod}(F^{N}(w)) $ by inverse function theorem. Thus using block matrix expressions, $ [DF^N_{\mod}(w)]^{-1} $ is
\begin{equation} \label{block matrix of DF-1 infty}
\begin{aligned}
DF^{-N}_{\mod} = 
\begin{pmatrix}
A^{-1}_N & {\bf 0} \\[0.4em]
- D^{-1}_N \;\! C_N \;\! A_N^{-1} & D^{-1}_N
\end{pmatrix}
\end{aligned}
\end{equation}
at the point, $ F^{N}(w) $. 
\msk \\
Let us consider the tangent bundle, $ T_{\Gamma}B $ of $ DF_{\mod} $ over a compact invariant set $ \Gamma $. The lower triangular block matrix of the $ DF_{\mod} $ implies the existence of $ DF_{\mod} $ invariant tangent subbundle, say $ \mathbb{E}^1 $. Then if $ \| \:\! C_1 \| $ is small enough, then $ T_{\Gamma}B $ is decomposed to $ \mathbb{E}^1 \times \mathbb{E}^2 $ where $ \mathbb{E}^2 = T_{\Gamma}B / \mathbb{E}^1 $. Moreover, since $ B = \Dom(F) $ is contractible, the tangent bundle, $ T_{\Gamma}B $ is trivial. Thus decomposition of tangent space at each point, for instance, $ T_w(\R^2 \times \R)= T_w \R^2 \oplus T_w \R $, is still true for the whole tangent bundle, that is, $ \mathbb{E}^1 \times \mathbb{E}^2 = \mathbb{E}^1 \oplus \mathbb{E}^2 $.
%However, the existence of the invariant cone fields under $ DF_{\mod} $ implies the dominated splitting. 
\msk \\ Let the cone at $ w $ with some positive number $ \gamma $ be
\begin{align} \label{complement cone infty}
\CC(\gamma)_w = \{ u + v \; | \; u \in \R^2 \times \{0\},\; v \in \{0\} \times \R \ \ \textrm{and} \ \ \dfrac{1}{\gamma}\,\| \:\! u\| >  \|v\|  \; \} .
\end{align}
%where $ E^{ss}(w) $ and $ E^{\, pu}(w)  $ are the splitting of the tangent space $DF_{\mod}$ at $ w $ with strong stable and pseudo-unstable direction respectively. %of $F$ and $ \gamma $ is a positive number which is sufficiently small. 
Cone field over a given compact invariant set $ \Gamma $ is the union of cones at every points in $ \Gamma $
\begin{align} \label{complement cone field infty}
\CC(\gamma) = \bigcup_{w \in \,\Gamma} \CC(\gamma)_w .
\end{align}
\msk
Let $ \| DF \| $ be the operator norm of $ DF $. The minimum expansion rate (or the strongest contraction rate) of $ DF $ is defined by the equation, $ \| DF^{-1} \| = \dfrac{1}{m(DF)} $. \footnote{The operator norm is defined on the linear operator at each point. For example, 
$$ \| DF_w \| = \sup_{ \| v \| =1} \{ \| DF_w v \| \} $$
The value $ \| DF \| $ is defines as $ \sup_{ w \in B } \| DF_w \| $. }

\ssk
\begin{lem} \label{upper bound of C-N A-N infty}
Let $ A_N $, $ {\bf 0} $, $ C_N $ and $ D_N $ be components of $ DF^N_{\mod} $ defined on \eqref{block matrix of DF-mod infty}. Suppose that $ \| D_1 \| < m(A_1) $. Then $ \| \;\! C_N \;\! A_N^{-1} \| < \kappa $ for some $ \kappa > 0 $ independent of $ N $.
\end{lem}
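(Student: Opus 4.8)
The plan is to use the explicit block formulas in \eqref{component of DF-n infty} for $DF^N_{\mod}$ and reduce the estimate to summing a geometric series whose ratio is $\theta := \|D_1\|\,/\,m(A_1)$; this ratio is $<1$ precisely by the hypothesis $\|D_1\|<m(A_1)$, and that is exactly where the assumption enters. First recall that $m(A_1)>0$, since $A_1 = DF_{2d}$ is invertible on the (compact) domain, that $\|C_1\|$ and $\|D_1\|$ are finite there, and that because $\|D_1\|$ is a supremum and $m(A_1)$ an infimum over the domain one has $\|D_1(w)\|\le\|D_1\|$ and $m(A_1(w))\ge m(A_1)$ at every point $w$ of the orbit under consideration.

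From \eqref{component of DF-n infty} we have, for every $N\ge1$,
\begin{equation*}
C_N\,A_N^{-1} \;=\; \sum_{i=0}^{N-1} D_i(w_{N-1-i})\,C_1(w_{N-1-i})\,A_{N-1-i}\,A_N^{-1},
\end{equation*}
so it suffices to bound each summand by a constant times $\theta^{\,i}$. The one structural point is that $A_{N-1-i}A_N^{-1}$ telescopes: since $A_N = A_1(w_{N-1})\cdots A_1(w_0)$ and $A_{N-1-i} = A_1(w_{N-2-i})\cdots A_1(w_0)$ (ordered products of single-step matrices, by the chain rule and the lower-triangular shape of $DF_{\mod}$), one has $A_N = \big(A_1(w_{N-1})\cdots A_1(w_{N-1-i})\big)\,A_{N-1-i}$, hence
\begin{equation*}
\big\|\,A_{N-1-i}\,A_N^{-1}\,\big\| \;=\; \big\|\,\big(A_1(w_{N-1})\cdots A_1(w_{N-1-i})\big)^{-1}\,\big\| \;\le\; \prod_{j=N-1-i}^{N-1}\frac{1}{m(A_1(w_j))} \;\le\; \Big(\frac{1}{m(A_1)}\Big)^{\!i+1}.
\end{equation*}
Likewise $D_i(w_{N-1-i})$ is a product of $i$ single-step scalar factors $D_1(\cdot)$, so $|D_i(w_{N-1-i})|\le\|D_1\|^{\,i}$.

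Multiplying the two bounds, the $i$-th summand has norm at most
\begin{equation*}
\|C_1(w_{N-1-i})\|\cdot\|D_1\|^{\,i}\cdot\Big(\frac{1}{m(A_1)}\Big)^{\!i+1} \;\le\; \frac{\|C_1\|}{m(A_1)}\,\Big(\frac{\|D_1\|}{m(A_1)}\Big)^{\!i} \;=\; \frac{\|C_1\|}{m(A_1)}\,\theta^{\,i},
\end{equation*}
and summing the geometric series gives
\begin{equation*}
\|C_N\,A_N^{-1}\| \;\le\; \frac{\|C_1\|}{m(A_1)}\sum_{i=0}^{N-1}\theta^{\,i} \;\le\; \frac{\|C_1\|}{m(A_1)\,(1-\theta)} \;=:\;\kappa,
\end{equation*}
which is independent of $N$. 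The only thing to get right is the telescoping identity for $A_{N-1-i}A_N^{-1}$ together with the observation that the hypothesis is exactly what makes $\theta<1$, so that the partial sums stay bounded as $N\to\infty$; everything else is a routine submultiplicativity estimate, uniform over the compact invariant set on which $DF^N_{\mod}$ is taken.
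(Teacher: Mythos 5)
Your proposal is correct and follows essentially the same route as the paper: the sum formula for $C_N$ from \eqref{component of DF-n infty}, the telescoping of $A_{N-1-i}A_N^{-1}$ into a product of single-step inverses, and the resulting geometric series with ratio $\|D_1\|/m(A_1)<1$. The only difference is a harmless bookkeeping one (you count $i+1$ inverse factors where the paper writes $i$, so your constant $\kappa$ differs from the paper's by a factor of $m(A_1)$), which does not affect the conclusion.
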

\begin{proof}
By \eqref{block matrix of DF-1 infty},
\begin{equation} \label{A-N -1 prod}
A_N^{-1}(w_N) = \prod_{i=0}^{N-1} A_1^{-1}(w_i) = \prod_{j=0}^{N-1-i} A_1^{-1}(w_j) \, \prod_{j=N-i}^{N-1} A_1^{-1}(w_j) = A_{N-1-i}^{-1} \, A^{-1}_i(w_{N-i})
\end{equation}
By \eqref{component of DF-n infty}, $ \| D_k \| \leq \| D_1 \|^k $ and by \eqref{A-N -1 prod} $ m(A_k) \geq m(A_1)^k $ for any $ k \in \N $.
Then \msk
\begin{equation}
\begin{aligned}
%& \quad \ \ \| \;\! C_N \;\! A_N^{-1}(w_N) \| \\
\| \;\! C_N \;\! A_N^{-1}(w_N) \|
&=  \ \Big\| \sum_{i=0}^{N-1} D_i(w_{N-1-i})\, C_1(w_{N-1-i}) \, A_{N-1-i}  \;\! A_N^{-1}(w_N) \Big\| \\
&=  \ \Big\| \sum_{i=0}^{N-1} D_i(w_{N-1-i})\, C_1(w_{N-1-i}) \,A^{-1}_i(w_{N-i}) \Big\| \\
&\leq \ \sum_{i=0}^{N-1} \| D_i \| \| C_1 \| \| A^{-1}_i \| = \ \| \:\! C_1 \| \sum_{i=0}^{N-1} \frac{\| D_i \|}{m(A_i)} \\
&\leq \ \| \:\! C_1 \| \sum_{i=0}^{N-1} \left( \frac{\| D_1 \|}{m(A_1)} \right)^i \leq \ \| \:\! C_1 \| \sum_{i=0}^{\infty} \left( \frac{\| D_1 \|}{m(A_1)} \right)^i \\[0.4em]
& = \ \frac{\| \:\! C_1 \| \cdot  m(A_1)}{m(A_1) - \| D_1 \| }
\end{aligned} \msk
\end{equation}
Then we can choose $ \kappa = \dfrac{\| \:\! C_1 \| \cdot  m(A_1)}{m(A_1) - \| D_1 \| } $ which is independent of $ N $.
\end{proof}

\msk
\begin{lem} \label{Invariance of cone field infty}
Let $ F_{\mod} \in \II(\bar \eps) $ with small enough $ \bar \eps >0 $. Suppose that $ \| D_1 \| \leq \frac{\ \rho}{\ 2} \cdot m(A_1) $ for some $ \rho \in (0,1) $. Let $ \CC(\gamma) $ %over the given compact invariant set $ \Gamma $ 
be the cone field which is defined on \eqref{complement cone field infty} with cones in \eqref{complement cone infty}. Then $ \CC(\gamma) $ is invariant under $ DF^{-1}_{\mod} $ for all sufficiently small $ \gamma > 0 $. More precisely, $ DF_{\mod} $ has dominated splitting over any given invariant compact set $ \Gamma $. 
\end{lem}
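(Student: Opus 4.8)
The plan is to verify the cone condition directly from the block-triangular form of the derivative cocycle in \eqref{block matrix of DF-mod infty}--\eqref{block matrix of DF-1 infty} and then extract the splitting by the standard nested-cone argument. The two quantitative inputs are Lemma \ref{upper bound of C-N A-N infty}, which (under the weaker hypothesis $\|D_1\|<m(A_1)$) already gives $\|C_N A_N^{-1}\|<\kappa$ uniformly in $N$, and the submultiplicativity $|D_N(w)|\le\|D_1\|^N$, $m(A_N(w))\ge m(A_1)^N$ read off from \eqref{component of DF-n infty}. The reason for assuming the stronger gap $\|D_1\|\le\tfrac{\rho}{2}m(A_1)$ (which is the precise form of ``$b_2\ll b_1$'', since $m(A_1)\asymp b_1$ and $\|D_1\|\asymp b_2$) is that it yields
$$
\frac{|D_N|}{m(A_N)}\le\Bigl(\frac{\|D_1\|}{m(A_1)}\Bigr)^N\le(\rho/2)^N\le\tfrac{\rho}{2}<\tfrac12\qquad(N\ge1),
$$
which both keeps the cone condition uniform in $N$ and supplies the exponential domination rate at the end.

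The core computation is the cone estimate itself. Fix $w$ in the invariant compact set $\Gamma$, an integer $N\ge1$, and a vector $(u,v)\in\CC(\gamma)_w$, that is, $u\in\R^2\times\{0\}$, $v\in\{0\}\times\R$, $\|v\|<\gamma^{-1}\|u\|$ (so $u\neq0$). Put $(u',v')=DF^N_{\mod}(w)(u,v)=(A_Nu,\ C_Nu+D_Nv)$. Since $u=A_N^{-1}u'$, Lemma \ref{upper bound of C-N A-N infty} gives $\|C_Nu\|=\|C_NA_N^{-1}u'\|\le\kappa\|u'\|$, while $\|u\|\le\|A_N^{-1}\|\,\|u'\|=\|u'\|/m(A_N)$ gives $|D_N|\,\|v\|\le\gamma^{-1}(|D_N|/m(A_N))\|u'\|\le\gamma^{-1}(\rho/2)^N\|u'\|$. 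Therefore
$$
\|v'\|\le\|C_Nu\|+|D_N|\,\|v\|\le\bigl(\kappa+\gamma^{-1}(\rho/2)^N\bigr)\|u'\|<\gamma^{-1}\|u'\|
$$
provided $\kappa\gamma<1-(\rho/2)^N$, which holds for every $N\ge1$ as soon as $\gamma<1/(2\kappa)$. Hence, for all sufficiently small $\gamma>0$, $DF^N_{\mod}$ carries $\CC(\gamma)$ into itself for every $N$; running the same estimate through \eqref{block matrix of DF-1 infty} shows symmetrically that the narrow cone field around the strong-stable $z$-axis is carried into itself by every $DF^{-N}_{\mod}$.

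It remains to deduce the dominated splitting. The $z$-axis $\mathbb{E}^1=\{0\}\times\R$ is $DF_{\mod}$-invariant by the lower-triangular shape, and I would set $\mathbb{E}^2_w=\bigcap_{N\ge0}DF^N_{\mod}(F^{-N}(w))\,(\CC(\gamma)_{F^{-N}(w)})$. One-step cone invariance makes this a nested intersection, and the bound $|D_N|/m(A_N)\le(\rho/2)^N$ shows that $DF^N_{\mod}(\CC(\gamma))$ pinches, at rate $(\rho/2)^N$, onto the graph of the uniformly bounded operator $C_NA_N^{-1}$; the recursion $C_{N+1}A_{N+1}^{-1}=(C_1+D_1\,C_NA_N^{-1})A_1^{-1}$ (blocks evaluated along the orbit) is a contraction by the factor $\|D_1A_1^{-1}\|\le\rho/2$, so $C_NA_N^{-1}$ converges and $\mathbb{E}^2$ is a well-defined $DF_{\mod}$-invariant plane field transverse to $\mathbb{E}^1$, continuous in $w$ by the uniformity of the estimates. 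Finally $\|DF^N_{\mod}|_{\mathbb{E}^1}\|\cdot\|(DF^N_{\mod}|_{\mathbb{E}^2})^{-1}\|$ is dominated by $|D_N|/m(A_N)\le(\rho/2)^N$, which is exactly the domination inequality. I expect the displayed inequalities to be routine; the only genuine care is in this last paragraph (nestedness, convergence of $C_NA_N^{-1}$, continuity of $\mathbb{E}^2$) and in checking that all constants are uniform over $\Gamma$ --- and since every inequality used is an open condition, the same proof gives the asserted $C^1$-robustness.
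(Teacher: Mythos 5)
Your proof is correct and rests on exactly the same quantitative inputs as the paper's: Lemma \ref{upper bound of C-N A-N infty} for the uniform bound $\|C_NA_N^{-1}\|\le\kappa$, and the submultiplicative estimate $\|D_N\|/m(A_N)\le(\|D_1\|/m(A_1))^N\le(\rho/2)^N$ read off from \eqref{component of DF-n infty}. The one substantive difference is which cone you iterate in which direction. You verify that the cone as literally written in \eqref{complement cone infty}, i.e.\ $\|v\|<\gamma^{-1}\|u\|$ (a wide cone about the $xy$-plane), is preserved by the \emph{forward} iterates $DF^N_{\mod}$. The paper's proof instead starts from a vector with $\|u\|<\gamma\|v\|$ --- the complementary narrow cone about the $z$-axis, with the inequality silently reversed relative to \eqref{complement cone infty} --- and shows via \eqref{block matrix of DF-1 infty} that it is contracted by $DF^{-N}_{\mod}$; that is what the phrase ``invariant under $DF^{-1}_{\mod}$'' in the statement actually refers to. (The literal cone of \eqref{complement cone infty} cannot be backward invariant: $DF^{-1}_{\mod}$ expands the $z$-direction by $m(D^{-1})=\|D\|^{-1}\gg m(A)^{-1}\ge\|A^{-1}\|$ and so pushes vectors toward the vertical.) The two computations are dual, both follow from the same two estimates, and your closing remark that the narrow vertical cone is carried into itself by $DF^{-N}_{\mod}$ is precisely the paper's displayed chain of inequalities. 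Beyond this, you are more explicit than the paper about extracting the splitting, which the paper only asserts after the cone estimate: you identify $\mathbb{E}^1=\{0\}\times\R$ from the triangular block structure, build $\mathbb{E}^2$ as the nested intersection of forward cone images pinching at rate $(\rho/2)^N$ onto the graphs of $C_NA_N^{-1}$ (whose convergence follows either from your recursion $C_{N+1}A_{N+1}^{-1}=(C_1+D_1\,C_NA_N^{-1})A_1^{-1}$, a $\rho/2$-contraction, or directly from the nesting), and read off the domination ratio $\|D_N\|/m(A_N)$ up to the uniform constant coming from $\|C_NA_N^{-1}\|\le\kappa$. That last paragraph is the part the paper leaves to the reader, and your version of it is sound and uniform over $\Gamma$.
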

\begin{proof}
Let us take a vector \footnote{The vector $ u \in \R^2 $ depends on each point $ w \in \R^2 \times \R $. Then $ w \mapsto u(w) $ is a map from $ \R $ to $ \R^2 $.} $ (u \ v) \in \R^2 \times \R $ in the cone field $ \CC(\gamma) $ with small enough $ \gamma > 0 $. Since $ \| u \| < \gamma \| v \| $, we may assume that $ \| v \| =1 $ and $ \| u \| < \gamma $. Thus for cone field invariance, it suffice to show that \footnote{Use the matrix form in \eqref{block matrix of DF-1 infty} with a vector on the cone field $ \CC(\gamma) $.
\begin{align*}
\begin{pmatrix}
A^{-1}_N & {\bf 0} \\[0.4em]
- D^{-1}_N \;\! C_N \;\! A_N^{-1} & D^{-1}_N
\end{pmatrix}
\begin{pmatrix}
u \\
1
\end{pmatrix}
\end{align*}    
The number one in the vector means the the number corresponding to the identity map.     }
\msk
\begin{equation*}
\big\| A_N^{-1} \;\! u \;\! \big( -D_N^{-1} \;\! C_N \;\! A_N^{-1} \;\! u + D_N^{-1} \big)^{-1} \big\| \leq \rho_1 \| u\|
\end{equation*}
for some $ \rho_1 \in (0,1) $ %\footnote{The upper bound of $ \rho $ in terms of $ \gamma $ can be also calculated. The any positive number $ \rho < 1 - \sqrt{\| C_1 \| \gamma} $ implies $ \rho_1 < 1 $. }
. Let us calculate the lower bound of $ m( -D_N^{-1} \;\! C_N \;\! A_N^{-1} \;\! u + D_N^{-1} ) $. The equation
\begin{equation*}
-D_N^{-1} \;\! C_N \;\! A_N^{-1} \;\! u + D_N^{-1} = D_N^{-1} \big( - C_N \;\! A_N^{-1} \;\! u + \Id \big) 
\end{equation*}
and Lemma \ref{upper bound of C-N A-N infty}, $ \| - C_N \;\! A_N^{-1} \;\! u \| < \kappa \gamma $. Then with the small enough $ \gamma > 0 $ and the definition of minimum expansion rate, 
\begin{equation*}
m( -D_N^{-1} \;\! C_N \;\! A_N^{-1} \;\! u + D_N^{-1} ) \geq m( D_N^{-1})\cdot(-\kappa \gamma +1 )
\end{equation*} 

\nin Then
\begin{equation}
\begin{aligned}
%& \ \big\| A_N^{-1} \;\! u \;\! \big( -D_N^{-1} \;\! C_N \;\! A_N^{-1} \;\! u + D_N^{-1} \big)^{-1} \big\| \\[0.3em]
\big\| A_N^{-1} \;\! u \;\! \big( -D_N^{-1} \;\! C_N \;\! A_N^{-1} \;\! u + D_N^{-1} \big)^{-1} \big\| 
\leq & \  \| A_N^{-1} \;\! u \| \;\! \big\| \big( -D_N^{-1} \;\! C_N \;\! A_N^{-1} \;\! u + D_N^{-1} \big)^{-1} \big\| \\[0.3em]
\leq & \ \frac{\| A_N^{-1} \| \|u \| }{ m \big( -D_N^{-1} \;\! C_N \;\! A_N^{-1} \;\! u + D_N^{-1} \big) } 
 \\[0.3em]
\leq &
\ \frac{\| A_N^{-1} \| \|u \| }{ m ( D_N^{-1}) \;\! m(- C_N \;\! A_N^{-1} \;\! u + \Id ) } \\[0.3em]
\leq & \ \frac{\| A_N^{-1} \| \|u \| }{ (1 - \kappa \gamma) \, m(D_N^{-1})} \ =  \ \frac{ \|D_N \|}{ (1 - \kappa \gamma) \,m(A_N)} \, \|u \| \\[0.3em]
\leq & \ \frac{1}{1 - \kappa \gamma} \left( \frac{\| D_1 \|}{ m( A_1)} \right)^N \|u \| .
\end{aligned} \msk
\end{equation}

\nin Thus for all small enough $ \rho >0 $ and $ \gamma > 0 $ satisfying $ \kappa \gamma \ll 1 $, we see
\begin{equation} \label{contracting rate of cone}
\frac{1}{1 - \kappa \gamma} \; \frac{\| D_1 \|}{ m( A_1)} \leq \frac{\rho_1}{2}
\end{equation}
for some $ \rho_1 \in (0,1) $. Hence, there exists decomposition of the tangent bundle, $ T_{\Gamma}B $ which is invariant under $ DF_{\mod} $ and the invariant subbundles satisfies dominated splitting condition. Moreover, dominated splitting implies the continuity of invariant sections.
\end{proof}
\msk

\begin{rem}
In Lemma \ref{upper bound of C-N A-N infty}, \ssk the components of block matrix, $ A_1 $, $ D_1 $, $ A_N $ and $ D_N $ depend on each point $ w \in \Gamma $. Thus $ \dfrac{\| D_1(w) \|}{ m(A_1(w))} < \dfrac{\;1}{\;2}\,\rho_w $ for some positive $ \rho_w < 1 $. Then the actual assumption is that the set, $ \{ \, \rho_w > 0 \,|\; w \in \Gamma \, \} $ is totally bounded above by the number less than $ \frac{\,1}{\;2} $. However, since $ \Gamma $ is compact, the set $ \{\, \rho_w \,|\; w \in \Gamma \,\} $ is precompact. Then $ \rho $ can be chosen as $ \sup_{w \in \Gamma} \{\, \rho_w \,|\; w \in \Gamma \,\} $. 
Then $ \kappa $ in Lemma \ref{upper bound of C-N A-N infty} is independent of $ w \in \Gamma $ %and $ \kappa $ is also uniform number independent of $ w $
. Moreover, the cone field $ \CC(\gamma) $ in Lemma \ref{Invariance of cone field infty} is contracted in uniform rate by $ DF^{-1}_{\mod} $.
\end{rem}

\msk
\subsection{Tangent bundle splitting under a small perturbation of toy model map}
The existence of invariant cone field under $ DF_{\mod} $ is still true when a small perturbation of $ DF_{\mod} $ is chosen. Let us consider the block diagonal matrix of $ DF $. Let the following map be a {\em perturbation} of toy model map, $ F_{\mod} (w) = (f(x) - \eps_{2d}(x,y),\;x,\;\de(w)) $ 
\begin{equation}  \label{perturbation of model maps infty}
F(w) = (f(x) - \eps(w),\ x,\ \de(w))
\end{equation}
where $ \eps(w) = \eps_{2d}(x,y) + \widetilde{\eps}(w) $. Thus $ \di_{z} \eps(w) = \di_{z} \widetilde{\eps}(w) $. \msk
\begin{equation} \label{matrix symbolic expression of DF infty}
\begin{aligned}
DF  = 
\renewcommand{\arraystretch}{1.3}
 \left(
\begin{array} {cc|c}
\multicolumn{2}{c|}{\multirow{2}{*}{$D \widetilde {F}_{2d}$}} & \di_{z} \eps  \\ 
 & & { 0} \\    \hline  %\noalign{\smallskip}
\di_x \de & \di_y \de &\di_{z} \de
\end{array}
\right)
= \left( \renewcommand{\arraystretch}{1.3} \begin{array}{c|c}
A & B \\
\hline
C & D
\end{array} \right)
\end{aligned} \bsk
\end{equation}
where $ D\widetilde{F}_{2d} = \begin{pmatrix}
f'(x) - \di_x \eps(w) & -\di_y \eps(w) \\[0.3em]
1  & 0
\end{pmatrix} $.
Observe that if $ B \equiv {\bf 0} $, then $ F $ is $ F_{\mod} $. 
\ssk \\
Let us quantify a small perturbation keeping invariance of cone fields under the assumption $ b_1 \ll b_2 $. One of the sufficient condition is that $ \| \di_z \eps \| \ll b_F $ and $ \| \de \|_{C^1} \ll \| \di_y \eps \| $. See the lemma below.

\msk
\begin{lem} \label{Invariance of cone filed perturbation infty}
Let $ F $ be a perturbation of the toy model map $ F_{\mod} $ defined in \eqref{perturbation of model maps infty} and $ A $, $ B $, $ C $ and $ D $ are components of block matrix of $ DF $ defined in \eqref{matrix symbolic expression of DF infty}. Suppose that $ \| D_1 \| \leq \frac{\rho_1}{2} \cdot m(A_1) $ for some $ \rho_1 \in (0,1) $. Suppose \ssk also that $ \| B \| \| C \| \leq \rho_0 \cdot m(A) \cdot m(D) $ where $ \rho_0 < \frac{\kappa \gamma}{2} $ for sufficiently small $ \gamma > 0 $. Then the cone field $ \CC(\gamma) $ defined on \eqref{complement cone field infty} is invariant under $ DF^{-1} $ .
\end{lem}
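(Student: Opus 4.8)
When the block $B$ of $DF$ in \eqref{matrix symbolic expression of DF infty} vanishes identically, $DF$ is block lower triangular and the assertion is precisely Lemma~\ref{Invariance of cone field infty}; so the plan is to regard $B$ (whose only nonzero entry is $\di_z\eps$) as a perturbation and to push the one–step cone estimate of that lemma through. First I would invert $DF=\left(\begin{smallmatrix}A&B\\ C&D\end{smallmatrix}\right)$ by the standard block–inversion (Schur complement) formula: since $A=D\widetilde F_{2d}$ is invertible, $DF^{-1}$ is determined by the scalar $S=D-CA^{-1}B$, and $\|CA^{-1}B\|\le\|B\|\,\|C\|/m(A)\le\rho_0\,|D|<\tfrac12|D|$ (as $\rho_0<\kappa\gamma/2<\tfrac12$), so $S\neq0$ and $|S|\asymp|D|=m(D)$. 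Writing $(u',v')=DF^{-1}(u,v)$ this yields the convenient form $v'=S^{-1}\bigl(v-CA^{-1}u\bigr)$ and $u'=A^{-1}\bigl(u-Bv'\bigr)$.

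Next, for a vector $(u,v)\in\CC(\gamma)$ normalised so that $\|v\|=1$, hence $\|u\|<\gamma$, I would estimate $\|v'\|$ from below and $\|u'\|$ from above. Under the standing assumptions $\|\de\|_{C^1}\ll\|\di_y\eps\|$ and $\|\di_z\eps\|\ll b_F$ the ratios $\|C\|/m(A)$ and $\|B\|/|D|$ are individually small; in particular $\|CA^{-1}u\|\le\|C\|\,\|u\|/m(A)$ is a small fraction of $\|v\|$, so $\|v'\|\ge|S|^{-1}(1-o(1))$, while $\|u'\|\le m(A)^{-1}\bigl(\|u\|+\|B\|\,\|v'\|\bigr)$. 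Combining these with $|S|\le(1+\rho_0)|D|$, $|D|=\|D_1\|\le\tfrac{\rho_1}{2}m(A_1)=\tfrac{\rho_1}{2}m(A)$ and $\|B\|\,\|C\|/(m(A)|D|)\le\rho_0<\kappa\gamma/2$ gives
\[
\frac{\|u'\|}{\|v'\|}\ \le\ (1+\rho_0)\,\frac{\rho_1}{2}\;\frac{\gamma+\|B\|\,|S|^{-1}(1+o(1))}{1-o(1)}\ \le\ \rho_2\,\gamma
\]
for some $\rho_2\in(0,1)$ once $\gamma$ is small. Thus $DF^{-1}(\CC(\gamma))\subset\CC(\rho_2\gamma)\subset\CC(\gamma)$ pointwise. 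Alternatively one can mimic the $N$–step argument of Lemma~\ref{Invariance of cone field infty}: first prove the perturbed analogue of Lemma~\ref{upper bound of C-N A-N infty}, namely $\|C_NA_N^{-1}\|<\kappa$ — the $B$–block only perturbs this at order $\|B\|$ — and then observe that the extra $B$–term appearing when $DF^{-N}$ is applied to a cone vector contributes a coupling of size $O\bigl(\|B\|\,\|C\|/(m(A)m(D))\bigr)=O(\rho_0)$, which is dominated by $\tfrac12\kappa\gamma$ by hypothesis.

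Finally, the cones $\CC(\gamma)_w$ have the same shape at every point, and by the Remark following Lemma~\ref{Invariance of cone field infty} the quantities $m(A_1)$, $\|D_1\|$ and $\kappa$ are uniformly controlled on the compact invariant set $\Gamma$, so the pointwise estimate iterates along orbits to give $DF^{-N}(\CC(\gamma))\subset\CC(\rho_2^{\,N}\gamma)$; hence $\CC(\gamma)$ is invariant under $DF^{-1}$, the intersection of the nested cones along backward orbits is a continuous $DF$–invariant plane field, and $DF$ has dominated splitting over $\Gamma$. The hard part is the bookkeeping of the cross terms created by $B\neq\mathbf 0$: one must keep the Schur complement $S$ comparable to $D$ and keep the $B$–correction in $u'$ strictly below the contraction $\|D_1\|/m(A_1)\le\rho_1/2$ coming from the diagonal blocks. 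This is exactly what the calibration $\|B\|\,\|C\|\le\rho_0\,m(A)\,m(D)$ with $\rho_0<\kappa\gamma/2$ provides, since it forces the off–diagonal coupling to be small relative to the natural scale $\kappa\gamma$ of the cone estimate; the remaining verification is routine once this is in place.
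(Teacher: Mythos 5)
Your proposal is correct and follows essentially the same route as the paper: block-invert $DF$ (the paper writes out the full inverse via the Schur complement $A-BD^{-1}C$ and the auxiliary terms $\zeta_{11},\zeta_{12},\zeta_{22}$, while you solve the linear system through $S=D-CA^{-1}B$), apply it to a cone vector normalized by $\|v\|=1$, and bound the ratio of the $\R^2$-component to the $\R$-component by $\rho_2\gamma$ using $\|B\|\,\|C\|\le\rho_0\,m(A)\,m(D)$ and $\|D_1\|\le\frac{\rho_1}{2}m(A_1)$. The one step to tighten is that the individual smallness of $\|B\|/m(D)$ and of $\|CA^{-1}u\|$ should be extracted from the lemma's stated hypotheses — as the paper does in \eqref{zeta-12 C norm upper bound} and \eqref{B norm upper bound}, using the definition of $\kappa$ to convert the product bound into a bound on $\|B\|/m(D)$ alone — rather than from the informal standing assumptions $\|\de\|_{C^1}\ll\|\di_y\eps\|$ and $\|\di_z\eps\|\ll b_F$ quoted in the prose before the lemma, since $m(A)\asymp b_1$ is itself small and the product hypothesis alone does not control each factor separately.
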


\begin{proof}
The matrix form of $ DF^{-1} $ is
\begin{equation}  \label{DF-1 matrix form infty}
\begin{aligned}
\begin{pmatrix}
A^{-1} + \zeta_{11} & \zeta_{12} \\[0.3em]
-D^{-1}C (A^{-1} + \zeta_{11}) & D^{-1} \zeta_{22}
\end{pmatrix}
\end{aligned}  \msk
\end{equation}

\nin where $ \zeta_{12} = -(A- BD^{-1}C)^{-1}BD^{-1} $,\; $ \zeta_{11} = -\zeta_{12}\;\! CA^{-1} $\, and \, $ \zeta_{22} = \Id - C \;\! \zeta_{12} $. \footnote{If the bounded operator $ T $ has $ \| T \| <1 $, then $ \Id - T $ is invertible. Moreover,
\begin{equation*}
(\Id - T)^{-1} = \sum_{n=0}^{\infty} T^{\,n}
\end{equation*}
Since, $ \| T^n \| \leq \| T \|^n $ for every $ n \in \N $, $ \| (\Id - T)^{-1} \| \leq \dfrac{1}{1 - \| T \|}  $. Equivalently, we obtain the lower bound of the minimum expansion rate, $ m( \Id - T) \geq 1 - \| T \| $. } Thus \bsk
\begin{equation} \label{zeta-12 C norm upper bound}
\begin{aligned}
\| \:\! \zeta_{12} \| \| C \| & = \| -(A- BD^{-1}C)^{-1}BD^{-1} \| \| C \| \\
& \leq \| (\Id - A^{-1}BD^{-1}C)^{-1} \| \| A^{-1} \| \| B \| \| D^{-1} \| \| C \| \\[0.3em]
& \leq \frac{1}{1 - \| A^{-1}BD^{-1}C \| } \cdot \frac{\| B \| \|C \| }{ m(A)\;\! m(D)} \\
& \leq \frac{1}{1 - \rho_0}\cdot \rho_0 < \kappa \gamma .
\end{aligned} \msk
\end{equation}

\nin Let us calculate the upper bound of $ \| B \| $. Then \ssk
\begin{equation} \label{B norm upper bound}
\begin{aligned}
\| B \| \| C \| &< \frac{\kappa \gamma}{2} m(A)\;\! m(D) \\
\| B \| &< \frac{m(A)}{m(A) - \| D \|} \cdot m(A)\;\! m(D) \cdot \frac{\kappa \gamma}{2} \\
\frac{\| B \|}{ m(D)} &< \frac{[m(A)]^2}{m(A) - \| D \|} \cdot \frac{\kappa \gamma}{2} \\
&\leq \frac{m(A)}{ 2(1 - \frac{\rho_1}{2})} \cdot \gamma = \frac{m(A)}{2 - \rho_1} \cdot \gamma < m(A)\cdot \gamma
\end{aligned}
\end{equation}

\nin Thus by \eqref{zeta-12 C norm upper bound} and \eqref{B norm upper bound},
\begin{equation*}
\begin{aligned}
\| \;\!\zeta_{12} \| &< \frac{ 1}{1- \rho_0} \cdot \frac{ \| B \| }{m(A)\;\! m(D) } \cdot \gamma < \frac{\gamma}{m(A)} \cdot \frac{2}{2- \kappa \gamma} \cdot \frac{\| B \|}{m(D)} \\
&< \frac{\gamma}{m(A)} \cdot \frac{2 }{2- \kappa \gamma} \cdot m(A)\cdot \gamma = \frac{2 \gamma^2}{2 - \kappa \gamma} .
\end{aligned} \msk
\end{equation*}

\nin Take a vector $ (u \ v) \in \R^2 \times \R $ such that $ \| u \| < \gamma \| v \| $ in the cone $ \CC(\gamma) $ at a point $ w \in \Gamma $. We may assume that $ \| v \| = 1 $, that is, $ \| u \| < \gamma $. For the invariance of cone field under $ DF^{-1} $, it suffice to show that 
\begin{equation*}
\big\| \big[\,(A^{-1} + \zeta_{11}) u + \zeta_{12} \,\big] \big[\, -D^{-1}C (A^{-1} + \zeta_{11})\;\! u + D^{-1} \zeta_{22} \,\big]^{-1} \big\| < \rho_2 \gamma 
\end{equation*}
for some $ \rho_2 \in (0,1) $. Let us estimate the upper bound of norm of the first factor
\msk
\begin{equation} \label{numerator invariant cone infty}
\begin{aligned}
\| (A^{-1} + \zeta_{11})\;\! u + \zeta_{12} \| &\leq  \| A^{-1}\;\! u \| + \| \;\!\zeta_{11}\;\! u + \zeta_{12} \| \\[0.3em]
&=  \| A^{-1} u \| + \| \;\!\zeta_{12} \;\!(-CA^{-1} \;\! u + \Id ) \| \\
& \leq \frac{\gamma}{ m(A)} + %\frac{\gamma}{m(A)} \cdot \frac{2 \;\! m(A)}{2- \kappa \gamma} \cdot \gamma 
 \frac{2 \gamma^2}{2 - \kappa \gamma}\cdot ( 1 + \kappa \gamma ) \\[0.3em]
& = \frac{\gamma}{ m(A)} \,\left[\,1 + \frac{2(1 + \kappa \gamma) \gamma}{2- \kappa \gamma} \cdot m(A)  \, \right] .
\end{aligned} \msk
\end{equation}
Let us consider the lower bound of the second factor % \msk
\begin{equation} \label{denominator invariant cone infty}
\begin{aligned}
 m( - D^{-1}C(A^{-1} + \zeta_{11} )\;\!u + D^{-1} \zeta_{22} ) \geq & \ m(  D^{-1})\, m(CA^{-1} u + C \;\!\zeta_{11}\;\! u - \zeta_{22} ) \\
= & \ m(  D^{-1})\, m(CA^{-1} u - C \;\!\zeta_{12}\;\!CA^{-1} u - \Id + C\;\! \zeta_{12})   \\
= & \ m(  D^{-1})\, m( CA^{-1} u - C \;\!\zeta_{12}\;\! (CA^{-1} u - \Id ) - \Id) \\
= & \ m(  D^{-1})\, m( CA^{-1} u  - \Id - \,C \;\!\zeta_{12} (CA^{-1} u - \Id ))  \\
= & \ m(  D^{-1})\, m(\,[ \Id -  C \;\!\zeta_{12} ]\, [ CA^{-1} u  - \Id ]) \\
\geq & \ m(  D^{-1})\, m( \Id -  C \;\!\zeta_{12} ) \, m( CA^{-1} u  - \Id ) \\
\geq & \ \frac{ (1 - \kappa \gamma) ( 1- \kappa \gamma) }{\| D \|} = \frac{(1 - \kappa \gamma)^2 }{ \| D \| } .
\end{aligned} \bsk
\end{equation}

\nin Then the inequalities, \eqref{numerator invariant cone infty} and \eqref{denominator invariant cone infty} implies that \msk
\begin{equation*}
\begin{aligned}
& \quad \ \big\| \big[\,(A^{-1} + \zeta_{11}) u + \zeta_{12} \,\big] \big[\, -D^{-1}C (A^{-1} + \zeta_{11})\;\! u + D^{-1} \zeta_{22} \,\big]^{-1} \big\| \\[0.4em]
& \leq \frac{\| (A^{-1} + \zeta_{11})\;\! u + \zeta_{12} \|}{m( - D^{-1}C(A^{-1} + \zeta_{11} )\;\! u + D^{-1} \zeta_{22} ) } \\[0.4em]
& \leq \frac{\gamma}{ m(A)} \,\left[\,1 + \frac{2(1 + \kappa \gamma) \gamma}{2- \kappa \gamma} \cdot m(A) \, \right] \cdot \frac{ \| D \| }{(1 - \kappa \gamma)^2 } \\[0.5em]
& = \frac{1}{(1 - \kappa \gamma)^2} \, \left[\,1 + \frac{2(1 + \kappa \gamma) \gamma}{2- \kappa \gamma} \cdot m(A) \, \right] \cdot \frac{\| D \| }{m(A)} \; \gamma .
\end{aligned} \msk
\end{equation*}

\nin Thus for small enough $ \gamma > 0 $, the constant, $ \dfrac{1}{(1 - \kappa \gamma)^2} \, \left[\,1 + \dfrac{2(1 + \kappa \gamma) \gamma}{2- \kappa \gamma} \cdot m(A) \, \right] $ is less than two. Hence, the cone field $ \CC(\gamma) $ is invariant under $ DF $.

\end{proof}

\nin Then the tangent bundle $ T_{\Gamma}B $ has the splitting with subbundles $ E^1 \oplus E^2 $ such that

\ssk
\begin{enumerate}
\item $ T_{\Gamma}B = E^1 \oplus E^2 $. \msk
\item Both $ E^1 $ and $ E^2 $ are invariant under $ DF $. \msk
\item $ \| DF^n |_{E^1(x)} \| \|  DF^{-n} |_{E^2(F^{-n}(x))} \| \leq C \mu^n$ for some $ C>0 $ and $ 0 < \mu < 1 $ and $ n \geq 1 $.
\end{enumerate}
\msk
\nin Thus $ T_{\Gamma}B $ has {\em dominated splitting} over the compact invariant set $ \Gamma $. Moreover, the dominated splitting implies that invariant sections %$ w \mapsto E^1(w) $ and $ w \mapsto E^2(w) $ 
are continuous by Theorem 1.2 in \cite{New}. Then the maps, $ w \mapsto E^i(w) $ for $ i =1,2 $ are continuous.

\msk

\subsection{Non existence of continuous invariant line field over $ \OO_{F_{\mod}} $}
Toy model map $ F_{\mod} $ with invariant splitting over $ \OO_{F_{\mod}} $ satisfying Lemma \ref{Invariance of cone field infty} has the continuous invariant plane field. Moreover, the set of lines perpendicular to $ xy- $plane,
$$ \bigcup_{(x_0,y_0) \in I^x \times I^y} \{ (x_0,y_0,z) \ | \ z \in I^z \} $$
is forward invariant under $ F_{\mod} $. Thus $ F_{\mod} $ is semi-conjugate to $ F_{2d} \equiv \pi_{xy} \circ F_{\mod} $ by the projection, $ \pi_{xy} $. Sufficient condition of the existence of invariant plane field in Lemma \ref{Invariance of cone field infty} can be substituted by the condition $ b_2 \ll b_1 $ by Lemma \ref{estimation of 2d min-norm}. 
 
\msk
\begin{lem} \label{Perpendicular line ss manifold}
Let $ F_{\mod} $ be in $ \II(\bar \eps) $ with $ b_2 \ll b_1 $. If each line perpendicular to $ xy- $plane in $ \Dom(F) $ over $ \OO_{F_{\mod}} $ is the local strong stable manifold of $ w \in \OO_{F_{\mod}} $. Moreover, 
$$ W^{ss}(w) \cap \OO_{F_{\mod}} = \{w \}  $$ 
for each $ w \in \OO_{F_{\mod}} $. 
\end{lem}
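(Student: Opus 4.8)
The plan is to derive both assertions from the dominated splitting that is already available for the toy model map together with the fact that, for $F_{\mod}$, the vertical foliation is literally invariant. Under the hypothesis $b_2\ll b_1$, Lemma~\ref{estimation of 2d min-norm} gives $\|D_1\|\le\tfrac{\rho}{2}\,m(A_1)$, so Lemma~\ref{Invariance of cone field infty} applies and yields a $DF_{\mod}$-invariant dominated splitting $T_{\OO_{F_{\mod}}}B=E^1\oplus E^2$ over the critical Cantor set, with $E^2$ the strongly contracted line bundle and, by Theorem~1.2 in \cite{New}, both bundles continuous in the base point. Because $F_{\mod}$ is a toy model map the off-diagonal block $B$ of $DF_{\mod}$ in \eqref{matrix symbolic expression of DF infty} vanishes, hence the constant vertical line field $w\mapsto\{0\}\times\{0\}\times\mathbb R$ is itself $DF_{\mod}$-invariant; being one-dimensional and, as noted below, uniformly contracted, it must coincide with $E^2$ along $\OO_{F_{\mod}}$.

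Next I would identify the local strong stable manifolds. Fix $w=(x_0,y_0,z_0)\in\OO_{F_{\mod}}$ and let $\ell_w$ be the vertical segment $\{(x_0,y_0,z):z\in I^z\}$ inside $B$. Since the first two coordinate functions of $F_{\mod}$ depend only on $(x,y)$, $F_{\mod}$ maps $\ell_w$ into the vertical segment over $F_{2d}(x_0,y_0)=\pi_{xy}(F_{\mod}(w))$ and acts there, in the natural vertical coordinate, as the contraction $z\mapsto\de(x_0,y_0,z)$, whose derivative is bounded by $\|\de\|_{C^1}\le\bar\eps<1$; iterating $2^n$ times and using the average-Jacobian bookkeeping for the toy model, the contraction along the fibre orbit of $w$ is governed by $b_2^{2^n}$, which by the domination estimate \eqref{contracting rate of cone} (i.e. $b_2\ll b_1$) is strictly faster than the growth of $DF_{\mod}$ transverse to $E^2$. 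Moreover $\ell_w$ is everywhere tangent to the globally defined invariant vertical line field, and is forward-invariant and uniformly contracted by $F_{\mod}$ while staying in $B$. By the uniqueness part of the local strong stable manifold theorem for dominated (partially hyperbolic) splittings — $W^{ss}_{\loc}(w)$ being the unique locally $F_{\mod}$-invariant $C^1$ disc through $w$ tangent to $E^2(w)$ — we conclude $\ell_w=W^{ss}_{\loc}(w)$, which is the first assertion, and also $W^{ss}(w)=\bigcup_{n\ge0}F_{\mod}^{-n}\big(\ell_{F_{\mod}^n(w)}\big)$.

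Finally, for $W^{ss}(w)\cap\OO_{F_{\mod}}=\{w\}$ I would show that $\OO_{F_{\mod}}$ contains no two distinct forward-asymptotic points. By Corollary~\ref{dyadic adding machine} the map $F_{\mod}|_{\OO_{F_{\mod}}}$ is topologically conjugate, via a homeomorphism $h$ with $h,h^{-1}$ uniformly continuous, to the dyadic adding machine $P$ on $W^\infty$; since $P$ is an isometry for the translation-invariant $2$-adic metric, $\dist(F_{\mod}^n u,F_{\mod}^n u')\to0$ with $u,u'\in\OO_{F_{\mod}}$ forces $\dist(h^{-1}u,h^{-1}u')=0$, i.e. $u=u'$. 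As every point of $W^{ss}(w)$ is forward asymptotic to $w$, the claim follows. (Alternatively one argues combinatorially: $W^{ss}_{\loc}(w)=\ell_w$ forces $\pi_{xy}(w')=\pi_{xy}(w)$ for any $w'\in W^{ss}_{\loc}(w)\cap\OO_{F_{\mod}}$, whereas two distinct points of $\OO_{F_{\mod}}$ lie in disjoint pieces $B^n_{\mathbf v}\ne B^n_{\mathbf v'}$ for some $n$ by Lemma~\ref{pieces} and Corollary~\ref{diameter 2}, and $\pi_{xy}(B^n_{\mathbf v}(F_{\mod}))=B^n_{\mathbf v}(F_{2d})$ are pairwise disjoint because $\pi_{xy}$ intertwines the scaling maps of $F_{\mod}$ and $F_{2d}$, cf. the proof of Proposition~\ref{renormalization of toy model}; the passage from $W^{ss}_{\loc}$ to $W^{ss}$ uses forward-invariance of $\OO_{F_{\mod}}$ and the same argument at iterates.)

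The step I expect to be the main obstacle is the rigorous identification $\ell_w=W^{ss}_{\loc}(w)$: one must make precise the comparison between the genuine uniform exponential contraction along the vertical fibres and the much weaker "transverse contraction" of $DF_{\mod}$ over $\OO_{F_{\mod}}$ — the latter being accessible only through the piece and scaling estimates of Section~\ref{universality of Jacobian} and the cone-field domination of Lemma~\ref{Invariance of cone field infty}, not through any literal tangent action along the Cantor set — and then to apply the invariant-manifold machinery in a situation where the invariant set is a Cantor set rather than a manifold, while also checking that $W^{ss}_{\loc}(w)$ exhausts the whole fibre $\ell_w$ inside $\Dom(F)$ and not merely a subinterval.
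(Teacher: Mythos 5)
Your proposal is correct and follows essentially the same route as the paper: the uniqueness of the invariant (most contracted) direction inside the cone field of Lemma \ref{Invariance of cone field infty} forces it to coincide with the vertical direction, which identifies the vertical fibres as the local strong stable manifolds, and the second claim is reduced to the fact that the dyadic adding machine admits no distinct forward-asymptotic pairs. Your parenthetical combinatorial argument (distinct points of $\OO_{F_{\mod}}$ eventually lie in disjoint level-$n$ pieces, which are permuted by $F_{\mod}$, so orbit distances stay bounded below) is precisely the paper's proof, and your primary 2-adic-isometry phrasing is an equivalent repackaging of it.
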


\begin{proof}
By cone field construction in Lemma \ref{Invariance of cone field infty}, invariant direction represent $ \di_z \de $ is unique in each cones. However, each line perpendicular to $ xy- $plane in $ \Dom(F) $ over $ \OO_{F_{\mod}} $ has already invariant direction in each cones. Then these two invariant directions are equal by the uniqueness. It is the strongest contracting direction which is one dimensional. However, at each point of $ \OO_{F_{\mod}} $, the stable manifold is two dimensional. Thus strongest one dimensional direction is for the strong stable manifold. Moreover, the constant invariant direction implies that straight lines are the local strong stable manifolds.
\msk \\
If $ w $ and $ w' $ in $ \OO_{F_{\mod}} $ is in the same (strong) stable manifold, then $ \dist (F_{\mod}^n(w), F_{\mod}^n(w')) $ converges to zero as $ n \ra \infty $. However, each point of the critical Cantor set is the limit of nested sequence of boxes, for instance,
\begin{align*}
\{ p \} = \bigcap_{n \geq 0} B^n_{{\bf w}_n} 
\end{align*}
for every $ p \in \OO_{F_{\mod}} $. 
%where the diameter of $ B^n_{{\bf w}_n} $ is $ O(\si^n) $. 
Thus if $ w \neq w' $, then each boxes $ B^n_{{\bf w}_n} $ and $ B^n_{{\bf w'}_n} $ for $ w $ and $ w' $ respectively are disjoint from each other for every big enough $ n $. Since box, $ B^n_{{\bf w}_n} $ with any word $ {\bf w} \in W^n $ is invariant under $ F^{2^{n+1}} $, $ \dist (F_{\mod}^n(w),\ F_{\mod}^n(w')) \geq c>0 $ for all sufficiently big $ n $. Then the fact that $ \dist (F_{\mod}^n(w),\ F_{\mod}^n(w')) \ra 0 $ as $ n \ra \infty $ implies that $ w = w' $.  
\end{proof}

%\nin The map $ \pi_{xy} $ is a bijection between the critical Cantor sets of $ F_{\mod} $ and $ F_{2d} $ in the below lemmas.

\msk
\begin{lem}
Let $ F_{\mod} $ be in $ \II(\bar \eps) $ with $ b_2 \ll b_1 $. Then each strong stable manifold of the point, $ w \in \OO_{F_{\mod}} $ in the critical Cantor set, $ W^{ss}(w) $ meets a single point of the critical Cantor set, $ \OO_{2d} $. In particular, $ \pi_{xy} $ is a bijection between $ \OO_{F_{\mod}} $ and $ \OO_{F_{2d}} $. 
\end{lem}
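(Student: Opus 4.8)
\emph{Proof proposal.} The plan is to push everything through the projection $\pi_{xy}$, exploiting three facts already available: the skew--product structure of $F_{\mod}$ over $F_{2d}$ from Proposition~\ref{renormalization of toy model}; the identification, from Lemma~\ref{Perpendicular line ss manifold}, of the strong stable manifolds with the vertical fibres, together with $W^{ss}(w)\cap\OO_{F_{\mod}}=\{w\}$; and minimality of the dyadic adding machine on $\OO_{F_{2d}}$, i.e. the two--dimensional form of Corollary~\ref{dyadic adding machine} (see \cite{CLM}).

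First I would record that $\pi_{xy}$ carries the $n$--th level pieces of $F_{\mod}$ exactly onto those of $F_{2d}$:
$$
\pi_{xy}\big(B^n_{\bf w}(F_{\mod})\big)=B^n_{\bf w}(F_{2d}),\qquad {\bf w}\in W^n,\ n\ge 1 .
$$
Indeed, by Proposition~\ref{renormalization of toy model} every $R^jF_{\mod}$ is again a toy model map, so each horizontal--like diffeomorphism $H_j$, each dilation $\La_j$, and each $R^jF_{\mod}$ is a skew product over its two--dimensional counterpart, while the box splits as $B=B_{2d}\times I^z$; hence each scaling map $\psi^j_v=H_j^{-1}\circ\La_j^{-1}$ and $\psi^j_c=R^jF_{\mod}\circ\psi^j_v$, and therefore each composition $\Psi^n_{\bf w}$, is a skew product over the corresponding two--dimensional scaling map, which gives the displayed equality since $B^n_{\bf w}(F)=\Psi^n_{\bf w}(B)$. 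Applying the elementary inclusion $g\big(\bigcap_m X_m\big)\subset\bigcap_m g(X_m)$ with $g=\pi_{xy}$ and $X_m=\bigcup_{\bf w}B^m_{\bf w}(F_{\mod})$ then yields $\pi_{xy}(\OO_{F_{\mod}})\subset\OO_{F_{2d}}$.

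Next I would prove surjectivity and injectivity. The set $\pi_{xy}(\OO_{F_{\mod}})$ is a nonempty compact subset of $\OO_{F_{2d}}$, and it is $F_{2d}$--invariant because $\pi_{xy}\circ F_{\mod}=F_{2d}\circ\pi_{xy}$ and $F_{\mod}(\OO_{F_{\mod}})=\OO_{F_{\mod}}$; since $F_{2d}|_{\OO_{F_{2d}}}$ is conjugate to the dyadic adding machine, which is minimal, this forces $\pi_{xy}(\OO_{F_{\mod}})=\OO_{F_{2d}}$. (Alternatively, for $q\in\OO_{F_{2d}}$ with address $(w_1w_2\ldots)$ the sets $\pi_{xy}^{-1}(q)\cap B^n_{w_1\ldots w_n}(F_{\mod})$ are nonempty, compact and nested, so their intersection is a nonempty subset of $\OO_{F_{\mod}}$ mapping to $q$.) For injectivity, suppose $w,w'\in\OO_{F_{\mod}}$ with $\pi_{xy}(w)=\pi_{xy}(w')$; then $w'$ lies on the vertical line through $w$, which by Lemma~\ref{Perpendicular line ss manifold} equals $W^{ss}(w)$ and meets $\OO_{F_{\mod}}$ only in $w$, so $w=w'$. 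Thus $\pi_{xy}$ restricts to a continuous bijection $\OO_{F_{\mod}}\to\OO_{F_{2d}}$ between compact metric spaces, hence a homeomorphism; and since $W^{ss}(w)$ is the vertical fibre through $w$, $\pi_{xy}$ collapses it onto the single point $\pi_{xy}(w)\in\OO_{F_{2d}}$, which is the assertion of the lemma.

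The main obstacle is the first step: one must make sure the coordinate--change maps $\Psi^n_{\bf w}$ genuinely respect the splitting $\R^3=\R^2\times\R$, i.e. that the straightening first coordinate $\phi_j^{-1}$ of $H_j^{-1}$ does not depend on $z$ for a toy model map. This is exactly what the computation behind Proposition~\ref{renormalization of toy model} delivers (the relation $f'\circ\phi^{-1}\cdot\di_z\phi^{-1}=\di_x\eps\circ H^{-1}\cdot\di_z\phi^{-1}$ forces $\di_z\phi^{-1}\equiv 0$ for small $\bar\eps$), so once that is invoked the rest is bookkeeping; the hypothesis $b_2\ll b_1$ enters only through Lemma~\ref{estimation of 2d min-norm} and Lemma~\ref{Invariance of cone field infty}, which are what make Lemma~\ref{Perpendicular line ss manifold}, hence the injectivity step, available.
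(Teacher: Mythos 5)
Your proposal is correct and follows essentially the same route as the paper: project the nested pieces $B^n_{\bf w}$ via the skew-product structure to identify $\pi_{xy}(\OO_{F_{\mod}})$ with $\OO_{F_{2d}}$ (giving surjectivity), and deduce injectivity from Lemma~\ref{Perpendicular line ss manifold} since $\pi_{xy}$ collapses exactly the vertical fibres $W^{ss}(w)$. The extra details you supply (the nested-compact-sets and minimality arguments for surjectivity) are compatible elaborations of the paper's brief "passing the limit" step.
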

\begin{proof}
Since $ F_{2d} = \pi_{xy} \circ F_{\mod} $ and $ \pi_{xy} \circ R^nF_{\mod} = R^nF_{2d} $ for each $ n \geq 1 $, we obtain that $ \pi_{xy} \big( B^n_{{\bf w}_n} \big) $ is the box for $ F_{2d} $ for every $ {\bf w}_n \in W^n $ and for every $ n \geq 1 $. Passing the limit, $ \pi_{xy}(w) $ for every $ w \in \OO_{F_{\mod}} $ is well defined. Moreover, $ \OO_{F_{2d}} $ is the limit of boxes which is the image of three dimensional boxes under $ \pi_{xy} $. Thus $ \pi_{xy} $ is onto map. Furthermore, the fact that $ \pi_{xy} $ is the map along $ W^{ss}(w) $ for each $ w \in \OO_{F_{\mod}} $ implies that $ \pi_{xy} $ is one-to-one by Lemma \ref{Perpendicular line ss manifold}.
\end{proof}

\nin The invariant splitting with uniform contraction implies the existence of invariant single surfaces contains the given invariant compact set.

\msk
\begin{defn}
A submanifold $ Q $ which contains $ \Gamma $ is {\em locally invariant} under $ f $ if there exists a neighborhood $ U $ of $ \Gamma $ in $ Q $ such that $ f(U) \subset Q $. 
\end{defn}

\nin The necessary and sufficient condition for the existence of these submanifolds, see \cite{CP} or \cite{BC}. 
\begin{thm}[\cite{BC}] \label{Existence of invariant submanifold} Let $ \Gamma $ be an invariant compact set with a dominated splitting $ T_{\Gamma}M = E^1 \oplus E^2 $ such that $ E^1 $ is uniformly contracted. Then $ \Gamma $ is contained in a locally invariant $ C^1 $ submanifold tangent to $ E^2 $ if and only if the strong stable leaves for the bundle $ E^1 $ intersect the set $ \Gamma $ at only one point.
\end{thm}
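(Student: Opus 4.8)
The statement is an "if and only if", so I would prove the two implications separately, and the bulk of the work lies in the "if" direction.

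First, the easy direction. Suppose $\Gamma \subset Q$ with $Q$ a locally invariant $C^1$ submanifold tangent to $E^2$. Fix $x \in \Gamma$. Then $T_xQ = E^2(x)$, whereas the strong stable leaf $W^{ss}(x)$ is tangent to $E^1(x)$; since $E^1(x) \oplus E^2(x) = T_xM$, the leaf and $Q$ are transverse at $x$, hence meet only at $x$ inside a small ball $B(x,r_x)$, and $r_x$ can be taken bounded below by compactness and continuity of the splitting. If there were $y \in W^{ss}(x) \cap \Gamma$ with $y \neq x$, uniform contraction of $E^1$ gives $\dist(f^n(x), f^n(y)) \to 0$ exponentially, with $f^n(y) \in W^{ss}(f^n(x))$ and $f^n(x), f^n(y) \in \Gamma \subset Q$ for all $n$; for $n$ large the pair sits inside $B(f^n(x), r_{f^n(x)})$, contradicting transversality there. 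Hence $W^{ss}(x) \cap \Gamma = \{x\}$.

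For the substantive direction, I would argue as follows. Extend $E^1 \oplus E^2$ to a continuous dominated splitting $\widetilde E^1 \oplus \widetilde E^2$ over a neighborhood $U$ of $\Gamma$ using the invariant cone field of Lemma \ref{Invariance of cone field infty}, keeping $\widetilde E^1$ uniformly contracted. By the plaque-family (fake-invariant-manifold) theorem of Hirsch--Pugh--Shub, obtain locally invariant $C^1$ plaque families $\{W^{ss}_{\loc}(x)\}_{x \in \Gamma}$ and $\{W^{cu}_{\loc}(x)\}_{x \in \Gamma}$, tangent along $\Gamma$ to $E^1$ and $E^2$ respectively and varying continuously with $x$; uniform contraction upgrades the first to a genuine strong stable lamination on $\Gamma \cdot W^{ss}_{\loc}$. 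I then build $Q$ by a graph transform: take an embedded $C^1$ disk $Q_0$ through a point of $\Gamma$, transverse to the strong stable lamination and $C^1$-close to $\widetilde E^2$, and set $Q_n = f^n(Q_0) \cap U$. Domination together with contraction on $E^1$ forces $C^1$ convergence $Q_n \to Q$ with $Q$ tangent to $E^2$ along $\Gamma$; running this over a finite cover of $\Gamma$ by such disks and invoking uniqueness of plaques to patch yields a locally invariant $Q \supset \Gamma$.

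\textbf{Main obstacle.} The delicate point is showing $Q$ is an \emph{embedded} $C^1$ submanifold near $\Gamma$, with no self-intersections, and this is precisely where the hypothesis $W^{ss}(x) \cap \Gamma = \{x\}$ is used: two sheets of the limiting object could only collide along a strong stable leaf joining two distinct points of $\Gamma$, which the hypothesis forbids, so the holonomy of the strong stable lamination restricted to $\Gamma$ is injective and the plaques $W^{cu}_{\loc}(x)$ assemble coherently. Making the domination estimates deliver genuine $C^1$ (not merely $C^0$) regularity and injectivity of the limit, uniformly over the compact set $\Gamma$, is the technical heart; I would carry out this step following the arguments of \cite{BC} (equivalently \cite{CP}), quoting their $C^1$-section and plaque-family lemmas rather than reproving them, since in our application $E^1$ is one-dimensional (the strong stable direction $\di_z\de$) and $E^2$ two-dimensional, which keeps the holonomy and transversality bookkeeping particularly simple.
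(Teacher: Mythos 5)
The paper does not prove this statement at all: it is imported verbatim from \cite{BC} (see also \cite{CP}), so there is no internal proof to compare yours against. Your outline is nonetheless consistent with the strategy of the cited source. The ``only if'' direction you give is essentially complete and correct: transversality of $W^{ss}(x)$ to $Q$ at points of $\Gamma$, uniformity of the transversality constants by compactness and continuity of the splitting, and forward iteration using the uniform contraction of $E^1$ (note that what you need and what contraction actually gives is that the \emph{leafwise} distance $d_{W^{ss}}(f^n(x),f^n(y))$ tends to zero, so that $f^n(y)$ eventually lies in the local leaf through $f^n(x)$). The ``if'' direction is, as you say, the substance of \cite{BC}; your sketch (extend the splitting to a neighborhood, take Hirsch--Pugh--Shub plaque families, run a graph transform, and use the hypothesis $W^{ss}(x)\cap\Gamma=\{x\}$ to get coherence and injectivity of the assembled plaques) is the right shape, and deferring the $C^1$-convergence and patching estimates to \cite{BC} is legitimate since the theorem is being quoted rather than reproved here. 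One thing to fix: the general theorem concerns an arbitrary invariant compact set in an arbitrary manifold, so you should not invoke Lemma \ref{Invariance of cone field infty} (which is specific to the toy model map $F_{\mod}$) to extend the splitting; the extension of a dominated splitting to a continuous, approximately invariant cone field on a neighborhood of $\Gamma$ is a standard general fact and should be cited as such.
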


\nin It is remarkable that invariant submanifolds in Theorem \ref{Existence of invariant submanifold} are robust under $ C^1 $ perturbation by \cite{BC}. By Lemma \ref{Perpendicular line ss manifold} and the above Theorem \ref{Existence of invariant submanifold}, there exist $ C^1 $ single surfaces which contain $ \OO_{F_{\mod}} $ and tangent to invariant plane field at each point in $ \OO_{F_{\mod}} $. 
\ssk \\
\nin Let $ Q $ be a locally invariant surface which is tangent to invariant plane field and which contains $ \OO_{F_{\mod}} $. Since the lines perpendicular to $ xy- $plane is strong stable manifolds, $ Q $ be the graph of $ C^1 $ function $ \xi $ from a subset of $ I^x \times I^y $ to $ I^z $. The map $ (x,y) \mapsto (x,y, \xi(x,y)) $, say the graph map of $ \xi $, is a $ C^1 $ diffeomorphism. Moreover, the map, $ \pi_{xy}|_Q $ is the inverse of the graph map of $ \xi $. 
%\ssk \\
For notational simplicity, denote $ \OO_{F_{\mod}} $ by $ \OO_{\mod} $ and $ \OO_{F_{2d}} $ by $ \OO_{2d} $. Abusing notation let the graph map, $ (x,y) \mapsto (x,y, \xi(x,y)) $ be just $ \xi $ unless it makes any confusion.

\msk

\begin{thm}
Let $ F_{\mod} $ be in $ \II(\bar \eps) $ with $ b_2 \ll b_1 $. Let $ Q $ be a locally invariant surface tangent to the continuous invariant plane field, say $ E $ over $ \OO_{F_{\mod}} $. Then any invariant line field in $ E $ over $ \OO_{F_{\mod}} $ is discontinuous at the tip, $ \tau_F $. 
\end{thm}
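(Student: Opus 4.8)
The plan is to reduce the statement to the corresponding two dimensional fact from \cite{CLM}, in the spirit of the ``reduction of ambient space'' of this paper. First I would record the relevant properties of the surface $Q$. By hypothesis $Q$ is a locally invariant $C^1$ surface tangent to the continuous plane field $E$ along $\OO_{F_{\mod}}$, so $T_wQ = E(w)$ for every $w \in \OO_{F_{\mod}}$. Since the strong stable leaves are exactly the lines perpendicular to the $xy$--plane (Lemma \ref{Perpendicular line ss manifold}), $Q$ is the graph of a $C^1$ function $\xi$ and $\pi_{xy}|_Q$ is a $C^1$ diffeomorphism onto $\pi_{xy}(Q)$, with inverse the graph map of $\xi$. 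Because $\pi_{xy}\circ F_{\mod} = F_{2d}\circ \pi_{xy}$ and $Q$ is locally invariant (so $F_{\mod}(Q') \subset Q$ for a neighbourhood $Q'$ of $\OO_{F_{\mod}}$ in $Q$), the map $\pi_{xy}|_Q$ conjugates $F_{\mod}|_{Q'}$ to $F_{2d}$ near $\OO_{F_{2d}}$; moreover $\pi_{xy}$ maps $\OO_{F_{\mod}}$ bijectively onto $\OO_{F_{2d}}$ and, since $\pi_{xy}(B^n_{v^n}(F_{\mod}))$ is the corresponding piece of $F_{2d}$ (Proposition \ref{renormalization of toy model}), $\pi_{xy}(\tau_F) = \tau_{F_{2d}}$.

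\textbf{Transport of the line field.} Given a $DF_{\mod}$--invariant line field $L$ contained in $E$ over $\OO_{F_{\mod}}$, I would push it forward by $D(\pi_{xy}|_Q)$ to a line field $\ell$ on $T\R^2$ over $\OO_{F_{2d}}$. Differentiating $\pi_{xy}|_Q\circ F_{\mod} = F_{2d}\circ \pi_{xy}|_Q$ at $w \in \OO_{F_{\mod}}$ gives $ D(\pi_{xy}|_Q)_{F_{\mod}(w)}\circ DF_{\mod}|_{E(w)} = DF_{2d}(\pi_{xy}w)\circ D(\pi_{xy}|_Q)_w $, so $DF_{\mod}|_{E(w)}$ is conjugate to $DF_{2d}(\pi_{xy}w)$; hence invariance of $L$ inside $E$ is equivalent to invariance of $\ell$. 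Since $Q$ is $C^1$, the map $w \mapsto D(\pi_{xy}|_Q)_w$ is continuous and invertible, and $\pi_{xy}|_{\OO_{F_{\mod}}}$ is a homeomorphism onto $\OO_{F_{2d}}$ carrying $\tau_F$ to $\tau_{F_{2d}}$; therefore $L$ is continuous at $\tau_F$ if and only if $\ell$ is continuous at $\tau_{F_{2d}}$.

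\textbf{Conclusion from the two dimensional case.} The map $F_{2d} = \pi_{xy}\circ F_{\mod}$ is an infinitely renormalizable two dimensional H\'enon-like diffeomorphism (via $\pi_{xy}\circ R^nF_{\mod} = R^nF_{2d}$), and its average Jacobian $b_1$ is positive: the hypothesis $b_2 \ll b_1$ together with $b_F = b_1 b_2 > 0$ forces $b_1 > 0$. By the two dimensional result of \cite{CLM} --- the critical Cantor set of such a map carries no continuous invariant line field, and more precisely every invariant line field on it is discontinuous at its tip --- the field $\ell$ is discontinuous at $\tau_{F_{2d}}$. By the equivalence of the previous paragraph, $L$ is discontinuous at $\tau_F$, which is the assertion.

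\textbf{Main obstacle.} The substantive point to verify carefully is the identification $T_wQ = E(w)$ along $\OO_{F_{\mod}}$ together with the conjugacy of the restricted derivatives: this uses both the \emph{local} invariance of $Q$ (so that $DF_{\mod}$ genuinely preserves $TQ$ along $\OO_{F_{\mod}}$, not merely $E$) and the construction of $Q$ via Theorem \ref{Existence of invariant submanifold}, and one must also confirm that the $C^1$ regularity of $Q$ suffices to transport \emph{continuity} of line fields back and forth. A self-contained alternative, not invoking \cite{CLM}, would imitate the two dimensional argument directly, using the asymptotics of $\Psi^n_{\tip}$ from Section \ref{asymptotic coordinate for n} and the non-vanishing at the tip of the quadratic coefficient of $x + S^n_0$; that route, however, would require upgrading the bound $a_{F,i} = O(\bar \eps)$ to a genuine non-degeneracy statement tied to $b_1 \neq 0$, so the reduction to dimension two is the cleaner path.
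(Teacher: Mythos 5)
Your proposal is correct and follows essentially the same route as the paper: both arguments use the $C^1$ graph map $\xi$ (you work with its inverse $\pi_{xy}|_Q$) to conjugate the restricted dynamics on $Q$ to $F_{2d}$, observe that continuity of an invariant line field is preserved under this $C^1$ conjugacy, and then invoke the two dimensional non-existence result (Proposition 9.3 in \cite{CLM}) at the tip. The only differences are cosmetic — direction of transport and your explicit remarks on $\pi_{xy}(\tau_F)=\tau_{F_{2d}}$ and positivity of $b_1$.
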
 
\begin{proof}
Denote $ \Dom(F_{2d}) $ by $ P $. Since the graph map $ \xi $ is a $ C^1 $ diffeomorphism, the following diagram is commutative.
\begin{displaymath}
\xymatrix @C=1.5cm @R=1.5cm
 {
T_{\OO_{2d}}P  \ar[d]_*+ {\pi}  \ar[r]^*+{(D\xi, \xi)}    & T_{\OO_{\mod}}Q \ar[d]^*+{\pi'}   \\
 \OO_{2d} \ar[r]^*+{ \xi}      & \OO_{\mod}
   }
\end{displaymath} \ssk \\
where the tangent map is $ (D\xi, \xi)(v, w) = (D\xi(w)\cdot v,\; \xi(w)) $ for each $ (v,w) \in T_{\OO_{2d}}P $ and both $ \pi $ and $ \pi' $ are the projections from the bundle to the base space, that is, for each $ (v, w) \in \text{bundle} $, $ \pi(v, w) = w $ and $ \pi'(v, w) = w $ respectively. 
\ssk \\
%Thus $ (D\xi, \xi) $ is the conjugation between $ (DF_{2d}, F_{2d}) $ and $ (DF|_{\,Q}, F|_{\,Q})$ and 
The image of any invariant tangent subbundle of $ T_{\OO_{2d}}P $ is an invariant subbundle of $ T_{\OO_{\mod}}Q $. Thus without loss of generality, we may assume that $ (D\xi, \xi)(E^1_{2d}) = E^1 $. Let $ \gamma $ and $ \gamma' $ be the invariant sections under $ F_{2d} $ and $ F|_{\,Q} $ respectively.
\begin{displaymath}
\xymatrix @C=1.5cm @R=1.5cm
 {
E^1_{2d}  \ar@{<-}[d]_*+ {\gamma}  \ar[r]^*+{(D\xi, \xi)}    & E^1 \ar@{<-}[d]^*+{\gamma'}   \\
 \OO_{2d} \ar[r]^*+{ \xi}      & \OO_{\mod}
   }
\end{displaymath}
Since $ \xi $ is $ C^1 $ function, the tangent map $ (D\xi, \xi) $ is continuous at $ (v, w) \in E^1_{2d} $. Then the section $ \gamma $ is continuous if and only if $ \gamma' $ is continuous because $ \xi $ is a diffeomorphism. However, any invariant line field under $ DF_{2d} $ on the Cantor set $ \OO_{2d} $ is not continuous at the tip, $ \tau_{F_{2d}} $ by Proposition 9.3 in \cite{CLM}. Hence, there is no continuous invariant line field under $ DF|_{\,Q} $ on any $ C^1 $ invariant surface $ Q $ under $ F $.
\end{proof}

\bsk

%\addcontentsline{toc}{section}{Appendix}

\titleformat{\section}[display]{\normalfont\Large\bfseries}{Appendix~\Alph{section}}{12pt}{\Large}

\begin{appendices}

\section{Lyapunov exponents and splitting of tangent bundle}
Three dimensional H\'enon-like map has two non zero Lyapunov exponents on its critical Cantor set, $ \OO_F $. %Thus we expect at least two different dynamical theories on $ \OO_F $ by the relation between these two exponents. 
%In the previous subsection, in order to construct the invariant cone fields, we assumed that $ \| D \| \leq \frac{1}{2} m(A) $. This condition means that
%\begin{equation*}
%\sup_{w \in B} \frac{\| D_w \|}{m(A_w)} \leq \frac{1}{2}
%\end{equation*}
%because the linear operator as the derivative is defined for each point $ w \in B $. 
Recall that $ \di_z \de \asymp b_2 $ for all $ w \in B $ by Proposition \ref{renormalization of toy model}. The number $ b_2 $ is defined as $ b/b_1 $ where $ b $ and $ b_1 $ is the average Jacobian of $ F_{\mod} $ and $ F_{2d} $ respectively. Actually two negative numbers, $ \log b_1 $ and $ \log b_2 $ are the exponents which affect the dynamical properties in two dimensional H\'enon-like map and contraction rate along $ z- $direction respectively. We would see that $ m(DF^{2^n}_{2d}) $ is bounded above by $ b_1^{2^n} $ for all $ w \in B $ but it is not much smaller than $ b_1^{2^n} $ (See Lemma \ref{estimation of 2d min-norm} below). %The inequality between two exponents, $ b_1 \gg b_2 $ implies that the dynamics on the Cantor set is dominated by the two dimensional H\'enon-like map.  

%%%%%%%%%%%%%%%%%%%% The below example describe how to use the multicolum and multirow simultaneously.
%\[
%\left(\begin{array}{cccc}
%b_1 & & \multicolumn{2}{c}{\multirow{2}{*}{0}}\\
%& b_2 & & \\
%\multicolumn{2}{c}{\multirow{2}{*}{0}} & \ddots & \\
%& & & b_n \end{array}\right)
%\]
%%%%%%%%%%%%%%%%%%%%%%5

%\ssk \\
\nin Recall that $ \Jac F_{2d} $ is $ \di_y \eps(x,y) $. Since $ F_{2d} $ is an orientation preserving diffeomorphism, % from $B_{2d} \equiv \Dom(F_{2d})$ to its image, 
$ \di_y \eps(x,y) $ has the positive infimum. Let this infimum be $ m_{2d} $. %, that is, $ m_{2d} = \displaystyle\inf \left\{ \,\di_y \eps(x,y) \:| \: {(x,\,y) \in B(F_{2d})} \right\} $. 
Similarly, define $ m_{2d,\,n} = \displaystyle\inf \left\{ \,\di_y \eps_n(x,y) \;| \; {(x,\,y) \in \,B(R^nF_{2d})} \, \right\} $ for $ n^{th} $ renormalized map, $ F_{2d,\,n} \equiv R^nF_{2d} $. %Let two dimensional coordinate change map from level $ n $ to $ k $ be $ \Psi^n_{k,\,2d} $. 
Recall that $ \Psi^n_{0,\,2d} = ( \alpha_{n,\,0}\,(x+ S^n_0(w) + \si_{n,\,0}\, t_{n,\,0}\cdot y ,\ \si_{n,\,0}\cdot y  ) $. Thus the derivative of $ \Psi^n_{0,\,2d} $ at the tip is as follows
\msk
\begin{equation*}
\begin{aligned}
D\Psi^n_{0,\,2d} =
\begin{pmatrix}
\alpha_{n,\,0}\,(1+ \di_xS^n_0) & \alpha_{n,\,0}\cdot \di_yS^n_0 + \si_{n,\,0}\, t_{n,\,0} \\[0.3em]
0 & \si_{n,\,0}
\end{pmatrix} .
\end{aligned} \msk
\end{equation*}

\begin{lem} \label{estimation of 2d min-norm}
Let $ F_{\mod} \in \II(\bar \eps) $ with sufficiently small $ \bar \eps > 0 $. Then the infimum of the derivative of two dimensional map, $ m(DF^{2^n}_{2d}) \asymp \si^n b_1^{2^n} $ for every $ n \in \N $. %sufficiently large $ n \geq n_0 $.% where $ c_N $ is bounded above and $ c_N $ converges to one exponentially fast as $ N \ra \infty $.
\end{lem}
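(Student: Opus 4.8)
The plan is to read off $m(DF^{2^n}_{2d})$ from the conjugacy between $F^{2^n}_{2d}$ on the tip piece and the renormalized map, together with the elementary identity $m(M) = |\det M|/\|M\|$ valid for any invertible $2\times 2$ matrix (here $m(M)$ is the smallest singular value, $\|M\|$ the largest). On $B^n_{v^n}$ one has $F^{2^n}_{2d} = \Psi^n_{\tip}\circ (R^nF_{2d})\circ(\Psi^n_{\tip})^{-1}$, hence for $w\in B^n_{v^n}$ and $\tilde w=(\Psi^n_{\tip})^{-1}(w)$,
\begin{equation*}
DF^{2^n}_{2d}(w) = D\Psi^n_{\tip}\!\big(R^nF_{2d}(\tilde w)\big)\cdot DR^nF_{2d}(\tilde w)\cdot \big[D\Psi^n_{\tip}(\tilde w)\big]^{-1}.
\end{equation*}
Conjugation preserves determinants, so $\det DF^{2^n}_{2d}(w) = \Jac F^{2^n}_{2d}(w)$, which by Corollary \ref{average} applied to $F_{2d}$ equals $b_1^{2^n}(1+O(\rho^n))$; thus $|\det DF^{2^n}_{2d}| \asymp b_1^{2^n}$ on $B^n_{v^n}$, and it remains to show $\|DF^{2^n}_{2d}\| \asymp \si^{-n}$ there.

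For the upper bound on the norm I would use the factorisation above: $\|DR^nF_{2d}\| = O(1)$ because $R^nF_{2d}\to F_*=(f_*(x),x)$ in $C^1$ by Lemma \ref{exponential convergence to 1d map}; $\|D\Psi^n_{\tip}\| = O(\si^n)$ by Lemma \ref{diameter}; and the explicit triangular form of $D\Psi^n_{0,2d}$ recalled in the text, with diagonal entries $\alpha_{n,0}\asymp\si^{2n}$ and $\si_{n,0}\asymp\si^n$ and a super-exponentially small off-diagonal entry $\si_{n,0}t_{n,0}+\alpha_{n,0}\di_yS^n_0$, gives $\|[D\Psi^n_{\tip}]^{-1}\| \asymp \alpha_{n,0}^{-1}\asymp \si^{-2n}$ — the lower bound on $\alpha_{n,0}\,|1+\di_xS^n_0|$ here uses that $1+\di_xS^n_0\to v_*'$ with $v_*'$ bounded away from zero near the tip, as in the proof of Theorem \ref{Universality of the Jacobian}. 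Multiplying the three factors yields $\|DF^{2^n}_{2d}\|\le C\si^n\cdot\si^{-2n}=C\si^{-n}$, hence $m(DF^{2^n}_{2d})\ge c\,\si^n b_1^{2^n}$.

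For the matching lower bound on the norm — equivalently $m(DF^{2^n}_{2d})\le C\si^n b_1^{2^n}$ — I would track the strong stable direction through the factorisation: take the unit vector $v\in T_wB$ proportional to $D\Psi^n_{\tip}(\tilde w)(0,1)$, so that $[D\Psi^n_{\tip}(\tilde w)]^{-1}v$ is, up to a super-exponentially small error, proportional to $(0,1)$ with length $\asymp\si^{-n}$; then $DR^nF_{2d}(\tilde w)$ sends $(0,1)$ to $(-\di_y\eps_n,0)$ with $|\di_y\eps_n|=\Jac R^nF_{2d}\asymp b_1^{2^n}$, and finally the factor $\alpha_{n,0}\asymp\si^{2n}$ in $D\Psi^n_{\tip}$ scales the first coordinate back up, giving $\|DF^{2^n}_{2d}(w)v\|\asymp\si^{2n}\cdot\si^{-n}b_1^{2^n}=\si^n b_1^{2^n}$. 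With $\|v\|=1$ this is the desired upper bound for $m$, completing the estimate on $B^n_{v^n}$. To pass to the global infimum over the attractor I would invoke bounded distortion (Lemma \ref{distortion}, which keeps $\Jac F^{2^n}_{2d}$ comparable to $b_1^{2^n}$ on every piece) together with the fact that $F$ permutes the pieces $B^n_{\bf w}$ transitively (Lemma \ref{pieces}), reducing the estimate on an arbitrary piece to the one on $B^n_{v^n}$.

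The main obstacle I expect is the two-sided control of $\|[D\Psi^n_{\tip}]^{-1}\|$: one must know not just that $\Psi^n_{\tip}$ contracts at the advertised rates but that its $x$-contraction is genuinely of order $\si^{2n}$ and no faster, which rests on the non-degeneracy $|1+\di_xS^n_0|\asymp|v_*'|\gtrsim 1$ near the tip and on the super-exponential smallness of the off-diagonal term so that it cannot spoil the triangular inversion. Making the constants in $\asymp$ uniform in $n$, and then uniform across all pieces via distortion, is the part that needs care.
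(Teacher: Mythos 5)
Your proposal is correct and rests on the same core machinery as the paper's proof: the conjugacy $F_{2d}^{\pm 2^n}=\Psi^n_{\tip}\circ F_{2d,n}^{\pm1}\circ(\Psi^n_{\tip})^{-1}$ together with the explicit triangular form of $D\Psi^n_{0,2d}$, whose diagonal entries $\alpha_{n,0}\asymp\si^{2n}$ and $\si_{n,0}\asymp\si^n$ and nonvanishing factor $1+\di_xS^n_0\to v_*'$ drive both bounds. Your upper bound on $m$ (tracking the image of the vertical vector $(0,1)$ forward through the three factors) is in substance the paper's first step, which lower-bounds $\|DF_{2d}^{-2^n}\|$ by the norm of the first column of the product matrix — the two computations are dual to each other and use identical asymptotics ($\di_y\eps_n\asymp b_1^{2^n}$, $t_{n,0}=O(\bar\eps)$). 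Where you genuinely diverge is the lower bound on $m$: the paper grinds out $\|DF_{2d}^{-2^n}\|\le\|D\Psi^n_{0,2d}\|\,\|DF_{2d,n}^{-1}\|\,\|D(\Psi^n_{0,2d})^{-1}\|$ with Frobenius-norm estimates for each factor (in particular $\|DF_{2d,n}^{-1}\|\asymp b_1^{-2^n}$), whereas you use the $2\times2$ identity $m(M)=|\det M|/\|M\|$, feed in $\Jac F_{2d}^{2^n}\asymp b_1^{2^n}$ from Corollary \ref{average}, and only need the cruder bound $\|DF_{2d}^{2^n}\|=O(\si^{-n})$ with $\|DF_{2d,n}\|=O(1)$; this is cleaner and makes visible that the $b_1^{2^n}$ in the answer is exactly the universal Jacobian. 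Two small remarks: the off-diagonal entry $\alpha_{n,0}\di_yS^n_0+\si_{n,0}t_{n,0}$ is only $O(\bar\eps\,\si^n)$, not super-exponentially small (only $t_{k+1,k}$, not $t_{n,0}$, is $O(\bar\eps^{2^k})$), but this does not affect the triangular inversion since it is still dominated by the diagonal; and your final step passing from the tip piece to the other pieces via distortion and the permutation of pieces is a point the paper leaves implicit, so including it is a welcome addition rather than a deviation.
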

\begin{proof}
Firstly, let us show that $ m(DF^{2^n}_{2d}) \lesssim \si^n b_1^{2^n} $. If $ n=1 $, then we get the upper bound of $ m(DF_{2d}) $ as follows
\begin{equation*}
\begin{aligned}
\frac{1}{m(DF_{2d})} = \| DF_{2d}^{-1} \| \geq \left \| \, \frac{1}{\di_y \eps} 
\begin{pmatrix}
0 & \di_y \eps \\
-1 & f'(x) - \di_x \eps
\end{pmatrix}
\begin{pmatrix}
1 \\
0
\end{pmatrix} \right \| %\\
& \geq \frac{1}{\di_y \eps(x,y)} . %\leq \frac{1}{m_{2d}}
\end{aligned} \msk
\end{equation*}
Then $ m(DF_{2d}) \leq \di_y \eps(x,y) = \Jac F_{2d}$ for every point $ (x,y) $. Similarly, since $ F_{2d} $ is infinitely renormalizable, $ m(DF_{2d,\,n}) \leq \di_y \eps_n(x,y) = \Jac F_{2d,\,n}$ for every $ n \in \N $. Then $ m(DF_{2d,\,n}) \leq m_{2d,\,n} $.   %\ssk \\ 
%In this proof, we denote the coordinate change map between $ F_{2d,\,n} $ and $ F^{2^n} $ to be just $ \Psi^n_0 $ for the notational convenience.
Let us generalize the above idea for the estimating the norm of $ DF_{2d}^{-2^n} $. For example,
\msk
\begin{equation*}
\begin{aligned}
\sqrt{a^2 + c^2} = 
\begin{Vmatrix} 
\begin{pmatrix}
a \\ c
\end{pmatrix}
\end{Vmatrix} =
\begin{Vmatrix}
\begin{pmatrix}
a & b \\
c & d
\end{pmatrix}
\begin{pmatrix}
1 \\ 0
\end{pmatrix}
\end{Vmatrix}
\leq \sup_{ \| (v_1 \, v_2 ) \| =1} \left\{
\begin{Vmatrix}
\begin{pmatrix}
a & b \\
c & d
\end{pmatrix}
\begin{pmatrix}
v_1 \\ v_2
\end{pmatrix}
\end{Vmatrix} \right\} .
\end{aligned} \msk
\end{equation*}
Thus a lower bound of $ \| DF^{-2^n}_{2d} \| $ is determined by the length of the first column of $ DF^{-2^n}_{2d} $. Recall that $ F^{2^n} \circ \Psi^n_{\tip} = \Psi^n_{\tip} \circ F_{n} $, that is,  
$$ F^{-2^n} = \Psi^n_{\tip} \circ F_{n}^{-1} \circ (\Psi^n_{\tip})^{-1} . $$
Since composition of linear functions does not affect the norm of derivative, let us use $ \Psi^n_0 $ instead of $ \Psi^n_{\tip} $ in order to simplify notations. Let us choose points, $ w'' $, $ w' $ and $ w $ with following relation
\begin{equation*}
w'' \xmapsto{ (\Psi^n_{\tip})^{-1} } w' \xmapsto{ \quad F_{n}^{-1} \ \ } w .
\end{equation*}
Recall that
\begin{equation*}
\begin{aligned}
\Psi^n_{0,\,2d}(w) &= \ ( \alpha_{n,\,0} \cdot (x + S^n_0(w)) + \si_{n,\,0}\,t_{n,\,0}\cdot y,\ \si_{n,\,0}\cdot y) \\[0.3em]
F_{n,\,2d}(w) &= \ (f_n(x) - \eps_n(w), \ x) .
\end{aligned}
\end{equation*}
Then
\begin{equation}
\begin{aligned}
& \quad \ \ DF^{-2^n}_{2d}(w'') \\[0.3em]
&= \ D\Psi^n_{0,\,2d}(w) \cdot DF_{n}^{-1}(w') \cdot D(\Psi^n_{0,\,2d})^{-1}(w'') \\[0.3em]
&= \ D\Psi^n_{0,\,2d}(w) \cdot \big[\,DF_{n}^{-1}(w) \,\big]^{-1} \cdot \big[\,D(\Psi^n_{0,\,2d})^{-1}(w') \,\big]^{-1} \\[0.4em]
&= \ \begin{pmatrix}
\alpha_{n,\,0} \cdot (1 + \di_x S^n_0(w)) & \alpha_{n,\,0}\, \di_y S^n_0(w) + \si_{n,\,0}\,t_{n,\,0} \\[0.2em]
0 & \si_{n,\,0}
\end{pmatrix} %\\ & \qquad 
\cdot \frac{1}{\di_y \eps_n(w)} \,
\begin{pmatrix}
0 & \di_y \eps_n(w) \\[0.2em]
-1 & f_n'(x) - \di_x \eps_n(w)
\end{pmatrix} \\[0.4em]
& \qquad \cdot \frac{1}{\si_{n,\,0}\,\alpha_{n,\,0} \cdot (1 + \di_x S^n_0(w))} \,
\begin{pmatrix}
\si_{n,\,0} & - \alpha_{n,\,0} \cdot \di_y S^n_0(w) - \si_{n,\,0}\,t_{n,\,0} \\[0.3em]
0 & \alpha_{n,\,0} \cdot (1 + \di_x S^n_0(w)) .
\end{pmatrix}
\end{aligned} \bsk
\end{equation}
Let us choose temporary expression of $ D\Psi^n_{0,\,2d}(w) \cdot \big[\,DF_{n}^{-1}(w) \,\big]^{-1} \cdot \big[\,D(\Psi^n_{0,\,2d})^{-1}(w') \,\big]^{-1} $ as follows
\begin{equation*}
\begin{aligned}
DF^{-2^n}_{2d}(w'') &= \ 
\begin{pmatrix}
A_1 & B_1 \\
0 & \si_{n,\,0}
\end{pmatrix} \cdot
\begin{pmatrix}
0 & 1 \\
C_2 & D_2
\end{pmatrix} \cdot
\begin{pmatrix}
A_3 & B_3 \\
0 & \si_{n,\,0}^{-1}
\end{pmatrix} \\[0.5em]
&= \ \begin{pmatrix}
B_1 C_2 A_3 & B_1 C_2 B_3 + (A_1 + B_1 D_2)\, \si_{n,\,0}^{-1} \\[0.2em]
\si_{n,\,0}\, C_2 A_3 & \si_{n,\,0}\,C_2 B_3 + D_2
\end{pmatrix} .
\end{aligned} \msk
\end{equation*}
Then
\begin{equation*}
\begin{aligned}
B_1 C_2 A_3 = &\ - \frac{1}{ \di_y \eps_n(w)} \cdot \frac{\alpha_{n,\,0} \cdot \di_y S^n_0(w) + \si_{n,\,0}\,t_{n,\,0} }{\alpha_{n,\,0} \cdot (1 + \di_x S^n_0(w'))} \\[0.2em]
=& \ - \frac{1}{b_1^{2^n}} \cdot \frac{1}{(-\si)^n} \cdot \frac{t_{*,\,0} + O(\si^n)}{v_*(x - \pi_x(\tau_{\infty}))} \, \big(1 + O(\rho^n) \big) %\quad (*) 
\\[0.5em]
 \si_{n,\,0}\, C_2 A_3 =& \ \si_{n,\,0} \cdot \left( - \frac{1}{ \di_y \eps_n(w)} \right) \cdot \frac{ 1}{\alpha_{n,\,0} \cdot (1 + \di_x S^n_0(w'))} \\
 =& \ - \frac{1}{b_1^{2^n}} \cdot \frac{1}{(-\si)^n} \cdot \frac{1}{v_*(x - \pi_x(\tau_{\infty}))} \, \big(1 + O(\rho^n) \big) .
\end{aligned} \msk
\end{equation*}

\nin By the universality of two dimensional H\'enon-like maps, $ 1+ \di_xS^n_0(w) = v_*'(x) + O(\rho^n) $ where $ v_*(x) $ is a diffeomorphism on its domain and $ \di_y \eps_n \asymp b_1^{2^n} $. Moreover, $ | \, \si_{n,\,0} | \asymp \si^n $, $ \alpha_{n,\,0} \asymp \si^{2n} $, $ | \,t_{n,\,0} | = O(\bar \eps) $, and $ \di_y S^n_0 = a_Fy + O(\rho^n) $ for $ a_F = O(\bar \eps) $. %Then the inequality $ \displaystyle{(*)} $ and $ \displaystyle{(**)} $ holds. 
For the detailed proof about the above asymptotic of two dimensional H\'enon-like maps, see Section 7 in \cite{CLM}. Then
%Since $ \| \:\!v_* \| = O(1) $ and $ | \, t_{*,\,0} | = O\big(\bar \eps \big) $, a lower bound of $ \| DF^{-2^n}_{2d} \| $ is as follow
\begin{equation*}
\frac{C_0}{\si^n b_1^{2^n}} \leq \| DF^{-2^n}_{2d} \| 
\end{equation*}
for some $ C_0 > 0 $ independent of $ n $. 
\ssk \\
Secondly, let us estimate an upper bound of the norm of $ DF_{2d}^{-2^n} $. Let us observe the following fact for later use. The unit vector $ \bigl(\begin{smallmatrix}
v_1 \\ 
v_2
\end{smallmatrix} \bigr)$ %whose length is $ 1 $
, that is, $ v_1^2 + v_2^2 =1 $, satisfies the following inequality by Cauchy-Schwarz inequality.
\begin{align} \label{matrix 2norm 1}
\left\| \begin{pmatrix}
a& b \\
c& d
\end{pmatrix}
\begin{pmatrix}
v_1 \\
v_2
\end{pmatrix} \right\| \leq \sqrt{a^2 + b^2 + c^2 +d^2} .
\end{align}
Moreover, if $ ad-bc \neq 0 $, then
\begin{align} \label{matrix 2norm 2}
\left\| \begin{pmatrix}
a& b \\
c& d
\end{pmatrix}^{-1}
\begin{pmatrix}
v_1 \\
v_2
\end{pmatrix} \right\| \leq \frac{1}{| ad-bc |}\sqrt{a^2 + b^2 + c^2 +d^2} .
\end{align}
%The map $ _{2d}\Psi^n_0 $ is the conjugation between $ F_{2d}^{-2^n} $ and $ F_{2d,\,n}^{-1} $.
Recall the equation
 $$ F_{2d}^{-2^n} = \Psi^n_0 \circ F_{2d,\,n}^{-1} \circ (\Psi^n_0)^{-1} . $$ 
 Then
\begin{align*}
\| DF^{-2^n}_{2d} \| \leq \| D\Psi^n_{0,\,2d} \| \cdot \| DF_{2d,\,n}^{-1} \| \cdot \| D(\Psi^n_{0,\,2d})^{-1} \| .
\end{align*}
By \eqref{matrix 2norm 1} and \eqref{matrix 2norm 2}, the upper bounds of norms are as follows
\begin{equation*}
\begin{aligned}
\| D\Psi^n_{0,\,2d} \|^2 = & \left\|
\begin{pmatrix}
\alpha_{n,\,0}\,(1+ \di_xS^n_0) & \alpha_{n,\,0}\, \di_yS^n_0 + \si_{n,\,0}\, t_{n,\,0} \\
0 & \si_{n,\,0}
\end{pmatrix} \right\|^2 \\[0.5em]
\leq & \ \sup_{w \in \,B(R^nF) } \{ \alpha_{n,\,0}^2(1+ \di_xS^n_0)^2 + (\alpha_{n,\,0}\, \di_yS^n_0 + \si_{n,\,0}\, t_{n,\,0} )^2 + \si_n^2 \} \\
\| DF_{2d,\,n}^{-1} \|^2 = & \left\|
\, \frac{1}{\di_y \eps_n} 
\begin{pmatrix}
0 & \di_y \eps_n \\
-1 & f_n'(x) - \di_x \eps_n
\end{pmatrix} \right\|^2 \\[0.5em]
\leq & \ \sup_{w \in \,B(R^nF) } \left\{ \frac{1}{(\di_y \eps_n)^2} \big( (\di_y\eps_n)^2 + 1 + (f_n'(x) - \di_x\eps_n)^2 \big) \right\} .
\end{aligned}
\end{equation*}
Hence, 
\begin{align*}
& \| DF_{2d}^{-2^n} \|^2  \leq  %\\ 
 \sup_{w \in \,B(R^nF) } \{ \alpha_{n,\,0}^2(1+ \di_xS^n_0)^2 + (\alpha_{n,\,0}\, \di_yS^n_0 + \si_{n,\,0}\, t_{n,\,0} )^2 + \si_n^2 \} \\
&\qquad  \cdot \sup_{w \in \,B(R^nF) } \left\{ \frac{1}{(\di_y \eps_n)^2} \big( (\di_y\eps_n)^2 + 1 + (f_n'(x) - \di_x\eps_n)^2 \big) \right\} \\
&\qquad  \cdot \sup_{w \in \,B(F) } \left\{ \frac{1}{\alpha_{n,\,0}^2(1+ \di_xS^n_0)^2 \cdot \si_{n,\,0}^2}\, \Big[ \,\alpha_{n,\,0}^2(1+ \di_xS^n_0)^2 + (\alpha_{n,\,0}\, \di_yS^n_0 + \si_{n,\,0}\, t_{n,\,0} )^2 + \si_n^2 \Big] \right\} \\[0.5em]
&\qquad \qquad \ \leq \frac{C_2}{b_1^{2^{n+1}}  \si^{2n}}
\end{align*}
for some $ C_2>0 $. Then 
\begin{align*}
%\frac{1}{m(DF_{2d}^{2^n})}
\| DF^{-2^n}_{2d} \|  \leq \frac{C_1}{b_1^{2^{n}}  \si^{n}} 
\end{align*}
where $ C_1 > 0 $ is independent of $ n $ for each $ n\in \N $. Hence, $ m(DF_{2d}^{2^n}) \asymp b_1^{2^{n}}  \si^{n} $.
\end{proof}
\msk
\nin By the above lemma, we obtain the estimation of $ m(DF_{2d}^{N}) $ for each $ N $. Each natural number $ N $ can be expressed as a dyadic number%, namely, the number whose base is two
. Let us assume that 
$$ N = \sum_{j = 0}^k 2^{m_j} $$
where $ m_k > m_{k-1} > \cdots > m_1 > m_0 \geq 0 $. Then we estimate the minimum expansion rate as follows
$$ m(DF_{2d}^{N}) \geq C b_1^N \si^{\sum_{j=0}^k m_j}  $$
for some $ C > 0 $ independent of $ n $. Observe that $ \log_2 N \asymp \sum_{j=0}^k m_j $ for each big enough $ N $. Let us choose a number smaller than $ \si $. For example, let us take $ \frac{1}{4} < \si $. Thus
$$ \si^{\sum_{j=0}^k m_j} \asymp \si^{\log_2 N} \geq \left( \frac{1}{4} \right)^{ \log_2 N} = \frac{1}{N^2} $$ 
Then the minimum expansion rate has the lower bound as follows
$$ m(DF_{2d}^{N}) \geq C \frac{b_1^N}{N^{\alpha}}  $$
where $ -\alpha < \log_2 \si < 0 $ \,for some $ C>0 $.

\end{appendices}

\bsk

%\bibliography{H3D1st}

\bibliographystyle{alpha}

\end{document}